\documentclass[9pt,a4paper]{amsart}
\usepackage{verbatim}

\usepackage[toc]{appendix}
\usepackage[T1]{fontenc}

\usepackage{graphicx}
\usepackage{enumerate}
\usepackage{amsmath,amsfonts,amssymb}
\usepackage{color}
\def\loc{\operatorname{loc}}
\usepackage{cite}
\usepackage{ latexsym }
\definecolor{citation}{rgb}{0.11,0.67,0.84}
\definecolor{formula}{rgb}{0.1,0.2,0.6}
\definecolor{url}{rgb}{0.11,0.67,0.84}
\usepackage{pgf,tikz}
\usepackage{mathrsfs}
\usepackage{fancyhdr}
\usepackage{dutchcal}

\usepackage{dsfont}

\newcommand{\reqnomode}{\tagsleft@false}

\vfuzz3pt 
\hfuzz2pt 


\textwidth = 15.7 cm
\textheight = 23.6 cm 
\oddsidemargin = 0.4cm
\evensidemargin = 0.4cm 
\topmargin = 1mm
\headheight = 0.2 cm
\headsep = 0.7 cm

%
%
\usepackage{hyperref}

\def\dx{\,{\rm d}x}

\def\dtt{\,{\rm d}t}

\def\ds{\,{\rm d}s}

\def\dy{\,{\rm d}y}

\def \d{\,{\rm d}}

\def\dist{\,{\rm dist}}

\def\supp{\,{\rm supp}}

\allowdisplaybreaks
\makeatletter
\DeclareRobustCommand*{\bfseries}{%
  \not@math@alphabet\bfseries\mathbf
  \fontseries\bfdefault\selectfont
  \boldmath
}

\makeatother

\newlength{\defbaselineskip}
\setlength{\defbaselineskip}{\baselineskip}
\newcommand{\setlinespacing}[1]
           {\setlength{\baselineskip}{#1 \defbaselineskip}}
\newcommand{\mint}{\mathop{\int\hskip -1,05em -\, \!\!\!}\nolimits}


\newtheorem{theorem}{Theorem}
\newtheorem{corollary}{Corollary}[section]
\newtheorem{definition}{Definition}
\newtheorem{remark}{Remark}[section]
\newtheorem{lemma}{Lemma}[section]
\newtheorem{proposition}{Proposition}[section]
\numberwithin{equation}{section}
\setlength{\delimitershortfall}{-0.1pt}
\allowdisplaybreaks[4]

\newcommand{\kk}{\kappa}

\def\er{\mathbb R}

\newcommand{\ti}[1]{\tilde{#1}}
\newcommand{\mf}[1]{\mathfrak{#1}}

\newcommand\eps\varepsilon

\def\eqn#1$$#2$${\begin{equation}\label#1#2\end{equation}}

\newcommand{\be}{\begin{equation}}
\newcommand{\ee}{\end{equation}}

\newcommand{\rr}{\varrho}

\newcommand{\snr}[1]{\lvert #1\rvert}

\newcommand{\nr}[1]{\lVert #1 \rVert}

\newcommand{\uu}{\mathfrak{u}}

\newcommand{\RN}{\mathbb{R}^{N}}

\newcommand{\N}{\mathbb{N}}

\def\name[#1, #2]{#1 #2}

\delimitershortfall=-0.1pt

\title[Singular multiple integrals and nonlinear potentials]{Singular multiple integrals and nonlinear potentials}

\author[De Filippis]{Cristiana De Filippis}  \address{Cristiana De Filippis\\Dipartimento SMFI, Universit\'a di Parma\\ Parco Area delle Scienze 53/A, 43124 Parma, Italy} \email{\url{cristiana.defilippis@unipr.it}}
\author[Stroffolini]{Bianca Stroffolini}  \address{Bianca Stroffolini\\ Dipartimento di Ingegneria Elettrica e delle Tecnologie dell'Informazione, University of Napoli "Federico II", Via Claudio, 80125 Napoli, Italy} \email{\url{bstroffo@unina.it}}

\begin{document}

\subjclass[2020]{35B65, 31C45 \vspace{1mm}} 

\keywords{Quasiconvexity, $(p,q)$-growth, Nonlinear potential theory, Singular variational integrals.\vspace{1mm}}


\maketitle

\begin{abstract}
We prove sharp partial regularity criteria of nonlinear potential theoretic nature for the Lebesgue-Serrin-Marcellini extension of nonhomogeneous singular multiple integrals featuring $(p,q)$-growth conditions. 
 \end{abstract}
%

\setlinespacing{1.00}
\section{Introduction}\label{si}
We provide optimal partial regularity criteria for relaxed minimizers of nonhomogeneous, singular multiple integrals of the form
\begin{eqnarray}\label{fun}
W^{1,p}(\Omega,\mathbb{R}^{N})\ni w\mapsto \mathcal{F}(w;\Omega):=\int_{\Omega}\left[F(Dw)-f\cdot w\right] \ \dx,
\end{eqnarray}
i.e., local minimizers of the Lebesgue-Serrin-Marcellini extension of $\mathcal{F}(\cdot)$:
\begin{flalign}\label{exfun}
\bar{\mathcal{F}}(w;\Omega):=\inf\left\{\liminf_{j\to \infty}\mathcal{F}(w_{j};\Omega)\colon \{w_{j}\}_{j\in \N}\subset W^{1,q}_{\loc}(\Omega,\mathbb{R}^{N})\colon w_{j}\rightharpoonup w \ \mbox{in} \ W^{1,p}(\Omega,\mathbb{R}^{N})\right\},
\end{flalign}
using tools from Nonlinear Potential Theory, thus completing the analysis started in \cite{deqc} for degenerate functionals. More precisely, we prove almost everywhere gradient continuity for local minimizers of \eqref{exfun} under sharp assumptions on the external datum $f$. Here, $\Omega\subset \mathbb{R}^{n}$ is an open, bounded set with Lipschitz boundary, $n\ge 2$, and $F\colon \mathbb{R}^{N\times n}\to \mathbb{R}$ is a strictly quasiconvex integrand, verifying so-called $(p,q)$-growth conditions according to Marcellini's terminology \cite{ma2}:
\eqn{pqpq}
$$
\snr{z}^{p}\lesssim F(z)\lesssim 1+\snr{z}^{q},\qquad\qquad 1<p\le q;
$$
the singular behavior of $F(\cdot)$ around zero being encoded in the requirement $p\in (1,2)$. Let us recall that $F(\cdot)$ is quasiconvex when
\begin{flalign}\label{qc}
\mint_{B_{1}(0)}F(z+D\varphi) \  \dx \ge F(z)\quad \mbox{holds for all} \ \ z\in \mathbb{R}^{N\times n}, \ \ \varphi\in C^{\infty}_{\rm c}(B_{1}(0),\mathbb{R}^{N}),
\end{flalign}
therefore the three main aspects of \eqref{fun}-\eqref{exfun} we are interested in are the presence of a nontrivial forcing term $f$, the $(p,q)$-growth conditions in \eqref{pqpq} and the quasiconvexity \eqref{qc} of the integrand $F(\cdot)$. Let us briefly discuss some classical and recent results on these ingredients as each of them is currently object of intense investigation. The problem of determining the best conditions to impose on $f$ in order to prove gradient continuity for minima is classical and received a considerable attention in the past decades. To better understand this issue, let us introduce the Lorentz space $L(n,1)$, defined by
$$
w\in L(n,1) \ \Longleftrightarrow \ \nr{w}_{L(n,1)}:=\int_{0}^{\infty}\snr{\{x\in \mathbb{R}^{n}\colon \snr{w(x)}>t\}}^{1/n} \ \dtt<\infty.
$$
A related deep result of Stein \cite{stein} states that 
\eqn{xx.4}
$$
w\in W^{1,n}\ \ \mbox{and} \ \ Dw\in L(n,1) \ \Longrightarrow \ w \ \ \mbox{is continuous},
$$
so \eqref{xx.4} and the immersions $L^{n+\varepsilon}\hookrightarrow L(n,1) \hookrightarrow L^{n}$ for all $\varepsilon>0$, lead to the borderline characterization of $L(n,1)$ as the limiting space with respect to the Sobolev embedding theorem. A linear PDE interpretation of Stein's theorem relying on the combination of \eqref{xx.4} with standard Calder\'on-Zygmund theory prescribes that
$$
-\Delta u=f\in L(n,1) \ \Longrightarrow \ Du \ \ \mbox{is continuous},
$$
which turns out to be sharp, in the light of Cianchi's counterexample \cite{CiGA}. Surprisingly enough, the same conclusion holds in a way more general setting than the linear one. It is indeed true for uniformly elliptic operators \cite{akm,ba2,cm,cm1,dz,dumi2,kumi1,kumig,kumi2,np1,tn}; systems of differential forms \cite{sil}; fully nonlinear elliptic equations \cite{bm,dkm}, general nonuniformly elliptic functionals \cite{bemi,demi1}; and it also holds at the level of partial regularity for systems of the $p$-Laplacian type without Uhlenbeck's structure \cite{by,kumi}. 
The key point consists in the possibility of gaining local control on the gradient of solutions via the truncated Riesz potential of $f$, that is
\begin{eqnarray}\label{i.0}
\mathbf{I}^{f}_{1}(x_{0},\rr):=\int_{0}^{\rr}\left(\mint_{B_{\sigma}(x_{0})}\snr{f} \ \dx\right) \ \d\sigma\lesssim \int_{\mathbb{R}^{n}}\frac{\snr{f(y)}}{\snr{x-y}^{n-1}} \ \dy,
\end{eqnarray}
which is a standard aspect of linear equations and a remarkable feature of nonlinear ones, cf. Kuusi \& Mingione's seminal works \cite{kumig,kumi2}. On the other hand, in \cite{bemi,demi1} the gradient of minima is dominated via a nonlinear Wolff type potential, first introduced by Havin \& Maz'ya \cite{hm1}, defined as:
\begin{eqnarray*}
\mathbf{I}^{f}_{1,m}(x_{0},\rr):=\int_{0}^{\rr}\left(\sigma^{m}\mint_{B_{\sigma}(x_{0})}\snr{f}^{m} \ \dx\right)^{1/m} \ \frac{\d\sigma}{\sigma},\qquad m>1,
\end{eqnarray*}
sharing the same homogeneity - and therefore analogous mapping properties on function spaces - as the linear potential in \eqref{i.0}. 
All the aforementioned results crucially rely on the strong ellipticity of the operators involved, while in \eqref{fun} the integrand $F(\cdot)$ is only quasiconvex. This notion was first introduced by Morrey \cite{mo} and it turns out to be a natural condition in the multidimensional Calculus of Variations. Indeed, under polynomial growth conditions on the integrand $F(\cdot)$, quasiconvexity is a necessary and sufficient condition for sequential weak lower semicontinuity in $W^{1,p}$, \cite{af1,bamu,bfm,foma,ma3,mo}. A peculiar characteristic of quasiconvexity is that it is a purely nonlocal concept \cite{k,mo} in the sense that there is no condition involving only $F(\cdot)$ and a finite number of its derivatives, which is equivalent to quasiconvexity. Moreover, minimizers and critical points of quasiconvex functionals have a very different behavior. Precisely, a classical result of Evans \cite{ev} states that minima are regular outside a negligible "singular" set, while M\"uller \& \v{S}ver\'ak \cite{musv} proved that critical points, i.e. solutions to the associated Euler-Lagrange system, may be everywhere discontinuous. This is coherent with the theory of elliptic systems: well-known counterexamples \cite{ms,svya} show that solutions might develop singularities, therefore in the genuine vectorial setting the best one could hope for is partial regularity. The matter of almost everywhere regularity for minimizers of quasiconvex integrals was first treated by Evans \cite{ev} in the case of quadratic functionals, and, after that, it received lots of attention over the years. Subsequently, partial regularity for multiple integrals with standard $p$-growth was obtained in \cite{af,cfm,kt} exploiting Evans' blow up method, while in \cite{dumi} a unified approach to the partial regularity for degenerate or singular quasiconvex integrals was proposed via the $p$-harmonic approximation and in \cite{km} was derived an upper bound on the Hausdorff dimension of the singular set of minima of quasiconvex functionals. We refer to \cite{bs,bddms,deqc,dlsv,dgk,gme,gm1,gk,i1,i,k1,li,ts,ts1,ts2} and references therein for a non-exhaustive list of remarkable contributions in more general settings.
The other main feature of the class of integrands considered in this paper is their $(p,q)$-growth conditions. This nomenclature was introduced by Marcellini in the fundamental papers \cite{ma2,ma5} within the framework of nonlinear elasticity. In fact, a basic model describing the behavior of compressible materials subject to deformations is given by
\eqn{hh}
$$
W^{1,p}(\Omega,\mathbb{R}^{n})\ni w\mapsto \mathcal{H}(w;\Omega):=\int_{\Omega}\left[\snr{Dw}^{p}+\sqrt{1+\snr{\det(Dw)}^{2}}-f\cdot w\right] \ \dx,
$$
for some $f\in W^{1,p}(\Omega,\mathbb{R}^{N})^{*}$, see \cite{bb,bamu,ma2,ma5}. A natural phenomenon in compressible elasticity is cavitation, i.e. the possible formation of cavities (holes) in elastic bodies after stretch, corresponding to the development of singularities in equilibrium solutions (minima) of $\mathcal{H}(\cdot)$. Functional $\mathcal{H}(\cdot)$ is quasiconvex in the sense of \eqref{qc}, \cite[Chapter 5]{giu}, however in general it is not $W^{1,p}$-quasiconvex\footnote{I.e.: \eqref{qc} holds for all $\varphi\in W^{1,p}_{0}(B_{1}(0),\mathbb{R}^{N})$.} unless $p\ge n$, \cite[Theorem 4.1]{bamu}, while its Lebesgue-Serrin-Marcellini extension $\bar{\mathcal{H}}(\cdot)$ is $W^{1,p}$-quasiconvex provided that $p>n-1$, \cite[Lemma 7.6]{ts1}. This means that the approach by relaxation based on the extension of the ambient space proposed in \cite{ma2,ma5} fits the analysis of cavitation better than the pointwise one of \cite{bb,bamu}, as it allows dealing with discontinuous maps thus describing the possible formations of cavities. We also point out that the integrand $H(z):=\snr{z}^{p}+\sqrt{1+\snr{\det(z)}^{2}}$ in \eqref{hh} verifies
$$
\snr{z}^{p}\le H(z)\lesssim 1+\snr{z}^{n},
$$
that is \eqref{pqpq} with $q=n$. This was the first main reason behind the investigation of variational integrals with $(p,q)$-growth: starting with \cite{ma2} for questions of semicontinuity and \cite{ma4,ma1} about regularity, since then such class of functionals received lots of attention - with no pretence of completeness we mention the everywhere regularity results in \cite{bemi,BS1,BS,BF,bdms,demi1,demi3,ELM,hs,ko,ko1}, the partial regularity proven in \cite{cdk,px,demi2,dsv,dumi1,s} for general systems and manifold constrained problems with special structure and refer to \cite{masu2} for a reasonable survey. The aforementioned results hold for strictly convex variational integrals. In the quasiconvex setting partial regularity has been obtained by Schmidt \cite{ts,ts2} for homogeneous - $f\equiv 0$ in \eqref{fun} - functionals with $(p,q)$-growth and for their Lebesgue-Serrin-Marcellini extension \cite{ts1}, while \cite{deqc} contains sharp partial regularity criteria in terms of a nontrivial forcing term $f$ for relaxed minimizers of degenerate integrals of the form \eqref{fun}. The standard notion of relaxed local minimizers \cite{ts1} reads as:
\begin{definition}\label{d1}
Let $p\in (1,\infty)$. A function $u\in W^{1,p}(\Omega,\mathbb{R}^{N})$ is a local minimizer of \eqref{exfun} on $\Omega$ with $f\in W^{1,p}(\Omega,\mathbb{R}^{N})^{*}$ if and only if every $x_{0}\in \Omega$ admits a neighborhood $B\Subset \Omega$ so that $\bar{\mathcal{F}}(u;B)<\infty$ and $\bar{\mathcal{F}}(u;B)\le \bar{\mathcal{F}}(w;B)$ for all $w\in W^{1,p}(B,\mathbb{R}^{N})$ so that $\supp(u-w)\Subset B$. 
\end{definition}
Such definition can be immediately adapted to local minimizers of functional \eqref{fun}. Let us point out that when considering \eqref{fun}-\eqref{exfun} we will assume with no loss of generality that $f$ is defined on the whole $\mathbb{R}^{n}$, which is always possible if we set $f\equiv 0$ in $\mathbb{R}^{n}\setminus \Omega$. For this reason, when stating that $f$ belongs to a certain function space, we shall often avoid to specify the underlying domain. Further details about the notation employed can be found in Section \ref{presec} below. The main result of our paper is the following 
\begin{theorem}\label{t.v.0}
Under assumptions \eqref{assf}-\eqref{sqc}, \eqref{assf.0} and \eqref{f}, let $u\in W^{1,p}(\Omega,\mathbb{R}^{N})$ be a local minimizer of \eqref{exfun}. Suppose that
\eqn{1.2}
$$
\lim_{\rr\to 0}\mathbf{I}^{f}_{1,m}(x,\rr)=0\qquad \mbox{locally uniformly in} \ \ x\in \Omega.
$$
Then there exists an open "regular" set $\Omega_{u}\subset \Omega$ of full $n$-dimensional Lebesgue measure with $\snr{\Omega\setminus \Omega_{u}}=0$ such that $V_{p}(Du)$ and $Du$ are continuous on $\Omega_{u}$. In particular, the regular set $\Omega_{u}$ can be characterized as
\begin{flalign*}
&\Omega_{u}:=\left\{\frac{}{}x_{0}\in \Omega\colon \exists M\equiv M(x_{0})\in (0,\infty), \ \ti{\varepsilon}\equiv \ti{\varepsilon}(\textnormal{\texttt{data}},M), \ \ti{\rr}\equiv \ti{\rr}(\textnormal{\texttt{data}},M,f(\cdot))\in (0,\min\{d_{x_{0}},1\})\frac{}{}\right.\nonumber \\
&\qquad \qquad \left.\frac{}{}\mbox{such that} \ \ \snr{(V_{p}(Du))_{B_{\rr}(x_{0})}}<M \ \mbox{and} \ \mf{F}(u;B_{\rr}(x_{0}))<\ti{\varepsilon} \ \mbox{for some} \ \rr\in (0,\ti{\rr}]\frac{}{}\right\}.
\end{flalign*}
\end{theorem}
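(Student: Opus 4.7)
The plan is to combine a careful Lebesgue--Serrin--Marcellini approximation with a nonlinear Wolff-potential excess decay of $A$-harmonic type, and then characterize the regular set through a Campanato--Morrey argument adapted to the map $V_{p}$. First, I would reduce matters to \emph{regular} local minimizers of suitably perturbed functionals: by definition of the relaxation $\bar{\mathcal{F}}$ and the constructions available in the quasiconvex $(p,q)$-setting (cf. \cite{ts1,deqc}), the local minimizer $u$ can be attained as an $L^{p}$-strong/$W^{1,p}$-weak limit of minimizers $u_{\varepsilon}\in W^{1,q}_{\loc}$ of approximate integrands $F_{\varepsilon}(z):=F(z)+\varepsilon(1+|z|^{2})^{q/2}$ with the same forcing term $f$, which automatically satisfy a Caccioppoli-type inequality on $V_{p}$-excesses and an a priori $W^{1,q}_{\loc}$ bound independent of $\varepsilon$ on the set where the smallness conditions defining $\Omega_{u}$ hold. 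This transfers the partial regularity problem for $\bar{\mathcal{F}}$ onto a more familiar class of classical minimizers with a controlled right-hand side.

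The heart of the proof is a hybrid harmonic/$\mathcal{A}$-harmonic comparison estimate. On a ball $B_{\rr}(x_{0})$ where $|(V_{p}(Du_{\varepsilon}))_{B_{\rr}}|<M$ and the excess $\mf{F}(u_{\varepsilon};B_{\rr})$ is small, I would linearize $F$ around the affine map with slope $(Du_{\varepsilon})_{B_{\rr}}$ and compare $u_{\varepsilon}$ with the solution $h$ to the frozen quadratic quasiconvex problem $\int F''((Du_{\varepsilon})_{B_{\rr}})Dh\cdot D\varphi\,\dx=0$, borrowing the excess-decay for $h$ from Schmidt's partial regularity result \cite{ts,ts1,ts2}. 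The quantitative discrepancy $u_{\varepsilon}-h$ is then estimated by testing the Euler--Lagrange system with its harmonic extension subtracted off, which produces contributions of two types: a $(p,q)$-gap remainder, controlled by the a priori $W^{1,q}$-bound and chosen to be absorbed by $\varepsilon$, and a forcing term of the form $\sigma\bigl(\mint_{B_{\sigma}}|f|^{m}\,\dx\bigr)^{1/m}$ arising after duality and Sobolev-Poincar\'e. The outcome is an iteration inequality of the shape
\[
E(B_{\tau\rr})\le C_{*}\tau^{2\beta}E(B_{\rr})+C_{*}\bigl[\mathbf{I}^{f}_{1,m}(x_{0},\rr)\bigr]^{2}+\text{o}(\varepsilon),
\]
valid under smallness of $E(B_{\rr})+|(V_{p}(Du_{\varepsilon}))_{B_{\rr}}|$, where $E$ denotes the $V_{p}$-excess.

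Passing $\varepsilon\to 0$ yields the same iteration for the relaxed minimizer $u$ on every ball contained in the set where the two smallness conditions defining $\Omega_{u}$ persist. A standard telescoping based on a careful choice of $\tau$ then shows that, locally uniformly on $\Omega_{u}$, the map $\rr\mapsto E(B_{\rr}(x))+|(V_{p}(Du))_{B_{\rr}(x)}|$ stays small and that the averages $(V_{p}(Du))_{B_{\rr}(x)}$ form a Cauchy net as $\rr\to 0$; because \eqref{1.2} makes the Wolff-potential tail $\mathbf{I}^{f}_{1,m}(\cdot,\rr)$ go to zero \emph{uniformly}, the limit is moreover continuous in $x\in\Omega_{u}$, which upgrades convergence of the $V_{p}$-averages to continuity of $V_{p}(Du)$ and hence of $Du$. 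Finally, the openness and full-measure character of $\Omega_{u}$ follow by standard Lebesgue-point arguments for $V_{p}(Du)\in L^{2}_{\loc}$ combined with the absolute continuity of $\mathbf{I}^{f}_{1,m}$ produced by \eqref{1.2}.

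The main obstacle I anticipate is the interaction between the $(p,q)$-gap and the singularity $p\in(1,2)$: the comparison estimate must produce a right-hand side expressed in terms of the natural quantity $V_{p}$, not of $|Du|$, so every step where Sobolev--Poincar\'e or duality enters has to be performed in the shifted $V_{p}$-geometry, and the $\varepsilon$-remainder from the Marcellini extension $F_{\varepsilon}$ must be shown to vanish uniformly only \emph{after} re-encoding it in these shifted norms; in addition, since $\bar{\mathcal{F}}(u;\Omega)$ is not a priori finer than $\mathcal{F}$, controlling the $\varepsilon$-gap near points where the $V_{p}$-average is small but nonzero requires the delicate convergence of approximating sequences built into the very definition \eqref{exfun}.
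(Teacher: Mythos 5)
Your outline captures the nonsingular half of the argument but misses the central structural difficulty of the paper, which is precisely the $p\in(1,2)$ degeneracy. The $\mathcal{A}$-harmonic comparison you propose -- linearizing around the affine map with slope $(Du)_{B_{\rr}}$ and freezing coefficients $\mathcal{A}=\partial^{2}F((Du)_{B_{\rr}})$ -- only makes sense when the reference slope is bounded away from zero, because $\partial^{2}F(z)\approx \snr{z}^{p-2}$ blows up as $z\to 0$. The regular set $\Omega_{u}$ in the statement requires only $\snr{(V_{p}(Du))_{B_{\rr}(x_{0})}}<M$, an \emph{upper} bound; there is no lower bound on the average, and in fact the average can approach zero along the iteration. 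At those scales your frozen bilinear form $\mathcal{A}$ has an ellipticity ratio that degenerates, the constant in the harmonic approximation lemma blows up, and the excess-decay estimate you write down simply fails. This is why the paper splits the analysis into a \emph{nonsingular} regime (Proposition \ref{p1}: $\mathcal{A}$-harmonic approximation, valid only when $\mf{F}(u;B_{\rr})<\varepsilon_{0}\snr{(V_{p}(Du))_{B_{\rr}}}$) and a \emph{singular} regime (Proposition \ref{p3}: $p$-harmonic approximation, Lemma \ref{p.phar}, valid when the excess dominates the average), the latter relying crucially on the structural hypothesis \eqref{assf.0} about $\partial^{2}F$ near the origin, which your proof never uses.

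A second, consequential gap is the iteration. You invoke ``a standard telescoping based on a careful choice of $\tau$,'' but once the scheme alternates between the two regimes there is nothing standard about it: each regime has its own decay rate and forcing contribution, the switch can happen infinitely many times, and the quantity $\snr{(V_{p}(Du))_{B_{\rr}}}$ that determines the active regime must be propagated along the iteration. This is exactly what the ``Blocks and Chains'' machinery of Theorem \ref{t.ex} (Steps 3--5) is designed to handle; it also explains why the forcing term does not enter as a single power $\bigl[\mathbf{I}^{f}_{1,m}(x_{0},\rr)\bigr]^{2}$ as in your display, but in the mixed powers $\mathbf{I}^{f}_{1,m}(x_{0},\rr)^{p/2(p-1)}$, $\mathbf{I}^{f}_{1,m}(x_{0},\rr)^{q/2(p-1)}$ and $M^{(2-p)/p}\mathbf{I}^{f}_{1,m}(x_{0},\rr)$ reflecting the nonhomogeneous $(p,q)$-scaling. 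Finally, the $\varepsilon$-regularization $F_{\varepsilon}(z)=F(z)+\varepsilon(1+\snr{z}^{2})^{q/2}$ is plausible but is not what the paper does: under \eqref{pq} the Euler--Lagrange system \eqref{el} is already available directly for minimizers of $\bar{\mathcal{F}}$, and the comparison maps are built via the smoothing operator of Lemma \ref{exlem}, so the convergence of $u_{\varepsilon}\to u$ you would need is bypassed entirely.
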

Theorem \ref{t.v.0} comes as a consequence of a fine connection established between the Lebesgue points of $Du$ and $V_{p}(Du)$ and the pointwise behavior of the Wolff potential $\mathbf{I}^{f}_{1,m}(\cdot)$.
\begin{theorem}\label{t.v}
Under assumptions \eqref{assf}-\eqref{sqc}, \eqref{assf.0} and \eqref{f}, let $u\in W^{1,p}(\Omega,\mathbb{R}^{N})$ be a local minimizer of \eqref{exfun}, $x_{0}\in \Omega$ be a point such that 
\eqn{1.3}
$$\mathbf{I}^{f}_{1,m}(x_{0},1)<\infty$$ and $M\equiv M(x_{0})$ be a positive constant. There exist $\ti{\varepsilon}\equiv \ti{\varepsilon}(\textnormal{\texttt{data}},M)\in (0,1)$ and $\ti{\rr}\equiv \ti{\rr}(\textnormal{\texttt{data}},M,f(\cdot))\in (0,\min\{1,d_{x_{0}}\})$ such that if
\begin{eqnarray}\label{1.4}
\left\{
\begin{array}{c}
\displaystyle
\ \snr{(V_{p}(Du))_{B_{\rr}(x_{0})}}<M\\[8pt]\displaystyle
\ \mf{F}(u;B_{\rr}(x_{0}))+\mathbf{I}^{f}_{1,m}(x_{0},\rr)^{\frac{p}{2(p-1)}}+\mathbf{I}^{f}_{1,m}(x_{0},\rr)^{\frac{q}{2(p-1)}}+M^{(2-p)/p}\mathbf{I}^{f}_{1,m}(x_{0},\rr)<\ti{\varepsilon},
\end{array}
\right.
\end{eqnarray}
for some $\rr\in (0,\ti{\rr}]$, then
\begin{flalign}\label{limv}
\lim_{s\to 0}(V_{p}(Du))_{B_{s}(x_{0})}=V_{p}(Du(x_{0})),\qquad \qquad \lim_{s\to 0}(Du)_{B_{s}(x_{0})}=Du(x_{0})
\end{flalign}
and
\begin{flalign}\label{1.7}
\left\{
\begin{array}{c}
\displaystyle
\ \snr{V_{p}(Du(x_{0}))-(V_{p}(Du))_{B_{\sigma}(x_{0})}}\le c\mf{N}(x_{0};\sigma)\\[8pt]\displaystyle
\ \snr{Du(x_{0})-(Du)_{B_{\sigma}(x_{0})}}\le c\mf{N}(x_{0};\sigma)^{2/p}+c\snr{(Du)_{B_{\sigma}(x_{0})}}^{(2-p)/2}\mf{N}(x_{0};\sigma) ,
\end{array}
\right.
\end{flalign}
for all $\sigma\in (0,\rr]$, where $c\equiv c(\textnormal{\texttt{data}},M)$ and
\begin{eqnarray*}
\mf{N}(x_{0};\sigma)&\approx& \mf{F}(u;B_{\sigma}(x_{0}))+\textbf{I}^{f}_{1,m}(x_{0},\sigma)^{\frac{p}{2(p-1)}}+\textbf{I}^{f}_{1,m}(x_{0},\sigma)^{\frac{q}{2(p-1)}}\nonumber \\
&&+\snr{(V_{p}(Du))_{B_{\sigma}(x_{0})}}^{(2-p)/p}\mathbf{I}^{f}_{1,m}(x_{0},\sigma),
\end{eqnarray*}
up to constants depending on $(\textnormal{\texttt{data}},M)$. In particular, $x_{0}\in \Omega$ satisfying \eqref{1.3} is a Lebesgue point of $V_{p}(Du)$ and of $Du$ if and only if it verifies \eqref{1.4}.
\end{theorem}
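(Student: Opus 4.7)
The plan is to run a Campanato-type decay for the hybrid excess $\mf{F}(u;B_{\sigma}(x_{0}))$ along a dyadic sequence of radii $\sigma_{k}=\tau^{k}\rr$, propagating smallness through an $\mathcal{A}$-harmonic approximation paired with a controlled absorption of the forcing error, and then telescoping the forcing errors into the Wolff potential $\mathbf{I}^{f}_{1,m}(x_{0},\rr)$. The four summands appearing on the left-hand side of the second inequality in \eqref{1.4} are precisely calibrated to match the total $f$-debt that the iteration accumulates: the three exponents $p/(2(p-1))$, $q/(2(p-1))$ and $1$ come from the three distinct ways a right-hand side of size $\snr{f}$ can enter the comparison estimate once one works with the $V_{p}$-excess and exploits the two-sided $(p,q)$-growth \eqref{pqpq}.

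Fix $x_{0}$ and $\rr\le\ti{\rr}$ satisfying \eqref{1.4}. On each $B_{\sigma}(x_{0})$ with $\sigma\le\rr$ I would compare $u$ with an $\mathcal{A}$-harmonic map $v$, where $\mathcal{A}=D^{2}F((Du)_{B_{\sigma}(x_{0})})$ is the symbol of the formal linearization; strict quasiconvexity \eqref{sqc} delivers a uniform Legendre--Hadamard condition on $\mathcal{A}$ in the regime $\snr{(Du)_{B_{\sigma}(x_{0})}}\lesssim M^{2/p}$, propagated from the first bound in \eqref{1.4} via the singular equivalences $V_{p}(z)\simeq\snr{z}^{p/2}$ for $\snr{z}\gtrsim 1$ and $V_{p}(z)\simeq\snr{z}$ for $\snr{z}\lesssim 1$. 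The $\mathcal{A}$-harmonic approximation for $(p,q)$-quasiconvex integrands in the spirit of \cite{ts,ts1,deqc}, combined with Schauder decay for the linear comparison map, yields
\begin{equation*}
\mf{F}(u;B_{\tau\sigma}(x_{0}))\ \le\ c_{1}\tau^{2\beta}\mf{F}(u;B_{\sigma}(x_{0}))\ +\ \varepsilon_{\circ}\mf{F}(u;B_{\sigma}(x_{0}))\ +\ c\,\mf{N}(x_{0};\sigma),
\end{equation*}
where $\varepsilon_{\circ}$ can be made arbitrarily small by tuning $\ti\varepsilon$. Choosing $\tau\in(0,1/2)$ so that $c_{1}\tau^{2\beta}\le 1/4$ and $\ti\varepsilon$ so that $\varepsilon_{\circ}\le 1/4$ produces the contraction
\begin{equation*}
\mf{F}(u;B_{\tau\sigma}(x_{0}))\ \le\ \tfrac{1}{2}\mf{F}(u;B_{\sigma}(x_{0}))\ +\ c\,\mf{N}(x_{0};\sigma).
\end{equation*}

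Iterating on $\sigma_{k}=\tau^{k}\rr$ and summing the geometric series, the explicit shape of $\mf{N}$ ensures that the accumulated error respects the exact powers $p/(2(p-1))$, $q/(2(p-1))$, $1$ of the Wolff potential appearing in \eqref{1.4}, in the spirit of the Kuusi--Mingione telescoping \cite{kumig,kumi2}; the uniform smallness of $\mf{F}(u;B_{\sigma_{k}})$ certifies that the linearization remains in the admissible regime at every scale, so the iteration is self-sustaining. From $\snr{(V_{p}(Du))_{B_{\sigma_{k+1}}(x_{0})}-(V_{p}(Du))_{B_{\sigma_{k}}(x_{0})}}\lesssim \mf{F}(u;B_{\sigma_{k}}(x_{0}))^{1/2}$ one obtains a summable Cauchy sequence, delivering the first limit in \eqref{limv} and the first bound in \eqref{1.7}; a standard continuity-in-$\sigma$ argument upgrades the dyadic limit to the full one. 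The corresponding $Du$-statements follow from the reverse inequality $\snr{z-w}\le c(\snr{w}^{(2-p)/2}\snr{V_{p}(z)-V_{p}(w)}+\snr{V_{p}(z)-V_{p}(w)}^{2/p})$, valid in the singular range $p\in(1,2)$, which produces the two-term shape of the second line of \eqref{1.7}. The \emph{only if} direction in the final assertion is immediate: under \eqref{1.3}, at any Lebesgue point both $\mf{F}(u;B_{\rr})$ and the Wolff terms vanish as $\rr\to 0$, so \eqref{1.4} must hold eventually.

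The main obstacle is the $\mathcal{A}$-harmonic approximation itself in the mixed singular/$(p,q)$-growth regime: it requires higher integrability of $Du$ beyond the native $L^{p}$ (available for local minimizers of $\bar{\mathcal{F}}$ by Schmidt's relaxation techniques \cite{ts1} and their sharpening in \cite{deqc}), together with a uniform handling of the $q$-part of $F(\cdot)$ on the set $\{\snr{Dv}\gtrsim 1\}$ within ellipticity bounds dictated by $M$. It is this mechanism that forces both $\ti\varepsilon$ and $\ti\rr$ to depend quantitatively on $M$ and on a residual tail of $\mathbf{I}^{f}_{1,m}(x_{0},\cdot)$, and that makes the singular range $p\in(1,2)$ genuinely different from the degenerate $p\ge 2$ case treated in \cite{deqc}.
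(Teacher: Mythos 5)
Your proposed linearization $\mathcal{A}=D^{2}F((Du)_{B_{\sigma}(x_{0})})$ cannot carry the argument in the singular range $p\in(1,2)$, and this is precisely what the paper's entire architecture is built to circumvent. Strict quasiconvexity \eqref{sqc} gives only the \emph{degenerate} Legendre--Hadamard bound \eqref{sqc.1}, namely $\partial^{2}F(z)\langle\xi\otimes\zeta,\xi\otimes\zeta\rangle\ge 2\lambda\snr{z}^{p-2}\snr{\xi}^{2}\snr{\zeta}^{2}$ with $\snr{z}^{p-2}\to\infty$ as $z\to 0$; there is no uniform ellipticity for $\mathcal{A}$ unless one rescales by $\snr{z}^{2-p}$, as the paper does in Proposition~\ref{p1} with $\mathcal{A}:=\partial^{2}F(\bar z)\snr{\bar z}^{2-p}$, \emph{and} this rescaling is only useful when $\snr{\bar z}$ is bounded away from zero relative to the excess (the ``nonsingular'' regime \eqref{ns.0}). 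When the average is small compared to the excess -- the ``singular'' regime \eqref{s.0} -- the linearization breaks down and the paper switches to the $p$-harmonic approximation of Proposition~\ref{p3}. Your proof proposal never acknowledges this dichotomy and asserts a single, uniformly contractive one-scale estimate, which is simply false: the decay rate and the constants in the comparison estimate depend on the ratio $\mf{F}/\snr{(V_{p}(Du))}$, a quantity that can move across the threshold separating the two regimes from scale to scale.

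The consequence is that iterating a single inequality $\mf{F}(\tau\sigma)\le\tfrac12\mf{F}(\sigma)+c\,\mf{N}(\sigma)$ and summing a geometric series does not work. The paper needs the Blocks and Chains machinery (Theorem~\ref{t.ex}) precisely because the singular and nonsingular scales interlace in an uncontrolled pattern, and the gradient average $\snr{(V_{p}(Du))_{B_{\sigma_{j}}}}$ must be tracked simultaneously with the excess -- the averages may grow while the excess shrinks, and the coefficient $\snr{(V_{p}(Du))_{B_{\sigma}}}^{(2-p)/p}$ multiplying the Wolff term makes the accumulated error non-geometric. Lemma~\ref{i.lem} then provides the delicate inductive step in which one shows the singular regime cannot persist when the composite excess $\mf{C}_{H}$ sits in the critical window; none of this bookkeeping is visible in your argument. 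A further small slip: the estimate you quote for the Cauchy sequence, $\snr{(V_{p}(Du))_{B_{\sigma_{k+1}}}-(V_{p}(Du))_{B_{\sigma_{k}}}}\lesssim\mf{F}(\sigma_{k})^{1/2}$, should have the first power of $\mf{F}$ (this is just $\eqref{tri.1}_{3}$, the $L^{2}$-mean oscillation controls the jump of averages linearly). The ``only if'' direction of the Lebesgue-point equivalence is essentially right in spirit, but to deduce \eqref{1.4} from the Lebesgue-point property one must push the excess through the $V_{p}$-change of variables as in \eqref{7.36}--\eqref{7.38}, a nontrivial computation in the subquadratic case which your sketch again elides.

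Until you insert the singular/nonsingular split, the two distinct harmonic approximations with their respective normalizations, and a mechanism (like the paper's maximal iteration chains) for controlling the gradient average across regime changes, the proposal does not close.
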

Conditions \eqref{1.2} or \eqref{1.3} can be guaranteed once prescribed the membership of $f$ to a proper function space, as stated in the following optimal function space criterion.
\begin{theorem}\label{t.v.1}
Under assumptions \eqref{assf}-\eqref{sqc}, \eqref{assf.0} and \eqref{f}, let $u\in W^{1,p}(\Omega,\mathbb{R}^{N})$ be a local minimizer of \eqref{exfun}. There exists an open set $\Omega_{u}\subset \Omega$ of full $n$-dimensional Lebesgue measure such that $f\in L(n,1)$ yields that $Du$, $V_{p}(Du)$ are continuous on $\Omega_{u}$, while if $f\in L^{d}$ for some $d>n$, then $Du,V_{p}(Du)\in C^{0,\ti{\alpha}}_{\loc}(\Omega_{u},\mathbb{R}^{N\times n})$ with $\ti{\alpha}\equiv \ti{\alpha}(n,N,p,d)$.
\end{theorem}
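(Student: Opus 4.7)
The plan is to derive Theorem \ref{t.v.1} from Theorems \ref{t.v.0} and \ref{t.v} by converting the integrability hypotheses on $f$ into pointwise, respectively quantitative, information on the truncated Wolff potential $\mathbf{I}^{f}_{1,m}$.

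\textbf{Step 1: the Lorentz case $f \in L(n,1)$.} I would first establish that
$$
\lim_{\rho \to 0^{+}} \mathbf{I}^{f}_{1,m}(x,\rho) = 0 \qquad \text{locally uniformly for } x \in \Omega.
$$
Fixing $m$ close to $1$ within its admissible range and applying H\"older's inequality on the average over $B_{\sigma}(x)$ together with the decreasing-rearrangement representation of $f$, one obtains the standard nonlinear-potential-theoretic control
$$
\mathbf{I}^{f}_{1,m}(x,\rho) \le c(n,m)\,\norma{f}_{L(n,1)(B_{2\rho}(x))}.
$$
Absolute continuity of the Lorentz norm with respect to its support yields the locally uniform vanishing. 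Theorem \ref{t.v.0} then produces the regular set $\Omega_u$ and the pointwise continuity of $Du$ and $V_{p}(Du)$ on it.

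\textbf{Step 2: the $L^d$ case with $d>n$.} For $m$ in the admissible range and not exceeding $d$, H\"older's inequality gives
$$
\left(\mint_{B_{\sigma}(x)} \snr{f}^{m} \, dx\right)^{1/m} \le c(n,d)\,\sigma^{-n/d}\,\norma{f}_{L^{d}},
$$
whence
$$
\mathbf{I}^{f}_{1,m}(x,\sigma) \le c(n,d)\,\norma{f}_{L^{d}}\,\sigma^{\beta}, \qquad \beta := 1 - \tfrac{n}{d} > 0.
$$
Fix $x_{0} \in \Omega_u$ and shrink to a neighborhood on which the smallness condition \eqref{1.4} is satisfied uniformly. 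Combining the quantitative estimate \eqref{1.7} with the polynomial bound just obtained and iterating on dyadic annuli (so as to force the excess $\mf{F}(u;B_{\sigma})$ itself to decay polynomially at some rate $\sigma^{\gamma}$, a reduction which is implicit in the one-step excess improvement underlying the proof of Theorem \ref{t.v}), one deduces
$$
\snr{V_{p}(Du)(x) - (V_{p}(Du))_{B_{\sigma}(x)}} + \snr{Du(x) - (Du)_{B_{\sigma}(x)}} \le c\,\sigma^{\tilde\alpha}
$$
uniformly on compact subsets of $\Omega_u$, for some $\tilde\alpha \equiv \tilde\alpha(n,N,p,d) > 0$. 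A Campanato-type characterization of H\"older continuity then upgrades these moduli to the claimed $C^{0,\tilde\alpha}_{\loc}$ regularity of $V_{p}(Du)$ and $Du$.

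\textbf{Main obstacle.} The technical heart lies in producing the \emph{locally uniform} vanishing of $\mathbf{I}^{f}_{1,m}(x,\rho)$ for $f \in L(n,1)$, which hinges on the absolute continuity of the Lorentz norm and on a careful rearrangement/H\"older argument for the Wolff potential; once this and the excess-decay iteration built into Theorem \ref{t.v} are in place, everything else is bookkeeping of decay rates, balanced to produce the exponent $\tilde\alpha(n,N,p,d)$.
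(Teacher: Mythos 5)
Your proposal is correct and follows the same two-step reduction the paper uses: translate the function-space hypothesis on $f$ into decay/smallness of $\mathbf{I}^{f}_{1,m}$, then feed this into the potential-theoretic machinery of Theorems \ref{t.v.0}--\ref{t.v}. The paper's own proof merely records the two implications you state and then cites \cite{kumi1} and \cite{deqc}, so your sketch simply unfolds what those citations contain.

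One small point worth noting: you describe the polynomial decay of the excess $\mf{F}(u;B_{\sigma})$ in the $L^{d}$ case as "implicit in the one-step excess improvement underlying the proof of Theorem \ref{t.v}," but this iteration is already recorded explicitly in the paper as the Morrey-type estimates \eqref{6.56}--\eqref{6.56.1} of Corollary \ref{cor.ex}, with the locally uniform dependence on $x$ that a Campanato argument requires furnished by \eqref{6.55}. Also, since the oscillation bounds in \eqref{1.7} carry different powers of $\mf{N}$ for $V_{p}(Du)$ and for $Du$ (the latter involving the extra factor $\snr{(Du)_{B_{\sigma}}}^{(2-p)/2}$, which is only bounded rather than small), the two H\"older exponents you would obtain are generally different; this is harmless because the theorem only asserts some $\tilde\alpha(n,N,p,d)$, and one simply takes the smaller of the two.
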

We stress that Theorems \ref{t.v.0}-\ref{t.v.1} hold for general strictly $W^{1,p}$-quasiconvex functionals\footnote{I.e.: \eqref{sqc} below holds for all $\varphi\in W^{1,p}_{0}(B,\mathbb{R}^{N})$.} as in \eqref{fun}, or for functionals that coincide with their Lebesgue-Serrin-Marcellini extension. Moreover, our results also apply to relaxed local minimizer of the functional $\mathcal{H}(\cdot)$ in \eqref{hh} with the choice $n=q=2$ and $p\in (8/5,2)$, cf. \cite[Section 2.4]{deqc}; we refer to \cite{ts2,sv1} for further discussions and more examples. We finally remark that the nonlinear potential theory for singular nonhomogeneous equations or systems of the $p$-Laplacian type is a very recent achievement. In fact, after Duzaar \& Mingione's breakthrough \cite{dumi2} on pointwise potential estimates with $p\in \left(2-1/n,\infty\right)$, lots of efforts have been devoted to the extension of such result to all $p\in (1,2)$: in \cite{np1} Nguyen \& Phuc decreased the lower bound on $p$ switching from $p>2-1/n$ to $p>\frac{3n-2}{2n-1}$; later on Dong \& Zhu \cite{dz} and Nguyen \& Phuc \cite{tn} (singular equations with measure data) and Byun \& Youn \cite{by} (general subquadratic systems) eventually covered the full range $p\in (1,2)$. In this respect, our paper fits such line of research as we provide pointwise bounds on the gradient oscillation of minima of \eqref{exfun} that hold almost everywhere subject to the validity of a smallness condition on the excess functional that naturally involves also the potential $\mathbf{I}^{f}_{1,m}(\cdot)$. The combined effect of the singular character of the integrand $F(\cdot)$ that unavoidably burdens potentials, and of its $(p,q)$-growth leading to nonhomogeneous estimates, forces us to develop some delicate iteration schemes, relying on refined approximation methods \cite{dumi,ts2} and on potential theoretic arguments \cite{deqc,kumi,kumi2}, that simultaneously allow a uniform control on the size of the excess functional and of the gradient average during iterations and preserves the rough regularity information available on $f$, i.e. the mere finiteness of the related Wolff type potential.

\section{Preliminaries}\label{presec}
In this section we record the notation employed throughout the paper, describe the structural assumptions governing the ingredients appearing in \eqref{fun} and collect certain basic results that will be helpful later on.
\subsection{Notation}\label{notation} In this paper, $\Omega\subset \er^n$ will always be an open, bounded domain with Lipschitz-regular boundary, and $n \geq 2$. We denote by $c$ a general constant larger than one depending on the main parameters governing the problem. We will still denote by $c$ distinct occurrences of constant $c$ from line to line. Specific occurrences will be marked with symbols $c_*,  \tilde c$ or the like. Significant dependencies on certain parameters will be outlined by putting them in parentheses, i.e. $c\equiv c(n,p)$ means that $c$ depends on $n$ and $p$. By $ B_r(x_0):= \{x \in \er^n  :   |x-x_0|< r\}$ we indicate the open ball with center in $x_0$ and radius $r>0$; we shall avoid denoting the center when this is clear from the context, i.e., $B \equiv B_r \equiv B_r(x_0)$; this happens in particular with concentric balls. For $x_{0}\in \Omega$, it is $d_{x_{0}}:=\dist(x_{0},\partial \Omega)$ and with $z_{1},z_{2}\in \mathbb{R}^{N\times n}$, $s\ge 0$ we set $\mathcal{D}_{s}(z_{1},z_{2}):=(s^{2}+\snr{z_{1}}^{2}+\snr{z_{2}}^{2})$. Given a measurable set $ B \subset \er^{n}$ with bounded positive Lebesgue measure $\snr{B}\in (0,\infty)$, and a measurable map $g \colon  B \to \er^{k}$, $k\geq 1$, we set $$
   (g)_{B} \equiv \mint_{ B}  g(x) \dx  :=  \frac{1}{\snr{B}}\int_{B}  g(x) \dx.
$$
A useful feature of the average is its almost minimality, i.e.:
\begin{eqnarray}\label{minav}
\left(\mint_{B}\snr{g-(g)_{B}}^{t} \ \dx\right)^{1/t}\le 2\left(\mint_{B}\snr{g-z}^{t} \dx\right)^{1/t}\qquad \mbox{for all} \ \ z\in \mathbb{R}^{k}, \ \ t\ge 1.
\end{eqnarray}
For $t\ge 1$, $s\ge 0$, $q\ge p> 1$, we shorten:
\begin{eqnarray}\label{ik}
\mf{I}_{t}(g;\mathcal{B}):=\left(\mint_{\mathcal{B}}\snr{g(x)}^{t} \dx\right)^{\frac{1}{t}},\qquad\qquad   \mf{K}(s):=s+s^{q/p}
\end{eqnarray}
and define 
\begin{flalign*}
\mathds{1}_{\{q>p\}}:=\begin{cases}
\ 1\quad &\mbox{if} \ \ q>p\\
\ 0\quad &\mbox{if} \ \ q=p,
\end{cases}\qquad \qquad \mathds{1}_{\{q\ge 2\}}:=\begin{cases}
\ 1\quad &\mbox{if} \ \ q\ge 2\\
\ 0\quad &\mbox{if} \ \ 1<q<2.
\end{cases}
\end{flalign*}
Finally, if $t>1$ is any number, its conjugate will be denoted by $t':=t/(t-1)$ and its Sobolev esponent as $t^{*}:=nt/(n-t)$ when $t<n$ or any number larger than one for $t\ge n$. To streamline the notation, we gather together the main parameters governing our problem in the shorthand $\textnormal{\texttt{data}}:=\left(n,N,\lambda,\Lambda,p,q,\omega(\cdot),F(\cdot),m\right)$, we refer to Section \ref{assec} for more details on such quantities.

\subsection{Tools for $p$-Laplacian type problems} The vector field $V_{s,p}\colon \er^{N\times n} \to  \er^{N\times n}$, defined as
\begin{flalign*}
V_{s,p}(z):= (s^{2}+|z|^{2})^{(p-2)/4}z, \qquad p\in (1,\infty)\ \ \mbox{and}\ \ s\ge 0
\end{flalign*}
for all $z \in \er^{N\times n}$, which encodes the scaling features of the $p$-Laplacian operator, is a useful tool for handling $p$-Laplacean type problems. If $s=0$, we simply write $V_{s,p}(\cdot)\equiv V_{p}(\cdot)$. Let us premise that although most of the properties of the vector field $V_{s,p}(\cdot)$ that we are going to list below hold for all $p\in (1,\infty)$, from now on, we shall always assume that $p\in (1,2)$. It is well-known that
\begin{flalign}\label{Vm}
\left\{
\begin{array}{c}
\displaystyle 
\ \frac{\snr{V_{s,p}(z_{1}+z_{2})}^{2}}{\snr{z_{1}+z_{2}}}\lesssim \frac{\snr{V_{s,p}(z_{1})}^{2}}{\snr{z_{1}}}+\frac{\snr{V_{s,p}(z_{2})}^{2}}{\snr{z_{2}}} \\[6pt]\displaystyle
\ \snr{V_{s,p}(z_{1})-V_{s,p}(z_{2})}\approx (s^{2}+\snr{z_{1}}^{2}+\snr{z_{2}}^{2})^{(p-2)/4}\snr{z_{1}-z_{2}}\\[6pt]\displaystyle
\ \snr{V_{\snr{z_{1}},p}(z_{2}-z_{1})}\approx \snr{V_{p}(z_{1})-V_{p}(z_{2})}\\[6pt]\displaystyle
\ \frac{\snr{V_{s,p}(z_{1})}^{2}\snr{z_{2}}}{\snr{z_{1}}}\lesssim \snr{V_{s,p}(z_{1})}^{2}+\snr{V_{s,p}(z_{2})}^{2}\\[6pt]\displaystyle
\ \snr{V_{s,p}(kz)}\lesssim \max\{k,k^{p/2}\}\snr{V_{s,p}(z)}\\[6pt]\displaystyle
\ \snr{V_{s,p}(z)}\approx \min\{\snr{z},\snr{z}^{p/2}\}\\[6pt]\displaystyle
\ \snr{V_{s,p}(z_{1}+z_{2})}\lesssim \snr{V_{s,p}(z_{1})}+\snr{V_{s,p}(z_{2})},
\end{array}
\right.
\end{flalign}
for all $k>0$ cf. \cite{cfm,dumi,ts2} - of course to avoid trivialities, above $\snr{z_{1}+z_{2}}$, $\snr{z_{1}}$ are supposed to be positive - and that whenever $t>-1$, $s\in [0,1]$ and $z_{1},z_{2}\in \mathbb{R}^{N\times n}$ verify $s+\snr{z_{1}}+\snr{z_{2}}>0$, then
\begin{flalign}\label{l6}
\int_{0}^{1}\left[s^2+\snr{z_{1}+y(z_{2}-z_{1})}^{2}\right]^{\frac{t}{2}} \ \dy\approx (s^2+\snr{z_{1}}^{2}+\snr{z_{2}}^{2})^{\frac{t}{2}}.
\end{flalign}
As useful consequences of $\eqref{Vm}_{2}$ we have
\eqn{Vm.1}
$$
\snr{z_{1}-z_{2}}^{p}\lesssim \snr{V_{s,p}(z_{1})-V_{s,p}(z_{2})}^{2} +  \snr{V_{s,p}(z_{1})-V_{s,p}(z_{2})}^p(\snr{z_1}+s)^{p(2-p)/2}.
$$
It is also worth recalling a Poincar\'e-type inequality involving the vector field $V_{s,p}(\cdot)$: with $p\in (1,2)$, $B_{\rr}(x_{0})\Subset \Omega$ and $w\in W^{1,p}(B_{\rr}(x_{0}),\mathbb{R}^{N})$ it is
\begin{eqnarray}\label{v.poi}
\mint_{B_{\rr}(x_{0})}\left| \ V_{s,p}\left(\frac{w-(w)_{B_{\rr}(x_{0})}}{\rr}\right) \ \right|^{p^{\#}} \ \dx\le c\left(\mint_{B_{\rr}(x_{0})}\snr{V_{s,p}(Dw)}^{2} \ \dx\right)^{p^{\#}/2},
\end{eqnarray}
with $p^{\#}:=2n/(n-p)$ and $c\equiv c(n,N,p)$, see \cite[Lemma 8]{dumi}. For $B_{\rr}(x_{0})\Subset \Omega$, $w\in W^{1,p}(B_{\rr}(x_{0}),\mathbb{R}^{N})$ and $z_{0}\in \mathbb{R}^{N}$, we define the excess functional by
$$
\mf{F}(w,z_{0};B_{\rr}(x_{0})):=\left(\mint_{B_{\rr}(x_{0})}\snr{V_{p}(Dw)-z_{0}}^{2} \ \dx\right)^{1/2}
$$
and further introduce the auxiliary integral
$$
\ti{\mf{F}}(w,z_{0};B_{\rr}(x_{0})):=\left(\mint_{B_{\rr}(x_{0})}\snr{V_{\snr{z_{0}},p}(Dw-z_{0})}^{2} \ \dx\right)^{1/2}.
$$
If $z_{0}=(V_{p}(Dw))_{B_{\rr}(x_{0})}$ we shall abbreviate $\mf{F}(w,(V_{p}(Dw))_{B_{\rr}(x_{0})};B_{\rr}(x_{0}))\equiv \mf{F}(w;B_{\rr}(x_{0}))$. Let us point out that combining \eqref{minav} with \cite[(2.6)]{gm} it holds that
\eqn{equiv.11}
$$
\mathfrak{F}(w;B_{\rr}(x_{0}))\approx \mathfrak{F}(w,V_{p}((Dw)_{B_{\rr}(x_{0})});B_{\rr}(x_{0})),
$$
while if $z_{0}=(V_{p})^{-1}((V_{p}(Dw))_{B_{\rr}(x_{0})})$ - recall that $V_{p}(\cdot)$ is an isomorphism of $\mathbb{R}^{N\times n}$ - via $\eqref{Vm}_{3}$ we have
\begin{eqnarray}\label{equiv.22}
\mf{F}(w;B_{\rr}(x_{0}))\approx \ti{\mf{F}}(w,z_{0};B_{\rr}(x_{0})).
\end{eqnarray}
In all the above displays, the constants implicit in "$\approx$, $\lesssim$" depend on $(n,N,p,t)$ - the dependency on $t$ accounts for the exponent appearing in \eqref{l6}. To the scopes of this paper, it is fundamental to record some well-known scaling features of the excess functional.
\begin{lemma}
Let $1<p<2$ be a number, $B_{\rr}(x_{0})\subset \mathbb{R}^{n}$ be a ball, and $w\in W^{1,p}(B_{\rr}(x_{0}),\mathbb{R}^{N})$ be any function. With $\nu\in (0,1)$ it holds that
\begin{flalign}\label{tri.1}
\left\{
\begin{array}{c}
\displaystyle
\ \mf{F}(w;B_{\nu\rr}(x_{0}))\le \frac{2}{\nu^{n/2}}\mf{F}(w;B_{\rr}(x_{0}))\\[8pt]\displaystyle
\ \snr{(V_{p}(Dw))_{B_{\nu\rr}(x_{0})}}\le \frac{1}{\nu^{n/2}}\mf{F}(w;B_{\rr}(x_{0}))+\snr{(V_{p}(Dw))_{B_{\rr}(x_{0})}}\\[8pt]\displaystyle
\ \snr{(V_{p}(Dw))_{B_{\nu\rr}(x_{0})}-(V_{p}(Dw))_{B_{\rr}(x_{0})}}\le \frac{1}{\nu^{n/2}}\mf{F}(w;B_{\rr}(x_{0})).
\end{array}
\right.
\end{flalign}
Moreover, if for $\sigma\le \rr$ there is $\kk\in \N\cup \{0\}$ satisfying $\nu^{\kk+1}\rr<\sigma\le \nu^{\kk}\rr$, then
\begin{flalign}\label{ls.42.1}
\left\{
\begin{array}{c}
\displaystyle
\ \mf{F}(w;B_{\nu^{\kk+1}\rr}(x_{0}))\le \frac{2}{\nu^{n/2}}\mf{F}(w;B_{\sigma}(x_{0}))^\le \frac{2^{2}}{\nu^{n}}\mf{F}(w;B_{\nu^{\kk}\rr}(x_{0}))\\[8pt]\displaystyle
\ \snr{(V_{p}(Dw))_{B_{\nu^{\kk+1}\rr}(x_{0})}}\le \snr{(V_{p}(Dw))_{B_{\sigma}(x_{0})}}+\frac{1}{\nu^{n/2}}\mf{F}(w;B_{\sigma}(x_{0}))\le \snr{(V_{p}(Dw))_{B_{\nu^{\kk}\rr}(x_{0})}}+\frac{2^{2}}{\nu^{n}}\mf{F}(w;B_{\nu^{\kk}\rr}(x_{0})),
\end{array}
\right.
\end{flalign}
and, whenever $c_{*}\ge 1$ is an absolute constant it is
\begin{flalign}\label{ls.43.1}
 &\snr{(V_{p}(Dw))_{B_{\nu^{\kk+1}\rr}(x_{0})}}+c_{*}\mf{F}(w;B_{\nu^{\kk+1}\rr}(x_{0}))\le \frac{2^{2}}{\nu^{n/2}}\left[\snr{(V_{p}(Dw))_{B_{\sigma}(x_{0})}}+c_{*}\mf{F}(w;B_{\sigma}(x_{0}))\right]\nonumber \\
 &\qquad \qquad \qquad \le\frac{2^{4}}{\nu^{n}}\left[\snr{(V_{p}(Dw))_{B_{\nu^{\kk}\rr}(x_{0})}}+c_{*}\mf{F}(w;B_{\nu^{\kk}\rr}(x_{0}))\right].
\end{flalign}
\end{lemma}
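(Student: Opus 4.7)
The plan is to obtain \eqref{tri.1} as a direct consequence of the almost minimality \eqref{minav} together with the obvious ball-inclusion estimate, and then to derive \eqref{ls.42.1} and \eqref{ls.43.1} by iterating \eqref{tri.1} with a well-chosen intermediate radius. No delicate analytic input is needed; the task is essentially bookkeeping of constants under rescaling.

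First I would prove \eqref{tri.1}$_1$. Since $B_{\nu\rr}(x_{0})\subset B_{\rr}(x_{0})$ with $|B_{\rr}|/|B_{\nu\rr}|=\nu^{-n}$, applying \eqref{minav} with $t=2$ and $z=(V_{p}(Dw))_{B_{\rr}(x_{0})}$ yields
\begin{align*}
\mf{F}(w;B_{\nu\rr}(x_{0}))&\le 2\left(\mint_{B_{\nu\rr}(x_{0})}\snr{V_{p}(Dw)-(V_{p}(Dw))_{B_{\rr}(x_{0})}}^{2}\,\dx\right)^{1/2}\\
&\le \frac{2}{\nu^{n/2}}\mf{F}(w;B_{\rr}(x_{0})).
\end{align*}
For \eqref{tri.1}$_{2,3}$, the triangle inequality together with Cauchy--Schwarz gives
\begin{align*}
\bigl|(V_{p}(Dw))_{B_{\nu\rr}(x_{0})}-(V_{p}(Dw))_{B_{\rr}(x_{0})}\bigr|
&\le \mint_{B_{\nu\rr}(x_{0})}\snr{V_{p}(Dw)-(V_{p}(Dw))_{B_{\rr}(x_{0})}}\,\dx\\
&\le \nu^{-n/2}\mf{F}(w;B_{\rr}(x_{0})),
\end{align*}
which is \eqref{tri.1}$_3$; adding $\snr{(V_{p}(Dw))_{B_{\rr}(x_{0})}}$ on the right supplies \eqref{tri.1}$_2$.

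Next I would deduce \eqref{ls.42.1}. Under the hypothesis $\nu^{\kk+1}\rr<\sigma\le \nu^{\kk}\rr$, both inclusions $B_{\nu^{\kk+1}\rr}(x_{0})\subset B_{\sigma}(x_{0})\subset B_{\nu^{\kk}\rr}(x_{0})$ hold with the radius ratios $\nu^{\kk+1}\rr/\sigma<1$ and $\sigma/(\nu^{\kk}\rr)\le 1$ both controlled by $\nu$ from below; more precisely $\nu^{\kk+1}\rr/\sigma\ge \nu$ (trivially) and $\sigma/(\nu^{\kk}\rr)> \nu$. Applying \eqref{tri.1}$_1$ to the pair $(B_{\nu^{\kk+1}\rr}(x_{0}),B_{\sigma}(x_{0}))$ with effective ratio $\nu'=\nu^{\kk+1}\rr/\sigma$ gives
\begin{equation*}
\mf{F}(w;B_{\nu^{\kk+1}\rr}(x_{0}))\le 2(\nu')^{-n/2}\mf{F}(w;B_{\sigma}(x_{0}))\le \frac{2}{\nu^{n/2}}\mf{F}(w;B_{\sigma}(x_{0})),
\end{equation*}
and a further application to $(B_{\sigma}(x_{0}),B_{\nu^{\kk}\rr}(x_{0}))$ with effective ratio $\sigma/(\nu^{\kk}\rr)>\nu$ yields $\mf{F}(w;B_{\sigma}(x_{0}))\le 2\nu^{-n/2}\mf{F}(w;B_{\nu^{\kk}\rr}(x_{0}))$; composing the two produces the claimed factor $2^{2}/\nu^{n}$. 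The bound \eqref{ls.42.1}$_2$ follows in the same way from \eqref{tri.1}$_{2,3}$, using the chain of inclusions to control the difference of averages between consecutive scales.

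Finally, \eqref{ls.43.1} is obtained by taking the linear combination of \eqref{ls.42.1}$_1$ (multiplied by $c_{*}\ge 1$) and \eqref{ls.42.1}$_2$, noting that the larger of the two constants $2/\nu^{n/2}$ and $2^{2}/\nu^{n}$ survives in each step. I do not expect any real obstacle here: the only point requiring a moment of care is to keep track of whether the radius ratio inside \eqref{tri.1} belongs to $(0,1)$ so the bound applies, which is precisely why the dyadic localization $\nu^{\kk+1}\rr<\sigma\le \nu^{\kk}\rr$ has been arranged.
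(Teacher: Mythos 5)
Your proof is correct and is essentially the unique elementary route to this lemma; the paper states it without proof as a routine scaling fact, and your argument via the almost-minimality property \eqref{minav}, Jensen's inequality, and the ball-volume ratio $|B_{\rr}|/|B_{\nu\rr}|=\nu^{-n}$ is precisely the intended one. The iteration step for \eqref{ls.42.1}--\eqref{ls.43.1} with the effective ratios $\nu^{\kk+1}\rr/\sigma$ and $\sigma/(\nu^{\kk}\rr)$ both lying in $(\nu,1]$ is handled correctly, and the final constants $2^{2}/\nu^{n}$ and $2^{4}/\nu^{n}$ are comfortably absorbed by your bookkeeping.
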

We conclude this section with a classical iteration lemma, \cite[Lemma 6.1]{giu}.
\begin{lemma}\label{l5}
Let $h\colon [\rr_{0},\rr_{1}]\to \mathbb{R}$ be a non-negative and bounded function, and let $\theta \in (0,1)$, $A,B,\gamma_{1},\gamma_{2}\ge 0$ be numbers. Assume that $h(t)\le \theta h(s)+A(s-t)^{-\gamma_{1}}+B(s-t)^{-\gamma_{2}}$ holds for all $\rr_{0}\le t<s\le \rr_{1}$. Then the following inequality holds $h(\rr_{0})\le c(\theta,\gamma_{1},\gamma_{2})[A(\rr_{1}-\rr_{0})^{-\gamma_{1}}+B(\rr_{1}-\rr_{0})^{-\gamma_{2}}].$
\end{lemma}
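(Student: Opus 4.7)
The plan is to use the standard hole-filling / geometric-iteration argument on a carefully chosen increasing sequence of radii. Concretely, fix a parameter $\tau\in(0,1)$ to be determined, set $t_{0}:=\rr_{0}$, and inductively define
\[
 t_{i+1}:=t_{i}+(1-\tau)\tau^{i}(\rr_{1}-\rr_{0}), \qquad i\in\N\cup\{0\}.
\]
Then $\{t_{i}\}$ is strictly increasing with $t_{i}\uparrow \rr_{1}$, all $t_{i}\in[\rr_{0},\rr_{1}]$, and $t_{i+1}-t_{i}=(1-\tau)\tau^{i}(\rr_{1}-\rr_{0})$.

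Next I would apply the hypothesis with the pair $(t,s)=(t_{i},t_{i+1})$, which gives
\[
 h(t_{i})\le \theta\, h(t_{i+1})+\bigl[(1-\tau)\tau^{i}(\rr_{1}-\rr_{0})\bigr]^{-\gamma_{1}}\!A+\bigl[(1-\tau)\tau^{i}(\rr_{1}-\rr_{0})\bigr]^{-\gamma_{2}}\!B.
\]
Iterating this inequality $k$ times starting from $i=0$ produces
\[
 h(\rr_{0})\le \theta^{k}h(t_{k})+\sum_{i=0}^{k-1}\theta^{i}\Bigl\{(1-\tau)^{-\gamma_{1}}\tau^{-i\gamma_{1}}(\rr_{1}-\rr_{0})^{-\gamma_{1}}A+(1-\tau)^{-\gamma_{2}}\tau^{-i\gamma_{2}}(\rr_{1}-\rr_{0})^{-\gamma_{2}}B\Bigr\}.
\]
The geometric series with ratios $\theta\tau^{-\gamma_{1}}$ and $\theta\tau^{-\gamma_{2}}$ both converge provided we pick $\tau\in(0,1)$ close enough to $1$ so that $\theta\tau^{-\max\{\gamma_{1},\gamma_{2}\}}<1$; this choice depends only on $\theta,\gamma_{1},\gamma_{2}$.

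Finally, I would pass to the limit $k\to\infty$. Since $h$ is bounded on $[\rr_{0},\rr_{1}]$ and $\theta\in(0,1)$, the term $\theta^{k}h(t_{k})$ vanishes, while the two series are summed explicitly to give constants $c(\theta,\gamma_{1},\gamma_{2})$. This yields the desired bound
\[
 h(\rr_{0})\le c(\theta,\gamma_{1},\gamma_{2})\bigl[A(\rr_{1}-\rr_{0})^{-\gamma_{1}}+B(\rr_{1}-\rr_{0})^{-\gamma_{2}}\bigr].
\]
The only genuinely delicate point is the joint choice of $\tau$ that simultaneously guarantees convergence of both series; the boundedness of $h$ is essential to discard the residual term at infinity and is the reason the hypothesis cannot be weakened to a merely finite-valued $h$.
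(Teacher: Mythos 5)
Your proof is correct and is exactly the standard hole-filling/geometric-iteration argument found in the cited reference \cite[Lemma 6.1]{giu}; the paper itself does not reprove the lemma but simply invokes that source. The only detail worth spelling out is the choice $\tau\in(\theta^{1/\max\{\gamma_1,\gamma_2\}},1)$ (with the trivial modification when $\gamma_1=\gamma_2=0$), which you correctly identify as what makes both geometric series converge.
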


\subsection{Structural assumptions}\label{assec} We assume that $F\colon \mathbb{R}^{N\times n}\to \mathbb{R}$ is an integrand verifying:
\begin{flalign}\label{assf}
\begin{cases}
\ F\in C^{1}_{\loc}(\mathbb{R}^{N\times n})\cap C^{2}_{\loc}(\mathbb{R}^{N\times n}\setminus \{0\})\\
\ \Lambda^{-1}\snr{z}^{p}\le F(z)\le \Lambda \left(1+\snr{z}^{q}\right)
\end{cases}
\end{flalign}
for all $z\in \mathbb{R}^{N\times n}$, with $\Lambda\ge 1$ being a positive, absolute constant and exponents $(p,q)$ satisfying:
\begin{eqnarray}\label{pq}
1<p<2\qquad \mbox{and}\qquad \frac{q}{p}<1+\frac{1}{2n}.
\end{eqnarray}
It is fundamental that $F(\cdot)$ is strictly degenerate quasiconvex, in the sense that whenever $B\Subset \Omega$ is a ball it holds that 
\begin{flalign}\label{sqc}
\int_{B}\left[F(z+D\varphi)-F(z)\right] \dx\ge \lambda\int_{B}(\snr{z}^{2}+\snr{D\varphi}^{2})^{\frac{p-2}{2}}\snr{D\varphi}^{2} \dx\qquad \mbox{for all} \ \ z\in \mathbb{R}^{N\times n}, \  \ \varphi\in C^{\infty}_{c}(B,\mathbb{R}^{N}),
\end{flalign}
where $\lambda$ is a positive, absolute constant. As a consequence, for all $z\in \mathbb{R}^{N\times n}\setminus \{0\}$, $\xi\in \mathbb{R}^{N}$, $\zeta\in \mathbb{R}^{n}$ it holds that
\begin{eqnarray}\label{sqc.1}
\partial^{2}F(z)\langle\xi\otimes \zeta,\xi\otimes \zeta\rangle\ge 2\lambda\snr{z}^{p-2}\snr{\xi}^{2}\snr{\zeta}^{2},
\end{eqnarray}
see \cite[Chapter 5]{giu} or \cite[Lemma 7.14]{ts1}. Moreover, as a minimal requirement on the second derivatives of $F(\cdot)$, we need to prescribe their behavior near the origin. Precisely, we need that
\begin{flalign}\label{assf.0}
\frac{\snr{\partial^{2}F(z)-\partial^{2}(\snr{z}^{p})}}{\snr{z}^{p-2}}\to 0\qquad \mbox{uniformly as} \ \ \snr{z}\to 0.
\end{flalign}
The by-product of \eqref{assf.0} is summarized in the following lemma, that collects results from \cite{af,ma3,ts2}.
\begin{lemma}
Let $F\colon \mathbb{R}^{N\times n}\to \mathbb{R}$ be an integrand verifying \eqref{assf}, $\eqref{pq}_{1}$ and \eqref{assf.0}. Then,
\begin{itemize}
    \item there exists a positive constant $c\equiv c(n,N,\Lambda,q)$ such that
    \begin{eqnarray}\label{assf.0.0}
    \snr{\partial F(z)}\le c(1+\snr{z}^{q-1})\qquad \mbox{for all} \ \ z\in \mathbb{R}^{N\times n};
    \end{eqnarray}
    \item whenever $z_{0}\in \mathbb{R}^{N\times n}$ verifies $\snr{z_{0}}\le L+1$ for some positive constant $L$, it is
\begin{eqnarray}\label{assf.0.1}
\left\{
\begin{array}{c}
\displaystyle 
\ \snr{F(z_{0}+z)-F(z_{0})-\langle\partial F(z_{0}),z\rangle}\le c\left(\snr{V_{\snr{z_{0}},p}(z)}^{2}+\snr{V_{\snr{z_{0}},q}(z)}^{2}\right) \\[8pt]\displaystyle
\ \snr{\partial F(z_{0}+z)-\partial F(z_{0})}\le c\snr{z}^{-1}\left(\snr{V_{\snr{z_{0}},p}(z)}^{2}+\snr{V_{\snr{z_{0}},q}(z)}^{2}\right) ,
\end{array}
\right.
\end{eqnarray}
for all $z\in \mathbb{R}^{N\times n}$, with $c\equiv c(n,N,\Lambda,p,q,F(\cdot),L)$;
\item there exists a concave modulus of continuity $\omega\colon (0,\infty)\to (0,\infty)$ with $\lim_{s\to 0}\omega(s)=0$ such that
\begin{eqnarray}\label{assf.0.2}
\snr{z}\le \omega(s) \ \Longrightarrow \ \snr{\partial F(z)-\partial F(0)-\snr{z}^{p-2}z}\le s\snr{z}^{p-1};
\end{eqnarray}
\item whenever $L>0$ is a positive constant and $z\left(\in \mathbb{R}^{N\times n}\setminus \{0\}\right)\cap \left\{\snr{z}\le L\right\}$ it is:
\begin{eqnarray}\label{assf.0.4}
\snr{\partial^{2}F(z)}\le c\snr{z}^{p-2},
\end{eqnarray}
for $c\equiv c(n,N,p,F(\cdot),L)$;
\item for all positive constants $L$ and vectors $z_{1},z_{2}\in \mathbb{R}^{N\times n}$ so that $0<\snr{z_{1}}\le L$, $0<\snr{z_{1}}\le 2L$ it holds that
\begin{eqnarray}\label{assf.0.3}
\snr{\partial^{2}F(z_{1})-\partial^{2}F(z_{2})}\le \left(\frac{\snr{z_{1}}^{2}+\snr{z_{2}}^{2}}{\snr{z_{1}}^{2}\snr{z_{2}}^{2}}\right)^{(2-p)/2}\mu_{L}\left(\frac{\snr{z_{1}-z_{2}}^{2}}{\snr{z_{1}}^{2}+\snr{z_{2}}^{2}}\right),
\end{eqnarray}
where $\mu_{L}\colon (0,\infty)\to (0,\infty)$ is a nondecreasing modulus of continuity with $s\mapsto \mu_{L}(s)^{2}$ concave, depending on $F(\cdot)$ and on $L$.
\end{itemize}
\end{lemma}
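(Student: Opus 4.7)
The plan is to establish the five items in sequence, leaning heavily on \eqref{assf.0} as the structural input that controls behavior near the origin, while classical quasiconvexity machinery handles behavior at large scales.

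First, I would derive \eqref{assf.0.0} by the standard route: strict quasiconvexity \eqref{sqc} implies rank-one convexity, which together with the upper bound $F(z)\le \Lambda(1+\snr{z}^{q})$ in \eqref{assf} yields $\snr{\partial F(z)}\lesssim 1+\snr{z}^{q-1}$ by the Marcellini/Acerbi--Fusco mean-value argument (comparing $F$ at $z\pm\xi$ with $\snr{\xi}\approx 1+\snr{z}$ and using the $C^1$ regularity from \eqref{assf}$_1$). Next, I would prove \eqref{assf.0.4} by writing $\partial^{2}F(z)=\partial^{2}(\snr{z}^{p})+[\partial^{2}F(z)-\partial^{2}(\snr{z}^{p})]$; the first term is $O(\snr{z}^{p-2})$ by direct computation, while the second is $o(\snr{z}^{p-2})$ by \eqref{assf.0}, yielding the bound for $\snr{z}$ small. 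On the compact annulus $\{r_{0}\le \snr{z}\le L\}$, continuity of $\partial^{2}F$ from \eqref{assf}$_{1}$ gives a uniform bound, and matching the two regimes produces \eqref{assf.0.4} (the resulting constant depends on $L$ and $F(\cdot)$).

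For \eqref{assf.0.2}, the idea is to integrate \eqref{assf.0} along the segment $[0,z]$. Writing
\[
\partial F(z)-\partial F(0)=\int_{0}^{1}\partial^{2}F(tz)\,z\,\dt\quad\text{and}\quad \partial(\snr{z}^{p})=\int_{0}^{1}\partial^{2}(\snr{tz}^{p})\,z\,\dt,
\]
and subtracting, the difference is bounded by $\int_{0}^{1}\snr{\partial^{2}F(tz)-\partial^{2}(\snr{tz}^{p})}\snr{z}\,\dt$, which by \eqref{assf.0} is $o(\snr{z}^{p-1})$ as $\snr{z}\to 0$. Choosing $\omega$ as the inverse of the rate function furnishes the claim (up to absorbing the numerical constant from $\partial(\snr{z}^{p})=p\snr{z}^{p-2}z$ into the normalization).

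For \eqref{assf.0.1}, I would apply Taylor's formula at $z_{0}$:
\[
F(z_{0}+z)-F(z_{0})-\langle\partial F(z_{0}),z\rangle=\int_{0}^{1}(1-t)\,\partial^{2}F(z_{0}+tz)\langle z,z\rangle\,\dt,
\]
and analogously $\partial F(z_{0}+z)-\partial F(z_{0})=\int_{0}^{1}\partial^{2}F(z_{0}+tz)\,z\,\dt$, splitting into the region where $\snr{z_{0}+tz}$ stays comparable to a small quantity (using \eqref{assf.0.4} to get a factor $(\snr{z_{0}}^{2}+\snr{z}^{2})^{(p-2)/2}\snr{z}^{2}\approx\snr{V_{\snr{z_{0}},p}(z)}^{2}$, via \eqref{l6}) and the region where $\snr{z_{0}+tz}$ is large (using \eqref{assf.0.0} combined with the constraint $\snr{z_{0}}\le L+1$ to yield the $\snr{V_{\snr{z_{0}},q}(z)}^{2}$ term). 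The $(p,q)$-growth of \eqref{pq} enters precisely to guarantee both estimates can coexist, and one reads off \eqref{assf.0.1}$_{1,2}$ after dividing by $\snr{z}$ where appropriate.

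Finally, \eqref{assf.0.3} is a quantitative continuity statement for $\partial^{2}F$. On the set $\{0<\snr{z}\le 2L\}$, $\partial^{2}F$ is continuous by \eqref{assf}$_{1}$; together with \eqref{assf.0} (which gives a uniform behavior as $\snr{z}\to 0$) one obtains a modulus of continuity $\tilde\mu_{L}$ such that $\snr{\partial^{2}F(z_{1})-\partial^{2}F(z_{2})}\le \snr{z_{\min}}^{p-2}\tilde\mu_{L}(\snr{z_{1}-z_{2}})$, after which elementary algebraic manipulations convert this into the form stated, with the prefactor $\left((\snr{z_{1}}^{2}+\snr{z_{2}}^{2})/(\snr{z_{1}}^{2}\snr{z_{2}}^{2})\right)^{(2-p)/2}$ encoding the worst-case singular scaling and $\mu_{L}$ defined so that $s\mapsto \mu_{L}(s)^{2}$ is concave (replacing if necessary by its concave envelope). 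The main obstacle across the whole lemma is the bookkeeping required to simultaneously respect the singular behavior near zero (forced by $p<2$ and \eqref{assf.0}) and the superlinear upper growth at infinity (forced by $q>p$ and \eqref{assf}); the Taylor expansion \eqref{assf.0.1}, where both $V_{\snr{z_{0}},p}$ and $V_{\snr{z_{0}},q}$ simultaneously appear, is where this balancing is most delicate.
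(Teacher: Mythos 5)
The paper does not prove this lemma; it cites \cite{af,ma3,ts2}, so there is no in-paper argument to compare against. Evaluated on its own, your plan for \eqref{assf.0.0}, \eqref{assf.0.4}, \eqref{assf.0.2}, and \eqref{assf.0.1} follows the standard Acerbi--Fusco / Schmidt route and is sound: rank-one convexity plus the upper growth bound for the gradient estimate; the decomposition $\partial^2 F = \partial^2(\snr{z}^p) + [\partial^2 F - \partial^2(\snr{z}^p)]$ combined with compactness for the Hessian bound; integration of \eqref{assf.0} along rays for the near-origin first-order expansion; and a Taylor-plus-regime-splitting argument using \eqref{assf.0.4} near $z_0$ and \eqref{assf.0.0} far from $z_0$ for the bilateral estimates. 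One caveat on \eqref{assf.0.2}: integrating \eqref{assf.0} produces $p\snr{z}^{p-2}z$ rather than $\snr{z}^{p-2}z$, and this factor cannot be ``absorbed into the normalization'' since $(p-1)\snr{z}^{p-1}$ is not $o(\snr{z}^{p-1})$. This is evidently a normalization issue in the paper's statement of \eqref{assf.0} (which should feature $\partial^2(\snr{z}^p/p)$) rather than a flaw in your argument, but you should flag it rather than gloss over it.

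Your argument for \eqref{assf.0.3} contains a genuine gap. The intermediate claim
\begin{equation*}
\snr{\partial^2 F(z_1) - \partial^2 F(z_2)} \le \snr{z_{\min}}^{p-2}\,\tilde\mu_L\bigl(\snr{z_1-z_2}\bigr),
\end{equation*}
with $\tilde\mu_L$ a modulus of continuity vanishing at $0$, is false already for the model $F(z)=\snr{z}^p$. Take $\snr{z_1}=\snr{z_2}=1/n$ with $z_1\perp z_2$: the Hessian difference is of order $n^{2-p}$, but your bound would give $n^{2-p}\tilde\mu_L(1/n)=o(n^{2-p})$ --- a contradiction. The scale-invariant argument $\snr{z_1-z_2}^2/(\snr{z_1}^2+\snr{z_2}^2)$ in \eqref{assf.0.3} is precisely what saves the estimate: it stays bounded away from zero in the problematic regime $\snr{z_1-z_2}\sim\snr{z_1}\sim\snr{z_2}\to 0$, so $\mu_L$ need not vanish there. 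Your intermediate form is therefore strictly stronger and generally false, and no ``elementary algebraic manipulations'' can convert it into the correct one. The right route is to exploit that $\partial^2 F$ is asymptotically $(p-2)$-homogeneous at the origin by \eqref{assf.0}: rescale $(z_1,z_2)$ by $r:=(\snr{z_1}^2+\snr{z_2}^2)^{1/2}$, reduce to the compact shell $\snr{w_1}^2+\snr{w_2}^2=1$, and combine uniform continuity of the rescaled Hessian with the error control from \eqref{assf.0} to obtain a modulus of $\snr{w_1-w_2}$ --- which is exactly the argument that appears inside $\mu_L$ after restoring the scale.
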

Finally, the forcing term $f\colon \Omega\to \mathbb{R}^{N}$ displayed in \eqref{fun} is such that
\begin{eqnarray}\label{f}
f\in L^{m}(\Omega,\mathbb{R}^{N})\quad \mbox{with} \ \ n>m>(p^{*})'>1
\end{eqnarray}
which, together with $\eqref{pq}_{1}$ yields:
\begin{eqnarray}\label{f.0}
1<m'<p^{*}<\infty\qquad \mbox{and} \qquad f\in W^{1,p}(\Omega,\mathbb{R}^{N})^{*}.
\end{eqnarray}
Assumption \eqref{f} should be interpreted as a minimal integrability requirement on the forcing term $f$, in the sense that it must at least belong to some intermediate Lebesgue space between $L^{(p^{*})'}$ and $L^{n}$. The motivation behind this choice is twofold: the lower bound $m>(p^{*})'$ assures that $f\in (W^{1,p})^{*}$, so that the linear functional $w\mapsto \int f\cdot w \ \dx$ is continuous on $W^{1,p}$; the upper bound $m<n$ reminds that all in all the forthcoming estimates $f$ should appear raised to a power strictly less than $n$ - this will eventually contribute to the construction of the Wolff potential $\mathbf{I}^{f}_{1,m}(\cdot)$, which is well-behaved with respect to the embedding in Lorentz spaces exactly when $m<n$, cf. \cite[Section 2.3]{kumi1}.

\subsection{Harmonic approximation lemmas}
This section is devoted to a quick overview of the main features of $\mathcal{A}$-harmonic maps and of $p$-harmonic maps. Let $\mathcal{A}$ be a constant bilinear form on $\mathbb{R}^{N\times n}$, elliptic in the sense of Legendre-Hadarmard i.e., satisfying
\begin{eqnarray}\label{con.0}
\snr{\mathcal{A}}\le H\qquad \mbox{and}\qquad \mathcal{A}\langle \xi\otimes \zeta,\xi\otimes \zeta\rangle\ge H^{-1}\snr{\xi}^{2}\snr{\zeta}^{2},
\end{eqnarray}
for all $\zeta\in \mathbb{R}^{n}$, $\xi\in \mathbb{R}^{N}$, with $H\ge 1$ being an absolute constant. An $\mathcal{A}$-harmonic map on an open set $\Omega\subset \mathbb{R}^{n}$ is a function $h\in W^{1,2}(\Omega,\mathbb{R}^{N})$ such that
$$
\int_{\Omega}\mathcal{A}\langle Dh,D\varphi\rangle \ \dx=0\qquad \mbox{for all} \ \ \varphi\in C^{\infty}_{c}(\Omega,\mathbb{R}^{N}).
$$
In \cite{cfm,dgk} we find that $\mathcal{A}$-harmonic maps have good regularity features; in fact for $B_{\rr}(x_{0})\Subset \Omega$ with $\rr\in (0,1]$ it holds that
\begin{eqnarray}\label{y.0}
\nr{Dh}_{L^{\infty}(B_{\rr/2}(x_{0}))}+\rr\nr{D^{2}h}_{L^{\infty}(B_{\rr/2}(x_{0}))}\le c\mint_{B_{\rr}(x_{0})}\snr{Dh} \ \dx,
\end{eqnarray}
for $c\equiv c(n,N,H,d)$. We record an $\mathcal{A}$-harmonic approximation result from \cite[Lemma 4]{dumi}, see also \cite[Lemma 6]{dgk}.
\begin{lemma}\label{ahar}
Let $\mathcal{A}$ be a bilinear form on $\mathbb{R}^{N\times n}$ verifying \eqref{con.0}, $B_{\rr}(x_{0})\Subset \Omega$ be a ball and $p>1$ be a number. For any $\varepsilon>0$ there exists $\delta\equiv \delta(n,N,H,p,\varepsilon)\in (0,1]$ such that if $v\in W^{1,p}(B_{\rr}(x_{0}),\mathbb{R}^{N})$ with $\mf{I}_{2}(V_{1,p}(Dv);B_{\rr}(x_{0}))\le \sigma\le 1$ is approximately $\mathcal{A}$-harmonic in the sense that
$$
\left| \ \mint_{B_{\rr}(x_{0})}\mathcal{A}\langle Dv,D\varphi\rangle \ \dx \ \right|\le \sigma\delta\nr{D\varphi}_{L^{\infty}(B_{\rr}(x_{0}))}\qquad \mbox{for all} \ \ \varphi\in C^{\infty}_{c}(B_{\rr}(x_{0}),\mathbb{R}^{N}),
$$
then there exists an $\mathcal{A}$-harmonic map $h\in W^{1,p}(B_{\rr}(x_{0}),\mathbb{R}^{N})$ such that 
\begin{eqnarray*}
\mint_{B_{\rr}(x_{0})}\snr{V_{1,p}(Dh)}^{2} \ \dx\le c \qquad \mbox{and}\qquad \mint_{B_{\rr}(x_{0})}\left| \ V_{1,p}\left(\frac{v-\sigma h}{\rr}\right)\ \right|^{2} \ \dx \le c\sigma^{2}\varepsilon,
\end{eqnarray*}
for $c\equiv c(n,N,p)$.
\end{lemma}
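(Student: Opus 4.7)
\medskip

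\noindent\textbf{Proof plan.} The plan is to argue by contradiction, in the spirit of the classical Simon--Duzaar--Mingione approximation lemmas, combining compactness of the bilinear forms with a rescaling that converts the subquadratic estimate $\mf{I}_{2}(V_{1,p}(Dv);B_{\rr}(x_{0}))\le \sigma$ into a genuine $W^{1,2}$-bound. Assume towards a contradiction that the conclusion fails for some $\varepsilon>0$. Then there are sequences $\mathcal{A}_j$ satisfying \eqref{con.0} with constant $H$, balls that can be normalized (by translation and scaling, which preserve both $\mf{I}_{2}$ and \eqref{con.0}) to $B_{1}(0)$, parameters $\sigma_{j}\in (0,1]$ and $\delta_{j}\downarrow 0$, and maps $v_{j}\in W^{1,p}(B_{1},\mathbb{R}^{N})$ that are $\sigma_{j}\delta_{j}$-approximately $\mathcal{A}_{j}$-harmonic with $\mf{I}_{2}(V_{1,p}(Dv_{j});B_{1})\le \sigma_{j}$, but for which no $\mathcal{A}_{j}$-harmonic map realizes the asserted bounds. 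Up to a subsequence we may further assume $\mathcal{A}_{j}\to \mathcal{A}_{\infty}$, still satisfying \eqref{con.0}.

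Next I would rescale by introducing $u_{j}:=(v_{j}-(v_{j})_{B_{1}})/\sigma_{j}$. Using $|V_{1,p}(z)|^{2}\approx \min\{|z|^{2},|z|^{p}\}$, which follows from the definition together with $\eqref{Vm}_{6}$, the assumption translates into
\begin{equation*}
\int_{\{|Du_{j}|\le 1/\sigma_{j}\}}|Du_{j}|^{2}\,dx+\sigma_{j}^{p-2}\int_{\{|Du_{j}|> 1/\sigma_{j}\}}|Du_{j}|^{p}\,dx\le c,
\end{equation*}
which in particular bounds $\{u_{j}\}$ uniformly in $W^{1,p}(B_{1},\mathbb{R}^{N})$ via H\"older on the "small-gradient" set. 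By Rellich and reflexivity a subsequence satisfies $u_{j}\rightharpoonup u$ weakly in $W^{1,p}$ and strongly in $L^{p}$. Dividing the approximate harmonicity condition by $\sigma_{j}$ yields
\begin{equation*}
\bigl|\mint_{B_{1}}\mathcal{A}_{j}\langle Du_{j},D\varphi\rangle\,dx\bigr|\le \delta_{j}\|D\varphi\|_{L^{\infty}}\to 0
\end{equation*}
for all $\varphi\in C^{\infty}_{c}(B_{1},\mathbb{R}^{N})$, and combining the uniform convergence $\mathcal{A}_{j}\to \mathcal{A}_{\infty}$ with the weak $L^{p}$-convergence of $Du_{j}$ gives that $u$ is $\mathcal{A}_{\infty}$-harmonic on $B_{1}$.

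Now I would pick the candidate $h_{j}$ to be the $\mathcal{A}_{j}$-harmonic map on $B_{1}$ sharing the boundary trace with $u_{j}$, solvable by Lax--Milgram after cutting to a slightly smaller ball $B_{1-\eta}$ where $u_{j}$ is in $W^{1,2}$ modulo a correction that vanishes in the limit; alternatively, one takes $h_{j}$ to be the $\mathcal{A}_{j}$-harmonic extension of $u$ itself and treats $u_{j}-u$ as the remainder. The higher integrability and smoothness of $\mathcal{A}_{j}$-harmonic functions, encoded in \eqref{y.0}, give $\mint |V_{1,p}(Dh_{j})|^{2}\le c$ from the energy bound on $u_{j}$. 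Strong convergence $u_{j}\to u$ in $L^{p}(B_{1-\eta})$ combined with uniqueness of the Dirichlet problem for $\mathcal{A}_{\infty}$ forces $h_{j}\to u$ in $L^{p}_{\loc}$, whence $\|u_{j}-h_{j}\|_{L^{p}(B_{1-\eta})}\to 0$. Finally, to upgrade this to the sought bound $\mint |V_{1,p}(\sigma_{j}(u_{j}-h_{j}))|^{2}\le c\sigma_{j}^{2}\varepsilon$, I split the ball into the region where $\sigma_{j}|u_{j}-h_{j}|\le 1$, on which $|V_{1,p}(\sigma_{j}(u_{j}-h_{j}))|^{2}\lesssim \sigma_{j}^{2}|u_{j}-h_{j}|^{2}$, and its complement, on which the quantity is controlled by $\sigma_{j}^{p}|u_{j}-h_{j}|^{p}$. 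Both integrals tend to $0$ by the strong $L^{p}$ convergence above and the uniform $W^{1,p}$ bound, contradicting the failure of the lemma for $j$ large enough.

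\medskip

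\noindent\textbf{Main obstacle.} The delicate point is the passage from the purely $V_{1,p}$-flavored, mixed-scale estimate on $Dv_{j}$ to a usable linear compactness statement after the $\sigma_{j}$-rescaling; the region $\{|Du_{j}|>1/\sigma_{j}\}$ contributes only through a weak $L^{p}$-bound whose mass could concentrate unless handled carefully. A related subtlety is the construction of $h_{j}$: since $u_{j}\in W^{1,p}$ only, directly prescribing boundary data for an $\mathcal{A}_{j}$-harmonic Dirichlet problem requires a slight domain shrinking together with an $L^{p}$-to-$L^{2}$ trade that does not spoil the target estimate.
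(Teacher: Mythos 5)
The paper does not prove Lemma \ref{ahar}; it is imported verbatim from \cite[Lemma 4]{dumi} (see also \cite[Lemma 6]{dgk}), whose proof is indeed a compactness/blow--up argument of exactly the type you propose. So the overall strategy is the right one. However, the final step of your argument has a genuine quantitative gap, and the preparatory observations need to be sharpened in a way that is actually what makes the proof in \cite{dumi,dgk} go through.

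The difficulty is in the last paragraph. After normalizing $\rr=1$, $(v_j)_{B_1}=0$ and setting $u_j:=v_j/\sigma_j$, the conclusion to be contradicted is
\begin{equation*}
\mint_{B_1}\snr{V_{1,p}(\sigma_j(u_j-h_j))}^2\ \dx \le c\,\sigma_j^{2}\varepsilon .
\end{equation*}
Your decomposition bounds the left-hand side by $\sigma_j^{p}\mint\snr{u_j-h_j}^{p}\dx$ (indeed on $\{\sigma_j\snr{u_j-h_j}\le 1\}$ one has $\sigma_j^2\snr{u_j-h_j}^2\le \sigma_j^{p}\snr{u_j-h_j}^{p}$, and on the complement $\snr{V_{1,p}(z)}^2\le\snr{z}^{p}$). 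But dividing through, the required inequality is
\begin{equation*}
\mint_{B_1}\snr{u_j-h_j}^p \ \dx \le c\,\sigma_j^{2-p}\,\varepsilon ,
\end{equation*}
and since $p\in(1,2)$ the factor $\sigma_j^{2-p}$ tends to zero if $\sigma_j\to 0$. The assertion ``both integrals tend to $0$'' therefore does not close the argument: you need a \emph{rate}, namely that $\mint\snr{u_j-h_j}^p\dx$ be $o(\sigma_j^{2-p})$, and mere strong $L^p$-convergence of $u_j-h_j$ to $0$ is not enough. If $\{\sigma_j\}$ is bounded below the argument is fine, but nothing in the contradiction set-up forbids $\sigma_j\to 0$; that is precisely the degenerate regime the lemma is designed to handle.

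What actually makes the argument work in \cite{dumi,dgk} is a finer use of the normalization $\mf{I}_2(V_{1,p}(Dv_j);B_1)\le\sigma_j$. Rescaled, this gives
\begin{equation*}
\mint_{\{\sigma_j\snr{Du_j}\le 1\}}\snr{Du_j}^2\ \dx+\sigma_j^{\,p-2}\mint_{\{\sigma_j\snr{Du_j}>1\}}\snr{Du_j}^p\ \dx\le c,
\end{equation*}
so $Du_j\mathds{1}_{\{\sigma_j\snr{Du_j}\le 1\}}$ is bounded in $L^2$ while $Du_j\mathds{1}_{\{\sigma_j\snr{Du_j}>1\}}\to 0$ in $L^p$ with rate $\sigma_j^{(2-p)/p}$. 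Hence the weak limit $u$ belongs to $W^{1,2}$, not merely $W^{1,p}$; this is what makes the Dirichlet problem for $h_j$ solvable directly and gives the uniform bound $\mint\snr{V_{1,p}(Dh_j)}^2\le c$. More importantly, the references then combine a Caccioppoli inequality for approximately $\mathcal{A}_j$-harmonic maps with the $V$-Poincar\'e inequality \eqref{v.poi} to obtain a \emph{strong} convergence statement adapted to the hybrid quantity $\sigma_j^{-2}\snr{V_{1,p}(\sigma_j\,\cdot)}^2$, rather than relying on crude $L^p$-compactness followed by the estimate $\snr{V_{1,p}(z)}^2\le\snr{z}^p$. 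You flag this as the ``main obstacle'', which is the right diagnosis, but the resolution you sketch (domain shrinking and an $L^p$-to-$L^2$ trade) does not supply the missing $\sigma_j^{2-p}$ factor, so the proof as written is incomplete.
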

We further recall the definition of $p$-harmonic map, i.e. a function $h\in W^{1,p}(\Omega,\mathbb{R}^{N})$ satisfying
\begin{eqnarray*}
\int_{\Omega}\langle \snr{Dh}^{p-2}Dh,D\varphi\rangle \ \dx=0\qquad \mbox{for all} \ \ \varphi\in C^{\infty}_{c}(\Omega,\mathbb{R}^{N}).
\end{eqnarray*}
According to the regularity theory contained in \cite{uh,ur}, whenever $B_{\rr}(x_{0})\Subset B_{r}(x_{0})\Subset \Omega$ are concentric balls, it is
\begin{flalign}\label{y.1}
\nr{Dh}_{L^{\infty}(B_{\rr/2}(x_{0}))}\le c'\left(\mint_{B_{\rr}(x_{0})}\snr{Dh}^{p} \ \dx\right)^{1/p}\qquad \mbox{and}\qquad \mf{F}(h;B_{\rr}(x_{0}))\le c''\left(\frac{\rr}{r}\right)^{\alpha}\mf{F}(h;B_{r}(x_{0})),
\end{flalign}
with $c',c''\equiv c',c''(n,N,p)$ and $\alpha\equiv \alpha(n,N,p)\in (0,1)$. As a "singular" variant of Lemma \ref{ahar}, we have the following $p$-harmonic approximation lemma from \cite[Lemma 1]{dumi1}.
\begin{lemma}\label{p.phar}
Let $p\in (1,\infty)$ be a number and $B_{\rr}(x_{0})\Subset \Omega$ be any ball. For all $\varepsilon>0$, there exists $\delta\equiv \delta(n,N,p,\varepsilon)\in (0,1]$ such that if $v\in W^{1,p}(B_{\rr}(x_{0}),\mathbb{R}^{N})$ with $\mf{I}_{p}(Dv;B_{\rr}(x_{0}))\le 1$ is approximately $p$-harmonic in the sense that
\eqn{p.phar1}
$$
\left| \ \mint_{B_{\rr}(x_{0})}\langle \snr{Dv}^{p-2}Dv,D\varphi\rangle \ \dx \ \right|\le \delta\nr{D\varphi}_{L^{\infty}(B_{\rr}(x_{0}))}\qquad \mbox{for all} \ \ \varphi\in C^{\infty}_{c}(B_{\rr}(x_{0}),\mathbb{R}^{N}),
$$
then there exits a $p$-harmonic map $h\in W^{1,p}(B_{\rr}(x_{0}),\mathbb{R}^{N})$ such that
\begin{eqnarray*}
\mf{I}_{p}(Dh;B_{\rr}(x_{0}))\le 1\qquad \mbox{and}\qquad  \mint_{B_{\rr}(x_{0})}\left| \ \frac{v-h}{\rr} \ \right|^{p} \ \dx \le c\varepsilon^{p},
\end{eqnarray*}
with $c\equiv c(n,N,p)$.
\end{lemma}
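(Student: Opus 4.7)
The plan is to prove Lemma \ref{p.phar} by a compactness-contradiction argument combined with a Lipschitz truncation and the monotonicity of the $p$-Laplacian operator. By a translation and the natural $p$-Laplacian scaling $v(x)\mapsto \rr^{-1}v(x_{0}+\rr x)$, first reduce to the case $x_{0}=0$, $\rr=1$, since $\mf{I}_{p}(Dv;B_{\rr}(x_{0}))$, the approximate $p$-harmonicity condition \eqref{p.phar1} (with the same $\delta$), and the $L^{p}$-quantity $\mint |v-h|^{p}\dx$ in the conclusion are all invariant under this rescaling. Argue by contradiction: if the lemma fails, there exist $\varepsilon_{0}>0$, a sequence $\delta_{j}\downarrow 0$, and maps $v_{j}\in W^{1,p}(B_{1},\mathbb{R}^{N})$ with $\mf{I}_{p}(Dv_{j};B_{1})\le 1$ that are approximately $p$-harmonic at level $\delta_{j}$, yet satisfy $\mint_{B_{1}}|v_{j}-h|^{p}\dx>c\varepsilon_{0}^{p}$ for every $p$-harmonic competitor $h$ with $\mf{I}_{p}(Dh;B_{1})\le 1$. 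Subtracting constants we may normalize $(v_{j})_{B_{1}}=0$, so Poincar\'e yields a uniform $W^{1,p}(B_{1})$-bound.

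For each $j$ let $h_{j}$ be the unique $p$-harmonic map on $B_{1}$ with boundary data $v_{j}$; existence and uniqueness follow by direct methods and strict convexity of $w\mapsto\int|Dw|^{p}\dx$. Minimality gives $\mf{I}_{p}(Dh_{j};B_{1})\le \mf{I}_{p}(Dv_{j};B_{1})\le 1$, so each $h_{j}$ is an admissible competitor and the contradiction hypothesis forces $\mint|v_{j}-h_{j}|^{p}\dx>c\varepsilon_{0}^{p}$. Since $\varphi_{j}:=v_{j}-h_{j}\in W^{1,p}_{0}(B_{1},\mathbb{R}^{N})$, Poincar\'e reduces the task to showing $\int_{B_{1}}|Dv_{j}-Dh_{j}|^{p}\dx\to 0$. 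Subtracting the $p$-harmonic equation for $h_{j}$ from the approximate one for $v_{j}$ gives
$$
\left| \mint_{B_{1}}\langle |Dv_{j}|^{p-2}Dv_{j}-|Dh_{j}|^{p-2}Dh_{j},D\psi\rangle\,\dx\right|\le \delta_{j}\nr{D\psi}_{L^{\infty}(B_{1})}
$$
for every Lipschitz, compactly supported test $\psi$ (valid by density).

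One would like to test with $\psi=\varphi_{j}$, but $\varphi_{j}$ carries no $L^{\infty}$-gradient bound. Here enters an Acerbi-Fusco / Diening-M\'alek-Steinhauer Lipschitz truncation $\varphi_{j}^{\lambda_{j}}\in W^{1,\infty}_{0}(B_{1},\mathbb{R}^{N})$ with $\nr{D\varphi_{j}^{\lambda_{j}}}_{L^{\infty}}\lesssim \lambda_{j}$, coinciding with $\varphi_{j}$ outside a bad set $\mathcal{U}_{j}$ satisfying $|\mathcal{U}_{j}|\lesssim \lambda_{j}^{-p}\int|D\varphi_{j}|^{p}\dx$. Testing with $\psi=\varphi_{j}^{\lambda_{j}}$ and using the pointwise monotonicity
$$
\langle |a|^{p-2}a-|b|^{p-2}b,a-b\rangle\gtrsim |V_{p}(a)-V_{p}(b)|^{2},
$$
on the good set $\{\varphi_{j}^{\lambda_{j}}=\varphi_{j}\}$ we control $|V_{p}(Dv_{j})-V_{p}(Dh_{j})|^{2}$ in $L^{1}$, whereas the error term is bounded by $\delta_{j}\lambda_{j}$, which vanishes if $\lambda_{j}$ is calibrated to diverge slowly enough against $\delta_{j}^{-1}$. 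The contribution on $\mathcal{U}_{j}$ is absorbed by H\"older using the uniform $L^{p}$-bounds on $Dv_{j}$ and $Dh_{j}$ since $|\mathcal{U}_{j}|\to 0$.

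The main obstacle will be the singular range $1<p<2$: there the monotonicity inequality degenerates and $\langle |a|^{p-2}a-|b|^{p-2}b,a-b\rangle$ controls only $|V_{p}(a)-V_{p}(b)|^{2}$, not $|a-b|^{p}$ directly. Passage to the $L^{p}$-distance must go through the equivalence \eqref{Vm.1}, which introduces the weight $(|Dv_{j}|+|Dh_{j}|)^{p(2-p)/2}$ and forces a further H\"older-type splitting. The delicate calibration of the truncation level $\lambda_{j}$ against $\delta_{j}$ and the $L^{p}$ gradient bounds -- ensuring simultaneously $\delta_{j}\lambda_{j}\to 0$ and that the bad-set contribution $\int_{\mathcal{U}_{j}}|D\varphi_{j}|^{p}\dx\to 0$ -- constitutes the technical heart of the proof and is where the Duzaar-Mingione construction \cite{dumi1} plays a decisive role.
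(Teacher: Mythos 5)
The paper does not prove Lemma \ref{p.phar}: it is imported verbatim from \cite[Lemma 1]{dumi1}, where Duzaar and Mingione obtain it by a compactness argument (normalize averages, extract a weak $W^{1,p}$-limit $v$ along a contradiction sequence, show $v$ is $p$-harmonic and derive the contradiction from strong $L^{p}$-convergence). Your strategy is genuinely different in its mechanism: you compare each $v_{j}$ to the boundary-value $p$-harmonic map $h_{j}$, subtract the two monotone equations, and try to extract energy decay by testing with a Lipschitz truncation of $\varphi_{j}=v_{j}-h_{j}$. That route is the one taken, for instance, by Diening--Stroffolini--Verde for $\varphi$-harmonic approximation, and it is a legitimate alternative.

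However, as written the argument has a real gap at exactly the place you flag as ``the technical heart.'' The Lipschitz truncation estimate you quote, $\snr{\mathcal{U}_{j}}\lesssim \lambda_{j}^{-p}\int \snr{D\varphi_{j}}^{p}\,\dx$, is too weak to kill the bad-set contribution. Indeed, after H\"older the bad-set term is bounded by
$$
\lambda_{j}\,\snr{\mathcal{U}_{j}}^{1/p}\left(\int_{B_{1}}\snr{Dv_{j}}^{p}+\snr{Dh_{j}}^{p}\,\dx\right)^{(p-1)/p}
\lesssim\ \lambda_{j}\cdot \lambda_{j}^{-1}\left(\int_{B_{1}}\snr{D\varphi_{j}}^{p}\,\dx\right)^{1/p},
$$
which is uniformly \emph{bounded}, not small: the factor $\lambda_{j}$ from $\nr{D\varphi_{j}^{\lambda_{j}}}_{\infty}$ exactly cancels the $\lambda_{j}^{-1}$ from $\snr{\mathcal{U}_{j}}^{1/p}$, and nothing forces the remaining factor to vanish. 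The $\varphi_{j}$ are \emph{not} a priori equiintegrable in $W^{1,p}$, so you cannot conclude $\int_{\mathcal{U}_{j}}\snr{D\varphi_{j}}^{p}\to 0$ merely from $\snr{\mathcal{U}_{j}}\to 0$. Consequently the bad-set contribution does not go to zero and the monotonicity inequality on the good set gives no information.

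To repair this you need the refined Diening--M\'alek--Steinhauer form of the Lipschitz truncation: given a uniform bound $\limsup_{j}\int\snr{D\varphi_{j}}^{p}\dx\le C$, for every $k\in\N$ one can choose (via a pigeonhole over $\approx k$ dyadic levels) a truncation height $\lambda_{j,k}$, bounded above in terms of $k$ only, so that
$$
\limsup_{j\to\infty}\ \lambda_{j,k}^{p}\,\snr{\{\varphi_{j}\neq \varphi_{j}^{\lambda_{j,k}}\}}\ \lesssim\ 2^{-k}\,C.
$$
With this version the bad-set term is $\lesssim 2^{-k/p}$ uniformly in $j$, and since for each fixed $k$ the levels $\lambda_{j,k}$ are bounded, $\delta_{j}\lambda_{j,k}\to 0$ as $j\to\infty$. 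Passing first $j\to\infty$ and then $k\to\infty$ then closes the argument. The subquadratic passage through \eqref{Vm.1} that you describe is fine once this is in place. Without the refined truncation, the calibration of $\lambda_{j}$ against $\delta_{j}$ that you invoke cannot work: $\lambda_{j}$ must be $o(\delta_{j}^{-1})$ and simultaneously large enough that $\int_{\{M(\snr{D\varphi_{j}})>\lambda_{j}\}}\snr{D\varphi_{j}}^{p}\dx$ is small, but the second requirement has no $j$-uniform rate under a mere $L^{p}$-bound.
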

\subsection{On the Lebesgue-Serrin-Marcellini extension} Let $\mathbb{B}_{\Omega}$ be the family of all open subsets of $\Omega$ and $B\in \mathbb{B}_{\Omega}$. For $1<p\le q<\infty$, an integrand $F\in C(\mathbb{R}^{N\times n})$, and maps $f\in W^{1,p}(\Omega,\mathbb{R}^{N})^{*}$ and $w\in W^{1,p}(\Omega,\mathbb{R}^{N})$, the Lebesgue-Serrin-Marcellini extension of a functional of type \eqref{fun}, i.e.:
$$
\mathcal{F}(w;B):=\int_{B}[F(Dw)-f\cdot w] \ \dx =:\mathcal{F}_{0}(w;B)-\int_{B}f\cdot w \ \dx,
$$
is defined as
\begin{eqnarray*}
\bar{\mathcal{F}}(w;B):=\inf_{\{w_{j}\}_{j\in \N}\in \mathcal{C}(w;B)}\liminf_{j\to \infty}\mathcal{F}(w_{j};B),
\end{eqnarray*}
with
\begin{eqnarray*}
\mathcal{C}(w;B):=\left\{\{w_{j}\}_{j\in \N}\subset W^{1,q}_{\loc}(B,\mathbb{R}^{N})\cap W^{1,p}(B,\mathbb{R}^{N})\colon w_{j}\rightharpoonup w \ \mbox{weakly in} \ W^{1,p}(B,\mathbb{R}^{N})\right\}.
\end{eqnarray*}
By density of smooth maps in $W^{1,p}(B,\mathbb{R}^{N})$ it is $\mathcal{C}(w;B)\not =\{\emptyset\}$ and since in particular $f\in W^{1,p}(B,\mathbb{R}^{N})^{*}$ we can rewrite
\begin{eqnarray}\label{ls.7}
\bar{\mathcal{F}}(w;B)=\bar{\mathcal{F}}_{0}(w;B)-\int_{B}f\cdot w \ \dx,
\end{eqnarray}
therefore while describing the relevant features of relaxation we shall refer to the "bulk" component of $\bar{\mathcal{F}}(\cdot)$, i.e. $\bar{\mathcal{F}}_{0}(\cdot)$. A first crucial observation is that $\bar{\mathcal{F}}_{0}(\cdot)$ cannot be represented as an integral. In fact, a deep result from \cite{bfm,foma} states that with $F(z)\lesssim (1+\snr{z}^{q})$ and $1<p\le q<np/(n-1)$, each $w\in W^{1,p}(\Omega,\mathbb{R}^{N})$ such that $\bar{\mathcal{F}}_{0}(w;\Omega)<\infty$ uniquely determines a finite outer Radon measure $\mu_{w}$ verifying
\eqn{measure}
$$
\bar{\mathcal{F}_{0}}(w;\cdot)=\left.\mu_{w}\right|_{\mathbb{B}_{\Omega}}\qquad \mbox{and}\qquad \frac{\d\mu_{w}}{\d\mathcal{L}^{n}}=QF(Dw).
$$
Here $QF(\cdot)$ denotes the quasiconvex envelope of $F(\cdot)$, see \cite[Section 5.3]{giu}. Moreover, a $W^{1,p}$-coercivity condition like $\snr{z}^{p}\lesssim F(z)$ assures sequential lower semicontinuity\footnote{Recall that we always work under the assumption that $\Omega$ is an open, bounded domain with Lipschitz boundary.} of $\bar{\mathcal{F}}_{0}(\cdot;\Omega)$ with respect to the weak topology of $W^{1,p}(\Omega,\mathbb{R}^{N})$, cf. \cite{foma,ts1}. In \cite{bamu} it is proven that $W^{1,p}$-quasiconvexity is necessary for this semicontinuity property. However, the results of \cite{bamu} hold for integral functionals, while, in the light of \eqref{measure}, $\bar{\mathcal{F}}_{0}(\cdot)$ cannot be represented as an integral. Despite the measure representation \cite{foma}, the arguments developed in \cite{bamu} can be adapted to prove that $\bar{\mathcal{F}}_{0}(\cdot)$ features the proper notion of $W^{1,p}$-quasiconvexity, i.e.: $\bar{\mathcal{F}}_{0}(\ell+\varphi;B)\ge \bar{\mathcal{F}}_{0}(\ell;B)$ holds for all $B\in \mathbb{B}_{\Omega}$, $\varphi\in W^{1,p}(B,\mathbb{R}^{N})$ with $\supp(\varphi)\Subset B$ and any affine function $\ell(x):=v_{0}+\langle z_{0},x-x_{0}\rangle$, cf. \cite{ts1}. Other remarkable properties of $\bar{\mathcal{F}}_{0}(\cdot)$ such as additivity and extremality conditions can be found in \cite{ts1,ts2}. Now, if $F(\cdot)$ is a continuous and $W^{1,p}$-coercive integrand and $f\in W^{1,p}(\Omega,\mathbb{R}^{N})^{*}$, the weak sequential lower semicontinuity of $\bar{\mathcal{F}}_{0}(\cdot)$ in $W^{1,p}(\Omega,\mathbb{R}^{N})$ and direct methods assure that once fixed a boundary datum $u_{0}\in W^{1,p}(\Omega,\mathbb{R}^{N})$ such that $\bar{\mathcal{F}}_{0}(u_{0};\Omega)<\infty$ - recall \eqref{ls.7} - there exists a local minimizer $u\in u_{0}+W^{1,p}_{0}(\Omega,\mathbb{R}^{N})$ of \eqref{exfun} in the sense of Definition \ref{d1}. If in addition $F\in C^{1}_{\loc}(\mathbb{R}^{N\times n})$ with \eqref{qc}, \eqref{assf}$_{2}$ and \eqref{f} in force and exponents $(p,q)$ satisfying $1<p\le q<\min\left\{np/(n-1),p+1\right\}$, then any local minimizer $u\in W^{1,p}(\Omega,\mathbb{R}^{N})$ of \eqref{exfun} verifies by minimality the integral identity
\begin{eqnarray}\label{el}
0=\int_{\Omega}\left[\langle \partial F(Du),D\varphi\rangle-f\cdot \varphi\right] \ \dx\qquad \mbox{for all} \ \ \varphi\in C^{\infty}_{c}(\Omega,\mathbb{R}^{N}),
\end{eqnarray}
see \cite[Section 2.7]{deqc} and \cite[Section 7.1]{ts1}.

\section{Caccioppoli inequality}
We start by recording a variation obtained in \cite[Lemmas 6.3-6.5]{ts} and \cite[Lemmas 4.4 and 4.6]{ts1} of the extension result from \cite{foma}, which will be crucial for constructing comparison maps for minima of \eqref{exfun}.
\begin{lemma}\label{exlem}
Let $0<\tau_{1}<\tau_{2}$ be two numbers and $B_{\tau_{2}}\Subset \Omega$ be a ball. There exists a bounded, linear smoothing operator $\mf{T}_{\tau_{1},\tau_{2}}\colon W^{1,1}(\Omega,\mathbb{R}^{N})\to W^{1,1}(\Omega,\mathbb{R}^{N})$ defined as
\begin{eqnarray*}
W^{1,1}(\Omega,\mathbb{R}^{N})\ni w\mapsto \mf{T}_{\tau_{1},\tau_{2}}[w](x):=\mint_{B_{1}(0)}w(x+\vartheta(x)y) \ \dy,
\end{eqnarray*}
where it is $\vartheta(x):=\frac{1}{2}\max\left\{\min\left\{\snr{x}-\tau_{1},\tau_{2}-\snr{x}\right\},0\right\}$. If $w\in W^{1,p}(\Omega,\RN)$ for some $p\ge 1$, the map $\mf{T}_{\tau_{1},\tau_{2}}[w]$ has the following properties:
\begin{itemize}
    \item[(\emph{i.})] $\mf{T}_{\tau_{1},\tau_{2}}[w]\in W^{1,p}(\Omega,\mathbb{R}^{N})$;
    \item[(\emph{ii.})] $w=\mf{T}_{\tau_{1},\tau_{2}}[w]$ almost everywhere on $(\Omega\setminus B_{\tau_{2}})\cup B_{\tau_{1}}$;
    \item[(\emph{iii.})] $\mf{T}_{\tau_{1},\tau_{2}}[w]\in w+W^{1,p}_{0}(B_{\tau_{2}}\setminus \bar{B}_{\tau_{1}},\mathbb{R}^{N})$;
    \item[(\emph{iv.})] $\snr{D\mf{T}_{\tau_{1},\tau_{2}}[w]}\le c(n)\mf{T}_{\tau_{1},\tau_{2}}[\snr{Dw}]$ almost everywhere in $\Omega$.
\end{itemize}
Furthermore,
\begin{flalign}\label{ex.0}
\left\{
\begin{array}{c}
\displaystyle 
\ \nr{\mf{T}_{\tau_{1},\tau_{2}}[w]}_{L^{p}(B_{\tau_{2}}\setminus B_{\tau_{1}})}\le c\nr{w}_{L^{p}(B_{\tau_{2}}\setminus B_{\tau_{1}})} \\[8pt]\displaystyle
\ \nr{D\mf{T}_{\tau_{1},\tau_{2}}[w]}_{L^{p}(B_{\tau_{2}}\setminus B_{\tau_{1}})}\le c\nr{Dw}_{L^{p}(B_{\tau_{2}}\setminus B_{\tau_{1}})}\\[8pt]\displaystyle
\nr{D\mf{T}_{\tau_{1},\tau_{2}}[w]}_{L^{p}(B_{\varsigma}\setminus B_{\tau_{1}})}\le c\nr{Dw}_{L^{p}(B_{2\varsigma-\tau_{1}}\setminus B_{\tau_{1}})}\quad \mbox{for} \ \ \tau_{1}\le \varsigma\le (\tau_{1}+\tau_{2})/2\\[8pt]\displaystyle
\nr{D\mf{T}_{\tau_{1},\tau_{2}}[w]}_{L^{p}(B_{\tau_{2}}\setminus B_{\varsigma})}\le c\nr{Dw}_{L^{p}(B_{\tau_{2}}\setminus B_{2\varsigma-\tau_{2}})}\quad \mbox{for} \ \ (\tau_{1}+\tau_{2})/2\le \varsigma\le \tau_{2},
\end{array}
\right.
\end{flalign}
for $c\equiv c(n,p)$. Finally, let $\mathcal{N}\subset \mathbb{R}$ be a set with zero Lebesgue measure. There are 
\begin{flalign}\label{ex.2}
\ti{\tau}_{1}\in \left(\tau_{1},\frac{2\tau_{1}+\tau_{2}}{3}\right)\setminus \mathcal{N},\ \ti{\tau}_{2}\in \left(\frac{\tau_{1}+2\tau_{2}}{3},\tau_{2}\right)\setminus \mathcal{N} \quad \mbox{verifying} \ \ (\tau_{2}-\tau_{1})\approx (\ti{\tau}_{2}-\ti{\tau}_{1})
\end{flalign}
up to absolute constants, such that
if $1\le p\le 2$, $s\ge 0$ and $\frac{2}{p}\le d<\frac{2n}{n-1}$, it is
\begin{eqnarray}\label{ex.1.2}
\nr{V_{s,p}(D\mf{T}_{\ti{\tau}_{1},\ti{\tau}_{2}}[w])}_{L^{d}(B_{\ti{\tau}_{2}}\setminus B_{\ti{\tau}_{1}})}\le \frac{c}{(\tau_{2}-\tau_{1})^{n\left(\frac{1}{2}-\frac{1}{d}\right)}}\nr{V_{s,p}(Dw)}_{L^{2}(B_{\tau_{2}}\setminus B_{\tau_{1}})},
\end{eqnarray}
for $c\equiv c(n,p,d)$. Clearly, operator $\mf{T}_{\ti{\tau}_{1},\ti{\tau}_{2}}$ satisfies properties \emph{(}i.\emph{)}-\emph{(}iv.\emph{)} and \eqref{ex.0} with $\ti{\tau}_{1}$, $\ti{\tau}_{2}$ substituting $\tau_{1}$, $\tau_{2}$.
\end{lemma}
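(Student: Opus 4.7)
The plan is to verify properties (\emph{i.})--(\emph{iv.}) by direct calculation and then treat the norm estimates as consequences of Fubini plus a change of variable. The Lipschitz cutoff $\vartheta$ satisfies $0\le \vartheta(x)\le (\tau_{2}-\tau_{1})/4$ and $\snr{D\vartheta(x)}\le 1/2$ almost everywhere, and $\vartheta\equiv 0$ on $(\Omega\setminus B_{\tau_{2}})\cup B_{\tau_{1}}$; the last point yields property (\emph{ii.}) at once, since the average reduces to $w(x)$ whenever $\vartheta(x)=0$, and then (\emph{iii.}) follows from (\emph{i.}) and (\emph{ii.}) by continuity of traces in $W^{1,p}$.

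To establish (\emph{i.}) and (\emph{iv.}), rewrite $\mf{T}_{\tau_{1},\tau_{2}}[w](x) = \mint_{B_{\vartheta(x)}(x)} w(z)\,dz$ wherever $\vartheta(x)>0$ and differentiate under the integral sign using the Lipschitz regularity of $\vartheta$. A chain-rule computation yields
\begin{eqnarray*}
D\mf{T}_{\tau_{1},\tau_{2}}[w](x) = \mint_{B_{1}(0)} \bigl(I + D\vartheta(x)\otimes y\bigr)\,Dw(x+\vartheta(x)y)\,dy,
\end{eqnarray*}
and the uniform bound $\snr{I+D\vartheta\otimes y}\le 3/2$ gives (\emph{iv.}); membership in $W^{1,p}$ then follows from (\emph{iv.}) together with the $L^{p}$-bound on $\mf{T}_{\tau_{1},\tau_{2}}[\snr{Dw}]$ that comes from the norm estimates below.

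For the base norm estimates \eqref{ex.0}, I would apply Jensen's inequality, swap integrations via Fubini, and for each fixed $y\in B_{1}(0)$ perform the change of variable $z=x+\vartheta(x)y$. The Jacobian $\det(I+D\vartheta\otimes y) = 1 + \langle D\vartheta, y\rangle$ lies in $[1/2,3/2]$, so the map $T_{y}$ is bi-Lipschitz with uniformly bounded inverse. The localized versions \eqref{ex.0}$_{3,4}$ exploit the sharper pointwise inequality $\vartheta(x)\le \frac{1}{2}\min\{\snr{x}-\tau_{1},\tau_{2}-\snr{x}\}$, which forces $T_{y}(B_{\varsigma}\setminus B_{\tau_{1}})\subset B_{2\varsigma-\tau_{1}}\setminus B_{\tau_{1}}$ and symmetrically on the outside. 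The existence of good radii $\ti{\tau}_{1},\ti{\tau}_{2}$ in \eqref{ex.2} is a standard Fubini-in-radii argument: the subset of radii at which any of the finitely many relevant $L^{p}$ slice-quantities exceeds three times its average has one-dimensional Lebesgue measure at most $(\tau_{2}-\tau_{1})/3$, so its complement intersected with $(\tau_{1},(2\tau_{1}+\tau_{2})/3)$ has positive measure and stays clear of any prescribed null set $\mathcal{N}$.

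The main obstacle is the improved integrability bound \eqref{ex.1.2}. The strategy is to exploit that, by \eqref{ex.2}, on the good annulus $B_{\ti{\tau}_{2}}\setminus B_{\ti{\tau}_{1}}$ the scale $\vartheta(x)$ is comparable to $\tau_{2}-\tau_{1}$, so $\mf{T}_{\ti{\tau}_{1},\ti{\tau}_{2}}$ behaves as mollification against a Lipschitz kernel of essentially fixed scale. Using $\snr{D\mf{T}[w]}\le c\,\mf{T}[\snr{Dw}]$ and the algebraic properties \eqref{Vm} of $V_{s,p}(\cdot)$, one reduces the claim to an estimate of the form $\nr{\mf{T}[G]}_{L^{d}(B_{\ti{\tau}_{2}}\setminus B_{\ti{\tau}_{1}})}\le c(\tau_{2}-\tau_{1})^{-n(1/2-1/d)}\nr{G}_{L^{2}(B_{\tau_{2}}\setminus B_{\tau_{1}})}$ for $G=\snr{V_{s,p}(Dw)}$. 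This is a Sobolev-type improvement for the variable-scale averaging $\mf{T}$: Young's inequality against the mollifying kernel, combined with Sobolev embedding on the averaging ball of radius $\approx \tau_{2}-\tau_{1}$, gains precisely one degree of integrability, which fixes the threshold $d<2n/(n-1)$ and produces the scaling factor $(\tau_{2}-\tau_{1})^{-n(1/2-1/d)}$. The lower bound $2/p\le d$ is dictated by the growth behavior of $\snr{V_{s,p}(\cdot)}^{2}$ (which passes from $|z|^{2}$ to $|z|^{p}$) and is needed to push the averaging inside the nonlinearity, so that $\snr{V_{s,p}(D\mf{T}[w])}^{d}\lesssim \mf{T}[\snr{V_{s,p}(Dw)}^{d}]$ up to controllable errors, after which the previous Sobolev improvement closes the proof.
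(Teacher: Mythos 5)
The paper does not prove this lemma: it records it as a variant of the Fonseca--Mal\'y smoothing operator, citing Schmidt's Lemmas 6.3--6.5 in \cite{ts}, Lemmas 4.4 and 4.6 in \cite{ts1}, and ultimately \cite{foma}. So you are reconstructing a proof from scratch, and the relevant question is whether your reconstruction is correct.

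Your treatment of (\emph{i.})--(\emph{iv.}) and of the base estimates \eqref{ex.0} is sound and matches the standard route. The Lipschitz bounds $0\le\vartheta\le(\tau_2-\tau_1)/4$, $\snr{D\vartheta}\le 1/2$, the differentiation formula under the integral sign, and the change of variable $z=x+\vartheta(x)y$ with Jacobian $1+\langle D\vartheta,y\rangle\in[1/2,3/2]$ for each fixed $y\in B_1(0)$ are all correct, and the inclusion $T_y(B_\varsigma\setminus B_{\tau_1})\subset B_{2\varsigma-\tau_1}\setminus B_{\tau_1}$ for $\varsigma\le(\tau_1+\tau_2)/2$ indeed follows from $\vartheta(x)\le\tfrac12(\snr{x}-\tau_1)$. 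The Fubini-in-radii choice of $\ti\tau_1,\ti\tau_2$ avoiding a prescribed null set is also correct and standard. Your identification of the origin of the lower bound $d\ge 2/p$ --- convexity of $t\mapsto\snr{V_{s,p}(t)}^d$, so that Jensen gives $\snr{V_{s,p}(D\mf{T}[w])}^d\lesssim\mf{T}[\snr{V_{s,p}(Dw)}^d]$ --- is also on target.

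The gap is in \eqref{ex.1.2}. Two separate points fail. First, the claim that ``on the good annulus $B_{\ti\tau_2}\setminus B_{\ti\tau_1}$ the scale $\vartheta(x)$ is comparable to $\tau_2-\tau_1$'' is false: the operator $\mf{T}_{\ti\tau_1,\ti\tau_2}$ uses the cutoff $\ti\vartheta(x)=\tfrac12\max\{\min\{\snr{x}-\ti\tau_1,\ti\tau_2-\snr{x}\},0\}$, which \emph{vanishes} as $\snr{x}\to\ti\tau_1$ or $\snr{x}\to\ti\tau_2$, so the scale degenerates precisely at the two boundary spheres of that very annulus. Treating $\mf{T}$ as a fixed-scale mollification there is exactly what the construction is designed \emph{not} to be. Second, neither Young's inequality nor Sobolev embedding produces the threshold $d<2n/(n-1)$: Young against a bump of fixed scale $h$ improves $L^2$ to $L^d$ for \emph{every} $d\ge 2$ with factor $h^{-n(1/2-1/d)}$, imposing no upper bound on $d$, while Sobolev embedding on balls would yield the exponent $2n/(n-2)$, not $2n/(n-1)$. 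The correct mechanism in Fonseca--Mal\'y and Schmidt is a slicing argument tailored to the degenerating radial scale: one decomposes the average over $B_1(0)$ in $y$ into an average over translates along rays and spheres, and the gain of integrability corresponds to a one-dimensional trace gain, which is precisely what fixes $d<np/(n-1)$ (here $p=2$). Moreover, once you have the Jensen reduction $\snr{V_{s,p}(D\mf{T}[w])}^d\lesssim\mf{T}[\snr{V_{s,p}(Dw)}^d]$, a naive change of variable only gives an $L^d\to L^d$ bound, not the $L^2\to L^d$ improvement claimed; closing the estimate requires the slicing structure explicitly, and it is there that the choice of ``good'' radii $\ti\tau_1,\ti\tau_2$ enters (to control traces of $\snr{V_{s,p}(Dw)}^2$ on spheres, not to keep $\vartheta$ bounded below). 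As written, your proof of \eqref{ex.1.2} does not go through.
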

In the next lemma we derive a preliminary version of Caccioppoli inequality that will be eventually adjusted depending on the singular/nonsingular behavior of $\bar{\mathcal{F}}(\cdot)$. 

\begin{lemma}\label{l.0}
Assume \eqref{assf}-\eqref{sqc}, \eqref{assf.0} and \eqref{f}, let $u\in W^{1,p}(\Omega,\mathbb{R}^{N})$ be a local minimizer of \eqref{exfun}, $B_{\rr}(x_{0})\Subset \Omega$ be any ball, $\rr/2\le \tau_{1}<\tau_{2}\le \rr$ be parameters and $\ell(x):=v_{0}+\langle z_{0},x-x_{0}\rangle$ be an affine function with $z_{0}\in \left\{z\in \mathbb{R}^{N\times n}\colon\snr{z}\le \left(80000(M+1)\right)^{2/p} \right\}$ for some positive constant $M$, and $v_{0}\in \mathbb{R}^{N}$. Then
\begin{eqnarray}\label{cacc.ns}
\ti{\mf{F}}(u,z_{0};B_{\rr/2}(x_{0}))^{2}&\le&c\mf{K}\left(\mint_{B_{\rr}(x_{0})}\left| \ V_{\snr{z_{0}},p}\left(\frac{u-\ell}{\rr}\right) \ \right|^{2} \ \dx\right)+c\mathds{1}_{\{q>p\}}\ti{\mf{F}}(u,z_{0};B_{\rr}(x_{0}))^{2q/p}\nonumber \\
&&+c\left[\left(\rr^{m}\mint_{B_{\rr}(x_{0})}\snr{f}^{m} \ \dx\right)^{\frac{p}{m(p-1)}}+\snr{z_{0}}^{2-p}\left(\rr^{m}\mint_{B_{\rr}(x_{0})}\snr{f}^{m} \ \dx\right)^{2/m}\right]
\end{eqnarray}
holds with $c\equiv c(\textnormal{\texttt{data}},M)$ and $\mf{K}(\cdot)$ being defined in \eqref{ik}.
\end{lemma}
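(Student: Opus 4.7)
The plan is to compare the relaxed minimizer $u$ with a competitor that coincides with the affine function $\ell$ on an interior ball, produced through the smoothing operator of Lemma \ref{exlem}, and then exploit strict quasiconvexity \eqref{sqc} to convert the resulting $\bar{\mathcal{F}}$-energy gap into a lower bound on $\ti{\mf{F}}(u,z_{0};B_{\rr/2}(x_{0}))^{2}$. A standard reabsorption via Lemma \ref{l5} then produces \eqref{cacc.ns}.

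\emph{Competitor construction.} First I would fix two intermediate radii $\rr/2\le\tau_{1}<\tau_{2}\le\rr$ and use \eqref{ex.2} to select $\ti{\tau}_{1},\ti{\tau}_{2}$ in the corresponding subintervals for which \eqref{ex.1.2} applies. With $v:=\mf{T}_{\ti{\tau}_{1},\ti{\tau}_{2}}[u-\ell]$ and $\eta\in C^{\infty}_{c}(B_{\ti{\tau}_{2}}(x_{0}))$ satisfying $\eta\equiv 1$ on $B_{\ti{\tau}_{1}}(x_{0})$ and $\snr{D\eta}\lesssim (\ti{\tau}_{2}-\ti{\tau}_{1})^{-1}$, I set $\varphi:=\eta v\in W^{1,p}_{0}(B_{\ti{\tau}_{2}}(x_{0}),\RN)$ and $w:=u-\varphi$. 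By properties (ii)--(iv) of Lemma \ref{exlem} and \eqref{ex.1.2}, $w=\ell$ on $B_{\ti{\tau}_{1}}(x_{0})$, $w=u$ outside $B_{\ti{\tau}_{2}}(x_{0})$, and $w$ is admissible for $\bar{\mathcal{F}}$; minimality of $u$ against $w$ together with the measure representation \eqref{measure} on $B_{\ti{\tau}_{1}}(x_{0})$ and the $W^{1,q}$ regularity of $w$ on the annulus gives
$$\int_{B_{\ti{\tau}_{1}}(x_{0})}\left[F(Du)-F(z_{0})-\langle\partial F(z_{0}),Du-z_{0}\rangle\right]\dx\le E_{\mathrm{ann}}+\int_{B_{\ti{\tau}_{2}}(x_{0})} f\cdot\varphi\,\dx,$$
where $E_{\mathrm{ann}}$ collects the annular error coming from the difference of energies of $w$ and $\ell$.

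\emph{Quasiconvexity and annular bound.} Strict quasiconvexity \eqref{sqc} coupled with $\eqref{Vm}_{2}$ bounds the left-hand side from below by $\lambda\,\ti{\mf{F}}(u,z_{0};B_{\ti{\tau}_{1}}(x_{0}))^{2}$. The annular error $E_{\mathrm{ann}}$ is estimated using the second-order Taylor expansion \eqref{assf.0.1} of $F$ around $z_{0}$ together with the Leibniz rule $D\varphi=\eta\,Dv+D\eta\otimes v$; property (iv) of Lemma \ref{exlem} and \eqref{ex.0} then yield the $V_{\snr{z_{0}},p}$-piece with integrand $\snr{V_{\snr{z_{0}},p}((u-\ell)/\rr)}^{2}$ plus a reabsorbing fragment of $\ti{\mf{F}}(u,z_{0};B_{\tau_{2}}(x_{0}))^{2}$. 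The corresponding $V_{\snr{z_{0}},q}$-piece is upgraded to $\ti{\mf{F}}(u,z_{0};B_{\tau_{2}}(x_{0}))^{2q/p}$ through the Sobolev--Poincar\'e inequality \eqref{v.poi} combined with the higher integrability \eqref{ex.1.2}; here the constraint $q/p<1+1/(2n)$ from \eqref{pq} is essential for the exponents to balance, and this is the source of the $\mathds{1}_{\{q>p\}}$ term.

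\emph{Forcing term and conclusion.} The integral $\int f\cdot\varphi\,\dx$ is handled by H\"older with exponent $m$ and Sobolev--Poincar\'e applied to $\varphi\in W^{1,p}_{0}(B_{\ti{\tau}_{2}}(x_{0}))$, after which Young's inequality distributes the resulting factor of $(\rr^{m}\mint_{B_{\rr}}\snr{f}^{m}\dx)^{1/m}$ across the two regimes of $V_{\snr{z_{0}},p}$: the $p$-homogeneous regime yields the $(\cdots)^{p/(m(p-1))}$ bound while the quadratic regime around $z_{0}$ produces the $\snr{z_{0}}^{2-p}(\cdots)^{2/m}$ bound, which is exactly the last bracket in \eqref{cacc.ns}. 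Collecting everything leads to an inequality of the schematic form
$$\ti{\mf{F}}(u,z_{0};B_{\tau_{1}}(x_{0}))^{2}\le\theta\,\ti{\mf{F}}(u,z_{0};B_{\tau_{2}}(x_{0}))^{2}+(\text{RHS of \eqref{cacc.ns}}),$$
with $\theta<1/2$, and Lemma \ref{l5} applied on $[\rr/2,\rr]$ eliminates the reabsorbing tail. The most delicate step is the simultaneous upgrade to $W^{1,q}$ on the annulus via $\mf{T}_{\ti{\tau}_{1},\ti{\tau}_{2}}$ without losing control on the $V_{\snr{z_{0}},p}$-norm, and the precise splitting of the forcing contribution into the two regimes of $V_{\snr{z_{0}},p}$, since any slack there would spoil the Wolff-potential exponents used later in Theorems \ref{t.v.0}--\ref{t.v.1}.
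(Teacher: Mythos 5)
Your overall strategy is aligned with the paper's: use the smoothing operator of Lemma \ref{exlem} to build a competitor that equals $\ell$ on an inner ball and $u$ outside $B_{\ti{\tau}_{2}}(x_{0})$, apply strict quasiconvexity \eqref{sqc} to obtain a lower bound in terms of $V_{\snr{z_0},p}$, estimate the annular error via \eqref{assf.0.1}--\eqref{ex.1.2}, handle the forcing term by H\"older--Sobolev--Young, and reabsorb via Lemma \ref{l5}. However, there is a genuine gap in your competitor construction.

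You set $v:=\mf{T}_{\ti{\tau}_{1},\ti{\tau}_{2}}[u-\ell]$ and then multiply by the cutoff, so that $\varphi=\eta v$ and $w=u-\eta v$. The paper instead smooths the already-cut quantity: $\varphi_{1}:=\mf{T}_{\ti{\tau}_{1},\ti{\tau}_{2}}[(1-\eta)(u-\ell)]$, $\varphi_{2}:=(u-\ell)-\varphi_{1}$, so the competitor is $u-\varphi_{2}=\ell+\varphi_{1}$. These two constructions agree with $\ell$ on $B_{\ti{\tau}_{1}}$ and with $u$ outside $B_{\ti{\tau}_{2}}$, but they differ on the transition annulus in a way that is decisive here. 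The paper's competitor $\ell+\varphi_{1}$ is, on all of $B_{\ti{\tau}_{2}}$, the sum of an affine function and a function whose gradient has improved integrability by \eqref{ex.1.2}; it therefore lies in $W^{1,\tilde q}(B_{\ti{\tau}_{2}})$ for some $\tilde q>q$, and one may legitimately bound $\bar{\mathcal{F}}_{0}(\ell+\varphi_{1};B_{\ti{\tau}_{2}})\le \int_{B_{\ti{\tau}_{2}}}F(z_{0}+D\varphi_{1})\dx$ by taking the constant approximating sequence in the definition of the Lebesgue--Serrin--Marcellini extension. Your competitor, by contrast, has gradient $Dw=Du-\eta Dv-D\eta\otimes v$ on the annulus, which still contains the raw term $Du\in L^{p}$ (not $L^{q}$); hence $w\notin W^{1,q}(B_{\ti{\tau}_{2}})$ and the inequality $\bar{\mathcal{F}}_{0}(w;B_{\ti{\tau}_{2}})\le\int F(Dw)$ that you implicitly invoke when passing to the annular error $E_{\mathrm{ann}}$ is not available. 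The ``$W^{1,q}$ regularity of $w$ on the annulus'' you claim is false for your $w$, and without it the minimality inequality for $\bar{\mathcal{F}}$ cannot be converted into the integral estimate you want. This is precisely the subtle point on which the paper leans on the referenced construction of \cite{ts1} and \cite{deqc}: the smoothing must be applied to $(1-\eta)(u-\ell)$, not to $u-\ell$ followed by multiplication by $\eta$. The remainder of your argument (the use of \eqref{assf.0.1}, the $\mathds{1}_{\{q>p\}}$ term from the Sobolev bound, the two-regime Young split of the forcing term, the iteration Lemma \ref{l5}) would work once the competitor is corrected.
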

\begin{proof}
With $\tau_{1},\tau_{2}$ as in the statement, $\ti{\tau}_{1},\ti{\tau}_{2}$ as in \eqref{ex.2}\footnote{The negligible set $\mathcal{N}$ can be defined as in \cite[Lemma 3.1]{deqc} or in \cite[Lemma 7.13]{ts1}.}, 
let $\eta\in C^{1}_{c}(B_{\ti{\tau}_{2}}(x_{0}))$ be a cut-off function satisfying
$$
\mathds{1}_{B_{\ti{\tau}_{1}}(x_{0})}\le \eta\le \mathds{1}_{B_{\ti{\tau}_{2}}(x_{0})},\qquad  \snr{D\eta}\lesssim \frac{1}{(\ti{\tau}_{2}-\ti{\tau}_{1})},
$$
set $S(x_{0}):=B_{\tau_{2}}(x_{0})\setminus B_{\tau_{1}}(x_{0})$, $\ti{S}(x_{0}):=B_{\ti{\tau}_{2}}(x_{0})\setminus B_{\ti{\tau}_{1}}(x_{0})$, $\mf{u}(x):=u(x)-\ell(x)$ and introduce the comparison maps
$$
\varphi_{1}(x):=\mf{T}_{\ti{\tau}_{1},\ti{\tau}_{2}}[(1-\eta)\mf{u}](x),\qquad \varphi_{2}(x):=\mf{u}(x)-\varphi_{1}(x).
$$
By Lemma \ref{exlem} (\emph{ii.})-(\emph{iii.}) it is
\begin{flalign}\label{0.0.0}
\varphi_{1}\equiv 0 \ \  \mbox{on} \ \  B_{\ti{\tau}_{1}}(x_{0}),\quad\varphi_{2}\in W^{1,p}_{0}(B_{\ti{\tau}_{2}}(x_{0}),\mathbb{R}^{N}),\quad\varphi_{2}\equiv \uu \ \ \mbox{on} \ \ B_{\ti{\tau}_{1}}(x_{0}), \quad D\uu=D\varphi_{1}+D\varphi_{2}.
\end{flalign}
Moreover, by \eqref{assf}-\eqref{sqc}, \eqref{f} and Lemma \ref{exlem},
we see that the construction developed in \cite[Lemma 3.1]{deqc}, \cite[Lemma 7.13]{ts1} applies to our setting as well and renders:
\begin{eqnarray*}
c\int_{B_{\ti{\tau}_{2}}(x_{0})}\snr{V_{\snr{z_{0}},p}(D\varphi_{2})}^{2} \ \dx&\le&\int_{\ti{S}(x_{0})}\left[F(Du-D\varphi_{1})-F(Du)\right] \ \dx\nonumber \\
&&+\int_{\ti{S}(x_{0})}\left[F(z_{0}+D\varphi_{1})-F(z_{0})\right] \ \dx\nonumber \\
&&+\int_{B_{\ti{\tau}_{2}}(x_{0})}f\cdot \varphi_{2} \ \dx=:\left[\mbox{(I)}+\mbox{(II)}+\mbox{(III)}\right],
\end{eqnarray*}
with $c\equiv c(n,N,p,q,\lambda,\Lambda)$. Before proceeding further, let us notice that 
\begin{flalign}\label{x.0}
\mathcal{D}_{\snr{z_{0}}}(z_{1},z_{2})^{(q-p)/2}\lesssim \left(\mathcal{D}_{\snr{z_{0}}}(z_{1},z_{2})^{(p-2)/2}\mathcal{D}_{0}(z_{1},z_{2})\right)^{(q-p)/p}+\snr{z_{0}}^{q-p},
\end{flalign}
for all $z_{0},z_{1},z_{2}\in \mathbb{R}^{N\times n}$, where $\mathcal{D}_{\snr{z_{0}}}(\cdot)$ has been defined in Section \ref{notation} and the constants implicit in "$\approx$, $\lesssim$" depend on $(n,N,p,q)$, cf. \cite[page 256]{ts2}. We then rearrange:
\begin{eqnarray*}
\mbox{(I)}+\mbox{(II)}&=&\int_{\ti{S}(x_{0})}\left\langle\left(\int_{0}^{1}\left[\partial F(z_{0})-\partial F(z_{0}+D\uu-s D\varphi_{1})\right] \ \ds\right), D\varphi_{1}\right\rangle \ \dx\nonumber \\
&&+\int_{\ti{S}(x_{0})}\left\langle\left(\int_{0}^{1}\left[\partial F(z_{0}+s D\varphi_{1})-\partial F(z_{0})\right] \ \ds\right), D\varphi_{1}\right\rangle\  \dx=:\mbox{(I')}+\mbox{(II')},
\end{eqnarray*}
and estimate (keep in mind the upper bound on $\snr{z_{0}}$),
\begin{eqnarray*}
\snr{\mbox{(I')}}&\stackrel{\eqref{assf.0.1}_{2}}{\le}&c\int_{\ti{S}(x_{0})}\int_{0}^{1}\left[\frac{\snr{V_{\snr{z_{0}},p}(D\mf{u}-sD\varphi_{1})}^{2}}{\snr{D\mf{u}-sD\varphi_{1}}}+\frac{\snr{V_{\snr{z_{0}},q}(D\mf{u}-sD\varphi_{1})}^{2}}{\snr{D\mf{u}-sD\varphi_{1}}}\right] \ \ds\snr{D\varphi_{1}} \ \dx\nonumber \\
&\stackrel{\eqref{Vm}_{1,5},\eqref{l6}}{\le}&c\int_{\ti{S}(x_{0})}\left(\frac{\snr{V_{\snr{z_{0}},p}(D\mf{u})}^{2}\snr{D\varphi_{1}}}{\snr{D\mf{u}}}+\snr{V_{\snr{z_{0}},p}(D\varphi_{1})}^{2}\right) \ \dx\nonumber \\
&&+c\int_{\ti{S}(x_{0})}\left(\int_{0}^{1}(\snr{z_{0}}^{2}+\snr{D\mf{u}-sD\varphi_{1}}^{2})^{(q-2)/2} \ \ds\right)(\snr{D\mf{u}}+\snr{D\varphi_{1}})\snr{D\varphi_{1}} \ \dx\nonumber \\
&\stackrel{\eqref{Vm}_{4},\eqref{l6}}{\le}&c\int_{\ti{S}(x_{0})}\snr{V_{\snr{z_{0}},p}(D\mf{u})}^{2}+\snr{V_{\snr{z_{0}},p}(D\varphi_{1})}^{2} \ \dx\nonumber \\
&&+c\int_{\ti{S}(x_{0})}(\snr{z_{0}}^{2}+\snr{D\mf{u}}^{2}+\snr{D\varphi_{1}}^{2})^{(q-2)/2}(\snr{D\mf{u}}+\snr{D\varphi_{1}})\snr{D\varphi_{1}} \ \dx\nonumber \\
&\stackrel{\eqref{x.0}}{\le}&c\int_{\ti{S}(x_{0})}\snr{V_{\snr{z_{0}},p}(D\mf{u})}^{2}+\snr{V_{\snr{z_{0}},p}(D\varphi_{1})}^{2} \ \dx\nonumber \\
&&+c\int_{\ti{S}(x_{0})}\snr{z_{0}}^{q-p}\mathcal{D}_{\snr{z_{0}}}(D\mf{u},D\varphi_{1})^{(p-2)/2}(\snr{D\mf{u}}+\snr{D\varphi_{1}})\snr{D\varphi_{1}} \ \dx\nonumber \\
&&+c\int_{\ti{S}(x_{0})}\mathcal{D}_{\snr{z_{0}}}(D\mf{u},D\varphi_{1})^{\frac{q(p-2)}{2p}}\mathcal{D}_{0}(D\mf{u},D\varphi_{1})^{(q-p)/p}(\snr{D\mf{u}}+\snr{D\varphi_{1}})\snr{D\varphi_{1}} \ \dx\nonumber \\
&\stackrel{\eqref{pq}_{1}}{\le}&c\int_{\ti{S}(x_{0})}(1+\snr{z_{0}}^{q-p})\left(\snr{V_{\snr{z_{0}},p}(D\mf{u})}^{2}+\snr{V_{\snr{z_{0}},p}(D\varphi_{1})}^{2} \right)\ \dx\nonumber \\
&&+c\int_{\ti{S}(x_{0})}\mathcal{D}_{\snr{z_{0}}}(D\mf{u},D\varphi_{1})^{\frac{q(p-2)}{2p}}\left(\snr{D\mf{u}}^{\frac{2q}{p}-1}\snr{D\varphi_{1}}+\snr{D\varphi_{1}}^{2q/p}\right) \ \dx\nonumber \\
&\le&c\int_{\ti{S}(x_{0})}(1+\snr{z_{0}}^{q-p})\left(\snr{V_{\snr{z_{0}},p}(D\mf{u})}^{2}+\snr{V_{\snr{z_{0}},p}(D\varphi_{1})}^{2} \right)\ \dx\nonumber \\
&&+c\int_{\ti{S}(x_{0})}\snr{V_{\snr{z_{0}},p}(D\mf{u})}^{\frac{2q}{p}-1}\snr{V_{\snr{z_{0}},p}(D\varphi_{1})}+\snr{V_{\snr{z_{0}},p}(D\varphi_{1})}^{2q/p} \ \dx\nonumber \\
&\stackrel{\eqref{ex.1.2}}{\le}&c(1+\snr{z_{0}}^{q-p})\int_{\ti{S}(x_{0})}\snr{V_{\snr{z_{0}},p}(D\mf{u})}^{2}+\left|\ V_{\snr{z_{0}},p}\left(\frac{\mf{u}}{\tau_{2}-\tau_{1}}\right)\ \right|^{2} \ \dx\nonumber \\
&&+c\left(\int_{\ti{S}(x_{0})}\snr{V_{\snr{z_{0}},p}(D\mf{u})}^{2} \ \dx\right)^{(2q-p)/2p}\left(\int_{\ti{S}(x_{0})}\snr{V_{\snr{z_{0}},p}(D\varphi_{1})}^{\frac{2p}{3p-2q}} \ \dx\right)^{\frac{3p-2q}{2p}}\nonumber \\
&&+\frac{c\mathds{1}_{\{q>p\}}}{(\tau_{2}-\tau_{1})^{n\left(\frac{q}{p}-1\right)}}\left(\int_{S(x_{0})}\snr{V_{\snr{z_{0}},p}(D\mf{u})}^{2}+\left|\ V_{\snr{z_{0}},p}\left(\frac{\mf{u}}{\tau_{2}-\tau_{1}}\right)\ \right|^{2} \ \dx\right)^{q/p}\nonumber \\
&\stackrel{\eqref{pq}}{\le}&c(1+\snr{z_{0}}^{q-p})\int_{\ti{S}(x_{0})}\snr{V_{\snr{z_{0}},p}(D\mf{u})}^{2}+\left|\ V_{\snr{z_{0}},p}\left(\frac{\mf{u}}{\tau_{2}-\tau_{1}}\right)\ \right|^{2} \ \dx\nonumber \\
&&+\frac{c\mathds{1}_{\{q>p\}}}{(\tau_{2}-\tau_{1})^{n\left(\frac{q}{p}-1\right)}}\left(\int_{S(x_{0})}\snr{V_{\snr{z_{0}},p}(D\mf{u})}^{2}+\left|\ V_{\snr{z_{0}},p}\left(\frac{\mf{u}}{\tau_{2}-\tau_{1}}\right)\ \right|^{2} \ \dx\right)^{q/p},
\end{eqnarray*}
where we also used \eqref{assf.0.1}$_{2}$ with $L\equiv 80000(M+1)$, and exploited that by $\eqref{pq}_{2}$ it is $\max\left\{2q/p,2p/(3p-2q)\right\}<2n/(n-1)$, $\min\left\{2q/p,2p/(3p-2q)\right\}\ge 2/p$ and $c\equiv c(n,N,\lambda,\Lambda,p,q,M)$. In a totally similar way we bound:
\begin{eqnarray*}
\snr{\mbox{(II')}}&\stackrel{\eqref{assf.0.1}_{2},\eqref{Vm}_{5}}{\le}&c\int_{\ti{S}(x_{0})}\snr{V_{\snr{z_{0}},p}(D\varphi_{1})}^{2}+(\snr{z_{0}}^{2}+\snr{D\varphi_{1}}^{2})^{(q-2)/2}\snr{D\varphi_{1}}^{2} \ \dx\nonumber \\
&\stackrel{\eqref{x.0},\eqref{ex.1.2}}{\le}&c(1+\snr{z_{0}}^{q-p})\int_{\ti{S}(x_{0})}\snr{V_{\snr{z_{0}},p}(D\mf{u})}^{2}+\left|\ V_{\snr{z_{0}},p}\left(\frac{\mf{u}}{\tau_{2}-\tau_{1}}\right)\ \right|^{2} \ \dx\nonumber \\
&&+c\int_{\ti{S}(x_{0})}\snr{V_{\snr{z_{0}},p}(D\varphi_{1})}^{2q/p} \ \dx\nonumber \\
&\stackrel{\eqref{pq},\eqref{ex.1.2}}{\le}&c(1+\snr{z_{0}}^{q-p})\int_{\ti{S}(x_{0})}\snr{V_{\snr{z_{0}},p}(D\mf{u})}^{2}+\left|\ V_{\snr{z_{0}},p}\left(\frac{\mf{u}}{\tau_{2}-\tau_{1}}\right)\ \right|^{2} \ \dx\nonumber \\
&&+\frac{c\mathds{1}_{\{q>p\}}}{(\tau_{2}-\tau_{1})^{n\left(\frac{q}{p}-1\right)}}\left(\int_{S(x_{0})}\snr{V_{\snr{z_{0}},p}(D\mf{u})}^{2}+\left|\ V_{\snr{z_{0}},p}\left(\frac{\mf{u}}{\tau_{2}-\tau_{1}}\right)\ \right|^{2} \ \dx\right)^{q/p}
\end{eqnarray*}
for $c\equiv c(n,N,\lambda,\Lambda,p,q,M)$. Concerning term $\mbox{(III)}$, we use \eqref{f}, \eqref{f.0}, \eqref{0.0.0}$_{2}$, Sobolev-Poincar\'e inequality, Young inequality and \eqref{Vm.1} with $s=\snr{z_{0}}$, $z_{1}=0$, $z_{2}=D\varphi_{2}$ to estimate
\begin{eqnarray*}
\snr{\mbox{(III)}}&\le&c\snr{B_{\ti{\tau}_{2}}(x_{0})}\left(\ti{\tau}_{2}^{m}\mint_{B_{\ti{\tau}_{2}}(x_{0})}\snr{f}^{m} \ \dx\right)^{1/m}\left(\mint_{B_{\ti{\tau}_{2}}(x_{0})}\snr{D\varphi_{2}}^{p} \ \dx\right)^{1/p}\nonumber \\
&\le&c\snr{B_{\ti{\tau}_{2}}(x_{0})}\left(\ti{\tau}_{2}^{m}\mint_{B_{\ti{\tau}_{2}}(x_{0})}\snr{f}^{m} \ \dx\right)^{1/m}\left(\mint_{B_{\ti{\tau}_{2}}(x_{0})}\snr{V_{\snr{z_{0}},p}(D\varphi_{2})}^{2}\ \dx\right)^{1/p}\nonumber \\
&&+\snr{B_{\ti{\tau}_{2}}(x_{0})}\left(\ti{\tau}_{2}^{m}\mint_{B_{\ti{\tau}_{2}}(x_{0})}\snr{f}^{m} \ \dx\right)^{1/m}\left(\mint_{B_{\ti{\tau}_{2}}(x_{0})}\snr{V_{\snr{z_{0}},p}(D\varphi_{2})}^{p}\snr{z_{0}}^{p(2-p)/2}\ \dx\right)^{1/p}\nonumber \\
&\le&\frac{1}{4}\int_{B_{\ti{\tau}_{2}}(x_{0})}\snr{V_{\snr{z_{0}},p}(D\varphi_{2})}^{2} \ \dx+c\snr{B_{\ti{\tau}_{2}}(x_{0})}\left(\ti{\tau}_{2}^{m}\mint_{B_{\ti{\tau}_{2}}(x_{0})}\snr{f}^{m} \ \dx\right)^{\frac{p}{m(p-1)}}\nonumber \\
&&+c\snr{B_{\ti{\tau}_{2}}(x_{0})}\snr{z_{0}}^{2-p}\left(\ti{\tau}_{2}^{m}\mint_{B_{\ti{\tau}_{2}}(x_{0})}\snr{f}^{m} \ \dx\right)^{2/m},
\end{eqnarray*}
with $c\equiv c(n,N,p,m)$. Merging the content of the previous displays, reabsorbing terms and recalling \eqref{0.0.0}$_{3}$, \eqref{ex.2} and the upper bound imposed on the size of $\snr{z_{0}}$, we obtain 
\begin{flalign*}
\int_{B_{\tau_{1}}(x_{0})}&\snr{V_{\snr{z_{0}},p}(D\mf{u})}^{2} \ \dx\le c\int_{B_{\tau_{2}}(x_{0})\setminus B_{\tau_{1}}(x_{0})}\snr{V_{\snr{z_{0}},p}(D\mf{u})}^{2}+\left|\ V_{\snr{z_{0}},p}\left(\frac{\mf{u}}{\tau_{2}-\tau_{1}}\right)\ \right|^{2} \ \dx\nonumber \\
&\qquad +\frac{c\mathds{1}_{\{q>p\}}}{(\tau_{2}-\tau_{1})^{n\left(\frac{q}{p}-1\right)}}\left(\int_{B_{\tau_{2}}(x_{0})\setminus B_{\tau_{1}}(x_{0})}\snr{V_{\snr{z_{0}},p}(D\mf{u})}^{2}+\left|\ V_{\snr{z_{0}},p}\left(\frac{\mf{u}}{\tau_{2}-\tau_{1}}\right)\ \right|^{2} \ \dx\right)^{q/p}\nonumber \\
&\qquad +c\snr{B_{\tau_{2}}(x_{0})}\left[\left(\tau_{2}^{m}\mint_{B_{\tau_{2}}(x_{0})}\snr{f}^{m} \ \dx\right)^{\frac{p}{m(p-1)}}+\snr{z_{0}}^{2-p}\left(\tau_{2}^{m}\mint_{B_{\tau_{2}}(x_{0})}\snr{f}^{m} \ \dx\right)^{2/m}\right],
\end{flalign*}
for $c\equiv c(\textnormal{\texttt{data}},M)$. 
We then sum to both sides of the above inequality the quantity $c\int_{B_{\tau_{1}}(x_{0})}\snr{V_{\snr{z_{0}},p}(D\mf{u})}^{2} \ \dx$ and use Lemma \ref{l5} to conclude with \eqref{cacc.ns}.
\end{proof}
\section{The nonsingular regime}
Let us prove the approximate $\mathcal{A}$-harmonic character of minima of \eqref{exfun} within the nonsingular scenario.
\begin{lemma}\label{aahar}
Under assumptions \eqref{assf}-\eqref{sqc}, \eqref{assf.0} and \eqref{f}, let $u\in W^{1,p}(\Omega,\mathbb{R}^{N})$ be a local minimizer of \eqref{exfun}, $B_{\rr}(x_{0})\Subset \Omega$ be a ball and $z_{0}\in \left(\mathbb{R}^{N\times n}\setminus \{0\}\right)\cap \left\{\snr{z}\le \left(80000(M+1)\right)^{2/p }\right\}$ for some positive constant $M$ be any matrix such that $\ti{\mf{F}}(u,z_{0};B_{\rr}(x_{0}))>0$. Then
\begin{flalign*}
&\left| \ \mint_{B_{\rr}(x_{0})}\frac{\partial^{2}F(z_{0})}{\snr{z_{0}}^{p-2}}\left\langle\frac{\snr{z_{0}}^{(p-2)/2}(Du-z_{0})}{\ti{\mf{F}}(u,z_{0};B_{\rr}(x_{0}))},D\varphi\right\rangle \ \dx \ \right| \le \frac{c\snr{z_{0}}^{(2-p)/2}\nr{D\varphi}_{L^{\infty}(B_{\rr}(x_{0}))}}{\ti{\mf{F}}(u,z_{0};B_{\rr}(x_{0}))}\left(\rr^{m}\mint_{B_{\rr}(x_{0})}\snr{f}^{m} \ \dx\right)^{1/m}\nonumber \\
&\qquad \qquad +c\left[\mu_{M}\left(\frac{\ti{\mf{F}}(u,z_{0};B_{\rr}(x_{0}))^{2}}{\snr{z_{0}}^{p}}\right)+\left(\frac{\ti{\mf{F}}(u,z_{0};B_{\rr}(x_{0}))^{2}}{\snr{z_{0}}^{p}}\right)^{(2\kk-1)/2}\right]\nr{D\varphi}_{L^{\infty}(B_{\rr}(x_{0}))},
\end{flalign*}
where $\kk:=(q-1)/p$ if $q\ge 2$ and $\kk:=1/p$ when $1<q<2$, and $c\equiv c(\textnormal{\texttt{data}},M)$.
\end{lemma}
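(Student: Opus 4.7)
The strategy is to test the Euler--Lagrange equation \eqref{el} against $\varphi\in C^{\infty}_{c}(B_{\rr}(x_{0}),\mathbb{R}^{N})$ and linearize $\partial F$ around $z_{0}$. Since $\partial F(z_{0})$ is constant and $\varphi$ has compact support, $\int\langle\partial F(z_{0}),D\varphi\rangle\dx=0$, so \eqref{el} rearranges as
\begin{flalign*}
\int_{B_{\rr}(x_{0})}\partial^{2}F(z_{0})\langle Du-z_{0},D\varphi\rangle\dx=\int_{B_{\rr}(x_{0})}f\cdot\varphi\dx-\int_{B_{\rr}(x_{0})}\langle\mathcal{R}, D\varphi\rangle\dx,
\end{flalign*}
with linearization remainder $\mathcal{R}:=\partial F(Du)-\partial F(z_{0})-\partial^{2}F(z_{0})(Du-z_{0})$. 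Since the expression to be bounded differs from $\mint\partial^{2}F(z_{0})\langle Du-z_{0},D\varphi\rangle\dx$ only by the normalization factor $1/(\snr{z_{0}}^{(p-2)/2}\ti{\mf{F}}(u,z_{0};B_{\rr}(x_{0})))$, the task reduces to controlling $\mint f\cdot\varphi\dx$ and $\mint\langle\mathcal{R},D\varphi\rangle\dx$ separately.

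For the forcing term I would combine H\"older with exponents $(m,m')$ and the elementary bound $\snr{\varphi(x)}\le 2\rr\nr{D\varphi}_{L^{\infty}(B_{\rr}(x_{0}))}$ (valid since $\supp\varphi\Subset B_{\rr}(x_{0})$), obtaining $\snr{\mint f\cdot\varphi\dx}\le c(\rr^{m}\mint\snr{f}^{m}\dx)^{1/m}\nr{D\varphi}_{L^{\infty}}$, which after dividing by $\snr{z_{0}}^{(p-2)/2}\ti{\mf{F}}$ yields the first piece of the claimed right-hand side. To treat the remainder I would split $B_{\rr}(x_{0})=A\cup B$ with $A:=\{x\in B_{\rr}(x_{0})\colon\snr{Du(x)-z_{0}}\le\snr{z_{0}}/2\}$ and $B:=B_{\rr}(x_{0})\setminus A$. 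On $A$ the entire segment $[z_{0},Du(x)]$ is comparable to $\snr{z_{0}}$ in modulus, so \eqref{assf.0.3} and the integral representation $\mathcal{R}=\int_{0}^{1}[\partial^{2}F(z_{0}+s(Du-z_{0}))-\partial^{2}F(z_{0})](Du-z_{0})\ds$ yield $\snr{\mathcal{R}}\mathds{1}_{A}\le c\snr{z_{0}}^{p-2}\snr{Du-z_{0}}\mu_{M}(\snr{Du-z_{0}}^{2}/\snr{z_{0}}^{2})$. Using Cauchy--Schwarz, Jensen for the concave $\mu_{M}^{2}$, and the equivalence $\snr{V_{\snr{z_{0}},p}(Du-z_{0})}^{2}\approx\snr{z_{0}}^{p-2}\snr{Du-z_{0}}^{2}$ valid on $A$, the $A$-contribution reduces to $c\mu_{M}(\ti{\mf{F}}^{2}/\snr{z_{0}}^{p})\nr{D\varphi}_{L^{\infty}}$ after normalization.

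On $B$ the opposite asymptotics prevails: $\snr{V_{\snr{z_{0}},p}(Du-z_{0})}^{2}\approx\snr{Du-z_{0}}^{p}$. I would estimate $\snr{\partial F(Du)-\partial F(z_{0})}$ via $\eqref{assf.0.1}_{2}$ by $c(\snr{Du-z_{0}}^{p-1}+\snr{Du-z_{0}}^{q-1})$, and $\snr{\partial^{2}F(z_{0})(Du-z_{0})}$ via \eqref{assf.0.4} by $c\snr{z_{0}}^{p-2}\snr{Du-z_{0}}$ (legitimate since $\snr{z_{0}}\le c(M)$), yielding
\begin{flalign*}
\snr{\mathcal{R}}\mathds{1}_{B}\le c\left(\snr{Du-z_{0}}^{p-1}+\snr{Du-z_{0}}^{q-1}+\snr{z_{0}}^{p-2}\snr{Du-z_{0}}\right)\mathds{1}_{B}.
\end{flalign*}
For each summand I would exploit the pointwise inequality $\mathds{1}_{B}\le(2\snr{Du-z_{0}}/\snr{z_{0}})^{\alpha}$ with $\alpha\ge 0$ chosen so that the total power of $\snr{Du-z_{0}}$ equals $p\kk$ ($\kk$ as in the statement), and then apply Jensen's inequality (the condition $2\kk\le 2$ being ensured by \eqref{pq}) to $\mint\snr{V_{\snr{z_{0}},p}(Du-z_{0})}^{2\kk}\dx\le\ti{\mf{F}}^{2\kk}$. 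After normalization this produces $\le c(\ti{\mf{F}}^{2}/\snr{z_{0}}^{p})^{(2\kk-1)/2}\nr{D\varphi}_{L^{\infty}}$, modulo a bounded factor $\snr{z_{0}}^{q-p}$ absorbed into $c\equiv c(\textnormal{\texttt{data}},M)$.

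The main obstacle lies in the bookkeeping on $B$: the $(p,q)$-growth of $\partial F$ forces the dichotomy between $q\ge 2$ and $1<q<2$ and the corresponding choice of $\kk$, while the singular vector field $V_{\snr{z_{0}},p}$ demands that the indicator-boost exponent $\alpha$ be chosen precisely to match the power $p\kk$ dictated by Jensen. The leftover powers of $\snr{z_{0}}$ must then be tracked carefully to verify they combine, modulo the bounded factor $\snr{z_{0}}^{q-p}$, into exactly the exponent $-p(2\kk-1)/2$ of $\snr{z_{0}}$ appearing in the conclusion.
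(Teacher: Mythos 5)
Your proposal follows essentially the same route as the paper's proof: test the Euler--Lagrange identity \eqref{el}, split the domain according to whether $\snr{Du-z_{0}}$ is small or large relative to $\snr{z_{0}}$ (the paper uses the threshold $\snr{z_{0}}$, you use $\snr{z_{0}}/2$ — an immaterial difference), use \eqref{assf.0.3} with Cauchy--Schwarz and Jensen on the small set, and \eqref{assf.0.1}$_{2}$/\eqref{assf.0.4} together with the ``indicator boost'' $\mathds{1}_{B}\le(c\snr{Du-z_{0}}/\snr{z_{0}})^{\alpha}$ (which is exactly what the paper implements via \eqref{xxxx}) and Jensen with exponent $1/\kk>1$ on the large set, then divide through by $\snr{z_{0}}^{(p-2)/2}\ti{\mf{F}}(u,z_{0};B_{\rr}(x_{0}))$. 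Your bookkeeping is correct, including the verification that \eqref{pq} gives $1<2\kk<2$ and that the leftover $\snr{z_{0}}^{q-p}$ is absorbed into $c(\data,M)$.
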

\begin{proof}
Let $\varphi\in C^{\infty}_{c}(B_{\rr}(x_{0}),\mathbb{R}^{N})$ be a map and, for the ease of reading, let us shorten $B_{\rr}(x_{0})\equiv B_{\rr}$, $\ti{\mf{F}}(u,z_{0},B_{\rr}(x_{0}))\equiv \ti{\mf{F}}_{0}(u)$, $\ell(x):=v_{0}+\langle z_{0},x-x_{0}\rangle$ with $v_{0}\in \mathbb{R}^{N}$, $\mf{u}:=u-\ell$ and $\nr{D\varphi}_{L^{\infty}(B_{\rr}(x_{0}))}\equiv \nr{D\varphi}_{\infty}$. Set 
$$
B^{-}:=B_{\rr}\cap \left\{\snr{D\mf{u}}\le \snr{z_{0}}\right\},\qquad \quad B^{+}:=B_{\rr}\cap \left\{\snr{D\mf{u}}>\snr{z_{0}}\right\}
$$
and bound
\begin{eqnarray*}
\mathcal{I}&:=&\left| \ \mint_{B_{\rr}}\partial^{2}F(z_{0})\langle Du-z_{0},D\varphi\rangle  \ \dx\ \right|\nonumber \\
&\stackrel{\eqref{el}}{=}&\left| \ \mint_{B_{\rr}}\partial^{2}F(z_{0})\langle Du-z_{0},D\varphi\rangle-\langle\partial F(Du)-\partial F(z_{0}),D\varphi\rangle\ \dx+\mint_{B_{\rr}(x_{0})}f\cdot \varphi \ \dx \ \right|\nonumber \\
&\le&\mint_{B_{\rr}}\snr{\partial^{2}F(z_{0})\langle Du-z_{0},D\varphi\rangle-\langle\partial F(Du)-\partial F(z_{0}),D\varphi\rangle}\ \dx+\mint_{B_{\rr}(x_{0})}\snr{f\cdot \varphi} \ \dx\nonumber \\
&=&\frac{1}{\snr{B_{\rr}}}\int_{B^{-}}\snr{\partial^{2}F(z_{0})\langle Du-z_{0},D\varphi\rangle-\langle\partial F(Du)-\partial F(z_{0}),D\varphi\rangle}\ \dx\nonumber \\
&&+\frac{1}{\snr{B_{\rr}}}\int_{B^{+}}\snr{\partial^{2}F(z_{0})\langle Du-z_{0},D\varphi\rangle-\langle\partial F(Du)-\partial F(z_{0}),D\varphi\rangle}\ \dx\nonumber \\
&&+\mint_{B_{\rr}(x_{0})}\snr{f\cdot \varphi} \ \dx=:\mathcal{I}_{1} +\mathcal{I}_{2} +\mathcal{I}_{3},
\end{eqnarray*}
where we also used that $\int_{B_{\rr}}\langle\partial F(z_{0}),D\varphi\rangle \ \dx =0$. We then estimate
\begin{eqnarray*}
\mathcal{I}_{1}&\le&\frac{\nr{D\varphi}_{\infty}}{\snr{B_{\rr}}}\int_{B^{-}}\left(\int_{0}^{1}\snr{\partial^{2}F(z_{0})-\partial^{2}F(z_{0}+sD\mf{u})} \ \ds\right)\snr{D\mf{u}} \ \dx\nonumber \\
&\stackrel{\eqref{assf.0.3}}{\le}&\frac{c\nr{D\varphi}_{\infty}}{\snr{B_{\rr}}}\int_{B^{-}}\mu_{M}\left(\frac{\snr{V_{\snr{z_{0}}}(D\mf{u})}^{2}}{\snr{z_{0}}^{p}}\right)\left(\int_{0}^{1}\snr{z_{0}+sD\mf{u}}^{p-2} \ \ds\right)\snr{D\mf{u}} \ \dx\nonumber \\
&\stackrel{\eqref{l6}}{\le}&c\snr{z_{0}}^{(p-2)/2}\nr{D\varphi}_{\infty}\mint_{B_{\rr}}\mu_{M}\left(\frac{\snr{V_{\snr{z_{0}}}(D\mf{u})}^{2}}{\snr{z_{0}}^{p}}\right)\snr{V_{\snr{z_{0}},p}(D\mf{u})} \ \dx\nonumber \\
&\le&c\snr{z_{0}}^{(p-2)/2}\ti{\mf{F}}_{0}(u)\nr{D\varphi}_{\infty}\left(\mint_{B_{\rr}}\mu_{M}\left(\frac{\snr{V_{\snr{z_{0}},p}(D\mf{u})}^{2}}{\snr{z_{0}}^{p}} \ \right)^{2} \ \dx\right)^{1/2}\nonumber \\
&\le&c\snr{z_{0}}^{(p-2)/2}\ti{\mf{F}}_{0}(u)\mu_{M}\left(\frac{\ti{\mf{F}}_{0}(u)^{2}}{\snr{z_{0}}^{p}}\right)\nr{D\varphi}_{\infty},
\end{eqnarray*}
for $c\equiv c(n,N,p,F(\cdot),M)$. We remark that the convergence of the singular integral in the previous display can be justified as in \cite[Section 4]{dumi}. Moreover, when applying \eqref{assf.0.3} above we chose $L\equiv 80000(M+1)$ and consequently denote $\mu_{L}(\cdot)$ as $\mu_{M}(\cdot)$. Concerning term $\mathcal{I}_{2}$, notice that
\begin{eqnarray}\label{xxxx}
\snr{z}\stackrel{\eqref{pq}_{1}}{\le} 2\snr{V_{\snr{z_{0}},p}(z)}^{2/p}\qquad \mbox{for all} \ \ z\in \mathbb{R}^{N\times n}\cap\left\{\snr{z}\ge \snr{z_{0}}\right\},
\end{eqnarray}
so we have
\begin{eqnarray*}
\mathcal{I}_{2}&\stackrel{\eqref{assf.0.1}_{2},\eqref{assf.0.4}}{\le}&\frac{c\nr{D\varphi}_{\infty}}{\snr{B_{\rr}}}\int_{B^{+}}\left(\snr{z_{0}}^{p-2}+\frac{\snr{V_{\snr{z_{0}},p}(D\mf{u})}^{2}}{\snr{D\mf{u}}^{2}}+\frac{\snr{V_{\snr{z_{0}},q}(D\mf{u})}^{2}}{\snr{D\mf{u}}^{2}}\right)\snr{D\mf{u}} \ \dx\nonumber \\
&\stackrel{\eqref{xxxx}}{\le}&\frac{c\nr{D\varphi}_{\infty}}{\snr{B_{\rr}}}\int_{B^{+}}\left(1+\left(1-\mathds{1}_{\{q\ge 2\}}\right)\snr{z_{0}}^{q-p}\right)\snr{z_{0}}^{p-2}\snr{V_{\snr{z_{0}},p}(D\mf{u})}^{2/p} \ \dx\nonumber \\
&&+\frac{c\nr{D\varphi}_{\infty}}{\snr{B_{\rr}}}\int_{B^{+}}\mathds{1}_{\{q\ge 2\}}\snr{V_{\snr{z_{0}},p}(D\mf{u})}^{2(q-1)/p} \ \dx\nonumber \\
&\le&\frac{c\nr{D\varphi}_{\infty}(1+\snr{z_{0}}^{q-p})}{\snr{B_{\rr}}}\int_{B^{+}}\snr{z_{0}}^{p-2}\snr{V_{\snr{z_{0}},p}(D\mf{u})}^{2/p} \ \dx\nonumber \\
&&+\frac{c\nr{D\varphi}_{\infty}\snr{z_{0}}^{q-p}}{\snr{B_{\rr}}}\int_{B^{+}}\frac{\mathds{1}_{\{q\ge 2\}}\snr{V_{\snr{z_{0}},p}(D\mf{u})}^{2(q-1)/p}}{\snr{z_{0}}^{q-p}} \ \dx\nonumber \\
&\stackrel{\eqref{pq}}{\le}&c\nr{D\varphi}_{\infty}\snr{z_{0}}^{p-p\kk-1}\ti{\mf{F}}_{0}(u)^{2\kk},
\end{eqnarray*}
where we used that $\snr{z_{0}}\lesssim (M+1)$. In the above display, $\kk$ is defined as in the statement and $c\equiv c(\textnormal{\texttt{data}},M)$. Trivially, we also get
\begin{eqnarray*}
\mathcal{I}_{3}\le 4\nr{D\varphi}_{\infty}\left(\rr^{m}\mint_{B_{\rr}}\snr{f}^{m} \ \dx\right)^{1/m}.
\end{eqnarray*}
Merging the content of the previous displays, dividing both sides of the resulting inequality by $\snr{z_{0}}^{(p-2)/2}\ti{\mf{F}}_{0}(u)$ and recalling that by \eqref{pq}$_{1}$ it is $2\kk> 1$ we obtain \eqref{aahar} and the proof is complete.
\end{proof}
Now we are ready to prove a one-scale decay result valid in the nonsingular case. To this end, a fundamental observation is that under proper smallness conditions, local minimizers of \eqref{exfun} are approximately $\mathcal{A}$-harmonic in the sense of \eqref{con.0} for a suitable choice of the bilinear form $\mathcal{A}$.
\begin{proposition}\label{p1}
Under hypotheses \eqref{assf}-\eqref{sqc}, \eqref{assf.0} and \eqref{f} and let $u\in W^{1,p}(\Omega,\mathbb{R}^{N})$ be a local minimizer of \eqref{exfun} satisfying
\begin{eqnarray}\label{ns.1}
\snr{(V_{p}(Du))_{B_{\rr}(x_{0})}}\le 40000(M+1)
\end{eqnarray}
for some $M> 0$ on a ball $B_{\rr}(x_{0})\Subset \Omega$. Then, for any $\beta\in (0,1)$ there are $\tau\equiv \tau,(\textnormal{\texttt{data}},M,\beta)\in (0,1/16)$ and $\varepsilon_{0},\varepsilon_{1}\equiv \varepsilon_{0},\varepsilon_{1}(\textnormal{\texttt{data}},M,\beta)\in (0,1]$ such that if the smallness conditions
\begin{eqnarray}\label{ns.0}
\mf{F}(u;B_{\rr}(x_{0}))<\varepsilon_{0}\snr{(V_{p}(Du))_{B_{\rr}(x_{0})}}
\end{eqnarray}
and
\begin{eqnarray}\label{ns.2}
\left(\rr^{m}\mint_{B_{\rr}(x_{0})}\snr{f}^{m} \ \dx\right)^{1/m}\le \varepsilon_{1}\mf{F}(u;B_{\rr}(x_{0}))\snr{(V_{p}(Du))_{B_{\rr}(x_{0})}}^{(p-2)/p}
\end{eqnarray}
are verified on $B_{\rr}(x_{0})$, it holds that
\begin{eqnarray}\label{nsns}
\mf{F}(u;B_{\tau\rr}(x_{0}))\le \tau^{\beta}\mf{F}(u;B_{\rr}(x_{0})).
\end{eqnarray}
\end{proposition}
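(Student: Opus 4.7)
The strategy is a standard $\mathcal{A}$-harmonic approximation argument, renormalized through the vector field $V_{|z_{0}|,p}$, combined with the Caccioppoli inequality of Lemma \ref{l.0} and the linearization Lemma \ref{aahar}.

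First I would fix the reference matrix $z_{0}:=V_{p}^{-1}((V_{p}(Du))_{B_{\rr}(x_{0})})$, so that $\eqref{Vm}_{6}$ together with \eqref{ns.1} gives $|z_{0}|^{p/2}\approx |(V_{p}(Du))_{B_{\rr}(x_{0})}|\le 40000(M+1)$, placing $z_{0}$ in the range where Lemmas \ref{l.0} and \ref{aahar} apply. By \eqref{equiv.22} one has $\tilde{\mf{F}}_{0}:=\tilde{\mf{F}}(u,z_{0};B_{\rr}(x_{0}))\approx \mf{F}(u;B_{\rr}(x_{0}))$; \eqref{ns.0} then yields $\tilde{\mf{F}}_{0}^{2}/|z_{0}|^{p}\lesssim \varepsilon_{0}^{2}$, so $z_{0}\neq 0$ and we are genuinely in the nonsingular regime. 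The normalized map $w(x):=|z_{0}|^{(p-2)/2}(u(x)-\ell(x))/\tilde{\mf{F}}_{0}$, with $\ell(x):=v_{0}+\langle z_{0},x-x_{0}\rangle$ and $v_{0}\in \mathbb{R}^{N}$ chosen so that $(w)_{B_{\rr}(x_{0})}=0$, is the natural rescaled object.

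Next I would freeze the bilinear form $\mathcal{A}:=\partial^{2}F(z_{0})/|z_{0}|^{p-2}$, which by \eqref{sqc.1} and \eqref{assf.0.4} satisfies \eqref{con.0} with a constant $H\equiv H(\textnormal{\texttt{data}},M)$. Lemma \ref{aahar} then shows that $w$ is approximately $\mathcal{A}$-harmonic: the $\mu_{M}(\tilde{\mf{F}}_{0}^{2}/|z_{0}|^{p})$ term is small with $\varepsilon_{0}$, the $\kappa$-power factor is of higher order since $2\kappa>1$, and the forcing contribution is controlled by \eqref{ns.2}. The integrability hypothesis of Lemma \ref{ahar}, namely $\mathfrak{I}_{2}(V_{1,p}(Dw);B_{\rr}(x_{0}))\le \sigma\le 1$, would then follow from Lemma \ref{l.0} applied with the same $z_{0}$ after absorbing the $(p,q)$-correction $\mathds{1}_{\{q>p\}}\tilde{\mf{F}}_{0}^{2q/p}$ via the factor $\tilde{\mf{F}}_{0}^{2(q-p)/p}\lesssim \varepsilon_{0}^{2(q-p)/p}$, yielding a $\sigma$ depending only on $\textnormal{\texttt{data}}$ and $M$.

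Lemma \ref{ahar} would then provide an $\mathcal{A}$-harmonic $h$ with $\fint_{B_{\rr}(x_{0})}|V_{1,p}((w-\sigma h)/\rr)|^{2}\le c\sigma^{2}\varepsilon$ and $\fint_{B_{\rr}(x_{0})}|V_{1,p}(Dh)|^{2}\le c$. The $C^{1,1}$ regularity \eqref{y.0} of $h$ gives $\fint_{B_{\tau\rr}(x_{0})}|Dh-Dh(x_{0})|^{2}\lesssim \tau^{2}$. Undoing the $\tilde{\mf{F}}_{0}|z_{0}|^{(2-p)/2}$-scaling and replacing $\ell$ by the corrected affine function $\ell(x)+\tilde{\mf{F}}_{0}|z_{0}|^{(2-p)/2}[h(x_{0})+\langle Dh(x_{0}),x-x_{0}\rangle]$, I would obtain $\tilde{\mf{F}}(u,z_{0}+\tilde{\mf{F}}_{0}|z_{0}|^{(2-p)/2}Dh(x_{0});B_{\tau\rr}(x_{0}))\le c(\textnormal{\texttt{data}},M)(\tau+\varepsilon^{1/2})\tilde{\mf{F}}_{0}$. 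Passing back to $\mf{F}$ via \eqref{minav}, \eqref{equiv.11} and the properties in \eqref{Vm} yields \eqref{nsns} once I first pick $\tau$ small so that $c\tau\le \tau^{\beta}/2$, then $\varepsilon$ with $c\varepsilon^{1/2}\le \tau^{\beta}/2$, and finally $\varepsilon_{0},\varepsilon_{1}$ small enough to meet the $\delta$-threshold of Lemma \ref{ahar}. The main obstacle is the simultaneous balancing of these three smallness layers: \eqref{ns.0} must be strong enough to kill both the $\mu_{M}$-term and the $\kappa$-power coming from Lemma \ref{aahar}; the Caccioppoli correction $\mathds{1}_{\{q>p\}}\tilde{\mf{F}}^{2q/p}$ must be reabsorbable; and the forcing entering Lemma \ref{aahar} must carry precisely the $|z_{0}|^{(2-p)/2}/\tilde{\mf{F}}_{0}$ factor dictated by the normalization, which is exactly the homogeneity prescribed by \eqref{ns.2}.
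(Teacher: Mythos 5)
Your strategy is broadly the right one — linearize around the reference matrix $z_0=V_p^{-1}((V_p(Du))_{B_\rho(x_0)})$, establish approximate $\mathcal{A}$-harmonicity via Lemma~\ref{aahar}, invoke Lemma~\ref{ahar}, use the $C^{1,1}$ regularity of $h$ from \eqref{y.0} to build a corrected affine comparison function, and then balance $\tau$, $\varepsilon$, $\varepsilon_0$, $\varepsilon_1$. However there is a genuine gap, and in addition you have placed the Caccioppoli inequality in the wrong spot.

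First, the claim that the integrability hypothesis of Lemma~\ref{ahar} ``would follow from Lemma~\ref{l.0}'' is incorrect. Caccioppoli bounds the gradient excess $\tilde{\mf{F}}(u,z_0;B_{\rho/2})$ at the \emph{smaller} scale by a Poincar\'e-type quantity at scale $\rho$, so it cannot be used to verify a condition on $\mathfrak{I}_2(V_{1,p}(Dw);B_\rho)$ at the scale $\rho$ itself. More to the point, no Caccioppoli is needed here at all: setting $u_0:=|z_0|^{-1}(u-\ell)$, the algebraic identity $V_{1,p}(Du_0)=V_{|z_0|,p}(Du-z_0)/|z_0|^{p/2}$ gives $\mathfrak{I}_2(V_{1,p}(Du_0);B_\rho)=\tilde{\mf{F}}(u,z_0;B_\rho)/|z_0|^{p/2}\approx\mf{F}(u;B_\rho)/|(V_p(Du))_{B_\rho}|<c_*\varepsilon_0$ directly from \eqref{ns.0} and \eqref{equiv.22}, so the hypothesis is immediate with $\sigma:=c_*\mf{F}(u;B_\rho)/|(V_p(Du))_{B_\rho}|$.

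Second — and this is the real gap — your stated conclusion $\tilde{\mf{F}}(u,z_0+\tilde{\mf{F}}_0|z_0|^{(2-p)/2}Dh(x_0);B_{\tau\rho})\lesssim(\tau+\varepsilon^{1/2})\tilde{\mf{F}}_0$ does not follow from what precedes it. What the harmonic approximation and the $C^{1,1}$ bound on $h$ produce is a \emph{function-level} (Poincar\'e-type) estimate of the form $\fint_{B_{2\tau\rho}}|V_{|z_0|,p}((u-\ell')/(2\tau\rho))|^2\lesssim\tau^2\mf{F}(u;B_\rho)^2$, where $\ell'$ is the corrected affine function. Passing from this to the \emph{gradient-level} excess $\tilde{\mf{F}}(u,z_0';B_{\tau\rho})^2$ with $z_0':=z_0+\sigma|z_0|Dh(x_0)$ is precisely where the Caccioppoli inequality \eqref{cacc.ns} of Lemma~\ref{l.0} must be applied — at the scale $\tau\rho$ versus $2\tau\rho$, with $z_0'$ as the reference matrix. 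This is the mechanism that actually produces the excess decay, and it also reinjects the $\mathds{1}_{\{q>p\}}$-correction and, crucially, the forcing terms $(\tau\rho)^m\fint_{B_{2\tau\rho}}|f|^m$ at the small scale, which carry $\tau$-dependent powers that have to enter the final balance together with $\tau^{2\beta}$. You mention the forcing entering Lemma~\ref{aahar} (the linearization) but not the forcing entering this second Caccioppoli, and you do not verify that the corrected slope satisfies $|z_0'|\approx|z_0|$ (needed so that $V_{|z_0'|,p}$ and $V_{|z_0|,p}$ are interchangeable, which the paper obtains after reducing $\varepsilon_0$ so that $c\sigma\le 1/2$). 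Without this Caccioppoli step at scale $\tau\rho$ with the corrected slope, the argument does not close.
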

\begin{proof}
For the transparency of the exposition, let us introduce some abbreviations. As all balls considered here will be concentric to $B_{\rr}(x_{0})$, we shall omit denoting the center, given any ball $B_{\varsigma}(x_{0})\subseteq B_{\rr}(x_{0})$ we shorten $(V_{p}(Du))_{B_{\varsigma}(x_{0})}\equiv (V_{p}(Du))_{\varsigma}$ and for all $\varphi\in C^{\infty}_{c}(B_{\rr},\mathbb{R}^{N})$ we denote $\nr{D\varphi}_{L^{\infty}(B_{\rr})}\equiv \nr{D\varphi}_{\infty}$. In the light of \eqref{ns.0}, there is no loss of generality in assuming that 
\begin{eqnarray}\label{ns.3}
\snr{(V_{p}(Du))_{\rr}}>0\qquad \mbox{and}\qquad \mf{F}(u;B_{\rr})>0,
\end{eqnarray}
otherwise \eqref{nsns} would be trivially true because of $\eqref{tri.1}_{1}$. Being $V_{p}(\cdot)$ an isomorphism of $\mathbb{R}^{N\times n}$, we can find $\bar{z}\in \mathbb{R}^{N\times n}\setminus \{0\}$ such that $V_{p}(\bar{z})=(V_{p}(Du))_{\rr}$ and
\begin{eqnarray}\label{ns.4}
\snr{\bar{z}}\stackrel{\eqref{ns.3}_{1}}{>}0,\qquad \ti{\mf{F}}(u,\bar{z};B_{\rr})\stackrel{\eqref{equiv.22}}{\approx} \mf{F}(u;B_{\rr})\stackrel{\eqref{ns.3}_{2}}{>}0, \qquad \snr{\bar{z}}=\snr{(V_{p}(Du))_{\rr}}^{2/p}.
\end{eqnarray}
We then define $u_{0}(x):=\snr{\bar{z}}^{-1}\left(u(x)-(u)_{\rr}-\langle \bar{z},x-x_{0}\rangle\right)$ and, motivated by \eqref{ns.4}, we apply Lemma \ref{aahar} to get
\begin{eqnarray*}
\left| \ \mint_{B_{\rr}}\frac{\partial^{2}F(\bar{z})}{\snr{\bar{z}}^{p-2}}\langle Du_{0},D\varphi\rangle \ \dx \ \right|&\le&\frac{c\ti{\mf{F}}(u,\bar{z};B_{\rr})}{\snr{\bar{z}}^{p/2}}\left[\mu_{M}\left(\frac{\ti{\mf{F}}(u,\bar{z};B_{\rr})^{2}}{\snr{\bar{z}}^{p}}\right)+\left(\frac{\ti{\mf{F}}(u,\bar{z};B_{\rr})^{2}}{\snr{\bar{z}}^{p}}\right)^{\frac{2\kk-1}{2}}\right]\nr{D\varphi}_{\infty}\nonumber \\
&&+c\snr{\bar{z}}^{1-p}\left(\rr^{m}\mint_{B_{\rr}}\snr{f}^{m} \ \dx\right)^{1/m}\nr{D\varphi}_{\infty}\nonumber \\
&\stackrel{\eqref{ns.4}_{2,3}}{\le}&\frac{c\mf{F}(u;B_{\rr})}{\snr{(V_{p}(Du))_{\rr}}}\left[\mu_{M}\left(\frac{\mf{F}(u;B_{\rr})^{2}}{\snr{(V_{p}(Du))_{\rr}}^{2}}\right)+\left(\frac{\mf{F}(u;B_{\rr})^{2}}{\snr{(V_{p}(Du))_{\rr}}^{2}}\right)^{\frac{2\kk-1}{2}}\right]\nr{D\varphi}_{\infty}\nonumber \\
&&+c\snr{(V_{p}(Du))_{\rr}}^{2(1-p)/p}\left(\rr^{m}\mint_{B_{\rr}}\snr{f}^{m} \ \dx\right)^{1/m}\nonumber\\
&\stackrel{\eqref{ns.0},\eqref{ns.2}}{\le}&\ti{c}\varepsilon_{0}\left[\mu_{M}(\varepsilon_{0}^{2})+\varepsilon_{0}^{2\kk-1}+\varepsilon_{1}\right]\nr{D\varphi}_{\infty},
\end{eqnarray*}
with $\ti{c}\equiv \ti{c}(\textnormal{\texttt{data}},M)$. Moreover, it holds that
\begin{eqnarray*}
\left(\mint_{B_{\rr}}\snr{V_{1,p}(Du_{0})}^{2} \ \dx\right)^{1/2}=\frac{\ti{\mf{F}}(u,\bar{z};B_{\rr})}{\snr{\bar{z}}^{p/2}}\stackrel{\eqref{ns.4}_{2,3}}{\le}\frac{c_{*}\mf{F}(u;B_{\rr})}{\snr{(V_{p}(Du))_{\rr}}}\stackrel{\eqref{ns.0}}{\le}c_{*}\varepsilon_{0},
\end{eqnarray*}
where $c_{*}\equiv c_{*}(n,N,p)$ is the constant from the upper bound in \eqref{equiv.22}. Now notice that by \eqref{ns.1}, \eqref{ns.4}$_{3}$, \eqref{assf.0.4} with $L=(40000(M+1))^{2/p}$ and \eqref{sqc.1} we see that the bilinear form $\mathcal{A}:=\partial^{2}F(\bar{z})\snr{\bar{z}}^{2-p}$ satisfies \eqref{con.0} for some $H\equiv H(n,N,\lambda,p,F(\cdot),M)\ge 1$. We then set $\sigma:=c_{*}\mf{F}(u;B_{\rr})/\snr{(V_{p}(Du))_{\rr}}$, let $\varepsilon\in (0,1]$ be any number to be determined later on and, recalling that by \eqref{pq}$_{1}$ it is $2\kk>1$, we assume the following smallness conditions:
\begin{eqnarray}\label{ns.5}
\max\{\ti{c},c_{*}\}\varepsilon_{0}<\frac{\delta}{2^{10}}\qquad \mbox{and}\qquad \mu_{M}(\varepsilon_{0}^{2})+\varepsilon_{0}^{2\kk-1}+\varepsilon_{1}\le \frac{1}{2^{10}},
\end{eqnarray}
where $\delta\equiv \delta(n,N,p,\varepsilon)\in (0,1]$ is the small parameter given by Lemma \ref{ahar}, further restrictions on the size of the various parameters appearing in \eqref{ns.5} will be imposed later on. The choice in \eqref{ns.5} requires in particular that $\varepsilon_{1}\in (0,1/3]$, fixes dependency $\varepsilon_{0}\equiv \varepsilon_{0}(\textnormal{\texttt{data}},M,\varepsilon)$ and ultimately gives that
\begin{eqnarray*}
\mint_{B_{\rr}}\snr{V_{1,p}(Du_{0})}^{2} \ \dx \le \sigma^{2}\le 1\quad \mbox{and}\quad \left| \ \mint_{B_{\rr}}\mathcal{A}\langle Du_{0},D\varphi\rangle \ \dx \ \right|\le \delta\nr{D\varphi}_{\infty} \ \ \mbox{for all} \ \ \varphi\in C^{\infty}_{c}(B_{\rr},\mathbb{R}^{N}),
\end{eqnarray*}
so Lemma \ref{ahar} applies: there exists a $\mathcal{A}$-harmonic map $h\in W^{1,p}(B_{\rr},\mathbb{R}^{N})$ such that
\begin{eqnarray}\label{ns.6}
\mint_{B_{\rr}(x_{0})}\snr{V_{1,p}(Dh)}^{2} \ \dx\le c \qquad \mbox{and}\qquad \mint_{B_{\rr}(x_{0})}\left| \ V_{1,p}\left(\frac{u_{0}-\sigma h}{\rr}\right)\ \right|^{2} \ \dx \le c\sigma^{2}\varepsilon,
\end{eqnarray}
for $c\equiv c(n,N,p)$. Let $\tau\in (0,2^{-10})$ be a small number whose size will be determined later on and estimate by \eqref{Vm}$_{5,7}$, $\eqref{ns.6}$, \eqref{y.0}, \eqref{xxxx} with $\snr{z_{0}}=1$ and the mean value theorem:
\begin{flalign*}
&\mint_{B_{2\tau\rr}}\left| \ V_{1,p}\left(\frac{u_{0}-\sigma(h(x_{0})+\langle Dh(x_{0}),x-x_{0}\rangle)}{2\tau\rr}\right) \ \right|^{2} \ \dx\nonumber \\
&\qquad \qquad\le c\mint_{B_{2\tau\rr}}\left| V_{1,p}\left(\frac{\sigma(h-h(x_{0})-\langle Dh(x_{0}),x-x_{0}\rangle)}{2\tau\rr}\right)\ \ \right|^{2} \ \dx+c\mint_{B_{2\tau\rr}}\left| \ V_{1,p}\left(\frac{u_{0}-\sigma h}{2\tau\rr}\right) \ \right|^{2} \ \dx\nonumber \\
&\qquad \qquad \le \frac{c\varepsilon\sigma^{2}}{\tau^{n+2}}+c\sigma^{2}\mint_{B_{2\tau\rr}}\left| \frac{h-h(x_{0})-\langle Dh(x_{0}),x-x_{0}\rangle}{2\tau\rr}\right|^{2} \ \dx\le\frac{c\varepsilon\sigma^{2}}{\tau^{n+2}}+c\sigma^{2}\tau^{2}\rr^{2}\nr{D^{2}h}_{L^{\infty}(B_{4\tau\rr})}^{2}\nonumber \\
&\qquad \qquad \le \frac{c\varepsilon\sigma^{2}}{\tau^{n+2}}+c\sigma^{2}\tau^{2}\mf{I}_{1}(Dh;B_{\rr})^{2}\le  \frac{c\varepsilon\sigma^{2}}{\tau^{n+2}}+c\sigma^{2}\tau^{2},
\end{flalign*}
with $c\equiv c(n,N,p)$. In the above display, we fix $\varepsilon:=\tau^{n+4}$ thus getting
\begin{eqnarray}\label{ns.7}
\mint_{B_{2\tau\rr}}\left| \ V_{1,p}\left(\frac{u_{0}-\sigma(h(x_{0})+\langle Dh(x_{0}),x-x_{0}\rangle)}{2\tau\rr}\right) \ \right|^{2} \ \dx\le c\sigma^{2}\tau^{2},
\end{eqnarray}
for $c\equiv c(n,N,p)$, which yields that
\begin{eqnarray}\label{ns.8}
\mint_{B_{2\tau\rr}}\snr{V_{\snr{\bar{z}},p}(\mathcal{S}(x))}^{2} \ \dx&:=&\mint_{B_{2\tau\rr}}\left| \ V_{\snr{\bar{z}},p}\left(\frac{u-(u)_{\rr}-\langle \bar{z},x-x_{0}\rangle-\sigma\snr{\bar{z}}(h(x_{0})+\langle Dh(x_{0}),x-x_{0}\rangle)}{2\tau\rr}\right) \ \right|^{2} \ \dx\nonumber \\
&=&\snr{\bar{z}}^{p}\mint_{B_{2\tau\rr}}\left| \ V_{1,p}\left(\frac{u_{0}-\sigma(h(x_{0})-\langle Dh(x_{0}),x-x_{0}\rangle)}{2\tau\rr}\right) \ \right|^{2} \ \dx\nonumber \\
&\stackrel{\eqref{ns.7}}{\le}&c\tau^{2}\snr{\bar{z}}^{p}\left(\frac{\mf{F}(u;B_{\rr})}{\snr{(V_{p}(Du))_{\rr}}}\right)^{2}\stackrel{\eqref{ns.4}_{3}}{\le}c\tau^{2}\mf{F}(u;B_{\rr})^{2},
\end{eqnarray}
with $c\equiv c(n,N,p)$. Now, notice that by \eqref{y.0} and $\eqref{ns.6}_{1}$ it is $\snr{Dh(x_{0})}\le c(n,N,p) $, so recalling \eqref{ns.0} and reducing further (with respect to \eqref{ns.5}) the size of $\varepsilon_{0}$ in such a way that $c\sigma \le c\varepsilon_{0} \le \min\{2^{-10},\tau^{n}\}$ we obtain
\eqn{ns.9}
$$\snr{\bar{z}}(1-c\sigma)\le \snr{\bar{z}+\sigma\snr{\bar{z}}Dh(x_{0})}\le \snr{\bar{z}}(1+c\sigma) \ \Longrightarrow \ \frac{1}{2}\snr{\bar{z}}\le \snr{\bar{z}+\sigma\snr{\bar{z}}Dh(x_{0})}\le \frac{3}{2}\snr{\bar{z}}.$$
We can then estimate
\begin{flalign*}
&\left|\  \snr{V_{\snr{\bar{z}},p}(\mathcal{S}(x))}^{2}-\snr{V_{\snr{\bar{z}+\sigma\snr{\bar{z}}Dh(x_{0})},p}(\mathcal{S}(x))}^{2} \ \right|\nonumber \\
&\qquad \qquad \le c\snr{\mathcal{S}(x)}^{2}\sup_{t\in [\snr{\bar{z}}(1-c\sigma),\snr{\bar{z}}(1+c\sigma)]}\left(t^{2}+\snr{\mathcal{S}(x)}^{2}\right)^{(p-3)/2}\left|\snr{\bar{z}}-\snr{\bar{z}+\sigma\snr{\bar{z}}Dh(x_{0})}\right|\nonumber \\
&\qquad \qquad \le c\sigma\snr{\bar{z}}\snr{Dh(x_{0})}\snr{\mathcal{S}(x)}^{2}\left(\snr{\bar{z}}(1-c\sigma)^{2}+\snr{\mathcal{S}(x)}^{2}\right)^{(p-3)/2}\nonumber \\
&\qquad \qquad\le\frac{c\sigma\snr{\bar{z}}\snr{V_{\snr{\bar{z}},p}(\mathcal{S}(x))}^{2}}{(1-c\sigma)^{3-p}(\snr{\mathcal{S}(x)}^{2}+\snr{\bar{z}}^{2})^{1/2}}\le c\snr{V_{\snr{\bar{z}},p}(\mathcal{S}(x))}^{2},
\end{flalign*}
for $c\equiv c(n,N,p)$. This and \eqref{ns.8} imply that
\begin{eqnarray}\label{ns.11}
\mint_{B_{2\tau\rr}}\snr{V_{\snr{\bar{z}+\sigma\snr{\bar{z}}Dh(x_{0})},p}(\mathcal{S}(x))}^{2} \ \dx\le c\mint_{B_{2\tau\rr}}\snr{V_{\snr{\bar{z}},p}(\mathcal{S}(x))}^{2} \ \dx\le c\tau^{2}\mf{F}(u;B_{\rr})^{2},
\end{eqnarray}
with $c\equiv c(n,N,p)$. Next, notice that
\begin{eqnarray}\label{ns.10}
\ti{\mf{F}}(u,\bar{z}+\sigma\snr{\bar{z}}Dh(x_{0});B_{2\tau\rr})^{2}&\stackrel{\eqref{ns.9}}{\le}&c\mint_{B_{2\tau\rr}}\snr{V_{\snr{\bar{z}},p}(Du-\bar{z}-\sigma\snr{\bar{z}}Dh(x_{0}))}^{2} \ \dx\nonumber \\
&\stackrel{\eqref{Vm}_{7}}{\le}&c\mint_{B_{2\tau\rr}}\snr{V_{\snr{\bar{z}},p}(Du-\bar{z})}^{2} \ \dx+c\mint_{B_{2\tau\rr}}\snr{V_{\snr{\bar{z}},p}(\sigma\bar{z}Dh(x_{0}))}^{2} \ \dx\nonumber \\
&\stackrel{\eqref{ns.4}_{2}}{\le}&c\tau^{-n}\mf{F}(u;B_{\rr})^{2}+c\sigma^{2}\snr{\bar{z}}^{p}\stackrel{\eqref{ns.4}_{3}}{\le} c\tau^{-n}\mf{F}(u;B_{\rr})^{2},
\end{eqnarray}
for $c\equiv c(n,N,p)$. At this stage, keeping in mind \eqref{ns.9} we apply Caccioppoli inequality \eqref{cacc.ns} to bound
\begin{eqnarray*}
\ti{\mf{F}}(u,\bar{z}+\sigma\snr{\bar{z}}Dh(x_{0});B_{\tau\rr})^{2}&\le&c\mf{K}\left(\mint_{B_{2\tau\rr}}\left| \ V_{\snr{\bar{z}+\sigma\snr{\bar{z}}Dh(x_{0})},p}(\mathcal{S}(x)) \ \right|^{2} \ \dx\right)\nonumber \\
&&+c\mathds{1}_{\{q>p\}}\ti{\mf{F}}(u,\bar{z}+\sigma\snr{\bar{z}}Dh(x_{0});B_{2\tau\rr})^{2q/p}+c\left((\tau\rr)^{m}\mint_{B_{2\tau\rr}}\snr{f}^{m} \ \dx\right)^{\frac{p}{m(p-1)}}\nonumber \\
&&+c\snr{\bar{z}+\sigma\snr{\bar{z}}Dh(x_{0})}^{2-p}\left((\tau\rr)^{m}\mint_{B_{2\tau\rr}}\snr{f}^{m} \ \dx\right)^{2/m}\nonumber \\
&=&c\left(\mbox{(I)}+\mbox{(II)}+\mbox{(III)}\right),
\end{eqnarray*}
with $c\equiv c(n,N,p,q,M)$. We continue estimating:
\begin{eqnarray*}
\mbox{(I)}&\stackrel{\eqref{ns.11}}{\le}&c\mf{K}\left(\tau^{2}\mf{F}(u;B_{\rr})^{2}\right)\stackrel{\eqref{ik}}{=}c\tau^{2}\mf{F}(u;B_{\rr})^{2}+c\tau^{2q/p}\mf{F}(u;B_{\rr})^{2q/p}\nonumber \\
&\stackrel{\eqref{ns.1}}{\le}&c\tau^{2}\mf{F}(u;B_{\rr})^{2}+cM^{2(q-p)/p}\tau^{2q/p}\left(\frac{\mf{F}(u;B_{\rr})}{\snr{(V_{p}(Du))_{\rr}}}\right)^{2(q-p)/p}\mf{F}(u;B_{\rr})^{2}\nonumber \\
&\stackrel{\eqref{ns.0}}{\le}&c\tau^{2}\mf{F}(u;B_{\rr})^{2}\left(1+M^{2(q-p)/p}\tau^{2(q-p)/p}\varepsilon_{0}^{2(q-p)/p}\right)\le c\tau^{2}\mf{F}(u;B_{\rr})^{2},
\end{eqnarray*}
for $c\equiv c(\textnormal{\texttt{data}},M)$,
\begin{eqnarray*}
\mbox{(II)}&\stackrel{\eqref{ns.10}}{\le}&c\mathds{1}_{\{q>p\}}\tau^{-nq/p}\mf{F}(u;B_{\rr})^{2q/p}\stackrel{\eqref{ns.1}}{\le}\frac{c\mathds{1}_{\{q>p\}}M^{2(q-p)/p}}{\tau^{nq/p}}\left(\frac{\mf{F}(u;B_{\rr})}{\snr{(V_{p}(Du))_{\rr}}}\right)^{2(q-p)/p}\mf{F}(u;B_{\rr})^{2}\nonumber \\
&\stackrel{\eqref{ns.0}}{\le}&c\mathds{1}_{\{q>p\}}\tau^{-nq/p}\varepsilon_{0}^{2(q-p)/p}\mf{F}(u;B_{\rr})^{2},
\end{eqnarray*}
with $c\equiv c(n,N,p,q,M)$ and
\begin{eqnarray*}
\mbox{(III)}&\stackrel{\eqref{ns.9}}{\le}&c\tau^{\frac{(m-n)p}{m(p-1)}}\left(\rr^{m}\mint_{B_{\rr}}\snr{f}^{m} \ \dx\right)^{\frac{p}{m(p-1)}}+c\tau^{\frac{2(m-n)}{m}}\snr{\bar{z}}^{2-p}\left(\rr^{m}\mint_{B_{\rr}}\snr{f}^{m} \ \dx\right)^{2/m}\nonumber \\
&\stackrel{\eqref{ns.2}}{\le}&c\varepsilon_{1}^{p/(p-1)}\tau^{\frac{(m-n)p}{m(p-1)}}\mf{F}(u;B_{\rr})^{p/(p-1)}\snr{(V_{p}(Du))_{\rr}}^{\frac{p-2}{p-1}}\nonumber \\
&&+c\varepsilon_{1}^{2}\tau^{\frac{2(m-n)}{m}}\snr{\bar{z}}^{2-p}\mf{F}(u;B_{\rr})^{2}\snr{(V_{p}(Du))_{\rr}}^{2(p-2)/p}\nonumber \\
&\stackrel{\eqref{ns.4}_{3}}{\le}&c\varepsilon_{1}^{p/(p-1)}\tau^{\frac{(m-n)p}{m(p-1)}}\left(\frac{\mf{F}(u;B_{\rr})}{\snr{(V_{p}(Du))_{\rr}}}\right)^{\frac{2-p}{p-1}}\mf{F}(u;B_{\rr})^{2}+c\varepsilon_{1}^{2}\tau^{\frac{2(m-n)}{m}}\mf{F}(u;B_{\rr})^{2}\nonumber \\
&\stackrel{\eqref{ns.0}}{\le}&c\left(\varepsilon_{1}^{p/(p-1)}\tau^{\frac{(m-n)p}{m(p-1)}}\varepsilon_{0}^{\frac{2-p}{p-1}}+\varepsilon_{1}^{2}\tau^{\frac{2(m-n)}{m}}\right)\mf{F}(u;B_{\rr})^{2},
\end{eqnarray*}
for $c\equiv c(n,p,m)$. Merging the previous four displays we obtain
\begin{eqnarray}\label{ns.13}
\ti{\mf{F}}(u,\bar{z}+\sigma\snr{\bar{z}}Dh(x_{0});B_{\tau\rr})^{2}&\le&c\mathcal{T}\mf{F}(u;B_{\rr})^{2},
\end{eqnarray}
where $c\equiv c(\textnormal{\texttt{data}},M)$ and we set
$$
\mathcal{T}:=\tau^{2}+\mathds{1}_{\{q>p\}}\tau^{-nq/p}\varepsilon_{0}^{2(q-p)/p}+\varepsilon_{1}^{p/(p-1)}\varepsilon_{0}^{(2-p)/(p-1)}\tau^{\frac{p(m-n)}{m(p-1)}}+\varepsilon_{1}^{2}\tau^{2(m-n)/m}. 
$$
Finally, let us observe that by triangular inequality it is
\begin{eqnarray}\label{ns.12}
\snr{V_{p}(Du)-V_{p}(\bar{z}+\sigma\snr{\bar{z}}Dh(x_{0})) }^{2}&\stackrel{\eqref{Vm}_{2}}{\le}& c\left(\snr{Du}^{2}+\snr{\bar{z}+\sigma\snr{\bar{z}}Dh(x_{0})}^{2}\right)^{(p-2)/2}\snr{Du-(\bar{z}+\sigma\snr{\bar{z}}Dh(x_{0}))}^{2}\nonumber \\
&\le&c\snr{V_{\snr{\bar{z}+\sigma\snr{\bar{z}}Dh(x_{0})},p}(Du-\bar{z}-\sigma\snr{\bar{z}}Dh(x_{0}))}^{2},
\end{eqnarray}
for $c\equiv c(n,N,p)$, therefore
\begin{flalign}
\mf{F}(u;B_{\tau\rr})^{2}\stackrel{\eqref{minav}}{\le}c\mf{F}(u,V_{p}(\bar{z}+\sigma\snr{\bar{z}}Dh(x_{0}));B_{\tau\rr})^{2}\stackrel{\eqref{ns.12}}{\le}c\ti{\mf{F}}(u,\bar{z}+\sigma\snr{\bar{z}}Dh(x_{0});B_{\tau\rr})^{2}\stackrel{\eqref{ns.13}}{\le}c\mathcal{T}\mf{F}(u;B_{\rr})^{2},\label{ns.14}
\end{flalign}
with $c\equiv c(\textnormal{\texttt{data}},M)$. Looking at the explicit expression of $\mathcal{T}$, we let $\beta\in (0,1)$ be any number, first fix $\tau\in (0,2^{-10})$, then reduce further the size of $\varepsilon_{0}\in (0,1)$ and finally restrict $\varepsilon_{1}$ in such a way that
\begin{flalign}\label{ns.20}
\left\{
\begin{array}{c}
\displaystyle 
\ c\max\left\{\tau^{2(1-\beta)},\tau^{\alpha/4}\right\}\le \frac{1}{2^{10}} \\[8pt]\displaystyle
\ \frac{c2^{2}\varepsilon_{0}}{\tau^{n}}+c\mathds{1}_{\{q>p\}}\tau^{-nq/p}\varepsilon_{0}^{2(q-p)/p}<\frac{\tau^{2\beta}}{2^{10}}\\[8pt]\displaystyle
\ c\max\left\{\varepsilon_{1}^{2}\tau^{\frac{2(m-n)}{m}},\varepsilon_{1}^{p/(p-1)}\tau^{\frac{p(m-n)}{m(p-1)}}\right\}<\frac{\tau^{2\beta_{0}}}{2^{10}},
\end{array}
\right.
\end{flalign}
where $\alpha\equiv \alpha(n,N,p)$ is the same exponent appearing in \eqref{y.1}$_{2}$. This way, we determine dependencies: $\tau,\varepsilon_{0},\varepsilon_{1}\equiv \tau,\varepsilon_{0},\varepsilon_{1}(\textnormal{\texttt{data}},M,\beta)$. Plugging the above restrictions in \eqref{ns.14} we get \eqref{nsns} and the proof is complete.
\end{proof}
Let us look at what happens when the complementary condition to \eqref{ns.2} is in force.
\begin{proposition}\label{p2}
In the setting of Proposition \ref{p1}, assume
\begin{eqnarray}\label{ns.21}
\varepsilon_{1}\mf{F}(u;B_{\rr}(x_{0}))\snr{(V_{p}(Du))_{B_{\rr}(x_{0})}}^{(p-2)/p}\le \left(\rr^{m}\mint_{B_{\rr}(x_{0})}\snr{f}^{m} \ \dx\right)^{1/m}
\end{eqnarray}
instead of \eqref{ns.2}. Then, for all $\tau\in (0,1)$ it is
\begin{eqnarray}\label{ns.22}
\mf{F}(u;B_{\tau\rr}(x_{0}))\le c_{0}\snr{(V_{p}(Du))_{B_{\rr}(x_{0})}}^{(2-p)/p}\left(\rr^{m}\mint_{B_{\rr}(x_{0})}\snr{f}^{m} \ \dx\right)^{1/m},
\end{eqnarray}
where $c_{0}:=2\varepsilon_{1}^{-1}\tau^{-n/2}$ and $\varepsilon_{1}\equiv \varepsilon_{1}(\textnormal{\texttt{data}},M)\in (0,1]$ is the same determined in \eqref{ns.20}.
\end{proposition}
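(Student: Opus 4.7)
The plan is to observe that \eqref{ns.22} follows directly from the complementary smallness assumption \eqref{ns.21} together with the elementary scaling property $\eqref{tri.1}_{1}$ of the excess functional, so no new analytical input (no harmonic approximation, no Caccioppoli iteration) is actually required in this regime.

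Concretely, I would first rearrange \eqref{ns.21}. Since $p\in(1,2)$, the exponent $(p-2)/p$ is negative, so \eqref{ns.21} is equivalent to
\begin{equation*}
\mf{F}(u;B_{\rr}(x_{0}))\le \varepsilon_{1}^{-1}\,\snr{(V_{p}(Du))_{B_{\rr}(x_{0})}}^{(2-p)/p}\left(\rr^{m}\mint_{B_{\rr}(x_{0})}\snr{f}^{m}\,\dx\right)^{1/m}.
\end{equation*}
This provides a direct control of the full-ball excess by the Wolff-type quantity on the right-hand side, at the cost of the (large) factor $\varepsilon_{1}^{-1}$ depending on $(\texttt{data},M)$.

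Next, I would invoke the scaling inequality $\eqref{tri.1}_{1}$ at scale $\tau\in(0,1)$, which gives
\begin{equation*}
\mf{F}(u;B_{\tau\rr}(x_{0}))\le \frac{2}{\tau^{n/2}}\,\mf{F}(u;B_{\rr}(x_{0})).
\end{equation*}
Combining the two displays above yields
\begin{equation*}
\mf{F}(u;B_{\tau\rr}(x_{0}))\le \frac{2}{\varepsilon_{1}\tau^{n/2}}\,\snr{(V_{p}(Du))_{B_{\rr}(x_{0})}}^{(2-p)/p}\left(\rr^{m}\mint_{B_{\rr}(x_{0})}\snr{f}^{m}\,\dx\right)^{1/m},
\end{equation*}
which is precisely \eqref{ns.22} with $c_{0}:=2\varepsilon_{1}^{-1}\tau^{-n/2}$. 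Note that $\varepsilon_{1}$ here is exactly the parameter fixed in \eqref{ns.20} and thus depends only on $(\texttt{data},M)$, so there is no circularity.

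There is no real obstacle: this proposition is the \emph{dichotomy companion} of Proposition \ref{p1}. Its role in the sequel is to ensure that, when the decay-via-harmonic-approximation mechanism fails because the forcing term dominates the excess, one nevertheless has an explicit pointwise control of $\mf{F}(u;B_{\tau\rr})$ by the Wolff-potential-type quantity $\snr{(V_{p}(Du))_{B_{\rr}}}^{(2-p)/p}\bigl(\rr^{m}\mint_{B_{\rr}}\snr{f}^{m}\,\dx\bigr)^{1/m}$. Together, Propositions \ref{p1} and \ref{p2} will then feed a standard iteration where at each scale either geometric decay or a bound by an $f$-tail term is available, and summing these alternatives reconstructs the Wolff potential $\mathbf{I}^{f}_{1,m}(\cdot)$ appearing in Theorem \ref{t.v}.
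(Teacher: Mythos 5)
Your argument is correct and is exactly the paper's proof: rearrange \eqref{ns.21} to bound $\mf{F}(u;B_{\rr}(x_{0}))$ by the $f$-term, then apply $\eqref{tri.1}_{1}$ at scale $\tau$. The paper states this in one line as "a direct consequence of $\eqref{tri.1}_{1}$ and \eqref{ns.21}."
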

\begin{proof}
Inequality \eqref{ns.22} is a direct consequence of \eqref{tri.1}$_{1}$ and \eqref{ns.21}.
\end{proof}
\section{The singular regime}
We start by proving that local minimizers of \eqref{exfun} are approximately $p$-harmonic within the singular scenario.
\begin{lemma}
Under assumptions \eqref{assf}-\eqref{sqc}, \eqref{assf.0} and \eqref{f} let $B_{\rr}(x_{0})\Subset \Omega$ be any ball and $u\in W^{1,p}(\Omega,\mathbb{R}^{N})$ be a local minimizer of \eqref{exfun}. Then
\begin{eqnarray}\label{pphar}
\left| \ \mint_{B_{\rr}(x_{0})}\snr{Du}^{p-2}\langle Du,D\varphi\rangle \ \dx \ \right|&\le& 4\nr{D\varphi}_{\infty}\left(\rr^{m}\mint_{B_{\rr}(x_{0})}\snr{f}^{m} \ \dx\right)^{1/m}+s\nr{D\varphi}_{\infty}\mf{J}_{p}(Du;B_{\rr})^{p-1}\nonumber \\
&&+c\nr{D\varphi}_{\infty}\left(\omega(s)^{-1}+\omega(s)^{-p}+\omega(s)^{q-p-1}\right)\mf{J}_{p}(Du;B_{\rr})^{p},
\end{eqnarray}
for all $\varphi\in C^{\infty}_{c}(B_{\rr}(x_{0}),\mathbb{R}^{N})$ and any $s\in (0,\infty)$, with $c\equiv c(n,N,\Lambda,q)$.
\end{lemma}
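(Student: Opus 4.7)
My plan is to start from the Euler--Lagrange identity \eqref{el} and decompose $\partial F(Du)$ into the model $p$-Laplacean piece plus a small error, controlled via the modulus in \eqref{assf.0.2} where $|Du|$ is small and via the polynomial growth \eqref{assf.0.0} where $|Du|$ is large. Fix $\varphi\in C^{\infty}_{c}(B_{\rr}(x_{0}),\mathbb{R}^{N})$ and observe first that $\int_{B_{\rr}(x_{0})}\langle\partial F(0),D\varphi\rangle\,\dx=0$ by the divergence theorem, so that \eqref{el} may be rewritten as
\begin{equation*}
\mint_{B_{\rr}(x_{0})}\snr{Du}^{p-2}\langle Du,D\varphi\rangle\,\dx=\mint_{B_{\rr}(x_{0})}\langle \snr{Du}^{p-2}Du-\partial F(Du)+\partial F(0),D\varphi\rangle\,\dx+\mint_{B_{\rr}(x_{0})}f\cdot \varphi\,\dx.
\end{equation*}

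Next, I split the first integral via the dichotomy $\mathcal{S}_{s}^{-}:=B_{\rr}(x_{0})\cap \{\snr{Du}\le \omega(s)\}$ and $\mathcal{S}_{s}^{+}:=B_{\rr}(x_{0})\cap \{\snr{Du}>\omega(s)\}$. On $\mathcal{S}_{s}^{-}$, inequality \eqref{assf.0.2} yields pointwise $|\snr{Du}^{p-2}Du-\partial F(Du)+\partial F(0)|\le s\snr{Du}^{p-1}$; integrating and applying H\"older gives a contribution bounded by $s\nr{D\varphi}_{\infty}\mf{J}_{p}(Du;B_{\rr})^{p-1}$, which is the second term of the claim. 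On $\mathcal{S}_{s}^{+}$, I bound $|\snr{Du}^{p-2}Du|=\snr{Du}^{p-1}$ directly and use \eqref{assf.0.0} together with $|\partial F(0)|\le c$ to obtain $|\partial F(Du)-\partial F(0)|\le c(1+\snr{Du}^{q-1})$. The elementary comparisons valid on $\mathcal{S}_{s}^{+}$, namely
\begin{equation*}
1\le \omega(s)^{-p}\snr{Du}^{p},\qquad \snr{Du}^{p-1}\le \omega(s)^{-1}\snr{Du}^{p},\qquad \snr{Du}^{q-1}\le \omega(s)^{q-p-1}\snr{Du}^{p},
\end{equation*}
then produce the three $\omega(s)$-factors multiplying $\mf{J}_{p}(Du;B_{\rr})^{p}$. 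The third comparison is where assumption \eqref{pq} enters crucially: the upper bound $q/p<1+1/(2n)$ forces $q-p-1<0$, so that the inequality $\snr{Du}^{q-p-1}\le \omega(s)^{q-p-1}$ on $\mathcal{S}_{s}^{+}$ holds in the correct direction.

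Finally, the forcing term is handled by exploiting $\varphi\in C^{\infty}_{c}(B_{\rr}(x_{0}))$, whence $\snr{\varphi}\le \rr\nr{D\varphi}_{\infty}$ pointwise, together with Jensen's inequality $\mint_{B_{\rr}(x_{0})}\snr{f}\,\dx\le (\mint_{B_{\rr}(x_{0})}\snr{f}^{m}\,\dx)^{1/m}$, yielding
\begin{equation*}
\left|\mint_{B_{\rr}(x_{0})}f\cdot \varphi\,\dx\right|\le \nr{D\varphi}_{\infty}\rr\left(\mint_{B_{\rr}(x_{0})}\snr{f}^{m}\,\dx\right)^{1/m}\le \nr{D\varphi}_{\infty}\left(\rr^{m}\mint_{B_{\rr}(x_{0})}\snr{f}^{m}\,\dx\right)^{1/m},
\end{equation*}
which contributes the first term on the right-hand side (with a loose absolute constant, estimated by $4$). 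Summing the three contributions gives \eqref{pphar}.

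\textbf{Main obstacle.} The proof is essentially bookkeeping once the dichotomy $\mathcal{S}_{s}^{\pm}$ is chosen. The only genuinely delicate point is the treatment of the superlinear perturbation $\snr{Du}^{q-1}$ on $\mathcal{S}_{s}^{+}$, which would fail if $q\ge p+1$; here the $(p,q)$-growth window \eqref{pq} is precisely what makes the argument close, producing the (negative) exponent $q-p-1$ visible in \eqref{pphar}. No approximation lemma is invoked: the estimate is purely algebraic given the Euler--Lagrange equation, so no further smallness assumption on the excess is required at this stage.
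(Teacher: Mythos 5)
Your proposal is correct and follows essentially the same path as the paper: the Euler--Lagrange identity \eqref{el}, the dichotomy $B_{\rr}\cap\{\snr{Du}\lessgtr\omega(s)\}$, the modulus estimate \eqref{assf.0.2} on the sublevel set, the polynomial growth bound \eqref{assf.0.0} on the superlevel set, and the elementary bound on the $f$-term. The only cosmetic difference is on the superlevel set, where you use the pointwise comparisons $1\le\omega(s)^{-p}\snr{Du}^{p}$, $\snr{Du}^{p-1}\le\omega(s)^{-1}\snr{Du}^{p}$, $\snr{Du}^{q-1}\le\omega(s)^{q-p-1}\snr{Du}^{p}$ directly, while the paper reaches the identical three terms via H\"older together with the Chebyshev-type measure estimate \eqref{notice}; both routes are equivalent, and your observation that $\eqref{pq}_{2}$ forces $q<p+1$ is exactly what makes the third comparison go in the right direction.
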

\begin{proof}
With the same abbreviations used in Lemma \ref{aahar}, let $\varphi\in C^{\infty}_{c}(B_{\rr},\mathbb{R}^{N})$ be any smooth map. We use \eqref{el} to control
\begin{eqnarray*}
\left| \ \mint_{B_{\rr}}\snr{Du}^{p-2}\langle Du,D\varphi\rangle \ \dx \ \right|&=&\left| \ \mint_{B_{\rr}}\langle \partial F(Du)-\partial F(0)-\snr{Du}^{p-2}Du,D\varphi\rangle-f\cdot \varphi \ \dx \ \right|\nonumber \\
&\le&\left| \ \mint_{B_{\rr}}\langle \partial F(Du)-\partial F(0)-\snr{Du}^{p-2}Du,D\varphi\rangle \ \dx \ \right|\nonumber \\
&&+\mint_{B_{\rr}}\snr{f}\snr{\varphi} \ \dx =:\mathcal{I}_{1}+\mathcal{I}_{2}.
\end{eqnarray*}
We fix $s\in (0,\infty)$, notice that
\eqn{notice}
$$
\frac{\snr{B_{\rr}\cap\{\snr{Du}>\omega(s)\}}}{\snr{B_{\rr}}}
\le \left(\frac{\mf{J}_{p}(Du;B_{\rr})}{\omega(s)}\right)^{p}$$
and then bound
\begin{eqnarray*}
\mathcal{I}_{1}&\le&\frac{1}{\snr{B_{\rr}}}\int_{B_{\rr}\cap \{\snr{Du}<\omega(s)\}}\snr{\langle \partial F(Du)-\partial F(0)-\snr{Du}^{p-2}Du,D\varphi\rangle} \ \dx\nonumber \\
&&+\frac{1}{\snr{B_{\rr}}}\int_{B_{\rr}\cap \{\snr{Du}\ge \omega(s)\}}\snr{\langle \partial F(Du)-\partial F(0)-\snr{Du}^{p-2}Du,D\varphi\rangle} \ \dx\nonumber \\
&\stackrel{\eqref{assf.0.2},\eqref{assf.0.0}}{\le}&\nr{D\varphi}_{\infty}\left(s\mf{J}_{p}(Du;B_{\rr})^{p-1}+\frac{c}{\snr{B_{\rr}}}\int_{B_{\rr}\cap\{\snr{Du}\ge \omega(s)\}}1+\snr{Du}^{p-1}+\snr{Du}^{q-1} \ \dx\right)\nonumber \\
&\stackrel{\eqref{pq}_{2}}{\le}&s\nr{D\varphi}_{\infty}\mf{J}_{p}(Du;B_{\rr})^{p-1}+c\nr{D\varphi}_{\infty}\frac{\snr{B_{\rr}\cap\{\snr{Du}>\omega(s)\}}}{\snr{B_{\rr}}}\nonumber \\
&&+c\nr{D\varphi}_{\infty}\left(\frac{\snr{B_{\rr}\cap\{\snr{Du}>\omega(s)\}}}{\snr{B_{\rr}}}\right)^{1/p}\mf{J}_{p}(Du;B_{\rr})^{p-1}\nonumber \\
&&+c\nr{D\varphi}_{\infty}\left(\frac{\snr{B_{\rr}\cap\{\snr{Du}>\omega(s)\}}}{\snr{B_{\rr}}}\right)^{(p-q+1)/p}\mf{J}_{p}(Du;B_{\rr})^{q-1}\nonumber \\
&\stackrel{\eqref{notice}}{\le}&s\nr{D\varphi}_{\infty}\mf{J}_{p}(Du;B_{\rr})^{p-1}+c\nr{D\varphi}_{\infty}\left(\omega(s)^{-1}+\omega(s)^{-p}+\omega(s)^{q-p-1}\right)\mf{J}_{p}(Du;B_{\rr})^{p},
\end{eqnarray*}
for $c\equiv c(n,N,\Lambda,q)$ and
\begin{eqnarray*}
\mathcal{I}_{2}\le 4\nr{D\varphi}_{\infty}\left(\rr^{m}\mint_{B_{\rr}}\snr{f}^{m} \ \dx\right)^{1/m}.
\end{eqnarray*}
Merging the content of the two previous display we obtain \eqref{pphar} and the proof is complete.
\end{proof}
Next, a one-scale decay estimate for the excess functional valid in the singular regime.
\begin{proposition}\label{p3}
Under hypotheses \eqref{assf}-\eqref{sqc}, \eqref{assf.0} and \eqref{f}, let $u\in W^{1,p}(\Omega,\mathbb{R}^{N})$ be a local minimizer of \eqref{exfun} satisfying \eqref{ns.1} on a ball $B_{\rr}(x_{0})\Subset \Omega$ for some positive constant $M$. Then, for any $\gamma\in (0,\alpha)$, $\chi\in (0,1]$ there are $\theta\equiv \theta(\textnormal{\texttt{data}},\chi,\gamma,M)\in (0,2^{-10})$, $\varepsilon,\varepsilon_{i}\equiv\varepsilon,\varepsilon_{i}(\textnormal{\texttt{data}},\gamma,M)$, $i\in \{2,3\}$, such that if the smallness conditions
\begin{flalign}\label{s.0}
\chi\snr{(V_{p}(Du))_{B_{\rr}(x_{0})}}\le \mf{F}(u;B_{\rr}(x_{0})),\qquad  \mf{F}(u;B_{\rr}(x_{0}))<\varepsilon_{2},\qquad \left(\rr^{m}\mint_{B_{\rr}(x_{0})}\snr{f}^{m} \ \dx\right)^{1/m}<\varepsilon_{3}
\end{flalign}
hold on $B_{\rr}(x_{0})$, then
\begin{eqnarray}\label{s.00}
\mf{F}(u;B_{\theta\rr}(x_{0}))\le \theta^{\gamma}\mf{F}(u;B_{\rr}(x_{0}))+c_{1}\mf{K}\left[\left(\rr^{m}\mint_{B_{\rr}(x_{0})}\snr{f}^{m} \ \dx\right)^{1/m}\right]^{\frac{p}{2(p-1)}},
\end{eqnarray}
with $c_{1}\equiv c_{1}(\textnormal{\texttt{data}},\chi,\gamma,M)$. Here, $\alpha\equiv \alpha(n,N,p)$ is the exponent appearing in $\eqref{y.1}_{2}$ and $\mf{K}(\cdot)$ has been defined in \eqref{ik}.
\end{proposition}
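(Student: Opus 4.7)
The approach will be via $p$-harmonic approximation, exploiting \eqref{pphar} and the fact that in the singular regime the averaged gradient is genuinely small. Indeed, the condition $\chi\snr{(V_{p}(Du))_{B_{\rr}(x_{0})}}\le \mf{F}(u;B_{\rr}(x_{0}))=:E$ combined with $\eqref{tri.1}$ and $\eqref{Vm}_{6}$ yields $\mf{I}_{p}(Du;B_{\rr})^{p}\lesssim (1+\chi^{-2})E^{2}$, so that $\mf{I}_{p}(Du;B_{\rr})$ is as small as we wish by shrinking $\varepsilon_{2}$. Since by \eqref{assf.0.2} the operator $\partial F$ coincides with the $p$-Laplace one modulo an infinitesimal perturbation, a $p$-harmonic comparison via Lemma \ref{p.phar} is the natural tool.

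I would first normalize: set $A:=\mf{I}_{p}(Du;B_{\rr})$ (assumed positive, else the claim is immediate via $\eqref{tri.1}_{1}$) and $v(x):=(u(x)-(u)_{\rr})/(A\rr)$, so that $\mf{I}_{p}(Dv;B_{\rr})\le 1$. Plugging this scaling into \eqref{pphar} and dividing by $A^{p-1}$, the $p$-Laplace residual of $v$ is dominated by
$$\bigl[\,s+cA\bigl(\omega(s)^{-1}+\omega(s)^{-p}+\omega(s)^{q-p-1}\bigr)+4A^{1-p}\bigl(\rr^{m}\mint_{B_{\rr}}\snr{f}^{m}\,\dx\bigr)^{1/m}\bigr]\nr{D\varphi}_{L^{\infty}}.$$
With $\delta\equiv \delta(n,N,p,\varepsilon)$ the parameter of Lemma \ref{p.phar} corresponding to $\varepsilon:=\theta^{n+p+p\gamma}$, I pick $s<\delta/4$ first, then reduce $\varepsilon_{2}$ so that the $\omega(s)$-term is $<\delta/4$. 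A dichotomy on the forcing then intervenes: if $(\rr^{m}\mint\snr{f}^{m})^{1/m}\le \delta A^{p-1}/16$ -- granted by $\varepsilon_{3}$ when $A$ dominates the forcing -- Lemma \ref{p.phar} produces a $p$-harmonic map $h$ with $\mint_{B_{\rr}}\snr{(v-h)/\rr}^{p}\,\dx\le c\varepsilon^{p}$; in the opposite case the forcing alone is of size $A^{p-1}\gtrsim E^{2(p-1)/p}$, hence $\mf{K}\bigl[(\rr^{m}\mint\snr{f}^{m})^{1/m}\bigr]^{p/(2(p-1))}$ already dominates $E$ and \eqref{s.00} is trivially true.

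In the principal case, $\eqref{y.1}_{2}$ gives $\mf{F}(h;B_{\theta\rr})\le c''\theta^{\alpha}\mf{F}(h;B_{\rr})$; choosing $\theta$ so small that $c''\theta^{\alpha}\le \frac{1}{2}\theta^{\gamma}$ (possible since $\gamma<\alpha$) secures the principal decay. To upgrade the $L^{p}$-closeness of $v-h$ into a genuine excess decay for $u$, I would apply Caccioppoli \eqref{cacc.ns} on $B_{\theta\rr}\subset B_{2\theta\rr}(x_{0})$ with affine datum $\ell(x):=(u)_{\rr}+A\rr[h(x_{0})+\langle Dh(x_{0}),x-x_{0}\rangle]$; the pointwise bound $\eqref{y.1}_{1}$ for $Dh$ ensures $\snr{z_{0}}\le c(n,N,p,M)$, so \eqref{cacc.ns} is applicable. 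The $(u-\ell)$-term in its right-hand side is controlled through the Lemma \ref{p.phar} output and the Lipschitz bound for $h$; the $\mathds{1}_{\{q>p\}}$-supplement and the $\mf{K}$-nonlinearity are reabsorbed by Young's inequality using $\varepsilon_{2}<1$ and $\eqref{pq}_{2}$, while the two $f$-contributions $(\rr^{m}\mint\snr{f}^{m})^{p/(m(p-1))}$ and $\snr{z_{0}}^{2-p}(\rr^{m}\mint\snr{f}^{m})^{2/m}$ collapse, after the square-root step passing from $\ti{\mf{F}}^{2}$ to $\ti{\mf{F}}$ and a further Young absorption, precisely into the $\mf{K}\bigl[(\rr^{m}\mint\snr{f}^{m})^{1/m}\bigr]^{p/(2(p-1))}$ on the right of \eqref{s.00}.

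The main obstacle is the triple balancing in the rescaled version of \eqref{pphar}: $s$ must be small, yet shrinking it blows up the $\omega(s)^{-1}$, $\omega(s)^{-p}$ and -- a feature of the singular regime -- the factor $\omega(s)^{q-p-1}$; $A$ must be small to kill those factors, but small $A$ inflates the forcing contribution $A^{1-p}(\rr^{m}\mint\snr{f}^{m})^{1/m}$; finally, the $(p,q)$-gap enters Caccioppoli through the $\mathds{1}_{\{q>p\}}$-term whose super-$p$ exponent could in principle survive in the final estimate. The dichotomy on the forcing outsources the competition between gradient size and datum size cleanly, routing the large-forcing case into the additive $\mf{K}[\cdots]^{p/(2(p-1))}$ correction, while the smallness of the excess together with $\eqref{pq}_{2}$ prevents the $(p,q)$-gap from polluting the Wolff exponent $p/(2(p-1))$ in the final Caccioppoli absorption.
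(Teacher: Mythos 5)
Your proposal follows the same $p$-harmonic-approximation skeleton as the paper's proof: the singular condition $\chi V(\rr)\le \mf{F}(u;B_{\rr})$ forces $\mf{I}_{p}(Du;B_{\rr})^{p}\lesssim c_{\chi}\mf{F}(u;B_{\rr})^{2}$, a rescaling plus Lemma~\ref{p.phar} produce a nearby $p$-harmonic map $h$, the Morrey decay $\eqref{y.1}_{2}$ of $h$ supplies the principal $\theta^{\gamma}$ gain, and Caccioppoli \eqref{cacc.ns} transfers that to $u$. The one genuine difference is your treatment of the forcing: the paper scales by the compound quantity $\psi:=c_{\chi}^{1/p}\mf{F}(u;B_{\rr})^{2/p}+(4/\varepsilon_{4})^{1/(p-1)}\left(\rr^{m}\mint\snr{f}^{m}\right)^{1/(m(p-1))}$ so that the rescaled residual automatically carries a small $f$-contribution, whereas you run a dichotomy on the datum relative to $\delta A^{p-1}/16$, $A:=\mf{I}_p(Du;B_\rr)$. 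Both routes are valid; to close the second branch of your dichotomy one needs the lower bound $A^{p-1}\gtrsim\mf{F}(u;B_{\rr})^{2(p-1)/p}$, which indeed holds with an absolute constant thanks to \eqref{minav} with $z=0$, so $\mf{K}[\mf{S}]^{p/(2(p-1))}\ge\mf{S}^{p/(2(p-1))}\gtrsim\delta^{p/(2(p-1))}\mf{F}(u;B_{\rr})$ and, via $\eqref{tri.1}_{1}$, \eqref{s.00} is immediate with a suitably enlarged $c_{1}$. Your dichotomy is arguably cleaner and mirrors the paper's own alternative \eqref{ns.2}/\eqref{ns.21} in the nonsingular regime; the paper's $\psi$-device avoids a case split at the cost of a slightly heavier bookkeeping.

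There is one imprecision that would need fixing to run the argument as you wrote it: your affine datum $\ell(x)=(u)_{\rr}+A\rr[h(x_{0})+\langle Dh(x_{0}),x-x_{0}\rangle]$ requires you to bound $\mint_{B_{2\theta\rr}}\snr{V_{\snr{z_{0}},p}\bigl((h-h(x_{0})-\langle Dh(x_{0}),x-x_{0}\rangle)/(2\theta\rr)\bigr)}^{2}$, and you claim this is handled by ``the Lipschitz bound for $h$'', i.e.\ $\eqref{y.1}_{1}$. That only yields $\snr{h(x)-h(x_{0})-\langle Dh(x_{0}),x-x_{0}\rangle}\le 2\nr{Dh}_{L^{\infty}}\snr{x-x_{0}}$, which on $B_{2\theta\rr}$ produces a contribution of size $O(A^{p})$ with no decay in $\theta$ whatsoever; $p$-harmonic maps are only $C^{1,\alpha}$, not $C^{1,1}$, so the first-order Taylor remainder is not controlled by a gradient sup-bound alone. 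The paper instead picks $z_{2\theta\rr}:=(V_{p})^{-1}\bigl((V_{p}(Dh))_{B_{2\theta\rr}}\bigr)$ and $(h)_{2\theta\rr}$ as affine data, so that the $V_{p}$-Poincar\'e inequality \eqref{v.poi} reduces the Taylor-remainder term exactly to $\mint_{B_{2\theta\rr}}\snr{V_{p}(Dh)-(V_{p}(Dh))_{B_{2\theta\rr}}}^{2}$, which is then beaten down by $\eqref{y.1}_{2}$. You do invoke $\eqref{y.1}_{2}$ separately, so the needed ingredient is present, but the gluing to the Caccioppoli step as you wrote it is circular: to run the proof as proposed you should switch to the average-based affine datum (or else explicitly invoke $C^{1,\alpha}$ regularity of $Dh$, a Campanato consequence of $\eqref{y.1}_{2}$ not stated as such in the paper). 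A further small glitch is the ansatz $\varepsilon:=\theta^{n+p+p\gamma}$: one needs $\theta^{-n-p}\varepsilon^{p}\lesssim\theta^{2\alpha}$, which fails when $p$ is close to $1$ and $\gamma$ close to $0$; simply leave $\varepsilon$ as a free parameter to be shrunk after $\theta$ is fixed, as the paper does in \eqref{s.11}.
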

\begin{proof}
Let us premise that the same abbreviations appearing in Proposition \ref{p1} will be adopted also here. By triangular inequality and \eqref{s.0}$_{1}$ we have that
\begin{eqnarray}\label{s.1}
\mf{J}_{p}(Du;B_{\rr})^{p}\le 2\mf{F}(u;B_{\rr})^{2}+2\snr{(V_{p}(Du))_{\rr}}^{2}\le 2\left(1+\frac{1}{\chi^{2}}\right)\mf{F}(u;B_{\rr})^{2}=:c_{\chi}\mf{F}(u;B_{\rr})^{2}.
\end{eqnarray}
With this estimate at hand, \eqref{pphar} becomes
\begin{eqnarray}\label{s.2}
\left| \ \mint_{B_{\rr}}\langle \snr{Du}^{p-2}Du,D\varphi\rangle \ \dx \ \right|&\le&4\nr{D\varphi}_{\infty}\left(\rr^{m}\mint_{B_{\rr}}\snr{f}^{m} \ \dx\right)^{1/m}+sc_{\chi}^{(p-1)/p}\nr{D\varphi}_{\infty}\mf{F}(u;B_{\rr})^{2(p-1)/p}\nonumber \\
&&+cc_{\chi}\nr{D\varphi}_{\infty}\left(\omega(s)^{-1}+\omega(s)^{-p}+\omega(s)^{q-p-1}\right)\mf{F}(u;B_{\rr})^{2},
\end{eqnarray}
for all $\varphi\in C^{\infty}_{c}(B_{\rr},\mathbb{R}^{N})$, $s\in (0,\infty)$, with $c\equiv c(n,N,\Lambda,q)$. Notice that there is no loss of generality in assuming $\mf{F}(u;B_{\rr})>0$, otherwise \eqref{s.00} would trivially be true by means of \eqref{tri.1}$_{1}$. We then set
\begin{eqnarray*}
\psi:=c_{\chi}^{1/p}\mf{F}(u;B_{\rr})^{2/p}+\left(\frac{4}{\varepsilon_{4}}\right)^{1/(p-1)}\left(\rr^{m}\mint_{B_{\rr}}\snr{f}^{m} \ \dx\right)^{\frac{1}{m(p-1)}},\qquad\qquad u_{0}:=\frac{u}{\psi}
\end{eqnarray*}
and divide both sides of \eqref{s.2} by $\psi^{p-1}$ to get
\begin{eqnarray}\label{s.3}
\left| \ \mint_{B_{\rr}}\langle \snr{Du_{0}}^{p-2}Du_{0},D\varphi\rangle  \ \dx\ \right|&\le&\left(\varepsilon_{3}+s\right)\nr{D\varphi}_{\infty}\nonumber \\
&&+cc_{\chi}^{1/p}\nr{D\varphi}_{\infty}\left(\omega(s)^{-1}+\omega(s)^{-p}+\omega(s)^{q-p-1}\right)\mf{F}(u;B_{\rr})^{2/p}\nonumber \\
&\stackrel{\eqref{s.0}_{2}}{\le}&\left[\varepsilon_{3}+s+cc_{\chi}^{1/p}\left(\omega(s)^{-1}+\omega(s)^{-p}+\omega(s)^{q-p-1}\right)\varepsilon_{2}^{2/p}\right]\nr{D\varphi}_{\infty},
\end{eqnarray}
for $c\equiv c(n,N,\Lambda,q)$. Now as a direct consequence of \eqref{s.1} we obtain
\begin{eqnarray}\label{s.4}
\mf{J}_{p}(Du_{0};B_{\rr})\le c_{\chi}^{1/p}\psi^{-1}\mf{F}(u;B_{\rr})^{2/p}\le 1,
\end{eqnarray}
so fixed $\varepsilon\in (0,1)$ to be determined later on and, with $\delta\equiv \delta(n,N,p,\varepsilon)\in (0,1]$ being the small parameter given by Lemma \ref{p.phar}, we reduce the size of parameters $\varepsilon_{4},s$ first and then that of $\varepsilon_{2},\varepsilon_{3}$ in such a way that
\begin{flalign}\label{s.5}
\varepsilon_{4}+s<\frac{\delta}{4}, \qquad  cc_{\chi}^{1/p}\left(\omega(s)^{-1}+\omega(s)^{-p}+\omega(s)^{q-p-1}+c'\right)\varepsilon_{2}^{2/p}<\frac{\delta}{4},\qquad \max\{c',1\}\left(\frac{4\varepsilon_{3}}{\varepsilon_{4}}\right)^{1/(p-1)}<\frac{1}{4}
\end{flalign}
thus \eqref{s.3} is turned into \eqref{p.phar1} and together with \eqref{s.4} legalizes the application of Lemma \ref{p.phar} which renders a $p$-harmonic map $h\in W^{1,p}(B_{\rr},\mathbb{R}^{N})$ such that
\begin{flalign}\label{s.6}
\mf{J}_{p}(Dh;B_{\rr})\le 1\qquad \mbox{and}\qquad \mint_{B_{\rr}}\left| \ \frac{u_{0}-h}{\rr} \ \right|^{p}\le c\varepsilon^{p},
\end{flalign}
for $c\equiv c(n,N,p)$. In $\eqref{s.5}_{3}$, $c'$ is the constant appearing in the Lipschitz bound $\eqref{y.1}_{1}$. We point out that the choices in \eqref{s.5} fix dependencies $\varepsilon_{3},\varepsilon_{4},s\equiv \varepsilon_{3},\varepsilon_{4},s(n,N,p,\varepsilon)$ and $\varepsilon_{2}\equiv \varepsilon_{2}(n,N,p,\omega(\cdot),\varepsilon,\chi)$. Further restrictions on the size of these parameters will be imposed in a few lines. Next, for $\theta\in (0,2^{-10})$, we exploit the isomorphism properties of $V_{p}(\cdot)$ to determine $z_{2\theta\rr}\in \mathbb{R}^{N\times n}$ such that $V_{p}(z_{2\theta\rr})=(V_{p}(Dh))_{2\theta \rr}$ and estimate via $\eqref{Vm}_{3,7}$, \eqref{v.poi}, $\eqref{s.6}$ and $\eqref{y.1}_{2}$,
\begin{flalign*}
&\mint_{B_{2\theta\rr}}\left| \ V_{\snr{z_{2\theta\rr}},p}\left(\frac{u_{0}-(h)_{2\theta\rr}-\langle z_{2\theta\rr},x-x_{0}\rangle}{2\theta\rr}\right) \ \right|^{2} \ \dx\le c\mint_{B_{2\theta\rr}}\left| \ V_{\snr{z_{2\theta\rr}},p}\left(\frac{u_{0}-h}{2\theta\rr}\right) \ \right|^{2} \ \dx\nonumber \\
&\qquad \qquad\qquad \quad+c\mint_{B_{2\theta\rr}}\left| \ V_{\snr{z_{2\theta\rr}},p}\left(\frac{h-(h)_{2\theta\rr}-\langle z_{2\theta\rr},x-x_{0}\rangle}{2\theta\rr}\right) \ \right|^{2} \ \dx\nonumber \\
&\qquad \qquad\qquad  \le c\theta^{-n-p}\int_{B_{\rr}}\left| \ \frac{u_{0}-h}{\rr} \ \right|^{p} \ \dx+c\mint_{B_{2\theta\rr}}\snr{V_{\snr{z_{2\theta\rr}},p}(Dh-z_{2\theta\rr})}^{2} \ \dx\nonumber \\
&\qquad \qquad \qquad \le c\theta^{-n-p}\varepsilon^{p}+c\mint_{B_{2\theta\rr}}\snr{V_{p}(Dh)-(V_{p}(Dh))_{2\theta\rr}}^{2} \ \dx\le c\theta^{-n-p}\varepsilon^{p}+c\theta^{2\alpha},
\end{flalign*}
with $c\equiv c(n,N,p)$. Scaling back to $u$ in the previous display we obtain
\begin{eqnarray}\label{s.7}
\mint_{B_{2\theta\rr}}\left| \ V_{\psi\snr{z_{2\theta\rr}},p}\left(\frac{u-\psi((h)_{2\theta\rr}+\langle z_{2\theta\rr},x-x_{0}\rangle)}{2\theta\rr}\right) \ \right|^{2} \ \dx\le c\psi^{p}\left(\theta^{-n-p}\varepsilon^{p}+\theta^{2\alpha}\right),
\end{eqnarray}
for $c\equiv c(n,N,p)$. Notice that
\begin{flalign}
\left\{
\begin{array}{c}
\displaystyle 
\ \snr{z_{2\theta\rr}}=\snr{(V_{p}(Du))_{\rr}}^{2/p}\le \mf{J}_{p}(Dh;B_{2\theta\rr})\stackrel{\eqref{y.1}_{1}}{\le}c'\mf{J}_{p}(Dh;B_{\rr})\stackrel{\eqref{s.6}_{1}}{\le}c' \\[8pt]\displaystyle
\ \psi\stackrel{\eqref{s.0}_{2,3}}{<}c_{\chi}^{1/p}\varepsilon_{2}^{2/p}+\left(\frac{4\varepsilon_{3}}{\varepsilon_{4}}\right)^{1/(p-1)}\stackrel{\eqref{s.5}}{\le}1,
\end{array}
\right.\label{s.8}
\end{flalign}
so we can bound
\begin{eqnarray*}
\mf{F}(u;B_{\theta\rr})^{2}&\stackrel{\eqref{minav}}{\le}&4\mint_{B_{\theta\rr}}\snr{V_{p}(Du)-V_{p}(\psi z_{2\theta\rr})}^{2} \ \dx\stackrel{\eqref{Vm}_{3}}{\le}c\ti{\mf{F}}(u,\psi z_{2\theta\rr};B_{\theta\rr})^{2}\nonumber \\
&\stackrel{\eqref{s.8},\eqref{cacc.ns}}{\le}&c\mf{K}\left(\mint_{B_{2\theta\rr}}\left| \ V_{\psi\snr{z_{2\theta\rr}},p}\left(\frac{u-\psi((h)_{2\theta\rr}-\langle z_{2\theta\rr},x-x_{0}\rangle)}{2\theta\rr}\right) \ \right|^{2} \ \dx\right)\nonumber \\
&&+c\psi^{2-p}\snr{z_{2\theta\rr}}^{2-p}\left((\theta\rr)^{m}\mint_{B_{2\theta\rr}}\snr{f}^{m} \ \dx\right)^{2/m}\nonumber \\
&&+c\left((\theta\rr)^{m}\mint_{B_{2\theta\rr}}\snr{f}^{m} \ \dx\right)^{\frac{p}{m(p-1)}}+c\mathds{1}_{\{q>p\}}\ti{\mf{F}}(u,\psi z_{2\theta\rr};B_{2\theta\rr})^{2q/p}\nonumber \\
&\stackrel{\eqref{s.7}}{\le}&c\mf{K}\left(\psi^{p}\left(\theta^{-n-p}\varepsilon^{p}+\theta^{2\alpha}\right)\right)+c\psi^{2-p}\snr{z_{2\theta\rr}}^{2-p}\left((\theta\rr)^{m}\mint_{B_{2\theta\rr}}\snr{f}^{m} \ \dx\right)^{2/m}\nonumber \\
&&+c\left((\theta\rr)^{m}\mint_{B_{2\theta\rr}}\snr{f}^{m} \ \dx\right)^{\frac{p}{m(p-1)}}+c\mathds{1}_{\{q>p\}}\ti{\mf{F}}(u,\psi z_{2\theta\rr};B_{2\theta\rr})^{2q/p}\nonumber \\
&=:&\mbox{(I)}+\mbox{(II)}+\mbox{(III)}+\mbox{(IV)},
\end{eqnarray*}
with $c\equiv c(\textnormal{\texttt{data}},M)$. We continue estimating
\begin{eqnarray*}
\mbox{(I)}&\stackrel{\eqref{ik}_{2}}{\le}&c\psi^{p}\left(\theta^{-n-p}\varepsilon^{p}+\theta^{2\alpha}\right)+c\psi^{q}\left(\theta^{-n-p}\varepsilon^{p}+\theta^{2\alpha}\right)^{q/p}\nonumber \\
&\stackrel{\eqref{s.0}_{2}}{\le}&cc_{\chi}^{q/p}\left(1+\varepsilon_{2}^{2(q-p)/p}\right)\left(\theta^{-\frac{(n+p)q}{p}}\varepsilon^{p}+\theta^{2\alpha}\right)\mf{F}(u;B_{\rr})^{2}\nonumber \\
&&+\frac{c}{\varepsilon_{4}^{q/(p-1)}}\mf{K}\left[\left(\rr^{m}\mint_{B_{\rr}}\snr{f}^{m} \ \dx\right)^{1/m}\right]^{p/(p-1)},
\end{eqnarray*}
for $c\equiv c(\textnormal{\texttt{data}},M)$. Moreover, by Young inequality with conjugate exponents $\left(\frac{p}{2-p},\frac{p}{2(p-1)}\right)$ we have
\begin{eqnarray*}
\mbox{(II)}+\mbox{(III)}&\stackrel{\eqref{s.8}_{1}}{\le}&cc_{\chi}^{(2-p)/p}\mf{F}(u;B_{\rr})^{\frac{2(2-p)}{p}}\theta^{\frac{2(m-n)}{m}}\left(\rr^{m}\mint_{B_{\rr}}\snr{f}^{m} \ \dx\right)^{2/m}\nonumber \\
&&+c\left(\theta^{\frac{2(m-n)}{m}}\varepsilon_{4}^{\frac{p-2}{p-1}}+\theta^{\frac{(m-n)p}{m(p-1)}}\right)\left(\rr^{m}\mint_{B_{\rr}}\snr{f}^{m} \ \dx\right)^{\frac{p}{m(p-1)}}\nonumber \\
&\le&cc_{\chi}\varepsilon\mf{F}(u;B_{\rr})^{2}+c\left(\varepsilon^{\frac{p-2}{2(p-1)}}\theta^{\frac{p(m-n)}{m(p-1)}}+\theta^{\frac{2(m-n)}{m}}\varepsilon_{4}^{\frac{p-2}{p-1}}\right)\left(\rr^{m}\mint_{B_{\rr}}\snr{f}^{m} \ \dx\right)^{\frac{p}{m(p-1)}},
\end{eqnarray*}
with $c\equiv c(\textnormal{\texttt{data}},M)$. Finally, we control
\begin{eqnarray*}
\mbox{(IV)}&\le&c\psi^{q}\mathds{1}_{\{q>p\}}\ti{\mf{F}}(u_{0},z_{2\theta\rr};B_{2\theta\rr})^{2q/p}\nonumber \\
&\stackrel{\eqref{pq}_{1}}{\le}& c\psi^{q}\theta^{-nq/p}\mathds{1}_{\{q>p\}}\mf{J}_{p}(Du_{0}-z_{2\theta\rr};B_{\rr})^{q}\stackrel{\eqref{s.4},\eqref{s.8}_{1}}{\le}c\mathds{1}_{\{q>p\}}\psi^{q}\theta^{-nq/p}\nonumber \\
&\stackrel{\eqref{s.0}_{2}}{\le}&cc_{\chi}^{q/p}\mathds{1}_{\{q>p\}}\theta^{-nq/p}\varepsilon_{2}^{2(q-p)/p}\mf{F}(u;B_{\rr})^{2}+\frac{c\mathds{1}_{\{q>p\}}}{\varepsilon_{4}^{q/(p-1)}\theta^{nq/p}}\left(\rr^{m}\mint_{B_{\rr}}\snr{f}^{m} \ \dx\right)^{\frac{q}{m(p-1)}},
\end{eqnarray*}
for $c\equiv c(\textnormal{\texttt{data}},M)$. Setting
\begin{flalign*}
&\mathcal{T}_{1}:=\theta^{-\frac{(n+p)q}{p}}\varepsilon^{p}+\theta^{2\alpha}+\varepsilon+\theta^{-nq/p}\mathds{1}_{\{q>p\}}\varepsilon_{2}^{2(q-p)/p};\nonumber \\
&\mathcal{T}_{2}:=\varepsilon_{4}^{-q/(p-1)}\theta^{-nq/p}+\varepsilon^{\frac{p-2}{2(p-1)}}\theta^{\frac{p(m-n)}{m(p-1)}}+\theta^{\frac{2(m-n)}{m}}\varepsilon_{4}^{\frac{p-2}{p-1}}
\end{flalign*}
and merging the content of all the previous displays we end up with
\begin{eqnarray}\label{s.9}
\mf{F}(u;B_{\theta\rr})^{2}\le cc_{\chi}^{q/p}\mathcal{T}_{1}\mf{F}(u;B_{\rr})^{2}+c\mathcal{T}_{2}\mf{K}\left[\left(\rr^{m}\mint_{B_{\rr}}\snr{f}^{m} \ \dx\right)^{1/m}\right]^{p/(p-1)},
\end{eqnarray}
for $c\equiv c(\textnormal{\texttt{data}},M)$. We then reduce the size of the various parameter appearing in the definition of $\mathcal{T}_{1}$ to get
\begin{flalign}\label{s.11}
\theta^{-\frac{(n+p)q}{p}}(\varepsilon^{p}+\varepsilon)<\frac{\theta^{2\alpha}}{2^{10}},\qquad \qquad  4\theta^{-(n+2)}\varepsilon_{2}+\theta^{-nq/p}\mathds{1}_{\{q>p\}}\varepsilon_{2}^{2(q-p)/p}<\frac{\theta^{2\alpha}}{2^{10}},
\end{flalign}
thus fixing dependencies $\varepsilon,\varepsilon_{2}\equiv \varepsilon,\varepsilon_{2}(n,N,p,q,m,\theta)$, and \eqref{s.9} becomes
\begin{eqnarray}\label{s.10}
\mf{F}(u;B_{\theta\rr})^{2}\le cc_{\chi}^{q/p}\theta^{2\alpha}\mf{F}(u;B_{\rr})^{2}+c\mf{K}\left[\left(\rr^{m}\mint_{B_{\rr}}\snr{f}^{m} \ \dx\right)^{1/m}\right]^{p/(p-1)},
\end{eqnarray}
with $c\equiv c(\textnormal{\texttt{data}},\theta,M)$. Finally, we pick any $\gamma\in (0,\alpha)$, with $\alpha\equiv \alpha(n,N,p)$ being the exponent in \eqref{y.1}$_{2}$ and select $\theta\in (0,2^{-10})$ so small that
\begin{eqnarray}\label{s.12}
cc_{\chi}^{q/p}\theta^{2(\alpha-\gamma)}\le \frac{1}{2^{10}} \ \Longrightarrow \ \theta\equiv \theta(\textnormal{\texttt{data}},\chi,\gamma,M),
\end{eqnarray}
so with this choice and \eqref{s.10} we obtain \eqref{s.00} and the proof is complete.
\end{proof}
\section{Excess decay and the Regular set}\label{morsec} In this section we prove that the excess functional $\mf{F}(\cdot)$ decays on a certain subset of $\Omega$ provided the boundedness of the potential $\mathbf{I}^{f}_{1,m}(\cdot)$. Precisely, with $u\in W^{1,p}(\Omega,\mathbb{R}^{N})$ being a local minimizer of \eqref{exfun}, we set
\begin{flalign}\label{ru.0}
&\mathcal{R}_{u}:=\left\{\frac{}{} x_{0}\in \Omega\colon \exists \ M\equiv M(x_{0})\in (0,\infty),\ \bar{\rr}\equiv \bar{\rr}(\textnormal{\texttt{data}},M,f(\cdot))<\min\{d_{x_{0}},1\}, \ \bar{\varepsilon}\equiv \bar{\varepsilon}(\textnormal{\texttt{data}},M)\frac{}{}\right.\nonumber \\
&\qquad \qquad \qquad \qquad\qquad\qquad \left. \frac{}{}\mbox{such that} \ \snr{(V_{p}(Du))_{B_{\rr}(x_{0})}}<M \ \mbox{and} \ \mf{F}(u;B_{\rr}(x_{0}))<\bar{\varepsilon} \ \mbox{for some} \ \rr\in (0,\bar{\rr}]\frac{}{}\right\}.
\end{flalign}
According to the discussion in \cite[Section 5.1]{deqc}, the set $\mathcal{R}_{u}$ is well defined and open, with full $n$-dimensional Lebesgue measure. In particular, given any point $x_{0}\in \mathcal{R}_{u}$, there exists an open neighborhood $B(x_{0})\subset \mathcal{R}_{u}$ of $x_{0}$ and a radius $\rr_{x_{0}}\in (0,\bar{\rr}]$ such that
\begin{flalign}\label{70}
\snr{(V_{p}(Du))_{B_{\rr_{x_{0}}}(x)}}<M\quad \mbox{and}\quad \mf{F}(u;B_{\rr_{x_{0}}}(x))<\bar{\varepsilon}\qquad \mbox{for all} \ \ x\in B(x_{0}).
\end{flalign}
We stress that for a given point $x_{0}\in \mathcal{R}_{u}$, all the radii considered from now on will be implicitly assumed to be less than $\min\{d_{x_{0}},1\}$. Next, for $x_{0}\in \mathcal{R}_{u}$ verifying conditions \eqref{ru.0} for some $M\equiv M(x_{0})>0$, and parameters $\bar{\varepsilon},\bar{\rr}$ still to be fixed, we set $\nu:=2^{-2}$, choose $\gamma=\alpha/2$ in \eqref{s.00}, $\beta=\gamma$ in \eqref{nsns}, define $\alpha_{0}:=\gamma$ and let $\chi:=\varepsilon_{0}$ in \eqref{s.0}$_{1}$. This eventually fixes the dependency of all the parameters appearing in Propositions \ref{p1}, \ref{p2} and \ref{p3} on $(\textnormal{\texttt{data}},M)$. We then define parameters
\begin{flalign}\label{6.9}
\hat{\varepsilon}:=\frac{\varepsilon_{2}\varepsilon_{0}^{2}\mf{m}(\tau\theta)^{32npq}}{2^{40npq+10}c_{3}c_{3}'H},\qquad \qquad  H:=2^{8(n+10)}c_{3}\max\{\tau^{-n},\varepsilon_{0}^{-1}\},
\end{flalign} 
constants $c_{2}:=4(c_{0}+c_{1})$, $c_{3}:=c_{2}\max_{\delta\in \{\nu,\tau,\theta\}}(1-\delta^{\alpha_{0}})^{-1}$, $c_{3}':=\left(\frac{c_{3}2^{28nq}H}{\varepsilon_{0}\mf{m}(\tau\theta)^{16nq}}\right)^{\frac{p}{2(p-1)}}$, $\mf{m}:=\min_{\delta\in \{\nu,\tau,\theta\}}(1-\delta^{\alpha_{0}})$,
introduce the balanced composite excess functional 
\eqn{cb}
$$
(0,\rr]\ni s\mapsto \mf{C}(x_{0};s):=\mf{F}(u;B_{s}(x_{0}))+\snr{(V_{p}(Du))_{B_{s}(x_{0})}}
$$
and assume that
\begin{eqnarray}\label{6.0}
\mathbf{I}^{f}_{1,m}(x_{0},1)<\infty.
\end{eqnarray}
Notice that, up to extend $f\equiv 0$ in $\mathbb{R}^{n}\setminus \Omega$, the above position makes sense. Moreover, in \eqref{6.0} the finiteness of $\mathbf{I}^{f}_{1,m}(x_{0},\cdot)$ is assumed to hold at radius one, but of course we can suppose that it holds at any positive radius. By \eqref{6.0} and the absolute continuity of Lebesgue integral, we can find $\hat{\rr}\equiv \hat{\rr}(\textnormal{\texttt{data}},f(\cdot),M)\in (0,\min\{1,d_{x_{0}}\})$ such that
\begin{flalign}\label{6.1}
c_{4}\mf{K}\left(\mathbf{I}^{f}_{1,m}(x_{0},s)\right)^{\frac{p}{2(p-1)}}+c_{4}M^{(2-p)/p}\mathbf{I}^{f}_{1,m}(x_{0},s)< \hat{\varepsilon},\qquad c_{4}:=\left(\frac{2^{80npq}c_{3}^{2}c_{3}'H^{2}}{\varepsilon_{0}^{4}\varepsilon_{3}(\tau\theta)^{64npq}\mf{m}}\right)^{\frac{p^{2}}{4(p-1)^{2}}}
\end{flalign}
for all $s\in (0,\hat{\rr}]$, so if $\delta\in \{\nu,\tau,\theta\}$ it is
\begin{eqnarray}\label{6.2}
\max_{\delta\in \{\nu,\tau, \theta\}}\left\{ \sum_{j=0}^{\infty}\left((\delta^{j+1}s)^{m}\mint_{B_{\delta^{j+1}s}(x_{0})}\snr{f}^{m} \dx\right)^{1/m} \right\}\le \frac{2^{4n}\mathbf{I}^{f}_{1,m}(x_{0},s)}{(\tau\theta)^{2n}},
\end{eqnarray}
for all $s\in (0,\hat{\rr}]$. Recalling that $\nu>\max\{\tau,\theta\}$, by \eqref{6.2} and routine interpolation arguments we obtain that
\begin{eqnarray}\label{6.3.1}
\left(\sigma^{m}\mint_{B_{\sigma}(x_{0})}\snr{f}^{m} \dx\right)^{1/m}\le \frac{2^{8n}\mathbf{I}^{f}_{1,m}(x_{0},s)}{(\tau\theta)^{4n}}\qquad \mbox{for all} \ \ 0<\sigma\le s/4 ,
\end{eqnarray}
which, together with \eqref{6.1} yields:
\begin{flalign}\label{6.3}
&\sup_{\sigma\le s/4}\mf{K}\left[\left(\sigma^{m}\mint_{B_{\sigma}(x_{0})}\snr{f}^{m} \dx\right)^{1/m}\right]^{\frac{p}{2(p-1)}}\nonumber \\
&\qquad \qquad \qquad \qquad +M^{(2-p)/p}\sup_{\sigma\le s/4}\left(\sigma^{m}\mint_{B_{\sigma}(x_{0})}\snr{f}^{m} \dx\right)^{1/m}<\hat{\varepsilon}\left(\frac{\varepsilon_{0}^{4}\varepsilon_{3}\mf{m}(\tau\theta)^{56npq}}{2^{64npq}c_{3}^{2}c_{3}'H^{2}}\right)^{\frac{p^{2}}{4(p-1)^{2}}}
\end{flalign}
for all $s\in (0,\hat{\rr}]$ and
\begin{eqnarray}\label{6.4}
\lim_{\sigma\to 0}\left(\sigma^{m}\mint_{B_{\sigma}(x_{0})}\snr{f}^{m} \dx\right)^{1/m}=0.
\end{eqnarray}
We refer to \cite[Section 5.2]{deqc} for more details on this matter. In \eqref{ru.0}, we pick $\bar{\varepsilon}= \hat{\varepsilon}$, $\bar{\rr}= \hat{\rr}$, thus determining a ball $B_{\rr}(x_{0})\Subset \Omega$ with $\rr\in (0,\hat{\rr}]$ on which
\begin{eqnarray}\label{6.5}
\snr{(V_{p}(Du))_{B_{\rr}(x_{0})}}<M\qquad \mbox{and}\qquad \mf{F}(u;B_{\rr}(x_{0}))<\hat{\varepsilon}
\end{eqnarray}
hold true. Now we are ready to prove the main result of this section.
\begin{theorem}\label{t.ex}
Under assumptions \eqref{assf}-\eqref{sqc}, \eqref{assf.0}, \eqref{f} and \eqref{6.0}, let $u\in W^{1,p}(\Omega,\mathbb{R}^{N})$ be a local minimizer of \eqref{exfun}, $x_{0}\in \mathcal{R}_{u}$ be a point and $M\equiv M(x_{0})>0$ be the corresponding constant in \eqref{ru.0}. There are $\hat{\varepsilon}\equiv \hat{\varepsilon}(\textnormal{\texttt{data}},M)\in (0,1)$ and $\hat{\rr}\equiv \hat{\rr}(\textnormal{\texttt{data}},M,f(\cdot))<d_{x_{0}}$ as in \eqref{6.9}$_{1}$ and \eqref{6.1} respectively, such that if $\bar{\varepsilon}\equiv \hat{\varepsilon}$ and $\bar{\rr}\equiv \hat{\rr}$ in \eqref{ru.0}, then for all balls $B_{\varsigma}(x_{0})\subset B_{\rr}(x_{0})$ it holds 
\begin{flalign}\label{6.6}
\left\{
\begin{array}{c}
\displaystyle
\ \snr{(V_{p}(Du))_{B_{\varsigma}(x_{0})}}<8(1+M)\\[8pt]\displaystyle 
\ \snr{(V_{p}(Du))_{B_{\varsigma}(x_{0})}}\le c_{6}\left(\mf{C}(x_{0};\rr)+\mf{K}\left(\mathbf{I}^{f}_{1,m}(x_{0},\rr)\right)^{\frac{p}{2(p-1)}}\right),
\end{array}
\right.
\end{flalign}
and
\begin{eqnarray}\label{6.7}
\mf{F}(u;B_{\varsigma}(x_{0}))&\le& c_{5}\left(\frac{\varsigma}{\rr}\right)^{\alpha_{0}}\mf{F}(u;B_{\rr}(x_{0}))+c_{6}\sup_{\sigma\le \rr/4}\mf{K}\left[\left(\sigma^{m}\mint_{B_{\sigma}(x_{0})}\snr{f}^{m} \ \dx\right)^{1/m}\right]^{\frac{p}{2(p-1)}}\nonumber \\
&&+c_{6}\left(\mf{C}(x_{0};\rr)+\mf{K}\left(\mathbf{I}^{f}_{1,m}(x_{0},\rr)\right)^{\frac{p}{2(p-1)}}\right)^{\frac{2-p}{p}}\sup_{\sigma\le \rr/4}\left(\sigma^{m}\mint_{B_{\sigma}(x_{0})}\snr{f}^{m} \ \dx\right)^{1/m},
\end{eqnarray}
with $c_{5},c_{6}\equiv c_{5},c_{6}(\textnormal{\texttt{data}},M)$ and $\alpha_{0}\equiv \alpha_{0}(n,N,p)\in (0,1)$.
\end{theorem}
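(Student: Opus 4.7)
The strategy is a dyadic induction on concentric balls, together with a trichotomy that, at each scale, selects one of Propositions \ref{p1}, \ref{p2}, \ref{p3} depending on which regime is in force. Set $\rr_0:=\rr$ and recursively define $\rr_{j+1}:=\delta_{j+1}\rr_j$ where $\delta_{j+1}\in\{\nu,\tau,\theta\}$ is picked as follows. At step $j$, consider the \emph{singular alternative}
\begin{equation*}
\varepsilon_0\,\snr{(V_{p}(Du))_{B_{\rr_j}(x_0)}}\le \mf{F}(u;B_{\rr_j}(x_0)),
\end{equation*}
If this holds, take $\delta_{j+1}=\theta$ and use Proposition \ref{p3} (with $\chi=\varepsilon_0$ and $\gamma=\alpha_0=\alpha/2$) to get
\begin{equation*}
\mf{F}(u;B_{\rr_{j+1}}(x_0))\le \theta^{\alpha_0}\mf{F}(u;B_{\rr_j}(x_0))+c_1\,\mf{K}\!\left[\Bigl(\rr_j^m\mint_{B_{\rr_j}(x_0)}\!|f|^m\dx\Bigr)^{\!1/m}\right]^{\!p/(2(p-1))}\!.
\end{equation*}
Otherwise we are in the nonsingular alternative and we further split according to whether the smallness \eqref{ns.2} holds: if yes, take $\delta_{j+1}=\tau$ and apply Proposition \ref{p1} with $\beta=\alpha_0$ to obtain $\mf{F}(u;B_{\rr_{j+1}}(x_0))\le \tau^{\alpha_0}\mf{F}(u;B_{\rr_j}(x_0))$; if no, take $\delta_{j+1}=\nu$ and use Proposition \ref{p2} to pay the $f$-term
\begin{equation*}
\mf{F}(u;B_{\rr_{j+1}}(x_0))\le c_0\,\snr{(V_{p}(Du))_{B_{\rr_j}(x_0)}}^{(2-p)/p}\Bigl(\rr_j^m\mint_{B_{\rr_j}(x_0)}\!|f|^m\dx\Bigr)^{\!1/m}.
\end{equation*}

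The induction carries the following invariants at each step $j$: (I1) $\snr{(V_{p}(Du))_{B_{\rr_j}(x_0)}}<8(1+M)$; (I2) $\mf{F}(u;B_{\rr_j}(x_0))<\hat{\varepsilon}$, so that the assumption \eqref{ns.1} and the smallness hypotheses \eqref{ns.0}, \eqref{s.0} of the three propositions are available at scale $\rr_j$; (I3) a weighted geometric-series bound of the form
\begin{equation*}
\mf{F}(u;B_{\rr_j}(x_0))\le c_3\Bigl(\tfrac{\rr_j}{\rr}\Bigr)^{\!\alpha_0}\mf{F}(u;B_{\rr}(x_0))+c_3\sum_{i<j}\Bigl(\tfrac{\rr_j}{\rr_i}\Bigr)^{\!\alpha_0}\!\mathcal{E}_i(f),
\end{equation*}
where $\mathcal{E}_i(f)$ collects the $f$-dependent terms produced at step $i$ (either an $\mf{K}$-power of a local $L^m$-mean, or a $\snr{(V_p(Du))}^{(2-p)/p}$ times such mean). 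Propagation of (I3) is routine by iterating the one-scale estimate; the key geometric point is that, since each $\delta_{j+1}\ge\min\{\nu,\tau,\theta\}$ and $\mf{m}$ governs the geometric series, the ratios $(\rr_j/\rr_i)^{\alpha_0}$ are summable against the Wolff-type sum \eqref{6.2}.

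The main obstacle is maintaining (I1) and (I2) through the iteration. For (I1) we combine \eqref{tri.1}$_{3}$ with the step-by-step bound on $\mf{F}(u;B_{\rr_{j+1}}(x_0))$, which gives a telescoping control
\begin{equation*}
\snr{(V_p(Du))_{B_{\rr_{j+1}}(x_0)}}\le \snr{(V_p(Du))_{B_{\rr_j}(x_0)}}+\delta_{j+1}^{-n/2}\mf{F}(u;B_{\rr_j}(x_0)),
\end{equation*}
and summing, using (I3) together with the bound \eqref{6.3} on the $f$-contributions, yields the quantitative refinement \eqref{6.6}$_2$, i.e.\ $\snr{(V_p(Du))_{B_{\rr_j}(x_0)}}\le \snr{(V_p(Du))_{B_{\rr}(x_0)}}+c_6\bigl(\mf{F}(u;B_\rr)+\mf{K}(\mathbf{I}^f_{1,m}(x_0,\rr))^{p/(2(p-1))}\bigr)$. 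The calibration \eqref{6.9} of $\hat{\varepsilon}$ versus $\mf{m},\tau,\theta,\varepsilon_0,\varepsilon_2,\varepsilon_3$ and the smallness \eqref{6.1}--\eqref{6.3} ensure that the correction stays below $1+7M$ (hence (I1) holds with factor $8(1+M)$) and that $\mf{F}(u;B_{\rr_j}(x_0))$ remains below $\hat{\varepsilon}$ (hence (I2) is propagated). This is precisely the reason why the geometric constants are chosen with the heavy $H,c_3,c_3',c_4$ in \eqref{6.9}--\eqref{6.1}: they absorb the losses produced by alternating between $\nu, \tau, \theta$ and by the nonhomogeneous tails from \eqref{s.00} and Proposition \ref{p2}.

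Finally, to pass from dyadic radii $\rr_j$ to arbitrary $\varsigma\in(0,\rr]$, we fix $j$ with $\rr_{j+1}<\varsigma\le \rr_j$ and apply \eqref{ls.42.1}--\eqref{ls.43.1} to transfer the decay and gradient-average bounds from the dyadic scale $\rr_j$ to $\varsigma$, paying only a fixed constant depending on $\min\{\nu,\tau,\theta\}$, hence on $(\texttt{data},M)$. Summing the $f$-contributions via \eqref{6.2}--\eqref{6.3} produces the $\sup_{\sigma\le \rr/4}$ terms in \eqref{6.7}, while the geometric factor $c_5(\varsigma/\rr)^{\alpha_0}$ arises from $\sum_{\delta_{i}=\tau,\theta}\delta_i^{\alpha_0}\le(\varsigma/\rr)^{\alpha_0}$ after accounting for the $\nu$-steps via the Wolff potential. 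This delivers \eqref{6.6}--\eqref{6.7} with $\alpha_0\equiv\alpha/2\equiv\alpha_0(n,N,p)$ and constants $c_5,c_6\equiv c_5,c_6(\texttt{data},M)$.
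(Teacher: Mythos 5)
You take the right high-level strategy (dyadic iteration that selects Proposition \ref{p1}, \ref{p2}, or \ref{p3} at each scale), which is indeed the backbone of the paper's proof, but there is a genuine gap in obtaining the \emph{sharp} gradient-average bound \eqref{6.6}$_2$ and the sharp prefactor $\bigl(\mf{C}(x_0;\rr)+\mf{K}(\cdot)^{p/(2(p-1))}\bigr)^{(2-p)/p}$ in \eqref{6.7}. Your invariant (I1) controls the bad-step tail from Proposition \ref{p2} only as $c_0\bigl(8(1+M)\bigr)^{(2-p)/p}\mf{S}(\rr_j)$, so summing along the dyadic chain gives at best $V(\rr_j)\le V(\rr)+c\mf{F}(\rr)+c\,\mf{K}\bigl(\mathbf{I}^{f}_{1,m}(x_{0},\rr)\bigr)^{p/(2(p-1))}+c\,M^{(2-p)/p}\mathbf{I}^{f}_{1,m}(x_{0},\rr)$. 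The last term is not absorbable into $\mf{C}(x_0;\rr)+\mf{K}(\mathbf{I}^{f}_{1,m})^{p/(2(p-1))}$: the quantity $\mf{C}(x_0;\rr)=\mf{F}(\rr)+V(\rr)$ can be far smaller than $M$, and $M^{(2-p)/p}\mathbf{I}^{f}_{1,m}\not\lesssim \mathbf{I}^{f}_{1,m}{}^{p/(2(p-1))}$ for fixed $M\gtrsim 1$ and small potential. Thus "calibration of constants" as you invoke it cannot deliver the theorem as stated; it yields only a strictly weaker, $M$-contaminated estimate.

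What closes the gap in the paper is a self-improving bootstrap, not a one-shot summation. The quantity $\mf{V}_2:=c_3'\bigl(\mf{F}(\rr)+V(\rr)+\mf{K}(\mathbf{I}^{f}_{1,m}(x_0,\rr))^{p/(2(p-1))}\bigr)$ is built to contain precisely the $\mf{K}$-power needed for reabsorption; one inducts $V(\tau_j r_2)\le\mf{V}_2$, bounds the bad-step tails by $\mf{V}_2^{(2-p)/p}\mathbf{I}^{f}_{1,m}$, and then applies Young's inequality with exponents $\bigl(\tfrac{p}{2-p},\tfrac{p}{2(p-1)}\bigr)$ so that this splits into a small multiple of $\mf{V}_2$ (absorbed) plus $c\,\mathbf{I}^{f}_{1,m}{}^{p/(2(p-1))}$ (already part of $\mf{V}_2$) — this is the content of display \eqref{6.30}. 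Your proposal has no analogue of this absorption, so the circular dependence between (I1) and the bad-step tails is never resolved in the sharp form. Relatedly, you omit the structural constraint \eqref{6.36} — the nonsingular regime can only hand back to the singular one at a scale where the composite excess has dropped below the Wolff-tail threshold — which is precisely what allows the paper to "reset" the iteration at potential-controlled data and keep all constants uniform over an arbitrary number of regime switches; without it, the assertion that the induction closes with constants depending only on $(\texttt{data},M)$ is unsubstantiated.
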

\begin{proof}
For the ease of reading, we split the proof into five steps.
\subsection*{Step 1: decay estimates at the first scale} For $j\in \N\cup \{0\}$, $\nu$ defined as above and $\tau,\theta$ from Propositions \ref{p1}, \ref{p3} respectively, we introduce the following notation: $\tau_{j}:=\tau^{j}$, $\theta_{j}:=\theta^{j}$, $\nu_{j}:=\nu^{j}$ with $\tau_{0}\equiv\theta_{0}\equiv\nu_{0}=1$, $r_{1}:=\nu_{1}\rr$ and, for $s> 0$ we set:
\begin{flalign*}
&\mf{F}(s):=\mf{F}(u;B_{s}(x_{0})),\qquad\qquad  V(s):=\snr{(V_{p}(Du))_{B_{s}(x_{0})}},\nonumber \\
&\mf{C}(s):=\mf{C}(x_{0};s),\quad\qquad\qquad \quad \mf{S}(s):=\left(s^{m}\mint_{B_{s}(x_{0})}\snr{f}^{m} \ \dx\right)^{1/m}\nonumber \\
&\mf{H}_{\mf{s}}:=\sup_{s\le \rr/4}\mf{S}(s),\qquad\qquad\quad  \ \ \mf{K}_{\mf{s}}:=\sup_{s\le \rr/4}\mf{K}\left(\mf{S}(s)\right).
\end{flalign*} We then estimate
\begin{flalign}
\left\{
\begin{array}{c}
\displaystyle
\ \mf{F}(r_{1})\stackrel{\eqref{tri.1}_{1}}{\le}2^{1+n}\mf{F}(\rr)\stackrel{\eqref{6.5}_{2}}{<}2^{n+1}\hat{\varepsilon}\stackrel{\eqref{6.9}_{1}}{<}\frac{\varepsilon_{2}(\tau\theta)^{4npq}}{2^{8npq}}<\varepsilon_{2}\\[8pt]\displaystyle 
\ V(r_{1})\stackrel{\eqref{tri.1}_{2}}{\le}2^{n}\mf{F}(\rr)+V(\rr)\stackrel{\eqref{6.5}}{<} 2^{n}\hat{\varepsilon}+M\stackrel{\eqref{6.9}_{1}}{\le}\frac{1}{2}+M.
\end{array}
\right.\label{6.8}
\end{flalign}
 With the content of display \eqref{6.8} at hand, we can start iterations.
 \subsection*{Step 2: maximal iteration chains} Let us recall from \cite[Section 12.4]{kumig} the definition of maximal iteration chains. Given any nonempty set of indices $\mathcal{J}_{0}\subset \N\cup \{0\}$, for $\kk\in \N$ the maximal iteration chain of length $\kk$ starting at $\iota$ is defined as:
\begin{eqnarray*}
\mathcal{C}^{\kk}_{\iota}:=\left\{j\in \N\cup\{0\} \colon \iota \le j\le \iota+\kk,\  \iota \in \mathcal{J}_{0}, \ \iota+\kk+1\in \mathcal{J}_{0}, \ j\not \in \mathcal{J}_{0} \ \mbox{if} \ j>\iota\right\},
\end{eqnarray*}
i.e., $\mathcal{C}^{\kk}_{\iota}=\{\iota,\iota+1,\cdots,\iota+\kk\}$ and all its elements lie outside $\mathcal{J}_{0}$ except $\iota$, which belongs to $\mathcal{J}_{0}$. Furthermore, $\mathcal{C}^{\kk}_{\iota}$ is maximal, in the sense that it cannot be properly contained in any other set of the same kind. Similarly, the infinite maximal chain starting at $\iota$ is given by
\begin{eqnarray*}
\mathcal{C}^{\infty}_{\iota}:=\left\{j\in \N\cup\{0\} \colon \iota \le j<\infty ,\  \iota \in \mathcal{J}_{0},  \ j\not \in \mathcal{J}_{0} \ \mbox{if} \ j>\iota\right\}.
\end{eqnarray*}
We then look at two different alternatives:
\begin{eqnarray}\label{6.11}
\mf{C}(r_{1})>\left( \frac{H\mf{H}_{\mf{s}}}{\varepsilon_{0}}\right)^{\frac{p}{2(p-1)}}\qquad \mbox{or}\qquad \mf{C}(r_{1})\le \left(\frac{H\mf{H}_{\mf{s}}}{\varepsilon_{0}}\right)^{\frac{p}{2(p-1)}},
\end{eqnarray}
with $\mf{H}_{\mf{s}}$ and $r_{1}$ defined at the beginning of \textbf{Step 1}, $H$ is the constant in \eqref{6.9}$_{2}$ and $\varepsilon_{0}$ is the same parameter appearing in \eqref{ns.0}.

\subsection*{Step 3: large composite excess at the first scale} In order to repeatedly apply \eqref{nsns}, \eqref{ns.22} and \eqref{s.00} while keeping under control the various parameters involved and avoiding any blow-up of the bounding constants, let us prepare the set-up for the Blocks and Chains technique introduced in \cite[Section 5.2]{deqc}. We assume that $\eqref{6.11}_{1}$ holds and, with $\nu$ as in \textbf{Step 1}, we consider the set of indices
\begin{eqnarray*}
\mathcal{J}_{0}:=\left\{j\in \N\cup\{0\}\colon \mf{C}(\nu_{j}r_{1})>\left( \frac{H\mf{H}_{\mf{s}}}{\varepsilon_{0}}\right)^{\frac{p}{2(p-1)}}\right\}.
\end{eqnarray*}
Notice that $\mathcal{J}_{0}\not=\{\emptyset\}$ by $\eqref{6.11}_{1}$. We then look at two possibilities:
\begin{itemize}
    \item[\emph{i.}] there is at least one maximal iteration chain $\mathcal{C}^{\kk}_{\iota}$ starting at $\iota\in \mathcal{J}_{0}$ for some $\kk\le \infty$;
    \item[\emph{ii.}] $\mathcal{J}_{0}\equiv \N\cup\{0\}$.
\end{itemize}
We first examine occurrence ($\emph{i.}$) at its worst: we assume that there are (countably) infinitely many finite maximal iteration chains $\{\mathcal{C}^{\kk_{d}}_{\iota_{d}}\}_{d\in \N}$ corresponding to the discrete sequences $\{\iota_{d}\}_{d\in \N},\{\kk_{d}\}_{d\in \N}\subset \N$. By maximality it is easy to see that $\mathcal{C}^{\kk_{d_{1}}}_{\iota_{d_{1}}}\cap \mathcal{C}^{\kk_{d_{2}}}_{\iota_{d_{2}}}=\{\emptyset\}$ for $d_{1}\not =d_{2}$ and 
\begin{eqnarray}\label{6.12}
 \iota_{d+1}\ge \iota_{d}+\kk_{d}+1 \ \Longrightarrow \ \{\iota_{d}\}_{d\in \N} \ \mbox{is increasing and} \ \iota_{d}\to \infty.
\end{eqnarray}
By \eqref{6.12} we can split the reference interval $(0,r_{1}]$ into the union of disjoint blocks as $(0,r_{1}]=\bigcup_{d\in \N\cup\{0\}}\texttt{B}_{d}$, where it is $\texttt{B}_{0}:=\texttt{I}_{0}\cup\texttt{I}_{1}^{1}\cup \texttt{K}_{1}$, $\texttt{B}_{d}:=\texttt{I}_{d}^{2}\cup\texttt{I}_{d+1}^{1}\cup \texttt{K}_{d+1}$ for $d\in \N$, with
\begin{flalign*}
&\texttt{I}_{0}:=(\nu_{\iota_{1}}r_{1},r_{1}],\qquad\qquad \  \texttt{K}_{d}:=(\nu_{\iota_{d}+\kk_{d}+1}r_{1},\nu_{\iota_{d}+1}r_{1}]\nonumber \\ &\texttt{I}_{d}^{1}:=(\nu_{\iota_{d}+1}r_{1},\nu_{\iota_{d}}r_{1}],\qquad \texttt{I}_{d}^{2}:=(\nu_{\iota_{d+1}}r_{1},\nu_{\iota_{d}+\kk_{d}+1}r_{1}],
\end{flalign*}
and we shall implicitly identify $\texttt{I}_{0}\equiv \texttt{I}^{2}_{0}$. By construction, the intervals described in the above display are disjoint and only $\texttt{I}^{2}_{d}$ may be empty. The very definition of maximal iteration chains for the choice of $\mathcal{J}_{0}$ made above yields that
\begin{flalign}\label{6.13}
\left\{
\begin{array}{c}
\displaystyle
\ \mf{C}(\nu_{\iota_{d}}r_{1})>\left( \frac{H\mf{H}_{\mf{s}}}{\varepsilon_{0}}\right)^{\frac{p}{2(p-1)}} \qquad \mbox{for all} \ \ d\in \N\\[8pt]\displaystyle 
\ \mf{C}(\nu_{j}r_{1})\le \left( \frac{H\mf{H}_{\mf{s}}}{\varepsilon_{0}}\right)^{\frac{p}{2(p-1)}} \qquad \mbox{for all} \ \ j\in \{\iota_{d}+1,\cdots,\iota_{d}+\kk_{d}\}, \ \ d\in \N,
\end{array}
\right.
\end{flalign}
so if $\varsigma\in \texttt{K}_{d}$ we can find $j_{\varsigma}\in \{\iota_{d}+1,\cdots, \iota_{d}+\kk_{d}\}$ such that $\nu_{j_{\varsigma}+1}r_{1}<\varsigma\le \nu_{j_{\varsigma}}r_{1}$ and
\begin{eqnarray}\label{6.14}
\mf{C}(\varsigma)\stackrel{\eqref{ls.43.1}}{\le}2^{2+n}\mf{C}(\nu_{j_{\varsigma}}r_{1})\stackrel{\eqref{6.13}_{2}}{\le}2^{n+2}\left( \frac{H\mf{H}_{\mf{s}}}{\varepsilon_{0}}\right)^{\frac{p}{2(p-1)}}.
\end{eqnarray}
On the other hand, if $\varsigma\in \texttt{I}_{0}$ or $\varsigma\in\texttt{I}_{d}^{2}$, $d\in \N$, it is possible to determine $j_{\varsigma}\in \{0,\cdots,\iota_{1}-1\}$ or $j_{\varsigma}\in \{\iota_{d}+\kk_{d}+1,\cdots,\iota_{d+1}-1\}$ verifying $\nu_{j_{\varsigma}+1}r_{1}<\varsigma\le \nu_{j_{\varsigma}}r_{1}$ and
\begin{eqnarray}\label{6.15}
\mf{C}(\varsigma)\stackrel{\eqref{ls.43.1}}{\ge}\frac{1}{2^{2+n}}\mf{C}(\nu_{j_{\varsigma}+1}r_{1})\stackrel{\eqref{6.13}_{1}}{>}\frac{1}{2^{2+n}}\left( \frac{H\mf{H}_{\mf{s}}}{\varepsilon_{0}}\right)^{\frac{p}{2(p-1)}}.
\end{eqnarray}
Next, if $\texttt{I}_{d}^{2}=\{\emptyset\}$ so $\texttt{B}_{d}=\texttt{I}^{1}_{d+1}\cup \texttt{K}_{d+1}$, it turns out that the adjacent blocks $\texttt{B}_{d-1}$-$\texttt{B}_{d}$ contain two consecutive chains. In fact, in this case it is $\iota_{d+1}=\iota_{d}+\kk_{d}+1$, therefore $\texttt{B}_{d-1}\cup \texttt{B}_{d}=\texttt{I}^{2}_{d-1}\cup\texttt{I}^{1}_{d}\cup\texttt{K}_{d}\cup \texttt{I}^{1}_{d+1}\cup\texttt{K}_{d+1}$ and if in particular $\varsigma\in \texttt{K}_{d}\cup \texttt{I}^{1}_{d+1}\cup \texttt{K}_{d+1}$, there is $j_{\varsigma}\in \{\iota_{d}+1,\cdots,\iota_{d+1}+\kk_{d+1}\}$ such that
\begin{eqnarray*}
\left\{
\begin{array}{c}
\displaystyle
\ \mf{C}(\varsigma)\stackrel{\eqref{ls.43.1}}{\le}2^{n+2}\mf{C}(\nu_{j_{\varsigma}}r_{1})\stackrel{\eqref{6.13}_{2}}{<}2^{n+2}\left( \frac{H\mf{H}_{\mf{s}}}{\varepsilon_{0}}\right)^{\frac{p}{2(p-1)}}\qquad \mbox{if} \ \ j_{\varsigma}\not =\iota_{d+1}\\[8pt]\displaystyle 
\ \mf{C}(\varsigma)\stackrel{\eqref{ls.43.1}}{\le}2^{n+2}\mf{C}(\nu_{\iota_{d+1}}r_{1})\stackrel{\eqref{ls.43.1}}{\le}2^{2n+4}\mf{C}(\nu_{\iota_{d}+\kk_{d}}r_{1})\stackrel{\eqref{6.13}_{2}}{<}2^{2n+4}\left( \frac{H\mf{H}_{\mf{s}}}{\varepsilon_{0}}\right)^{\frac{p}{2(p-1)}}\qquad \mbox{if} \ \ j_{\varsigma}=\iota_{d+1},
\end{array}
\right.
\end{eqnarray*}
so in any case we have that
\begin{eqnarray}\label{6.16}
\mf{C}(\varsigma)<2^{2n+4}\left( \frac{H\mf{H}_{\mf{s}}}{\varepsilon_{0}}\right)^{\frac{p}{2(p-1)}}\qquad \mbox{for all} \ \ \varsigma \in \texttt{K}_{d}\cup \texttt{B}_{d}.
\end{eqnarray}
We then consider two occurrences:
\begin{eqnarray}\label{6.17}
\varepsilon_{0}V(r_{1})\le \mf{F}(r_{1})\qquad \mbox{or}\qquad \varepsilon_{0}V(r_{1})>\mf{F}(r_{1})
\end{eqnarray}
assume that $\eqref{6.17}_{1}$ holds and introduce a second set of indices defined as $$\mathcal{J}_{1}:=\left\{j\in \N\cup\{0\}\colon \varepsilon_{0}V(\theta_{j}r_{1})\le \mf{F}(\theta_{j}r_{1})\right\},$$ which is nonempty given that $0\in \mathcal{J}_{1}$ by $\eqref{6.17}_{1}$.
\subsubsection*{Step 3.1: the singular regime is stable} In this case 
\eqn{6.24}
$$\mathcal{J}_{1}\equiv \N\cup\{0\},$$ 
so we can ignore the presence of blocks $\{\texttt{B}_{d}\}_{d\in \N\cup\{0\}}$ and proceed in a more standard way, cf. \cite{dumi,ts2}. By \eqref{6.8}, \eqref{6.3} and $\eqref{6.17}_{1}$ we see that Proposition \ref{p3} applies and gives
\begin{flalign}
\left\{
\begin{array}{c}
\displaystyle
\ \mf{F}(\theta_{1} r_{1})\stackrel{\eqref{s.00}}{\le}\theta^{\alpha_{0}}\mf{F}(r_{1})+c_{1}\mf{K}\left(\mf{S}(r_{1})\right)^{\frac{p}{2(p-1)}}\stackrel{\eqref{6.8}_{1},\eqref{6.3}}{<}\varepsilon_{2}\\[8pt]\displaystyle 
\ V(\theta_{1} r_{1})\stackrel{\eqref{6.24}}{\le} \frac{\mf{F}(\theta_{1} r_{1})}{\varepsilon_{0}}\stackrel{\eqref{6.18}_{1}}{\le}\frac{1}{\varepsilon_{0}}\left(\theta^{\alpha_{0}}\mf{F}(r_{1})+c_{1}\mf{K}(\mf{S}(r_{1}))^{\frac{p}{2(p-1)}}\right)\stackrel{\eqref{tri.1}_{1}}{\le}\frac{1}{\varepsilon_{0}}\left(2^{n+1}\mf{F}(\rr)+c_{1}\mf{K}_{\mf{s}}^{\frac{p}{2(p-1)}}\right)\\[8pt]\displaystyle \ V(\theta_{1}r_{1})\stackrel{\eqref{6.18}_{2},\eqref{6.8}_{1},\eqref{6.3}}{\le}1,\label{6.18}
\end{array}
\right.
\end{flalign}
where we also used that by $\eqref{pq}_{2}$ it is $\frac{p}{2(p-1)}>1$.
Let us fix $j\in \N$ and assume that
\begin{eqnarray}\label{6.19}
\mf{F}(\theta_{i}r_{1})<\varepsilon_{2}\qquad \mbox{for all} \ \ i\in \{0,\cdots,j\}.
\end{eqnarray}
As a consequence of \eqref{6.24} and \eqref{6.19}, we have
\begin{eqnarray}\label{6.20}
V(\theta_{i}r_{1})&\le&\frac{\mf{F}(\theta_{i}r_{1})}{\varepsilon_{0}}\stackrel{\eqref{6.19}}{<}\frac{\varepsilon_{2}}{\varepsilon_{0}}\stackrel{\eqref{s.11},\eqref{s.12}}{\le}1. 
\end{eqnarray}
Thanks to \eqref{6.19}-\eqref{6.20} we can apply \eqref{s.00} at the $\theta_{j}r_{1}$-scale to get
\begin{eqnarray}\label{6.21}
\mf{F}(\theta_{j+1}r_{1})&\stackrel{\eqref{s.00}}{\le}&\theta^{\alpha_{0}}\mf{F}(\theta_{j}r_{1})+c_{1}\mf{K}\left(\mf{S}(\theta_{j}r_{1})\right)^{\frac{p}{2(p-1)}}\nonumber \\
&\le&\theta^{\alpha_{0}(j+1)}\mf{F}(r_{1})+c_{1}\sum_{i=0}^{j}\theta^{\alpha_{0}(j-i)}\mf{K}\left(\mf{S}(\theta_{i}r_{1})\right)^{\frac{p}{2(p-1)}}\nonumber \\
&\le&\theta^{\alpha_{0}(j+1)}\mf{F}(r_{1})+c_{3}\mf{K}_{\mf{s}}^{\frac{p}{2(p-1)}}\stackrel{\eqref{6.3},\eqref{6.8}_{1}}{<\varepsilon_{2}}
\end{eqnarray}
and, via \eqref{6.24}, \eqref{6.21}, \eqref{s.11}, \eqref{s.12},
\begin{eqnarray}\label{6.22}
V(\theta_{j+1}r_{1})\le \frac{\mf{F}(\theta_{j+1}r_{1})}{\varepsilon_{0}}\le \frac{\varepsilon_{2}}{\varepsilon_{0}}\le 1.
\end{eqnarray}
The arbitrariety of $j\in \N$ and \eqref{6.24} allow concluding that \eqref{6.21}-\eqref{6.22} hold for all $j\in \N\cup\{0\}$; in particular it is
\begin{flalign}
V(\theta_{j+1}r_{1})\stackrel{\eqref{6.24}}{\le}\frac{\mf{F}(\theta_{j+1}r_{1})}{\varepsilon_{0}}\stackrel{\eqref{6.21}}{\le}\frac{1}{\varepsilon_{0}}\left(\theta^{\alpha_{0}(j+1)}\mf{F}(r_{1})+c_{3}\mf{K}_{\mf{s}}^{\frac{p}{2(p-1)}}\right)\stackrel{\eqref{tri.1}}{\le}\frac{1}{\varepsilon_{0}}\left(2^{n+1}\mf{F}(\rr)+c_{3}\mf{K}_{\mf{s}}^{\frac{p}{2(p-1)}}\right),\label{v.1}
\end{flalign}
for all $j\in \N\cup\{0\}$. Standard interpolative arguments and \eqref{6.8} then yield that whenever $\varsigma\in (0,r_{1}]$ there is $j_{\varsigma}\in \mathcal{J}_{1}$ such that $\theta_{j_{\varsigma}+1}r_{1}<\varsigma\le \theta_{j_{\varsigma}}r_{1}$,
\begin{eqnarray}\label{6.34}
\mf{F}(\varsigma)\le \frac{2^{4+n}}{\theta^{1+n/2}}\left(\frac{\varsigma}{\rr}\right)^{\alpha_{0}}\mf{F}(\rr)+\frac{2c_{3}}{\theta^{n/2}}\mf{K}_{\mf{s}}^{\frac{p}{2(p-1)}},\qquad \qquad V(\varsigma)\le \frac{3}{2}
\end{eqnarray}
and
\begin{eqnarray}\label{v.2}
V(\varsigma)&\le& \snr{V(\varsigma)-V(\theta_{j_{\varsigma}}r_{1})}+V(\theta_{j_{\varsigma}}r_{1})\stackrel{\eqref{ls.42.1}_{2},\eqref{v.1}}{\le}\frac{\mf{F}(\theta_{j_{\varsigma}}r_{1})}{\theta^{n/2}}+\frac{1}{\varepsilon_{0}}\left(2^{n+1}\mf{F}(\rr)+c_{3}\mf{K}_{\mf{s}}^{\frac{p}{2(p-1)}}\right)\nonumber \\
&\stackrel{\eqref{6.21}}{\le}&\frac{2^{n+4}c_{3}}{\theta^{n/2}\varepsilon_{0}}\left(\mf{F}(\rr)+V(\rr)+\mf{K}_{\mf{s}}^{\frac{p}{2(p-1)}}\right)=:\mf{V}_{1}.
\end{eqnarray}
Finally, if $\varsigma\in (r_{1},\rr]$ via \eqref{6.8} and \eqref{ls.42.1} it is
\begin{eqnarray}\label{6.35}
\mf{F}(\varsigma)\le 2^{4+n}\left(\frac{\varsigma}{\rr}\right)^{\alpha_{0}}\mf{F}(\rr)\qquad \mbox{and}\qquad V(\varsigma)\le 2^{n}\mf{F}(\rr)+V(\rr)< 1+M
\end{eqnarray}
Merging the content of the three previous displays we obtain
\begin{eqnarray}\label{final.1}
\left\{
\begin{array}{c}
\displaystyle
\ \mf{F}(\varsigma)\le \frac{2^{4+n}}{\theta^{1+n/2}}\left(\frac{\varsigma}{\rr}\right)^{\alpha_{0}}\mf{F}(\rr)+\frac{2c_{3}}{\theta^{n/2}}\mf{K}_{\mf{s}}^{\frac{p}{2(p-1)}}\\[8pt]\displaystyle 
\ V(\varsigma)\le 2+M,\qquad V(\varsigma)\le \mf{V}_{1}
\end{array}
\right.\qquad \mbox{for all} \ \ \varsigma\in (0,\rr].
\end{eqnarray}
\subsubsection*{Step 3.2: first change of scale} If \eqref{6.24} does not hold, there exists $j_{1}\in \N$ such that
\begin{eqnarray*}
j_{1}:=\min\{j\in \N\colon \varepsilon_{0}V(\theta_{j}r_{1})>\mf{F}(\theta_{j}r_{1})\},
\end{eqnarray*}
and by \eqref{6.17}$_{1}$ it is $j_{1}\ge 1$. We can rephrase the minimality character of $j_{1}$ as
\begin{eqnarray}\label{6.25}
\varepsilon_{0}V(\theta_{j}r_{1})\le \mf{F}(\theta_{j}r_{1}) \ \ \mbox{for all} \ \ j\in \{0,\cdots, j_{1}-1\}\qquad \mbox{and}\qquad \varepsilon_{0}V(\theta_{j_{1}}r_{1})>\mf{F}(\theta_{j_{1}}r_{1}),
\end{eqnarray}
therefore by $\eqref{6.25}_{1}$ and \eqref{6.3} we deduce that \eqref{6.21}-\eqref{v.1} hold for all $j\in \{0,\cdots,j_{1}-1\}$. In particular, it is
\begin{eqnarray}\label{6.26}
\mf{F}(\theta_{j_{1}}r_{1})\le \theta^{\alpha_{0}j_{1}}\mf{F}(r_{1})+c_{3}\mf{K}_{\mf{s}}^{\frac{p}{2(p-1)}},\qquad V(\theta_{j_{1}}r_{1})\le \frac{3}{2},\qquad V(\theta_{j_{1}}r_{1})\le \mf{V}_{1},
\end{eqnarray}
with $\mf{V}_{1}$ being defined in \eqref{v.2}. We set $r_{2}:=\theta_{j_{1}}r_{1}$ and introduce a new set of indices
\begin{eqnarray*}
\mathcal{J}_{2}:=\left\{j\in \N\cup \{0\}\colon \varepsilon_{0}V(\tau_{j}r_{2})>\mf{F}(\tau_{j}r_{2})\right\},
\end{eqnarray*}
which is nonempty given that $0\in \mathcal{J}_{2}$ because of $\eqref{6.25}_{2}$ and the very definition of $r_{2}$.
\subsubsection*{Step 3.3: the nonsingular regime is stable} Let us assume that
\begin{eqnarray}\label{6.27}
\mathcal{J}_{2}\equiv \N\cup\{0\} \ \Longrightarrow \ \varepsilon_{0}V(\tau_{j}r_{2})>\mf{F}(\tau_{j}r_{2}) \ \ \mbox{for all} \ \ j\in \N\cup\{0\}.
\end{eqnarray}
By induction, we want to show that
\begin{eqnarray}\label{6.28}
\left\{
\begin{array}{c}
\displaystyle
\ V(\tau_{j}r_{2})\le 2,\qquad V(\tau_{j}r_{2})\le \mf{V}_{2} \\[8pt]\displaystyle
\ \mf{F}(\tau_{j+1}r_{2})\le \tau^{(j+1)\alpha_{0}}\mf{F}(r_{2})+c_{0}\sum_{i=0}^{j}\tau^{\alpha_{0}(j-i)}V(\tau_{i}r_{2})^{(2-p)/p}\mf{S}(\tau_{i}r_{2})
\end{array}
\right.
\end{eqnarray}
for all $j\in \N\cup\{0\}$, where we set
$$
\mf{V}_{2}:=c_{3}'\left(\mf{F}(\rr)+V(\rr)+\mf{K}\left(\mathbf{I}^{f}_{1,m}(x_{0},\rr)\right)^{\frac{p}{2(p-1)}}\right)
$$
and $c_{3}'\equiv c_{3}'(\textnormal{\texttt{data}},M)$ has been defined at the beginning of Section \ref{morsec}. For $j=0$, by $\eqref{6.26}_{2,3}$, $\eqref{6.25}_{2}$ and Propositions \ref{p1}-\ref{p2} we obtain
\begin{eqnarray}\label{6.28.1}
\left\{
\begin{array}{c}
\displaystyle
\ \mf{F}(\tau_{1}r_{2})\stackrel{\eqref{nsns},\eqref{ns.22}}{\le}\tau^{\alpha_{0}}\mf{F}(r_{2})+c_{0}V(r_{2})^{(2-p)/p}\mf{S}(r_{2})\\[8pt]\displaystyle 
\ V(r_{2})\le \frac{3}{2},\qquad V(r_{2})\le \mf{V}_{1},
\end{array}
\right.
\end{eqnarray}
thus, recalling that 
\eqn{v.4}
$$\mf{V}_{1}<2^{-4}\mf{V}_{2}$$ by definition, \eqref{6.28} is proven for $j=0$. Next, let us fix $j\in \N$ and assume the validity of \eqref{6.28} for all $i\in \{0,\cdots, j\}$. In particular it holds that
\begin{eqnarray}\label{6.29}
\left\{
\begin{array}{c}
\displaystyle
\mf{F}(\tau_{i+1}r_{2})\le \tau^{\alpha_{0}(i+1)}\mf{F}(r_{2})+c_{0}\sum_{k=0}^{i}\tau^{\alpha_{0}(i-k)}V(\tau_{k}r_{2})^{(2-p)/p}\mf{S}(\tau_{k}r_{2})
\\[8pt]\displaystyle 
\ V(\tau_{i}r_{2})\le 2,\qquad V(\tau_{i}r_{2})\le \mf{V}_{2},
\end{array}
\right.
\end{eqnarray}
for all $i\in \{0,\cdots,j\}$, therefore we estimate using the discrete Fubini theorem and Young inequality with conjugate exponents $\left(\frac{p}{2-p},\frac{p}{2(p-1)}\right)$,
\begin{eqnarray}\label{6.30}
V(\tau_{j+1}r_{2})&\le&V(r_{2})+\sum_{i=0}^{j}\snr{V(\tau_{i+1}r_{2})-V(\tau_{i}r_{2})}\stackrel{\eqref{tri.1}_{3},\eqref{6.28.1}_{2}}{\le}\mf{V}_{1}+\frac{1}{\tau^{n/2}}\sum_{i=0}^{j}\mf{F}(\tau_{i}r_{2})\nonumber \\
&\stackrel{\eqref{6.29}_{1}}{\le}&\mf{V}_{1}+\frac{\mf{F}(r_{2})}{\tau^{n/2}}+\frac{\mf{F}(r_{2})}{\tau^{n/2}}\sum_{i=0}^{j-1}\tau^{\alpha_{0}(i+1)}+\frac{c_{0}}{\tau^{n/2}}\sum_{i=0}^{j-1}\sum_{k=0}^{i}\tau^{\alpha_{0}(i-k)}V(\tau_{k}r_{2})^{\frac{2-p}{p}}\mf{S}(\tau_{k}r_{2})\nonumber \\
&\stackrel{\eqref{6.26}}{\le}&\mf{V}_{1}+\frac{2}{\tau^{n/2}\mf{m}}\left(\theta^{\alpha_{0}j_{1}}\mf{F}(r_{1})+c_{3}\mf{K}_{\mf{s}}^{\frac{p}{2(p-1)}}\right)+\frac{c_{0}}{\tau^{n/2}}\sum_{i=0}^{j}\sum_{k=0}^{i}\tau^{\alpha_{0}(i-k)}V(\tau_{k}r_{2})^{\frac{2-p}{p}}\mf{S}(\tau_{k}r_{2})\nonumber \\
&\stackrel{\eqref{tri.1}_{1}}{\le}&\mf{V}_{1}+\frac{2}{\tau^{n/2}\mf{m}}\left(2^{n+1}\mf{F}(\rr)+c_{3}\mf{K}_{\mf{s}}^{\frac{p}{2(p-1)}}\right)+\frac{c_{0}}{\tau^{n/2}}\sum_{k=0}^{j}V(\tau_{k}r_{2})^{\frac{2-p}{p}}\mf{S}(\tau_{k}r_{2})\left(\sum_{i=k}^{j}\tau^{\alpha_{0}(i-k)}\right)\nonumber \\
&\stackrel{\eqref{6.2},\eqref{6.29}_{2}}{\le}&\mf{V}_{1}+\frac{2}{\tau^{n/2}\mf{m}}\left(2^{n+1}\mf{F}(\rr)+\frac{c_{3}2^{\frac{4npq}{p-1}}}{(\tau\theta)^{\frac{2npq}{p-1}}}\mf{K}\left(\mathbf{I}^{f}_{1,m}(x_{0},\rr)\right)^{\frac{p}{2(p-1)}}\right)\nonumber \\
&&+\frac{c_{0}\mf{V}_{2}^{\frac{2-p}{p}}}{\tau^{n/2}\mf{m}}\left(\mf{S}(r_{2})+\sum_{k=0}^{j-1}\mf{S}(\tau_{k+1}r_{2})\right)\nonumber \\
&\stackrel{\eqref{v.4},\eqref{6.2}}{\le}&\left(\frac{1}{2^{4}}+\frac{1}{2^{2}}\right)\mf{V}_{2}+\frac{2^{n+2}\mf{F}(\rr)}{\tau^{n/2}\mf{m}}+\frac{2^{4n+1}c_{0}}{(\tau\theta)^{3n}\mf{m}}\mf{V}_{2}^{\frac{2-p}{p}}\mathbf{I}^{f}_{1,m}(x_{0},\rr)\nonumber \\
&&+\frac{c_{3}2^{\frac{4npq}{p-1}+1}}{\mf{m}(\tau\theta)^{\frac{3npq}{p-1}}}\mf{K}\left(\mathbf{I}^{f}_{1,m}(x_{0},\rr)\right)^{\frac{p}{2(p-1)}}\nonumber \\
&\stackrel{\eqref{6.9},\eqref{6.5}_{2}}{\le}&\left(\frac{1}{2^{4}}+\frac{1}{2^{2}}+\frac{1}{2^{10}}\right)\mf{V}_{2}+\frac{c_{3}2^{\frac{8npq}{p-1}}}{\mf{m}(\tau\theta)^{\frac{4npq}{p-1}}}\mf{K}\left(\mathbf{I}^{f}_{1,m}(x_{0},\rr)\right)^{\frac{p}{2(p-1)}}\nonumber \\
&\stackrel{\eqref{6.1}}{\le}&\left(\frac{1}{2^{4}}+\frac{1}{2^{2}}+\frac{1}{2^{10}}+\frac{1}{2^{20}}\right)\mf{V}_{2}\le \mf{V}_{2}
\end{eqnarray}
and, estimating $V(\tau_{j+1}r_{2})$ in a slightly different way than \eqref{6.30} we also get
\begin{eqnarray}\label{6.30.1}
V(\tau_{j+1}r_{2})&\le& V(r_{2})+\frac{2\mf{F}(r_{2})}{\tau^{n/2}\mf{m}}+\frac{c_{0}}{\tau^{n/2}}\sum_{i=0}^{j-1}\sum_{k=0}^{i}\tau^{\alpha_{0}(i-k)}V(\tau_{k}r_{2})^{(2-p)/p}\mf{S}(\tau_{k}r_{2})\nonumber \\
&\stackrel{\eqref{6.29}_{2}}{\le}&V(r_{2})+\frac{2\mf{F}(r_{2})}{\tau^{n/2}\mf{m}}+\frac{c_{0}2^{(2-p)/p}}{\tau^{n/2}\mf{m}}\sum_{k=0}^{j}\mf{S}(\tau_{k}r_{2})\nonumber \\
&\stackrel{\eqref{6.26}}{\le}&\frac{3}{2}+\frac{2}{\tau^{n/2}\mf{m}}\left(\theta^{j_{1}\alpha_{0}}\mf{F}(r_{1})+c_{3}\mf{K}_{\mf{s}}^{\frac{p}{2(p-1)}}\right)+\frac{c_{0}2^{(2-p)/p}}{\tau^{n/2}\mf{m}}\left(\mf{S}(r_{2})+\sum_{k=0}^{j}\mf{S}(\tau_{k+1}r_{2})\right)\nonumber \\
&\stackrel{\eqref{tri.1}_{1},\eqref{6.2}}{\le}&\frac{3}{2}+\frac{2}{\tau^{n/2}\mf{m}}\left(2^{n+1}\mf{F}(\rr)+c_{3}\mf{K}_{\mf{s}}^{\frac{p}{2(p-1)}}\right)+\frac{c_{0}2^{4n+2}\mathbf{I}^{f}_{1,m}(x_{0},\rr)}{(\tau\theta)^{3n}\mf{m}}\nonumber \\
&\stackrel{\eqref{6.9}_{1},\eqref{6.3}}{\le}&\frac{3}{2}+\frac{1}{2^{10}}+\frac{1}{2^{20}}+\frac{c_{0}2^{4n+2}\mathbf{I}^{f}_{1,m}(x_{0},\rr)}{(\tau\theta)^{3n}\mf{m}}\stackrel{\eqref{6.1}}{\le}\frac{3}{2}+\frac{1}{2^{10}}+\frac{1}{2^{20}}+\frac{1}{2^{20}}\le 2.
\end{eqnarray}
We can then combine \eqref{6.27}, \eqref{6.30}-\eqref{6.30.1} and Proposition \ref{p1}-\ref{p2} to get
\begin{eqnarray}\label{6.31}
\mf{F}(\tau_{j+2}r_{2})&\le&\tau^{\alpha_{0}}\mf{F}(\tau_{j+1}r_{2})+c_{0}V(\tau_{j+1}r_{2})^{(2-p)/p}\mf{S}(\tau_{j+1}r_{2})\nonumber \\
&\stackrel{\eqref{6.28}_{2}}{\le}&\tau^{\alpha_{0}(j+2)}\mf{F}(r_{2})+c_{0}\sum_{k=0}^{j+1}\tau^{\alpha_{0}(j+1-k)}V(\tau_{k}r_{2})^{(2-p)/p}\mf{S}(\tau_{k}r_{2}).
\end{eqnarray}
Inequalities \eqref{6.30}-\eqref{6.31} prove the validity of the induction step, so by the arbitrariety of $j\in \N$ we can conclude that \eqref{6.28} holds for all $j\in \N\cup\{0\}$ and, once established this, we can refine \eqref{6.28}$_{2}$ as
\begin{eqnarray}\label{6.32}
\mf{F}(\tau_{j+1}r_{2})\le \tau^{(j+1)\alpha_{0}}\mf{F}(r_{2})+c_{3}\mf{V}_{2}^{(2-p)/p}\mf{H}_{\mf{s}}.
\end{eqnarray}
Next, if $\varsigma\in (0,r_{2}]$ there is $j_{\varsigma}\in \N\cup \{0\}$ such that $\tau_{j_{\varsigma}+1}r_{2}<\varsigma\le \tau_{j_{\varsigma}}r_{2}$ and, via \eqref{tri.1}, \eqref{ls.42.1}, \eqref{6.1}, \eqref{6.3}, \eqref{6.26}, \eqref{6.28} and \eqref{6.30}-\eqref{6.31} we have  
\begin{eqnarray*}
\left\{
\begin{array}{c}
\displaystyle
\ \mf{F}(\varsigma)\le \frac{2^{n+4}}{\tau^{1+n/2}}\left(\frac{\varsigma}{\rr}\right)^{\alpha_{0}}\mf{F}(\rr)+\frac{2c_{3}}{\tau^{n/2}}\left(\mf{K}_{\mf{s}}^{\frac{p}{2(p-1)}}+\mf{V}_{2}^{(2-p)/p}\mf{H}_{\mf{s}}\right)\\[8pt]\displaystyle 
\ V(\varsigma)\le 3,\qquad V(\varsigma)\le 2\mf{V}_{2}
\end{array}
\right.
\end{eqnarray*}
while if $\varsigma\in (r_{2},r_{1}]$ we can find $j_{\varsigma}\in \{0,\cdots,j_{1}-1\}$ verifying $\theta_{j_{\varsigma}+1}r_{1}<\varsigma\le \theta_{j_{\varsigma}}r_{1}$, so as done before we can confirm the validity of estimates \eqref{6.34}-\eqref{v.2}, and when $\varsigma\in (r_{1},\rr]$ the bounds in \eqref{6.35} trivially hold true. All in all, we can conclude with
\begin{eqnarray}\label{final.2}
\left\{
\begin{array}{c}
\displaystyle
\ \mf{F}(\varsigma)\le \frac{2^{n+4}}{(\tau\theta)^{1+n/2}}\left(\frac{\varsigma}{\rr}\right)^{\alpha_{0}}\mf{F}(\rr)+\frac{2c_{3}}{(\tau\theta)^{n/2}}\left(\mf{K}_{\mf{s}}^{\frac{p}{2(p-1)}}+\mf{V}_{2}^{(2-p)/p}\mf{H}_{\mf{s}}\right)\\[8pt]\displaystyle 
\ V(\varsigma)\le 3(1+M), \qquad V(\varsigma)\le 2\mf{V}_{2}
\end{array}
\right.
\end{eqnarray}
for all $\varsigma\in (0,\rr]$, where we accounted also for the case in which we directly started from \eqref{6.17}$_{2}$ in case of stable nonsingular regime - just set $j_{1}=0$, replace $r_{2}$ with $r_{1}$ above and recall \eqref{6.8}$_{2}$.
\subsubsection*{Step 3.4: second change of scale and block $\texttt{B}_{0}$} We now examine the case in which $\mathcal{J}_{2}\not \equiv \N\cup \{0\}$, i.e., there exists $j_{2}\in \N$ such that
\begin{eqnarray*}
j_{2}:=\min\left\{j\in \N\colon \varepsilon_{0}V(\tau_{j}r_{2})\le \mf{F}(\tau_{j}r_{2})\right\},
\end{eqnarray*}
and $\eqref{6.25}_{2}$ assures that $j_{2}\ge 1$.  The minimality of $j_{2}$ renders that
\begin{eqnarray}\label{6.33}
\varepsilon_{0}V(\tau_{j}r_{2})>\mf{F}(\tau_{j}r_{2}) \ \ \mbox{for all} \ \ j\in \{0,\cdots,j_{2}-1\}\qquad \mbox{and}\qquad \varepsilon_{0}V(\tau_{j_{2}}r_{2})\le \mf{F}(\tau_{j_{2}}r_{2}).
\end{eqnarray}
Set $r_{3}:=\tau_{j_{2}}r_{2}$ and notice that we can repeat the same procedure described in \emph{Step 3.3} a finite number of times for getting
\begin{eqnarray}\label{6.37}
\mf{F}(\tau_{j+1}r_{2})\le \tau^{\alpha_{0}(j+1)}\mf{F}(r_{2})+c_{3}\mf{V}_{2}^{(2-p)/p}\mf{H}_{\mf{s}},\qquad V(\tau_{j}r_{2})\le 2(M+1),\qquad V(\tau_{j}r_{2})\le \mf{V}_{2},
\end{eqnarray}
for all $j\in \{0,\cdots,j_{2}-1\}$\footnote{In comparison to \eqref{6.28}, here we included also the case in which we directly start from \eqref{6.17}$_{2}$. In fact, by \eqref{6.8}$_{2}$ the bound on averages increases by $2M$.}. Next we prove that
\begin{eqnarray}\label{6.36}
r_{3} \ \mbox{cannot belong to} \ \texttt{I}_{0} \ \mbox{or to} \ \texttt{I}^{2}_{d} \ \mbox{for all} \ d\in \N.
\end{eqnarray}
By contradiction, assume that \eqref{6.36} does not hold. We would then have
\begin{eqnarray}\label{6.38}
V(\tau_{j_{2}-1}r_{2})&\stackrel{\eqref{tri.1}_{3}}{\le}&V(r_{3})+\frac{1}{\tau^{n/2}}\mf{F}(\tau_{j_{2}-1}r_{2})\nonumber \\
&\stackrel{\eqref{6.33}_{1}}{\le}&V(r_{3})+\frac{\varepsilon_{0}}{\tau^{n/2}}V(\tau_{j_{2}-1}r_{2}) \ \stackrel{\eqref{ns.20}_{2}}{\Longrightarrow} \ 2V(r_{3})\ge V(\tau_{j_{2}-1}r_{2}),
\end{eqnarray}
so recalling that \eqref{6.33}$_{1}$-\eqref{6.37}$_{2}$ legalize the application of Propositions \ref{p1}-\ref{p2}, we obtain
\begin{eqnarray*}
\mf{F}(r_{3})&\stackrel{\eqref{nsns},\eqref{ns.22}}{\le}&\tau^{\alpha_{0}}\mf{F}(\tau_{j_{2}-1}r_{2})+c_{0}V(\tau_{j_{2}-1}r_{2})^{(2-p)/p}\mf{S}(\tau_{j_{2}-1}r_{2})\nonumber \\
&\stackrel{\eqref{6.33}_{1},\eqref{6.38}}{\le}&2\tau^{\alpha_{0}}\varepsilon_{0}V(r_{3})+c_{0}2^{(2-p)/p}V(r_{3})^{(2-p)/p}\mf{H}_{\mf{s}}\nonumber\\
&\stackrel{\eqref{6.15}}{\le}&2\tau^{\alpha_{0}}\varepsilon_{0}V(r_{3})+\frac{2^{2n+6}\varepsilon_{0}c_{0}\mf{C}(r_{3})}{H}\nonumber \\
&\stackrel{\eqref{tri.1}_{1}}{\le}&\varepsilon_{0}V(r_{3})\left(2\tau^{\alpha_{0}}+\frac{2^{2n+6}c_{0}}{H}\right)+\frac{2^{2n+8}\varepsilon_{0}c_{0}\mf{F}(\tau_{j_{2}-1}r_{2})}{\tau^{n/2}H}\nonumber \\
&\stackrel{\eqref{6.33}_{1},\eqref{6.38}}{\le}&\varepsilon_{0}V(r_{3})\left(2\tau^{\alpha_{0}}+\frac{2^{2n+6}c_{0}}{H}+\frac{2^{2n+10}\varepsilon_{0}c_{0}}{\tau^{n/2}H}\right)\stackrel{\eqref{ns.20}_{1},\eqref{6.9}_{2}}{<}\varepsilon_{0}V(r_{3}),
\end{eqnarray*}
thus contradicting $\eqref{6.33}_{2}$. Therefore \eqref{6.36} is true and in particular it holds that
\begin{eqnarray}\label{6.39}
\texttt{I}_{0}\subseteq (r_{3},r_{2}]\cup(r_{2},r_{1}].
\end{eqnarray}
This means that if $\varsigma\in \texttt{I}_{0}$ we can find $j_{\varsigma}\in \{0,\cdots,j_{1}-1\}$ or $j_{\varsigma}\in \{0,\cdots,j_{2}-1\}$ such that either $\theta_{j_{\varsigma}+1}r_{1}<\varsigma\le \theta_{j_{\varsigma}}r_{1}$ or $\tau_{j_{\varsigma}+1}r_{2}<\varsigma\le \tau_{j_{\varsigma}}r_{2}$ but in any case the estimates in \eqref{final.2} are valid. Next, we observe that
\begin{eqnarray}\label{6.40}
\mf{F}(\nu_{\iota_{1}}r_{1})&\stackrel{\eqref{tri.1}_{1}}{\le}&2^{1+n}\mf{F}(\nu_{\iota_{1}-1}r_{1})\nonumber \\
&\stackrel{\eqref{final.2}_{1}}{\le}&\frac{2^{2n+5}}{(\tau\theta)^{1+n/2}}\left(\frac{\nu_{\iota_{1}-1}r_{1}}{\rr}\right)^{\alpha_{0}}\mf{F}(\rr)+\frac{2^{n+2}c_{3}}{(\tau\theta)^{n/2}}\left(\mf{K}_{\mf{s}}^{\frac{p}{2(p-1)}}+\mf{V}_{2}^{(2-p)/p}\mf{H}_{\mf{s}}\right)\nonumber \\
&\le&\frac{2^{2n+7}}{(\tau\theta)^{1+n/2}}\left(\frac{\nu_{\iota_{1}}r_{1}}{\rr}\right)^{\alpha_{0}}\mf{F}(\rr)+\frac{2^{n+2}c_{3}}{(\tau\theta)^{n/2}}\left(\mf{K}_{\mf{s}}^{\frac{p}{2(p-1)}}+\mf{V}_{2}^{(2-p)/p}\mf{H}_{\mf{s}}\right)
\end{eqnarray}
and, concerning the average,
\begin{eqnarray}\label{6.41}
V(\nu_{\iota_{1}}r_{1})&\stackrel{\eqref{tri.1}_{2}}{\le}&2^{n}\mf{F}(\nu_{\iota_{1}-1}r_{1})+V(\nu_{\iota_{1}-1}r_{1})\nonumber \\
&\stackrel{\eqref{final.2}}{\le}&\frac{2^{2n+4}}{(\tau\theta)^{1+n/2}}\left(\frac{\nu_{\iota_{1}-1}r_{1}}{\rr}\right)^{\alpha_{0}}\mf{F}(\rr)+\frac{2^{n+1}c_{3}}{(\tau\theta)^{n/2}}\left(\mf{K}_{\mf{s}}^{\frac{p}{2(p-1)}}+\mf{V}_{2}^{(2-p)/p}\mf{H}_{\mf{s}}\right)+3(1+M)\nonumber \\
&\stackrel{\eqref{6.5}_{1}}{\le}&3(M+1)+\left(\frac{2^{2n+4}}{(\tau\theta)^{1+n/2}}+\frac{c_{3}'}{4}\right)\mf{F}(\rr)+\left(\frac{c_{3}2^{n+1}}{(\tau\theta)^{n/2}}+2^{\frac{2-p}{p-1}}\left(\frac{2^{2n+2}c_{3}}{(\tau\theta)^{n/2}}\right)^{\frac{p}{2(p-1)}}\right)\mf{K}_{\mf{s}}^{\frac{p}{2(p-1)}}\nonumber \\
&&+\frac{c'_{3}}{4}\mf{K}\left(\mathbf{I}^{f}_{1,m}(x_{0},\rr)\right)^{\frac{p}{2(p-1)}}+\frac{2^{n+1}c_{3}(c_{3}')^{(2-p)/p}}{(\tau\theta)^{n/2}}M^{(2-p)/p}\mf{H}_{\mf{s}}\nonumber \\
&\stackrel{\eqref{6.1},\eqref{6.3}}{\le}&\frac{1}{2}+3(M+1)+\mf{F}(\rr)\left(\frac{2^{2n+4}}{(\tau\theta)^{1+n/2}}+\frac{c_{3}'}{4}\right)\stackrel{\eqref{6.5}_{2},\eqref{6.9}}{\le}4(M+1),
\end{eqnarray}
and, using also the definition of $c_{3}'$ we get
\begin{eqnarray}\label{6.41.1}
V(\nu_{\iota_{1}}r_{1})&\stackrel{\eqref{tri.1}_{2},\eqref{final.2}_{2}}{\le}&2^{n}\mf{F}(\nu_{\iota_{1}-1}r_{1})+2\mf{V}_{2}\nonumber \\
&\stackrel{\eqref{final.2}_{1}}{\le}&\frac{2^{2n+4}}{(\tau\theta)^{1+n/2}}\mf{F}(\rr)+\frac{2^{n+1}c_{3}}{(\tau\theta)^{n/2}}\left(\mf{K}_{\mf{s}}^{\frac{p}{2(p-1)}}+\mf{V}_{2}^{(2-p)/p}\mf{H}_{\mf{s}}\right)+2\mf{V}_{2}\nonumber \\
&\stackrel{\eqref{6.3.1}}{\le}&\frac{9}{4}\mf{V}_{2}+\frac{2^{2n+4}}{(\tau\theta)^{1+n/2}}\mf{F}(\rr)+\left(\frac{2^{16nq}}{(\tau\theta)^{8n}}\right)^{\frac{p}{2(p-1)}}\mf{K}\left(\mathbf{I}^{f}_{1,m}(x_{0},\rr)\right)^{\frac{p}{2(p-1)}}\le 3\mf{V}_{2}.
\end{eqnarray}
Once \eqref{6.40}-\eqref{6.41.1} are available, given any $\varsigma\in \texttt{I}^{1}_{1}$ by \eqref{tri.1} we obtain
\begin{eqnarray}\label{6.42}
\left\{
\begin{array}{c}
\displaystyle
\ \mf{F}(\varsigma)\le \frac{2^{3n+10}}{(\tau\theta)^{1+n/2}}\left(\frac{\varsigma}{\rr}\right)^{\alpha_{0}}\mf{F}(\rr)+\frac{2^{2n+3}c_{3}}{(\tau\theta)^{n/2}}\left(\mf{K}_{\mf{s}}^{\frac{p}{2(p-1)}}+\mf{V}_{2}^{(2-p)/p}\mf{H}_{\mf{s}}\right)\\[8pt]\displaystyle 
\ V(\varsigma)\le 5(1+M),\qquad V(\varsigma)\le 4\mf{V}_{2}.
\end{array}
\right.
\end{eqnarray}
Finally, if $\varsigma\in \texttt{K}_{1}$, by \eqref{6.14}, \eqref{6.3} and \eqref{6.1} we can conclude that
\begin{eqnarray}\label{6.43}
\left\{
\begin{array}{c}
\displaystyle
\ \mf{F}(\varsigma)\le 2^{n+2}\left(\frac{H\mf{H}_{\mf{s}}}{\varepsilon_{0}}\right)^{\frac{p}{2(p-1)}} \\[8pt]\displaystyle 
\ V(\varsigma)\le 1,\qquad V(\varsigma)\le \mf{V}_{2}.
\end{array}
\right.
\end{eqnarray}
Combining estimates \eqref{final.2}, \eqref{6.42} and \eqref{6.43} we can conclude that for all $\varsigma \in \texttt{B}_{0}$ we obtain
\begin{eqnarray}\label{final.2.0}
\left\{
\begin{array}{c}
\displaystyle
\ \mf{F}(\varsigma)\le \frac{2^{4n+12}}{(\tau\theta)^{1+n/2}}\left(\frac{\varsigma}{\rr}\right)^{\alpha_{0}}\mf{F}(\rr)+\frac{2^{2n+4}c_{3}}{(\tau\theta)^{n/2}}\left(\frac{H}{\varepsilon_{0}}\right)^{\frac{p}{2(p-1)}}\left(\mf{K}_{\mf{s}}^{\frac{p}{2(p-1)}}+\mf{V}_{2}^{(2-p)/p}\mf{H}_{\mf{s}}\right)\\[8pt]\displaystyle
\ V(\varsigma)\le 5(1+M),\qquad V(\varsigma)\le 4\mf{V}_{2}.
\end{array}
\right.
\end{eqnarray}
\subsubsection*{Step 3.5: the general block $\texttt{B}_{d}$} Here we prove that in the regularity perspective, each block $\texttt{B}_{d}$ acts independently for all $d\in \N$ (the case $d=0$ is contained in \emph{Step 3.4}). Recalling the definition of $\texttt{B}_{d}$ given in \textbf{Step 3}, we immediately notice that if $\texttt{I}^{2}_{d}=\{\emptyset\}$, we can conclude with \eqref{6.16} and $\eqref{6.43}_{2}$. Next, define quantities
\begin{flalign*}
&\mf{V}_{1,d}:=c_{3,d}'\left(\mathbf{I}^{f}_{1,m}(x_{0},\rr)\right)^{\frac{p}{2(p-1)}},\qquad\quad \qquad \  c_{3,d}':=\left(\frac{2^{10n}H}{(\tau\theta)^{4n}\varepsilon_{0}}\right)^{\frac{p}{2(p-1)}}\\
&\mf{V}_{2,d}:=c_{3,d}''\left(\mf{K}\left(\mathbf{I}^{f}_{1,m}(x_{0},\rr)\right)\right)^{\frac{p}{2(p-1)}},\qquad \quad c_{3,d}'':=\left(\frac{2^{20nq}c_{3}H}{\mf{m}(\tau\theta)^{16nq}\varepsilon_{0}^{2}}\right)^{\frac{p}{2(p-1)}},
\end{flalign*}
 assume $\texttt{I}_{d}^{2}\not =\left\{\emptyset\right\}$ and observe that
\begin{eqnarray*}
\mf{C}(\nu_{\iota_{d}+\kk_{d}+1}r_{1})&\stackrel{\eqref{ls.43.1}}{\le}&2^{n+2}\mf{C}(\nu_{\iota_{d}+\kk_{d}}r_{1})\stackrel{\eqref{6.13}_{2}}{\le}2^{n+2}\left(\frac{H\mf{H}_{\mf{s}}}{\varepsilon_{0}}\right)^{\frac{p}{2(p-1)}},
\end{eqnarray*}
which, by means of \eqref{6.3}, \eqref{6.3.1} and \eqref{6.9}$_{1}$ yields:
\begin{flalign}
\left\{
\begin{array}{c}
\displaystyle
\ \mf{F}(\nu_{\iota_{d}+\kk_{d}+1}r_{1})\le 2^{n+2}\left(\frac{H\mf{H}_{\mf{s}}}{\varepsilon_{0}}\right)^{\frac{p}{2(p-1)}}<\frac{\varepsilon_{2}(\tau\theta)^{4npq}}{2^{8npq}}<\varepsilon_{2}\\[8pt] \displaystyle \ V(\nu_{\iota_{d}+\kk_{d}+1}r_{1})\le 1,\qquad V(\nu_{\iota_{d}+\kk_{d}+1}r_{1})\le \mf{V}_{1,d}.
\end{array}
\right.\label{6.44}
\end{flalign}
 Therefore, up to make the following substitutions:
\begin{flalign}\label{6.50}
r_{1}\leadsto r_{d}:=\nu_{\iota_{d}+\kk_{d}+1}r_{1},\qquad r_{2}\leadsto r_{2;d}:=\theta_{j_{1}}r_{d},\qquad r_{3}\leadsto r_{3;d}:=\tau_{j_{2}}r_{2;d},
\end{flalign}
we can replicate the whole procedure developed in \emph{Step 3.1}-\emph{Step 3.4}\footnote{Of course parameters $j_{1}$, $j_{2}$ appearing in the previous display do not necessarily coincide with those in \emph{Step 3.2} and in \emph{Step 3.4} respectively.} to obtain 
\begin{eqnarray*}
\left\{
\begin{array}{c}
\displaystyle
\ \mf{F}(\theta_{j+1}r_{d})\le \left(\frac{2^{4n}Hc_{3}}{\varepsilon_{0}}\right)^{\frac{p}{2(p-1)}}\mf{K}_{\mf{s}}^{\frac{p}{2(p-1)}}\\[8pt] \displaystyle \ V(\theta_{j}r_{d})\le 1,\qquad V(\theta_{j}r_{d})\le \mf{V}_{2;d},
\end{array}
\right.
\end{eqnarray*}
for all $j\in \N\cup\{0\}$ in case of stability of the singular regime or 
\begin{eqnarray}\label{0.1}
\left\{
\begin{array}{c}
\displaystyle
\ \mf{F}(\tau_{j+1}r_{2;d})\le \left(\frac{2^{4n}Hc_{3}}{\varepsilon_{0}}\right)^{\frac{p}{2(p-1)}}\mf{K}_{\mf{s}}^{\frac{p}{2(p-1)}}+2^{4}c_{3}\mf{V}_{2;d}^{(2-p)/p}\mf{H}_{\mf{s}}\\[8pt] \displaystyle \ V(\tau_{j}r_{2;d})\le 4,\qquad V(\tau_{j}r_{2;d})\le 4\mf{V}_{2;d},
\end{array}
\right.
\end{eqnarray}
for any $j\in \N\cup \{0\}$ if the nonsingular regime is stable. In any case, for all $\varsigma\in (0,r_{d}]$ it is
\begin{eqnarray}\label{0.0}
\left\{
\begin{array}{c}
\displaystyle
\ \mf{F}(\varsigma)\le \left(\frac{2^{6n}Hc_{3}}{\varepsilon_{0}(\tau\theta)^{n/2}}\right)^{\frac{p}{2(p-1)}}\mf{K}_{\mf{s}}^{\frac{p}{2(p-1)}}+\frac{2^{4}c_{3}}{(\tau\theta)^{n/2}}\mf{V}_{2;d}^{(2-p)/p}\mf{H}_{\mf{s}}\\[8pt] \displaystyle \ V(\varsigma)\le 5,\qquad V(\varsigma)\le 5\mf{V}_{2;d}.
\end{array}
\right.
\end{eqnarray}
On the other hand, keeping in mind that by \eqref{6.36} the nonsingular regime cannot end in $\texttt{I}^{2}_{d}$, we obtain that $\texttt{I}_{d}^{2}\subseteq (r_{3;d},r_{2;d}]\cup (r_{2;d},r_{d}]$, thus whenever $\varsigma\in \texttt{I}^{2}_{d}$ as in \emph{Step 3.4} we assure the validity of \eqref{0.0}.
Since by \eqref{tri.1}$_{1}$, \eqref{0.0}, \eqref{6.1}, \eqref{6.3} and \eqref{6.9}$_{1}$ it holds:
\begin{eqnarray}\label{6.46}
\left\{
\begin{array}{c}
\displaystyle
\ \mf{F}(\nu_{\iota_{d+1}}r_{1})\le  \left(\frac{2^{8n}Hc_{3}}{\varepsilon_{0}(\tau\theta)^{n/2}}\right)^{\frac{p}{2(p-1)}}\mf{K}_{\mf{s}}^{\frac{p}{2(p-1)}}+\frac{2^{n+6}c_{3}}{(\tau\theta)^{n/2}}\mf{V}_{2;d}^{(2-p)/p}\mf{H}_{\mf{s}}\\[8pt]\displaystyle 
\ V(\nu_{\iota_{d+1}}r_{1})\le 6,\qquad V(\nu_{\iota_{d+1}}r_{1})\le 6\mf{V}_{2;d}
\end{array}
\right.
\end{eqnarray}
for $\varsigma\in \texttt{I}^{1}_{d+1}$ we obtain thanks to \eqref{6.46}:
\begin{eqnarray}\label{6.51}
\left\{
\begin{array}{c}
\displaystyle
\ \mf{F}(\varsigma)\le  \left(\frac{2^{10n}Hc_{3}}{\varepsilon_{0}(\tau\theta)^{n/2}}\right)^{\frac{p}{2(p-1)}}\mf{K}_{\mf{s}}^{\frac{p}{2(p-1)}}+\frac{2^{2n+8}c_{3}}{(\tau\theta)^{n/2}}\mf{V}_{2;d}^{(2-p)/p}\mf{H}_{\mf{s}} \\[8pt]\displaystyle 
\ V(\varsigma)\le 7,\qquad V(\varsigma)\le 7\mf{V}_{2;d}
\end{array}
\right.
\end{eqnarray}
Finally, if $\varsigma\in \texttt{K}_{d+1}$ we directly have \eqref{6.14} and \eqref{6.43}$_{2}$. To summarize, we have just proven the validity of estimates \eqref{6.51}
for all $\varsigma\in \texttt{B}_{d}$, $d\in \N$.
\subsubsection*{Step 3.6: a finite number of finite iteration chains} Assume now that there is only a finite number, say $e_{*}\in \N$, of finite iteration chains, $\{\mathcal{C}^{\kk_{d}}_{\iota_{d}}\}_{d\in \{1,\cdots,e_{*}\}}$. Such chains determine blocks $\{\texttt{B}_{d}\}_{d\in \{0,\cdots,e_{*}-1\}}$, on which estimates \eqref{final.2.0} and \eqref{6.51} apply and, being only $e_{*}$ chains, by definition it follows that $\left\{j\in \N\colon j\ge \iota_{e_{*}}+\kk_{e_{*}}+1\right\}\subset \mathcal{J}_{0}$. Then, for all $\varsigma \in (0,r_{1}]\setminus \bigcup_{d\in \{0,\cdots,e_{*}-1\}}\texttt{B}_{d}\equiv(0,\nu_{\iota_{e_{*}}+\kk_{e_{*}}+1}r_{1}]$ there is $j\ge \iota_{e_{*}}+\kk_{e_{*}}+1$ such that $\nu_{j_{\varsigma}+1}r_{1}<\varsigma\le \nu_{j_{\varsigma}}r_{1}$ and
\begin{eqnarray}\label{6.47}
\mf{C}(\varsigma)\stackrel{\eqref{6.15}}{\ge}\frac{1}{2^{2+n}}\left(\frac{H\mf{H}_{\mf{s}}}{\varepsilon_{0}}\right)^{\frac{p}{2(p-1)}}\qquad \mbox{for all} \ \ \varsigma\in (0,\nu_{\iota_{e_{*}}+\kk_{e_{*}}+1}r_{1}].
\end{eqnarray}
Furthermore, we also have that
\begin{eqnarray*}
\mf{C}(\nu_{\iota_{e_{*}}+\kk_{e_{*}}+1}r_{1})\stackrel{\eqref{ls.43.1}}{\le}2^{2+n}\mf{C}(\nu_{\iota_{e_{*}}+\kk_{e_{*}}}r_{1})\stackrel{\eqref{6.13}_{2}}{\le}2^{2+n}\left(\frac{H\mf{H}_{\mf{s}}}{\varepsilon_{0}}\right)^{\frac{p}{2(p-1)}}\stackrel{\eqref{6.3},\eqref{6.9}_{1}}{<}\frac{\varepsilon_{2}(\tau\theta)^{4npq}}{2^{8npq}},
\end{eqnarray*}
which means that \eqref{6.44} holds and, via \eqref{6.47} we also see that the same argument leading to \eqref{6.36} works in this case as well and renders that the nonsingular regime is stable over the whole $(0,\nu_{\iota_{e_{*}}+\kk_{e_{*}}+1}r_{1}]$. With these last informations at hand we gain that \eqref{0.1} (with $\nu_{\iota_{e_{*}}+\kk_{e_{*}}+1}r_{1}$ instead of $r_{2;d}$) holds, and as a consequence \eqref{0.0} is satisfied for all $\varsigma\in (0,\nu_{\iota_{e_{*}}+\kk_{e_{*}}+1}r_{1}]$. To summarize, we have just proven that \eqref{final.2.0} or \eqref{6.51} hold for all $\varsigma\in (0,r_{1}]$.
\subsubsection*{Step 3.7: an infinite iteration chain} We describe the presence of an infinite iteration chains by introducing a number $e_{*}\in \N$ - assume $e_{*}\ge 2$ for the moment - and corresponding sets of integers $\{\iota_{1},\cdots,\iota_{e_{*}}\}\subset \N$, $\{\kk_{1},\cdots,\kk_{e_{*}-1}\}\subset \N$ and $\kk_{e_{*}}=\infty$, determining $e_{*}-1$ finite iteration chains $\{\mathcal{C}^{\kk_{d}}_{\iota_{d}}\}_{d\in \{1,\cdots,e_{*}-1\}}$ and one infinite iteration chains $\mathcal{C}^{\infty}_{\iota_{e_{*}}}$ that must be unique by maximality. On each of blocks $\{\texttt{B}_{d}\}_{d\in \{0,\cdots,e_{*}-2\}}$ determined by chains $\{\mathcal{C}^{\kk_{d}}_{\iota_{d}}\}_{d\in \{1,\cdots,e_{*}-1\}}$ estimates \eqref{final.2.0} or \eqref{6.51} hold true. Concerning the last chain $\mathcal{C}^{\infty}_{\iota_{e_{*}}}$, it generates the last block $\texttt{B}_{e_{*}-1}=\texttt{I}^{2}_{e_{*}-1}\cup \texttt{I}^{1}_{e_{*}}\cup \texttt{K}_{e_{*}}$ with $\texttt{K}_{e_{*}}=(0,\nu_{\iota_{e_{*}}+1}r_{1}]$. On intervals $\texttt{I}^{2}_{e_{*}-1}$-$\texttt{I}^{1}_{e_{*}}$ \eqref{6.46}-\eqref{6.51} are verified, while on $\texttt{K}_{e_{*}}$ we can simply conclude by means of \eqref{6.43}. On the other hand, if $e_{*}=1$, there is only one block $\texttt{B}_{0}=\texttt{I}_{0}\cup \texttt{I}^{1}_{1}\cup \texttt{K}_{1}\equiv (0,r_{1}]$, on which \eqref{final.2} and \eqref{6.42}-\eqref{6.43} holds, therefore we can conclude with \eqref{final.2.0} also in this case.
\subsubsection*{Step 3.8: occurrence \emph{(}ii.\emph{)}} Since $\mathcal{J}_{0}\equiv \N\cup \{0\}$, inequality \eqref{6.15} is satisfied by all $\varsigma\in (0,r_{1}]$, so the validity of \eqref{6.36} is now extended to the full interval $(0,r_{1}]$ and this guarantees the stability of the nonsingular regime. Therefore we can proceed as done in \emph{Step 3.1}-\emph{Step 3.3} to get \eqref{final.2}.
\subsection*{Step 4: small composite excess at the first scale.} This time, the set $\mathcal{J}_{0}\subseteq \N\cup\{0\}$ is defined as
\begin{eqnarray*}
\mathcal{J}_{0}:=\left\{j\in \N\cup\{0\}\colon \mf{C}(\nu_{j}r_{1})\le \left(\frac{H\mf{H}_{\mf{s}}}{\varepsilon_{0}}\right)^{\frac{p}{2(p-1)}}\right\}\stackrel{\eqref{6.11}_{2}}{\not =}\left\{\emptyset\right\}. 
\end{eqnarray*}
We immediately notice that if $\mathcal{J}_{0}\equiv \N\cup\{0\}$, then \eqref{6.14} holds for all $\varsigma\in (0,r_{1}]$ so this, $\eqref{6.43}$ and \eqref{6.35} give the result. We then look at the case in which there exist infinitely many finite iteration chains $\{\mathcal{C}^{\kk_{d}}_{\iota_{d}}\}_{d\in \N}$ with $\{\iota_{d}\}_{d\in \N}$, $\{\kk_{d}\}_{d\in \N}$ as in \eqref{6.12}, determining intervals
\begin{flalign*}
&\texttt{I}_{0}:=(\nu_{\iota_{1}+1}r_{1},r_{1}],\qquad\qquad \qquad  \qquad \texttt{K}_{d}^{1}:=(\nu_{\iota_{d}+\kk_{d}}r_{1},\nu_{\iota_{d}+1}r_{1}]\nonumber \\
&\texttt{K}_{d}^{2}:=(\nu_{\iota_{d}+\kk_{d}+1}r_{1},\nu_{\iota_{d}+\kk_{d}}r_{1}]\qquad\qquad  \texttt{I}_{d}^{2}:=(\nu_{\iota_{d+1}+1}r_{1},\nu_{\iota_{d}+\kk_{d}+1}r_{1}]
\end{flalign*}
and blocks $\texttt{B}_{0}:=\texttt{I}_{0}\cup \texttt{K}^{1}_{1}\cup\texttt{K}^{2}_{1}$, $\texttt{B}_{d}:=\texttt{I}^{2}_{d}\cup \texttt{K}^{1}_{d+1}\cup \texttt{K}^{2}_{d+1}$ such that $(0,r_{1}]\equiv \bigcup_{d\in \N\cup\{0\}}\texttt{B}_{d}$ by \eqref{6.12}. As done in \textbf{Step 3}, we readily observe that
\begin{flalign}\label{j.0}
\{0,\cdots,\iota_{1}\}, \ \{\iota_{d}+\kk_{d}+1,\cdots,\iota_{d+1}\}\subset \mathcal{J}_{0}\qquad\mbox{and}\qquad  \{\iota_{d}+1,\cdots,\iota_{d}+\kk_{d}\}\subset \mathcal{C}^{\kk_{d}}_{\iota_{d}},
\end{flalign}
therefore we have
\begin{eqnarray}\label{6.48}
\left\{
\begin{array}{c}
\displaystyle
\ \varsigma\in \texttt{I}_{0}\ \ \mbox{or} \ \ \varsigma\in \texttt{I}^{2}_{d} \ \Longrightarrow \ \mf{C}(\varsigma)\le 2^{2+n}\left(\frac{H\mf{H}_{\mf{s}}}{\varepsilon_{0}}\right)^{\frac{p}{2(p-1)}} \\[8pt]\displaystyle 
\ \varsigma\in \texttt{K}_{d}^{1} \ \Longrightarrow \ \mf{C}(\varsigma)> \frac{1}{2^{2+n}}\left(\frac{H\mf{H}_{\mf{s}}}{\varepsilon_{0}}\right)^{\frac{p}{2(p-1)}}.
\end{array}
\right.
\end{eqnarray}
Notice that $\texttt{I}^{2}_{d}$ cannot be empty otherwise $\iota_{d+1}=\kk_{d}+\iota_{d}$ and this is not possible by means of \eqref{j.0}, while if $\texttt{K}^{1}_{d+1}=\left\{\emptyset\right\}$ (i.e. if $\kk_{d+1}=1$), we can exploit \eqref{6.48}$_{1}$, \eqref{ls.43.1} and that $\iota_{d}\in \mathcal{J}_{0}$ for all $d\in \N$ to derive
\begin{eqnarray}\label{6.52}
\left.
\begin{array}{c}
\displaystyle
\ \varsigma\in \texttt{B}_{0} \ \ \mbox{with} \ \ \texttt{K}_{1}^{1}=\left\{\emptyset\right\}\\[8pt]\displaystyle 
\ \varsigma\in \texttt{B}_{d}  \ \ \mbox{with} \ \ \texttt{K}_{d+1}^{1}=\left\{\emptyset\right\}, \ \ d\in \N
\end{array}
\right. \quad \Longrightarrow \quad \mf{C}(\varsigma)\le 2^{2n+4}\left(\frac{H\mf{H}_{\mf{s}}}{\varepsilon_{0}}\right)^{\frac{p}{2(p-1)}};
\end{eqnarray}
in other words, whenever $\texttt{K}^{1}_{d+1}=\left\{\emptyset\right\}$ there is nothing to prove on the related block $\texttt{B}_{d}$ and \eqref{6.52} and \eqref{6.43}$_{2}$ immediately follow. Now, given a general block $\texttt{B}_{d}$ with $\texttt{K}^{1}_{d+1}\not =\left\{\emptyset\right\}$ and $d\in\N\cup\{0\}$, if $\varsigma\in \texttt{I}_{0}$ or $\varsigma\in \texttt{I}^{2}_{d}$ estimate \eqref{6.48} holds. Next, observe that
\begin{eqnarray}\label{6.53}
\mf{C}(\nu_{\iota_{d+1}+1}r_{1})&\stackrel{\eqref{ls.43.1}}{\le}&2^{2+n}\mf{C}(\nu_{\iota_{d+1}}r_{1})\stackrel{\eqref{j.0}}{\le}2^{2+n}\left(\frac{H\mf{H}_{\mf{s}}}{\varepsilon_{0}}\right)^{\frac{p}{2(p-1)}}\nonumber \\
&&\stackrel{\eqref{6.3},\eqref{6.9}_{1}}{\Longrightarrow} \mf{F}(\nu_{\iota_{d+1}+1}r_{1})<\frac{\varepsilon_{2}(\tau\theta)^{4npq}}{2^{8npq}},\qquad V(\nu_{\iota_{d+1}+1}r_{1})\le 1,\qquad V(\nu_{\iota_{d+1}+1}r_{1})\le \mf{V}_{1,d},
\end{eqnarray}
therefore, setting this time $r_{d}:=\nu_{\iota_{d+1}+1}r_{1}$ we can plug in the substitutions in \eqref{6.50} and apply the content of \emph{Step 3.5} (with $\texttt{I}^{2}_{d}$ replaced by $\texttt{K}_{d+1}^{1}$) to get \eqref{0.0} and, recalling that $\texttt{K}_{d+1}^{2}$ differs from $\texttt{K}^{1}_{d+1}$ only by one scale, we recover also \eqref{6.51}. Merging \eqref{6.48}, \eqref{6.52}, \eqref{0.0} and \eqref{6.51} we can conclude that \eqref{6.51} holds for all $\varsigma\in \texttt{B}_{d}$, $d\in \N\cup \{0\}$.
\subsubsection*{Step 4.1: a finite number of finite iteration chains} Let us assume now that there is a finite number, say $e_{*}\in \N$ of finite iteration chains $\{\mathcal{C}^{\kk_{d}}_{\iota_{d}}\}_{d\in \{1,\cdots,e_{*}\}}$ and corresponding blocks $\{\texttt{B}_{d}\}_{d\in \{0,\cdots,e_{*}-1\}}$. On every block $\texttt{B}_{d}$, $d\in \N\cup\{0\}$, estimates \eqref{6.51} apply. Notice that $(0,r_{1}]\setminus \bigcup_{d=0}^{e_{*}-1}\texttt{B}_{d}=(0,\nu_{\iota_{e_{*}}+\kk_{e_{*}}+1}r_{1}]$ and, since the last finite iteration chain is $\mathcal{C}^{\kk_{e_{*}}}_{\iota_{e_{*}}}$, it follows that $\left\{j\in \N\colon j\ge \iota_{e_{*}}+\kk_{e_{*}}+1\right\}\subset \mathcal{J}_{0}$, therefore for all $\varsigma\in (0,\nu_{\iota_{e_{*}}+\kk_{e_{*}}+1}r_{1}]$ the bound in \eqref{6.48} is verified, so we confirm again the validity of \eqref{6.51}.

\subsubsection*{Step 4.2: an infinite iteration chain} In this case, for $e_{*}\in \N$ (assume for the moment that $e_{*}\ge 2$) we can find finite set of integers $\{\iota_{1},\cdots,\iota_{e_{*}}\}\subset \N$, $\{\kk_{1},\cdots,\kk_{e_{*}-1}\}\subset \N$ and $\kk_{e_{*}}=\infty$, thus determining $e_{*}-1$ finite iteration chains $\{\mathcal{C}^{\kk_{d}}_{\iota_{d}}\}_{d\in \{1,\cdots,e_{*}-1\}}$ and one infinite iteration chain $\mathcal{C}_{\iota_{e_{*}}}^{\infty}$, that is unique by maximality. Chains $\{\mathcal{C}^{\kk_{d}}_{\iota_{d}}\}_{d\in \{1,\cdots,e_{*}-1\}}$ determine blocks $\{\texttt{B}_{d}\}_{d\in \{1,\cdots,e_{*}-2\}}$ on which the content of \textbf{Step 4} applies and \eqref{6.51} holds true, while the presence of $\mathcal{C}_{\iota_{e_{*}}}^{\infty}$ results into $\texttt{B}_{e_{*}-1}=\texttt{I}^{2}_{e_{*}-1}\cup(0,\nu_{\iota_{e_{*}}+1}r_{1}]$. If $\varsigma\in \texttt{I}^{2}_{e_{*}-1}$ we directly have \eqref{6.48} which in particular implies the validity of \eqref{6.53} with $d=e_{*}-1$, so we can reproduce the content of \emph{Step 3.5} with $r_{d}=\nu_{\iota_{e_{*}}+1}r_{1}$ and eventually arrive at \eqref{6.51}. Finally if $e_{*}=1$, there is only the infinite iteration chain, thus $(0,r_{1}]=\texttt{I}_{0}\cup (0,\nu_{\iota_{e_{*}}+1}r_{1}]$. On $\texttt{I}_{0}$ the bound in \eqref{6.48} is in force, this in turn yields \eqref{6.53} so, proceeding as in \emph{Step 3.5} we obtain \eqref{6.51}.
\subsection*{Step 5: conclusions} Collecting estimates \eqref{final.1}, \eqref{final.2}, \eqref{final.2.0},  and \eqref{6.51}, and setting
\begin{flalign*}
c_{5}:=\frac{2^{4n+12}}{(\tau\theta)^{1+n/2}},\qquad \qquad \quad c_{6}:=\left(\frac{2^{40nq}H^{2}c_{3}^{2}c_{3}'}{\varepsilon_{0}^{4}\mf{m}(\tau\theta)^{32nq}}\right)^{\frac{p}{2(p-1)}}
\end{flalign*}
we obtain \eqref{6.6}-\eqref{6.7} and the proof is complete.
\end{proof}
For later use, let us record a couple of consequences of Theorem \ref{t.ex}, that come along the lines of \cite[Proposition 5.1 and Corollary 5.1]{deqc}.
\begin{corollary}\label{cor.ex}
Assume \eqref{assf}-\eqref{sqc}, \eqref{f}, 
let $u\in W^{1,p}(\Omega,\mathbb{R}^{N})$ be a local minimizer of \eqref{exfun}, $x_{0}\in \mathcal{R}_{u}$ be a point, $M\equiv M(x_{0})>0$ be the constant in \eqref{ru.0}, $\hat{\varepsilon}\equiv \hat{\varepsilon}(\textnormal{\texttt{data}},M)$ and $\hat{\rr}\equiv \hat{\rr}(\textnormal{\texttt{data}},M,f(\cdot))$ be as in $\eqref{6.9}_{1}$ and $\eqref{6.1}$ respectively.
\begin{itemize}
    \item If
    \begin{eqnarray}\label{6.54}
\mathbf{I}^{f}_{1,m}(x,\sigma)\to 0\quad \mbox{locally uniformly in} \ \ x\in \Omega,
\end{eqnarray}
then 
if $\bar{\varepsilon}\equiv \hat{\varepsilon}$ and $\bar{\rr}\equiv \hat{\rr}$ in \eqref{ru.0}, there is an open neighborhood $B(x_{0})\subset \mathcal{R}_{u}$ and a positive radius $\rr_{x_{0}}\equiv \rr_{x_{0}}(\textnormal{\texttt{data}},M,f(\cdot))\in (0,\hat{\rr}]$ such that
\begin{eqnarray}\label{6.55}
\left\{
\begin{array}{c}
\displaystyle
\ \snr{(V_{p}(Du))_{B_{\varsigma}(x)}}<8(1+M)\\[8pt]\displaystyle 
\ \snr{(V_{p}(Du))_{B_{\sigma}(x)}}\le c_{8}\left(\mf{C}(x;\varsigma)+\mf{K}\left(\mathbf{I}^{f}_{1,m}(x,\varsigma)\right)^{\frac{p}{2(p-1)}}\right)
\end{array}
\right.
\end{eqnarray}
and
\begin{eqnarray}\label{6.56}
\mf{F}(u;B_{\sigma}(x))&\le& c_{7}\left(\frac{\varsigma}{\varsigma}\right)^{\alpha_{0}}\mf{F}(u;B_{\varsigma}(x))+c_{8}\sup_{s\le \varsigma/4}\mf{K}\left[\left(s^{m}\mint_{B_{s}(x)}\snr{f}^{m} \ \dx\right)^{1/m}\right]^{\frac{p}{2(p-1)}}\nonumber \\
&&+c_{8}\left(\mf{C}(x;\varsigma)+\mf{K}\left(\mathbf{I}^{f}_{1,m}(x,\varsigma)\right)^{\frac{p}{2(p-1)}}\right)^{(2-p)/p}\sup_{s\le \varsigma/4}\left(s^{m}\mint_{B_{s}(x)}\snr{f}^{m} \ \dx\right)^{1/m},
\end{eqnarray}
hold for all $x\in B(x_{0})$, $0<\sigma\le\varsigma\le \rr_{x_{0}}$, where $c_{7}:=c_{5}(2^{12}c_{5})^{1+\frac{n}{2\alpha_{0}}}$ and $c_{8}:=c_{6}(2^{16}c_{5})^{2+\frac{n}{2\alpha_{0}}}$, $c_{7},c_{8}\equiv c_{7},c_{8}(\textnormal{\texttt{data}},M)$. 
\item If \eqref{6.0} is in force instead of \eqref{6.54}, then the following "restricted" versions of \eqref{6.55}-\eqref{6.56} hold:
\begin{eqnarray}\label{6.55.1}
\left\{
\begin{array}{c}
\displaystyle
\ \snr{(V_{p}(Du))_{B_{\varsigma}(x_{0})}}<8(1+M)\\[8pt]\displaystyle 
\ \snr{(V_{p}(Du))_{B_{\sigma}(x_{0})}}\le c_{8}\left(\mf{C}(x_{0};\varsigma)+\mf{K}\left(\mathbf{I}^{f}_{1,m}(x_{0},\varsigma)\right)^{\frac{p}{2(p-1)}}\right)
\end{array}
\right.
\end{eqnarray}
and
\begin{eqnarray}\label{6.56.1}
\mf{F}(u;B_{\sigma}(x_{0}))&\le& c_{7}\left(\frac{\sigma}{\varsigma}\right)^{\alpha_{0}}\mf{F}(u;B_{\varsigma}(x_{0}))+c_{8}\sup_{s\le \varsigma/4}\mf{K}\left[\left(s^{m}\mint_{B_{s}(x_{0})}\snr{f}^{m} \ \dx\right)^{1/m}\right]^{\frac{p}{2(p-1)}}\nonumber \\
&&+c_{8}\left(\mf{C}(x_{0};\varsigma)+\mf{K}\left(\mathbf{I}^{f}_{1,m}(x_{0},\varsigma)\right)^{\frac{p}{2(p-1)}}\right)^{(2-p)/p}\sup_{s\le \varsigma/4}\left(s^{m}\mint_{B_{s}(x_{0})}\snr{f}^{m} \ \dx\right)^{1/m},
\end{eqnarray}
for all balls $B_{\sigma}(x_{0})\subseteq B_{\varsigma}(x_{0})\subseteq B_{\rr_{x_{0}}}(x_{0})$.
\item With \eqref{6.0} in force, if 
in \eqref{ru.0} it is $\bar{\varepsilon}\equiv \hat{\varepsilon}$, $\bar{\rr}\equiv \hat{\rr}$, then
\begin{eqnarray}\label{6.57}
\sup_{\sigma\le \rr_{x_{0}}}\mf{F}(u;B_{\rr}(x_{0}))\le c_{9}\hat{\varepsilon},
\end{eqnarray}
with $c_{9}:=2^{8}(c_{7}+c_{8})$, $c_{9}\equiv c_{9}(\textnormal{\texttt{data}},M)$.
\end{itemize}
\end{corollary}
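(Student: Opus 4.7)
The plan is to iterate Theorem \ref{t.ex} both across scales and across nearby points, following the scheme of \cite[Proposition 5.1, Corollary 5.1]{deqc}.

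For part $\mbox{(i)}$: openness of $\mathcal{R}_u$ together with \eqref{70} produce a neighborhood $B(x_0)\subset \mathcal{R}_u$ and a radius $\rr^{(1)}\le \hat{\rr}$ on which the smallness pair \eqref{6.5} holds at $B_{\rr^{(1)}}(x)$ for every $x\in B(x_0)$; the uniform vanishing hypothesis \eqref{6.54} together with the continuous $x$-dependence of $\mathbf{I}^{f}_{1,m}(x,\rr^{(1)})$ (via continuity of the dyadic $L^m$-averages of $f$) lets me further shrink $B(x_0)$ and choose $\rr_{x_0}\le \rr^{(1)}$ so that the full set of hypotheses of Theorem \ref{t.ex} is valid at $B_{\rr_{x_0}}(x)$ for all $x \in B(x_0)$. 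This delivers \eqref{6.55}-\eqref{6.56} in the restricted case $\varsigma = \rr_{x_0}$. To upgrade to general $\sigma\le \varsigma\le \rr_{x_0}$, I apply Theorem \ref{t.ex} a second time at the intermediate scale $\varsigma$: the bounds \eqref{6.6}-\eqref{6.7} produced by the first application, combined with the tail decay inherited from \eqref{6.1} and \eqref{6.3}, verify its hypotheses at $B_{\varsigma}(x)$ with the enlarged constant $\tilde M := 8(1+M)$. The explicit constants $c_{7} = c_{5}(2^{12}c_{5})^{1+n/(2\alpha_{0})}$ and $c_{8} = c_{6}(2^{16}c_{5})^{2+n/(2\alpha_{0})}$ come out of chaining these applications along a dyadic sequence of ratio $\tau^{*}\sim (2c_{5})^{-1/\alpha_{0}}$, chosen so that one step of Theorem \ref{t.ex} halves the excess; the exponent $n/(2\alpha_{0})$ records the geometric loss $(\tau^{*})^{-n/2}$ picked up per step when dilating averages across consecutive scales via \eqref{tri.1}, iterated $\sim 1/\alpha_{0}$ times.

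Part $\mbox{(ii)}$ is the spatially-localised version of $\mbox{(i)}$: no neighborhood is needed, so only \eqref{6.0} is required (in place of \eqref{6.54}), and the argument reduces to applying Theorem \ref{t.ex} twice at $x_{0}$ with radii $\rr_{x_{0}}$ and $\varsigma$ respectively. Part $\mbox{(iii)}$ is a direct consequence of \eqref{6.56.1} evaluated at $\sigma = \varsigma = \rr_{x_{0}}$: the geometric term contributes at most $c_{7}\hat{\varepsilon}$ by \eqref{6.5}$_{2}$, while both Wolff-type terms are bounded by $c_{8}\hat{\varepsilon}$ via \eqref{6.1}; summing these contributions yields \eqref{6.57} with $c_{9} = 2^{8}(c_{7}+c_{8})$.

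The main obstacle is the two-scale upgrade in part $\mbox{(i)}$: one must check that the smallness conditions of Theorem \ref{t.ex} at the enlarged constant $\tilde M$ remain achievable after a further reduction of $\rr_{x_{0}}$, and that the resulting threshold still depends only on $(\textnormal{\texttt{data}}, M, f(\cdot))$. This requires careful bookkeeping of how the parameters $\hat{\varepsilon}$, $\hat{\rr}$ and the tail controls \eqref{6.3}-\eqref{6.3.1} depend on $M$, and in particular using the monotonicity of $\mathbf{I}^{f}_{1,m}(x_{0},\cdot)$ in the radius variable to absorb the $\tilde M$-dependence back into an $M$-dependence via a single further smallness condition imposed on $\rr_{x_{0}}$.
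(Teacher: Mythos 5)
Your plan — iterate Theorem \ref{t.ex} across scales and across nearby points, first at the top radius $\rr_{x_0}$ and then re-starting at an intermediate $\varsigma$ after verifying that the hypotheses propagate there — is exactly the scheme the paper defers to \cite[Prop.~5.1, Cor.~5.1]{deqc}, so in outline you are on the paper's track. Two small remarks and one gentle warning.

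First, in part (iii) you write that \eqref{6.57} is ``a direct consequence of \eqref{6.56.1} evaluated at $\sigma=\varsigma=\rr_{x_0}$''. That only yields the single value $\mf{F}(u;B_{\rr_{x_0}}(x_0))\le c_9\hat\varepsilon$, whereas \eqref{6.57} is a supremum over $\sigma\le\rr_{x_0}$. What you want is to fix $\varsigma=\rr_{x_0}$ in \eqref{6.56.1} (or just use \eqref{6.7} at scale $\rr_{x_0}$ directly) and let $\sigma$ range freely; the rest of your sentence — geometric term $\le c_7\hat\varepsilon$ by \eqref{6.5}, Wolff terms $\le c_8\hat\varepsilon$ by \eqref{6.1}, \eqref{6.3.1} — is then the right bookkeeping. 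Second, the mechanism you invoke to get the neighborhood uniformity in part (i), namely ``continuous $x$-dependence of $\mathbf{I}^{f}_{1,m}(x,\rr^{(1)})$'', is not the lever that matters and is not a priori available (pointwise continuity of a Wolff potential in the center variable is delicate). What the argument really uses is the hypothesis \eqref{6.54} itself: the locally uniform decay, combined with \eqref{6.3} and \eqref{f.p.m}, directly produces a radius $\rr_{x_0}$ that works simultaneously for all $x$ in a small ball around $x_0$; no continuity claim on $x\mapsto \mathbf{I}^f_{1,m}(x,\cdot)$ is needed or proved.

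On the substantive ``two-scale upgrade'' in part (i)-(ii): you correctly identify this as the main obstacle. Note that after the first application of Theorem \ref{t.ex} at $\rr_{x_0}$ you cannot expect $\mf{F}(u;B_\varsigma(x_0))<\hat\varepsilon$ to hold for all $\varsigma<\rr_{x_0}$ (the decay constant $c_5$ exceeds $1$, so for $\varsigma$ near $\rr_{x_0}$ the bound $\mf{F}(u;B_\varsigma)\le c_5(\varsigma/\rr_{x_0})^{\alpha_0}\hat\varepsilon+\dots$ is weaker than $\hat\varepsilon$). Your dyadic-chaining fix — pick $\tau^*$ with $c_5(\tau^*)^{\alpha_0}<1/2$ and iterate, interpolating at off-dyadic $\varsigma$ via \eqref{tri.1} — is the right remedy, and it is exactly what drives the $\sim(2c_5)^{n/(2\alpha_0)}$ amplification in $c_7,c_8$: each interpolation step costs $(\tau^*)^{-n/2}$ on the averages, and you pay $\sim 1/\alpha_0$ times. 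The remaining boast of consistency — that $\hat\varepsilon$, $\hat\rr$ remain adequate after the inflation $M\leadsto 8(1+M)$ — is exactly what Remark \ref{rem} asserts (the constants throughout Theorem \ref{t.ex} were built against $40000(M+1)$ and $80000(M+1)$, which absorb $8(1+M)$ with room to spare), so invoking that remark closes the loop. All told your proposal is a faithful sketch of the argument the paper cites; tighten part (iii)'s phrasing and replace the continuity appeal by the uniform-decay hypothesis, and it reads correctly.
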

We conclude this section with an almost everywhere $VMO$ result. To do so, we need some preliminaries. Assume \eqref{6.54},
let $x_{0}\in \mathcal{R}_{u}$ be any point, $M\equiv M(x_{0})$ be the positive constant in \eqref{ru.0}. With $\bar{\varepsilon}$, $\bar{\rr}$ still to be determined, we introduce constants:
\begin{flalign}\label{hhh}
H_{1}:=\max\left\{2^{24n}Hc_{9},\frac{2^{24n}H}{\varepsilon_{1}(\tau\theta)^{6n}}\right\},\qquad\qquad  H_{2}:=\left(\frac{2^{36nq}c_{8}H}{\varepsilon_{1}(\tau\theta)^{20nq}}\right)^{\frac{p}{2(p-1)}},\end{flalign}
and fix
\begin{eqnarray}\label{6.58}
\varepsilon_{*}:=\frac{\hat{\varepsilon}}{2^{20}c_{9}},
\end{eqnarray}
where $\hat{\varepsilon}\equiv \hat{\varepsilon}(\textnormal{\texttt{data}},M)$, $H\equiv H(\textnormal{\texttt{data}},M)$ are defined in \eqref{6.9}. Notice that \eqref{6.54} implies that
\begin{flalign}\label{f.p.m}
\mathbf{I}^{f}_{1,m}(\cdot,1)\in L^{\infty}_{\loc}(\Omega)\qquad \mbox{and}\qquad \left(s^{m}\mint_{B_{s}(x)}\snr{f}^{m} \ \dx\right)^{1/m}\to 0\quad \mbox{locally uniformly in} \ \ x\in \Omega.
\end{flalign} 
By means of \eqref{6.54}, we determine a threshold radius $\rr_{*}\equiv \rr_{*}(\textnormal{\texttt{data}},M,f(\cdot))\in (0,\hat{\rr}]$ such that
\begin{flalign}\label{6.59}
c_{10}\mf{K}\left(\mathbf{I}^{f}_{1,m}(x,s)\right)^{\frac{p}{2(p-1)}}+c_{10}M^{(2-p)/p}\mathbf{I}^{f}_{1,m}(x,s)< \varepsilon_{*},\qquad\qquad  c_{10}:=\left(\frac{2^{24npq}c_{9}H_{2}}{\varepsilon_{0}(\tau\theta)^{20npq}\mf{m}}\right)^{\frac{p}{2(p-1)}}
\end{flalign}
for all $s\le \rr_{*}$, $x\in B_{d_{x_{0}}}(x_{0})$, which implies via \eqref{6.3.1} that
\begin{flalign}\label{6.59.1}
\left\{
\begin{array}{c}
\displaystyle
\ c_{8}\mf{K}\left[\sup_{s\le \rr_{*}/4}\left(s^{m}\mint_{B_{s}(x)}\snr{f}^{m} \ \dx\right)^{1/m}\right]^{\frac{p}{2(p-1)}}<\frac{1}{2^{10}}\\[8pt]\displaystyle
\ c_{8}\left(2+M\right)^{\frac{2-p}{p}}\left[\sup_{s\le \rr_{*}/4}\left(s^{m}\mint_{B_{s}(x)}\snr{f}^{m} \ \dx\right)^{1/m}\right]<\frac{1}{2^{10}},
\end{array}
\right.
\end{flalign}
for all $x\in B_{d_{x_{0}}}(x_{0})$, and recalling the definition of $c_{9}$ given in Corollary \ref{cor.ex}, we have $c_{10}>c_{4}$ and by \eqref{hhh} that it is $H_{2}>H$, so the choice made in \eqref{6.59} immediately implies the validity of \eqref{6.1} on $B_{d_{x_{0}}}(x_{0})$. Now we are ready to prove:
\begin{proposition}\label{vmo.t}
Under assumptions \eqref{assf}-\eqref{sqc}, \eqref{f} and \eqref{6.54}, let $u\in W^{1,p}(\Omega,\mathbb{R}^{N})$ be a local minimizer of \eqref{exfun}. There exists an open set $\Omega_{u}\subset \Omega$ of full $n$-dimensional Lebesgue measure such that $Du\in VMO_{\loc}(\Omega_{u},\mathbb{R}^{N\times n})$ which can be characterized as
\begin{flalign*}
\Omega_{u}:=\left\{x_{0}\in \Omega\colon \exists M\equiv M(x_{0})>0\colon \snr{(V_{p}(Du))_{B_{\rr}(x_{0})}}<M \ \ \mbox{and}\ \ \mf{F}(u;B_{\rr}(x_{0}))<\varepsilon_{*} \ \  \mbox{for some} \ \ \rr\in (0,\rr_{*}]\right\},
\end{flalign*}
with $\varepsilon_{*}\equiv \varepsilon_{*}(\textnormal{\texttt{data}},M)$ as in \eqref{6.59} and $\rr_{*}\equiv \rr_{*}(\textnormal{\texttt{data}},M,f(\cdot))$ defined by \eqref{6.59}-\eqref{6.59.1}. In particular, for all $x_{0}\in \Omega_{u}$ there is an open neighborhood $B(x_{0})\subset \Omega_{u}$ such that
\begin{eqnarray}\label{6.63}
\lim_{\rr\to 0}\mf{F}(u;B_{\rr}(x))=0\qquad \mbox{uniformly for all} \ \ x\in B(x_{0}).
\end{eqnarray}

\end{proposition}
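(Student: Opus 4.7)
The plan is to verify three facts: that $\snr{\Omega\setminus\Omega_{u}}=0$, that $\Omega_{u}$ is open, and that \eqref{6.63} holds on an open neighborhood of every point of $\Omega_{u}$. Combined with the isomorphism properties of $V_{p}(\cdot)$ encoded in \eqref{Vm} and \eqref{Vm.1} (which allow one to pass from vanishing mean oscillations of $V_{p}(Du)$ to those of $Du$), the last property upgrades to $Du\in VMO_{\loc}(\Omega_{u},\mathbb{R}^{N\times n})$. For the full-measure claim, I would apply the Lebesgue differentiation theorem to the $L^{1}_{\loc}$ maps $V_{p}(Du)$ and $Du$: for a.e.\ $x\in\Omega$, both $(V_{p}(Du))_{B_{\rr}(x)}\to V_{p}(Du(x))$ and $\mf F(u;B_{\rr}(x))\to 0$ as $\rr\to 0$. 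Setting $M:=\snr{V_{p}(Du(x))}+1$, and using \eqref{6.54} together with \eqref{f.p.m} to guarantee that $\rr_{*}(M,f(\cdot))>0$, the three requirements $\snr{(V_{p}(Du))_{B_{\rr}(x)}}<M$, $\mf F(u;B_{\rr}(x))<\varepsilon_{*}(M)$ and $\rr\le\rr_{*}(M,f(\cdot))$ are satisfied simultaneously for all sufficiently small $\rr$, placing $x$ in $\Omega_{u}$.

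For openness, fix $x_{0}\in\Omega_{u}$ with parameters $M$ and $\rr$ realizing the definition. The maps $x\mapsto (V_{p}(Du))_{B_{\rr}(x)}$ and $x\mapsto \mf F(u;B_{\rr}(x))$ are continuous in $x$ at this fixed radius $\rr$, being averages of $L^{1}_{\loc}$ integrands over continuously translated balls. The two strict smallness conditions at $x_{0}$ thus persist on a sufficiently small open ball $B(x_{0})$, after possibly replacing $M$ by $M+1$ and accepting the corresponding smaller thresholds $\varepsilon_{*}(M+1)$, $\rr_{*}(M+1,f(\cdot))$; on $B(x_{0})$ the excess condition can be restored at a slightly smaller (but still positive) radius thanks to the $\eqref{tri.1}$-type scaling and the a.e.\ pointwise convergence of averages, which is the only ingredient needed to make the neighborhood-level smallness rigorous.

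The core of the proposition is \eqref{6.63}. Corollary \ref{cor.ex}(i) applies at $x_{0}$ (note $\Omega_{u}\subset\mathcal{R}_{u}$ because $\varepsilon_{*}<\hat\varepsilon$ and $\rr_{*}\le\hat\rr$ by \eqref{6.58}--\eqref{6.59}, so the hypotheses on $\bar\varepsilon,\bar\rr$ in \eqref{ru.0} are inherited): it yields an open neighborhood $B(x_{0})\subset\Omega_{u}$, a radius $\rr_{x_{0}}\equiv\rr_{x_{0}}(\textnormal{\texttt{data}},M,f(\cdot))>0$ and constants $c_{7},c_{8}\equiv c_{7},c_{8}(\textnormal{\texttt{data}},M)$ such that \eqref{6.55} and \eqref{6.56} hold for all $x\in B(x_{0})$ and all $0<\sigma\le\varsigma\le\rr_{x_{0}}$. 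Given $\eta>0$, I would first use \eqref{6.54} (together with \eqref{6.3.1}, \eqref{f.p.m} and the uniform bound on $\mf C(x;\varsigma)$ from $\eqref{6.55}_{1}$--$\eqref{6.55}_{2}$) to pick $\varsigma\le\rr_{x_{0}}$ so small that the two $\sup_{s\le\varsigma/4}$ addends on the right-hand side of \eqref{6.56} are each smaller than $\eta/3$, uniformly in $x\in B(x_{0})$. With this $\varsigma$ now frozen, the uniform bound $\mf F(u;B_{\varsigma}(x))\le c(\data,M)$ coming from $\eqref{6.55}_{1}$ makes the leading term $c_{7}(\sigma/\varsigma)^{\alpha_{0}}\mf F(u;B_{\varsigma}(x))$ smaller than $\eta/3$ for all $\sigma$ sufficiently small, again uniformly in $x\in B(x_{0})$, which yields \eqref{6.63}.

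The main obstacle is precisely the uniform-in-$x$ control of constants, threshold radii and smallness of the Wolff potential on a single neighborhood of $x_{0}$: without this, the excess decay would only be pointwise in $x$ and could not be summed into a $VMO$ conclusion. This uniformity is exactly what is built into the ``locally uniform'' hypothesis \eqref{6.54} and into the neighborhood-formulation of Corollary \ref{cor.ex}(i), so the whole argument reduces to an $\eta/3$-splitting once these tools are in hand.
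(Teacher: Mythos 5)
Your overall strategy coincides with the paper's: reduce to showing $\Omega_u$ is open with full measure (which the paper outsources to the discussion at the start of Section~\ref{morsec} and \cite[Section 5.1]{deqc}), then establish \eqref{6.63} by applying Corollary~\ref{cor.ex}(i) and splitting the decay estimate \eqref{6.56} into a Wolff-potential part made uniformly small via \eqref{6.54} and a leading Morrey term made small by shrinking $\sigma$. Your $\eta/3$-splitting is structurally the same as the paper's two-radius choice $\rr''$ then $\sigma_r$ in \eqref{6.60}--\eqref{6.62}.

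There is, however, one genuine gap: you invoke $\eqref{6.55}_1$ as the source of the uniform bounds on $\mf{C}(x;\varsigma)$ and on $\mf F(u;B_{\varsigma}(x))$, but $\eqref{6.55}_1$ only bounds $\snr{(V_{p}(Du))_{B_{\varsigma}(x)}}$, i.e.\ the average of $V_{p}(Du)$; it says nothing about the $L^{2}$-excess $\mf F$, so neither of the two bounds you need follows from it. What is actually used is an intermediate iteration step, the paper's \eqref{6.61}: apply \eqref{6.56} at $(\sigma,\varsigma)=(\rr'',\rr_{x_0})$, where \eqref{70} (with $\bar{\varepsilon}\equiv\varepsilon_{*}$) guarantees $\mf F(u;B_{\rr_{x_0}}(x))<\varepsilon_{*}$ uniformly in $x\in B(x_0)$, and where \eqref{6.59}--\eqref{6.59.1} control both the $\sup$ terms and the factor $\mf{C}(x;\rr_{x_0})^{(2-p)/p}\le(1+M)^{(2-p)/p}$; this produces the uniform bound $\mf F(u;B_{\rr''}(x))\le 1$. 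Only after this does one have the uniform control on $\mf{C}(x;\rr'')$ and $\mf F(u;B_{\rr''}(x))$ needed to close the final $\eta$-smallness. The fix is minor (insert the extra application of \eqref{6.56} from the scale $\rr_{x_0}$), but as written your argument skips a necessary link: without it the factor multiplying the second $\sup$ in \eqref{6.56} and the leading coefficient $\mf F(u;B_{\varsigma}(x))$ are both uncontrolled.
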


\begin{proof}
In the light of the discussion at the beginning of Section \ref{morsec}, the ideal candidate for $\Omega_{u}$ is set $\mathcal{R}_{u}$ as in \eqref{ru.0} with $\bar{\varepsilon}\equiv \varepsilon_{*}$ and $\bar{\rr}\equiv \rr_{*}$: in fact, we already know that it is an open set of full $n$-dimensional Lebesgue measure so we only need to prove the $VMO$-result. We take $x_{0}\in \mathcal{R}_{u}$ with $\bar{\varepsilon}\equiv \varepsilon_{*}$, $\bar{\rr}\equiv \rr_{*}$ in \eqref{ru.0} and observe that \eqref{6.54} allows applying the first part of Corollary \ref{cor.ex}, so there exists an open neighborhood $B(x_{0})\subset \mathcal{R}_{u}$ and a positive radius $\rr_{x_{0}}\equiv \rr_{x_{0}}(\textnormal{\texttt{data}},M,f(\cdot))$ such that \eqref{6.55}-\eqref{6.56} are verified for all $x\in B(x_{0})$ and any $0<\sigma\le \varsigma\le \rr_{x_{0}}$. Of course, we can always assume that $B(x_{0})\subset B_{d_{x_{0}}}(x_{0})$. Fixed an arbitrary $r\in (0,1)$, by \eqref{f.p.m}$_{2}$ we can find a radius $\rr''\equiv \rr''(\textnormal{\texttt{data}},M,f(\cdot))\in (0,\rr_{x_{0}}]$ satisfying
\begin{flalign}\label{6.60}
&c_{8}\sup_{s\le \rr''}\mf{K}\left[\left(\mint_{B_{s}(x)}\snr{f}^{m} \ \dx\right)^{1/m}\right]^{\frac{p}{2(p-1)}}+c_{8}\left(2+M\right)^{(2-p)/p}\sup_{s\le \rr''}\left(\mint_{B_{s}(x)}\snr{f}^{m} \ \dx\right)^{1/m}\le \frac{r}{2^{4}}.
\end{flalign}
Moreover, via \eqref{6.56} with $\sigma\equiv \rr''$ and $\varsigma\equiv \rr_{x_{0}}$, \eqref{6.58}, \eqref{6.59}, \eqref{6.59.1} and \eqref{70} with $\bar{\varepsilon}\equiv \varepsilon_{*}$, $\bar{\rr}\equiv \rr_{*}$ we obtain
\begin{eqnarray}\label{6.61}
\mf{F}(u;B_{\rr''}(x))&\le&c_{7}\left(\frac{\rr''}{\rr_{x_{0}}}\right)^{\alpha_{0}}\mf{F}(u;B_{\rr_{x_{0}}}(x))+c_{8}\sup_{s\le \rr_{x_{0}}/4}\mf{K}\left[\left(s^{m}\mint_{B_{s}(x)}\snr{f}^{m} \ \dx\right)^{1/m}\right]^{\frac{p}{2(p-1)}}\nonumber \\
&&+c_{8}\left(\mf{C}(x;\rr_{x_{0}})+\mf{K}\left(\mathbf{I}^{f}_{1,m}(x,\rr_{x_{0}})\right)^{\frac{p}{2(p-1)}}\right)^{(2-p)/p}\sup_{s\le \rr_{x_{0}}/4}\left(s^{m}\mint_{B_{s}(x)}\snr{f}^{m} \ \dx\right)^{1/m}\nonumber \\
&\le&c_{7}\varepsilon_{*}+\frac{1}{2^{10}}\le 1
\end{eqnarray}
Finally we pick $\sigma_{r}\equiv \sigma_{r}(\textnormal{\texttt{data}},M,f(\cdot))\in (0,\rr'']$ small enough that 
\eqn{6.62}
$$c_{9}(\sigma_{r}/\rr'')^{\alpha_{0}}\le r/2.$$ Plugging \eqref{6.60}-\eqref{6.62} in \eqref{6.56} with $\sigma\equiv \sigma_{r}$ and $\varsigma\equiv \rr''$ we obtain that 
$$
\sigma\le \sigma_{r} \ \Longrightarrow \ \mf{F}(u;B_{\sigma}(x))\le r\qquad \mbox{for all} \ \ x\in B(x_{0}).
$$
The arbitrariety of $r$, \eqref{6.55} and a standard covering argument eventually lead to \eqref{6.63} and the proof is complete.
\end{proof}
\begin{remark}\label{rem}
\emph{Let us list some relevant observations.
\begin{itemize}
    \item Replacing \eqref{6.54} with \eqref{6.0} in Proposition \ref{vmo.t}, we obtain that whenever $x_{0}\in \Omega$ verifies the conditions in \eqref{ru.0} with $\bar{\varepsilon}\equiv \varepsilon_{*}$ and $\bar{\rr}\equiv \rr_{*}$ it holds that
    \eqn{6.64}
    $$
    \lim_{\rr\to 0}\mf{F}(u;B_{\rr}(x_{0}))=0.
    $$
    \item Corollary \ref{cor.ex} and Proposition \ref{vmo.t} remain valid if $M$ is replaced by $8(1+M)$, without affecting the magnitude of the bounding constants appearing in the various estimates as they are all derived in correspondence of larger values than $M$.
    \item Corollary \ref{cor.ex} guarantees in particular that once \eqref{6.5} - with $\hat{\varepsilon}$, $\hat{\rr}$ as in \eqref{6.9}$_{1}$ and \eqref{6.1} respectively - is verified for a certain $\rr\in (0,\hat{\rr}]$, then the Morrey type decay estimates \eqref{6.56} and \eqref{6.56.1} for the excess functional $\mf{F}(\cdot)$ hold at all scales smaller than $\rr$. This will allow us to work on all scales smaller that $\rr$.  
\end{itemize}}
\end{remark}
\section{Borderline gradient continuity}\label{7sec}
This final section is devoted to the proof of the partial gradient continuity for minima of \eqref{exfun}. Let $x_{0}\in \mathcal{R}_{u}$ be any point, $M\equiv M(x_{0})>0$, $\bar{\varepsilon}$, $\bar{\rr}$ be the parameters appearing in \eqref{ru.0}, still to be fixed as functions of $(\textnormal{\texttt{data}},M)$ and $(\textnormal{\texttt{data}},M,f(\cdot))$ respectively. We assume the validity of \eqref{6.0} at $x_{0}$, define the smallness threshold
\begin{eqnarray}\label{7.1}
\varepsilon':=\frac{\varepsilon_{*}}{2^{8}c_{9}\max\{H_{1},H_{2}\}} \ \Longrightarrow \ \varepsilon'\equiv \varepsilon'(\textnormal{\texttt{data}},M)
\end{eqnarray}
and determine the radius $\rr'\equiv \rr'(\textnormal{\texttt{data}},M,f(\cdot))\in (0,\rr_{*}]$ so small that
\begin{flalign}\label{7.2}
c_{11}\mf{K}\left(\mathbf{I}^{f}_{1,m}(x_{0},s)\right)^{\frac{p}{2(p-1)}}+c_{11}M^{(2-p)/p}\mathbf{I}^{f}_{1,m}(x_{0},s)< \varepsilon',\qquad\quad  c_{11}:=\left(\frac{c_{10}2^{32npq}\max\{H_{1},H_{2}\}}{(\tau\theta)^{16npq}}\right)^{\frac{p}{2(p-1)}}.
\end{flalign}
for all $s\in (0,\rr']$, which yields
\begin{flalign}\label{7.3}
&\sup_{\sigma\le s/4}\mf{K}\left[\left(\sigma^{m}\mint_{B_{\sigma}(x_{0})}\snr{f}^{m} \dx\right)^{1/m}\right]^{\frac{p}{2(p-1)}}\nonumber \\
&\qquad \qquad \qquad +M^{(2-p)/p}\sup_{\sigma\le s/4}\left(\sigma^{m}\mint_{B_{\sigma}(x_{0})}\snr{f}^{m} \dx\right)^{1/m}<\varepsilon'\left(\frac{(\tau\theta)^{12npq}}{2^{24npq}c_{10}\max\{H_{1},H_{2}\}}\right)^{\frac{p}{2(p-1)}},
\end{flalign}
cf. Section \ref{morsec}. Of course there is no loss of generality in assuming that $0<\rr'\le \rr_{*}\le \hat{\rr}$, so setting $\bar{\varepsilon}\equiv \varepsilon'$ and $\bar{\rr}\equiv \rr'$ in \eqref{ru.0} we can find $\rr\in (0,\rr']$ such that
\begin{eqnarray}\label{7.4}
\mf{F}(u;B_{\rr}(x_{0}))<\varepsilon'\qquad \mbox{and}\qquad \snr{(V_{p}(Du))_{B_{\rr}(x_{0})}}<M.
\end{eqnarray}
Thanks to the choices made in \eqref{7.1}-\eqref{7.2} we see that \eqref{6.55.1}-\eqref{6.56.1} and \eqref{6.64} are available; later on, we shall strengthen \eqref{6.0} by assuming \eqref{6.54} thus \eqref{6.55}-\eqref{6.56} and Proposition \ref{vmo.t} will be at hand. Now, with $H\equiv H(\textnormal{\texttt{data}},M)$ as in $\eqref{6.9}_{2}$ and $H_{1},H_{2}\equiv H_{1},H_{2}(\textnormal{\texttt{data}},M)$ being defined in \eqref{hhh}, we slightly modify the definition of the composite excess functional given in \eqref{cb} and consider its "unbalanced" version:
$$
(0,\rr]\ni s\mapsto \mf{C}_{H}(x_{0};s):=H\mf{F}(u;B_{\rr}(x_{0}))+\snr{(V_{p}(Du))_{B_{\rr}(x_{0})}}
$$
and, for $s\in (0,\rr]$, introduce the nonhomogeneous excess functional:
$$
 \mf{N}(x_{0};s):=H_{1}\mf{F}(u;B_{s}(x_{0}))+c_{12}H_{2}\left(\mf{K}\left(\textbf{I}^{f}_{1,m}(x_{0},s)\right)^{\frac{p}{2(p-1)}}+\snr{(V_{p}(Du))_{B_{s}(x_{0})}}^{(2-p)/p}\mathbf{I}^{f}_{1,m}(x_{0},s)\right),
$$
where $\mf{K}(\cdot)$ is defined in \eqref{ik} and $c_{12}:=\left(2^{16npq}(\tau\theta)^{-8npq}\right)^{\frac{p}{2(p-1)}}$. Notice that by \eqref{6.64}, $\eqref{6.55.1}_{1}$ and \eqref{6.0} we have
\begin{eqnarray}\label{7.5}
\lim_{\rr\to 0}\mf{F}(u;B_{\rr}(x_{0}))=0 \ \Longrightarrow \ \lim_{\rr\to 0}\mf{N}(x_{0};\rr)=0.
\end{eqnarray}
For the ease of exposition, we shall adopt some abbreviations. With $\tau\equiv \tau(\textnormal{\texttt{data}},M)$ being the parameter determined in Propositions \ref{p1}-\ref{p2}, $j\in \N\cup \{-1,0\}$ and $\sigma\in (0,\rr]$ set $\sigma_{j}:=\tau^{j+1}\sigma$, $\sigma_{-1}:= \sigma$ and $B_{j}:=B_{\sigma_{j}}(x_{0})$. From now on we will mostly employ the shorthands described in \textbf{Step 1} of the proof of Theorem \ref{t.ex} and, with Remark \ref{rem} in mind, unless otherwise specified we shall work within the setting designed at the beginning of Section \ref{7sec}.
\subsection{An inductive lemma} The key tool for proving our sharp partial continuity result is an inductive technical lemma that is the subquadratic counterpart of \cite[Lemma 6.1]{deqc}, both inspired by \cite[Lemma 6.1]{kumi}.
\begin{lemma}\label{i.lem}
Let $x_{0}\in \mathcal{R}_{u}$ be a point with $M\equiv M(x_{0})$ being the positive constant in \eqref{ru.0}, $\gamma$ be a positive number and assume that $\bar{\varepsilon}\equiv \varepsilon'$, $\bar{\rr}\equiv \rr'$ in \eqref{ru.0} with $\varepsilon'\equiv \varepsilon'(\textnormal{\texttt{data}},M)$, $\rr'\equiv \rr'(\textnormal{\texttt{data}},M,f(\cdot))$ defined in \eqref{7.1}-\eqref{7.2}; that
\begin{eqnarray}\label{7.6}
\mf{N}(x_{0};\sigma)\le 2\gamma\qquad \mbox{for some} \ \ \sigma\in (0,\rr]
\end{eqnarray}
and that, for integers $k\ge i\ge 0$ inequalities
\begin{eqnarray}\label{7.7}
\mf{C}_{H}(\sigma_{j})\le \gamma, \qquad   \mf{C}_{H}(\sigma_{j+1})\ge \frac{\gamma}{16} \qquad  \mbox{for all} \ \ j\in \{i,\cdots,k\},\qquad\qquad \mf{C}_{H}(\sigma_{i})\le \frac{\gamma}{4}
\end{eqnarray}
are verified. Then the following holds:
\begin{flalign}\label{7.8}
\mf{C}_{H}(\sigma_{k+1})\le \gamma,\qquad \qquad\qquad \sum_{j=i}^{k+1}\mf{F}(\sigma_{j})\le \frac{\gamma}{2H}
\end{flalign}
and
\begin{flalign}\label{7.8.1}
\sum_{j=i}^{k+1}\mf{F}(\sigma_{j})\le \frac{4\mf{F}(\sigma_{i})}{3}+\frac{2^{3}\gamma^{(2-p)/p}}{3\varepsilon_{1}\tau^{n/2}}\sum_{j=i}^{k}\mf{S}(\sigma_{j}),
\end{flalign}
where $H,\varepsilon_{1}\equiv H,\varepsilon_{1}(\textnormal{\texttt{data}},M)$ defined in $\eqref{6.9}_{2}$ and in Propositions \ref{p1}-\ref{p2} respectively.
\end{lemma}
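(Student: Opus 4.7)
The plan is to iterate the ``master'' nonsingular decay
\[
\mf{F}(\sigma_{j+1}) \le \tau^{\alpha_{0}}\mf{F}(\sigma_{j}) + c_{0}\snr{(V_{p}(Du))_{B_{\sigma_{j}}(x_{0})}}^{(2-p)/p}\mf{S}(\sigma_{j}),\qquad c_{0}=2\varepsilon_{1}^{-1}\tau^{-n/2},
\]
obtained by unifying the two alternatives \eqref{nsns} and \eqref{ns.22} of Propositions \ref{p1}--\ref{p2}, across the discrete scales $\{\sigma_{j}\}_{j=i}^{k}$. Its application at each $\sigma_{j}$ requires three ingredients. The a priori upper bound $\snr{(V_{p}(Du))_{B_{\sigma_{j}}(x_{0})}}\le 8(1+M)\le 40000(M+1)$ is free from $\eqref{6.55.1}_{1}$, since by construction $\bar{\varepsilon}\equiv\varepsilon'\le\hat{\varepsilon}$ and $\bar{\rr}\equiv\rr'\le\hat{\rr}$ in \eqref{ru.0} make Corollary \ref{cor.ex} applicable. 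The excess smallness $\mf{F}(\sigma_{j})\le\gamma/H$ follows instantly from $\mf{C}_{H}(\sigma_{j})\le\gamma$. The most delicate point is the nonsingular entry condition $\mf{F}(\sigma_{j})<\varepsilon_{0}\snr{(V_{p}(Du))_{B_{\sigma_{j}}(x_{0})}}$, which I would deduce by coupling the lower bound $\mf{C}_{H}(\sigma_{j+1})\ge\gamma/16$ with the quantitative choice $H\ge 2^{8(n+10)}c_{3}\max\{\tau^{-n},\varepsilon_{0}^{-1}\}$ of $\eqref{6.9}_{2}$ and the average-change estimate $\eqref{tri.1}_{3}$, which together rule out the possibility that $\snr{(V_{p}(Du))_{B_{\sigma_{j}}(x_{0})}}$ fall below the threshold $\mf{F}(\sigma_{j})/\varepsilon_{0}$.

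Once the master decay is available at every step, summing the inequalities for $j\in\{i,\ldots,k\}$, adding $\mf{F}(\sigma_{i})$ on both sides and absorbing $\tau^{\alpha_{0}}\sum\mf{F}(\sigma_{j})$ into the left-hand side -- legitimate because $\tau$ was fixed in \eqref{ns.20} so small that $(1-\tau^{\alpha_{0}})^{-1}\le 4/3$ -- and then freezing the gradient factor via $\snr{(V_{p}(Du))_{B_{\sigma_{j}}}}\le\gamma$, produces \eqref{7.8.1} at once, with the advertised constant $8/(3\varepsilon_{1}\tau^{n/2})$ matching exactly $\tfrac{4}{3}c_{0}$.

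The bound $\eqref{7.8}_{2}$ is then extracted from \eqref{7.8.1} by controlling the two contributions separately: from $\mf{C}_{H}(\sigma_{i})\le\gamma/4$ one gets $\tfrac{4}{3}\mf{F}(\sigma_{i})\le\gamma/(3H)$, whereas the forcing sum is dominated via $\sum_{j}\mf{S}(\sigma_{j})\lesssim\mathbf{I}^{f}_{1,m}(x_{0},\sigma)$ from \eqref{6.2} and the key scaling coming from $\mf{N}(x_{0};\sigma)\le 2\gamma$: as $\mf{K}(t)\ge t$, this hypothesis forces $(\mathbf{I}^{f}_{1,m}(x_{0},\sigma))^{p/(2(p-1))}\le 2\gamma/(c_{12}H_{2})$, and the algebraic identity $(2-p)/p+2(p-1)/p=1$ turns it into
\[
\gamma^{(2-p)/p}\mathbf{I}^{f}_{1,m}(x_{0},\sigma) \le 2^{2(p-1)/p}(c_{12}H_{2})^{-2(p-1)/p}\gamma \lesssim \gamma/H,
\]
where the last step uses that $H_{2}^{2(p-1)/p}$ matches $H$ up to harmless powers of $\tau\theta$ -- precisely the numerology encoded by the exponent $p/(2(p-1))$ in the definition \eqref{hhh} of $H_{2}$. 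The remainder of the right-hand side of \eqref{7.8.1} is then bounded by $\gamma/(6H)$, and the sum of the two pieces delivers the required $\gamma/(2H)$.

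Finally, $\eqref{7.8}_{1}$ is extracted from $\eqref{7.8}_{2}$ by decomposing $\mf{C}_{H}(\sigma_{k+1}) = H\mf{F}(\sigma_{k+1}) + \snr{(V_{p}(Du))_{B_{\sigma_{k+1}}}}$: the first piece is at most $H\sum_{j}\mf{F}(\sigma_{j})\le\gamma/2$, and the second, via the telescoping estimate $\snr{(V_{p}(Du))_{B_{\sigma_{k+1}}}}\le\snr{(V_{p}(Du))_{B_{\sigma_{i}}}}+\tau^{-n/2}\sum_{j}\mf{F}(\sigma_{j})$ from $\eqref{tri.1}_{3}$, is at most $\gamma/4+\tau^{-n/2}\gamma/(2H)$, the last addend being negligible thanks to $H\gg\tau^{-n/2}$. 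The hard part will be the rigorous verification of the nonsingular entry condition at each iteration: it demands a tight coupling of the upper bound on $\mf{C}_{H}(\sigma_{j})$ with the lower bound on $\mf{C}_{H}(\sigma_{j+1})$ and the exact magnitudes of $H$, $\varepsilon_{0}$, $\tau$ prescribed in \eqref{6.9} and \eqref{ns.20}, and is the technical engine of the whole inductive scheme.
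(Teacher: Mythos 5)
Your overall plan follows the paper's: prove that the nonsingular regime of Propositions \ref{p1}--\ref{p2} must be active at each discrete scale, obtain a one-step decay, sum it, absorb the geometric piece, and read off $\eqref{7.8}$--$\eqref{7.8.1}$. The bookkeeping in the second half of your sketch (the sum-and-absorb step, the use of $\mf{N}(x_{0};\sigma)\le 2\gamma$ together with $\frac{2-p}{p}+\frac{2(p-1)}{p}=1$ to dominate the forcing sum, and the decomposition of $\mf{C}_{H}(\sigma_{k+1})$ via $\eqref{tri.1}_{3}$) matches the paper's proof of $\eqref{7.8.1}$, $\eqref{7.8}_{2}$ and then $\eqref{7.8}_{1}$ in essentially the same order and with the same constants.

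The genuine gap is in the step you yourself flag as ``the technical engine'': excluding the singular alternative $\varepsilon_{0}\snr{(V_{p}(Du))_{B_{\sigma_{j}}(x_{0})}}\le \mf{F}(\sigma_{j})$ at every $j$. You propose to do this by combining the lower bound $\mf{C}_{H}(\sigma_{j+1})\ge\gamma/16$ with the size of $H$ and the average-change estimate $\eqref{tri.1}_{3}$. But $\eqref{tri.1}_{3}$ only controls the \emph{average} part $\snr{(V_{p}(Du))_{B_{\sigma_{j+1}}(x_{0})}}$ of $\mf{C}_{H}(\sigma_{j+1})$; the excess part $H\mf{F}(\sigma_{j+1})$ cannot be made small by scaling estimates alone. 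Passing from $\sigma_{j}$ to $\sigma_{j+1}$ via $\eqref{tri.1}_{1}$ only gives $H\mf{F}(\sigma_{j+1})\le 2\tau^{-n/2}H\mf{F}(\sigma_{j})\le 2\tau^{-n/2}\gamma$, and the adverse factor $\tau^{-n/2}$ is \emph{not} absorbed by the largeness of $H$ in $\eqref{6.9}_{2}$, because $H$ sits on both sides of $H\mf{F}(\sigma_{j})\le\mf{C}_{H}(\sigma_{j})\le\gamma$. The paper closes this gap by invoking the Morrey-type decay $\eqref{6.56.1}$ from Corollary \ref{cor.ex}, applied from the mother radius $\sigma$ down to $\sigma_{j+1}$: this bounds $H\mf{F}(\sigma_{j+1})$ directly in terms of $\mf{N}(x_{0};\sigma)\le 2\gamma$ with small coefficients, precisely because of the large constants $H_{1}$, $H_{2}$ built into the definition of $\mf{N}(\cdot)$ (step $\eqref{7.12}$). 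Only with $\eqref{7.12}$ in hand, together with $\eqref{7.13}$ (from the singular assumption) and $\eqref{7.9}$, does $\mf{C}_{H}(\sigma_{j+1})<\gamma/16$ follow and contradict $\eqref{7.7}_{2}$. Without the Morrey decay $\eqref{6.56.1}$, the contradiction argument you outline cannot be completed.
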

\begin{proof}
Our preliminary observation is that $x_{0}\in \mathcal{R}_{u}$ with $\bar{\varepsilon}\equiv \varepsilon'$ and $\bar{\rr}\equiv \rr'$ guarantees the validity of \eqref{7.4} and of \eqref{6.55.1}-\eqref{6.56.1}. A straightforward computation shows that
\begin{eqnarray}\label{7.9}
\snr{V(\sigma_{j})-V(\sigma_{j+1})}\stackrel{\eqref{tri.1}_{3}}{\le} \frac{\mf{F}(\sigma_{j})}{\tau^{n/2}}\le \frac{\mf{C}_{H}(\sigma_{j})}{\tau^{n/2}H}\stackrel{\eqref{6.9}_{2},\eqref{7.7}_{1}}{\le}\frac{\gamma}{2^{6}}.
\end{eqnarray}
Next, let us prove that under \eqref{7.6}-\eqref{7.7} the singular regime cannot be in force, i.e.:
\begin{flalign}\label{7.10}
\varepsilon_{0}V(\sigma_{j})\le \mf{F}(\sigma_{j})\quad \mbox{cannot hold for all} \ \ j\in \{i,\cdots,k\}.
\end{flalign}
By contradiction, we assume that
\begin{flalign}\label{7.11}
\mbox{there is} \ \ j\in \{i,\cdots,k\} \ \ \mbox{such that} \ \ \varepsilon_{0}V(\sigma_{j})\le \mf{F}(\sigma_{j}) \ \ \mbox{holds true}
\end{flalign}
and estimate via Young inequality with conjugate exponents $\left(\frac{p}{2-p},\frac{p}{2(p-1)}\right)$,
\begin{eqnarray}\label{7.12}
H\mf{F}(\sigma_{j+1})&\stackrel{\eqref{6.56.1}}{\le}&c_{7}H\tau^{\alpha_{0}(j+2)}\mf{F}(\sigma)+c_{8}H\sup_{s\le \sigma/4}\mf{K}\left(\mf{S}(s)\right)^{\frac{p}{2(p-1)}}\nonumber \\
&&+c_{8}H\left(\mf{C}(\sigma)+\mf{K}\left(\mathbf{I}^{f}_{1,m}(x_{0},\sigma)\right)^{\frac{p}{2(p-1)}}\right)^{(2-p)/p}\sup_{s\le \sigma/4}\mf{S}(s)\nonumber \\
&\le&H(c_{7}+2c_{8})\mf{F}(\sigma)+c_{8}H\sup_{s\le \sigma/4}\mf{K}\left(\mf{S}(s)\right)^{\frac{p}{2(p-1)}}\nonumber \\
&&+c_{8}H\mf{K}(\mathbf{I}^{f}_{1,m}(x_{0},\sigma))^{\frac{2-p}{2(p-1)}}\sup_{s\le \sigma/4}\mf{S}(s)+c_{8}HV(\sigma)^{(2-p)/p}\sup_{s\le \sigma/4}\mf{S}(\sigma)\nonumber \\
&\stackrel{\eqref{6.3.1}}{\le}&\mf{N}(x_{0};\sigma)\left(\frac{H(c_{7}+c_{8})}{H_{1}}+\frac{Hc_{8}}{c_{12}H_{2}}\left(\frac{2^{8nq}}{(\tau\theta)^{4nq}}\right)^{\frac{p}{2(p-1)}}+\frac{2Hc_{8}}{c_{12}H_{2}}\left(\frac{2^{8n}}{(\tau\theta)^{4n}}\right)\right)\nonumber \\
&\stackrel{\eqref{7.6}}{\le}&\gamma\left(\frac{2H(c_{7}+c_{8})}{H_{1}}+\frac{2Hc_{8}}{c_{12}H_{2}}\left(\frac{2^{8nq}}{(\tau\theta)^{4nq}}\right)^{\frac{p}{2(p-1)}}+\frac{2^{2}Hc_{8}}{c_{12}H_{2}}\left(\frac{2^{8n}}{(\tau\theta)^{4n}}\right)\right)\stackrel{\eqref{hhh}}{\le}\frac{\gamma}{2^{6}}.
\end{eqnarray}
Furthermore, we have
\begin{eqnarray}\label{7.13}
V(\sigma_{j})\stackrel{\eqref{7.10}}{\le}\frac{\mf{F}(\sigma_{j})}{\varepsilon_{0}}\le \frac{\mf{C}_{H}(\sigma_{j})}{\varepsilon_{0}H}\stackrel{\eqref{7.7}_{1}}{\le}\frac{\gamma}{\varepsilon_{0}H}\stackrel{\eqref{6.9}_{2}}{\le}\frac{\gamma}{2^{6}}
\end{eqnarray}
and so
\begin{eqnarray*}
\mf{C}_{H}(\sigma_{j+1})\le\snr{V(\sigma_{j+1})-V(\sigma_{j})}+V(\sigma_{j})+H\mf{F}(\sigma_{j+1})\stackrel{\eqref{7.9},\eqref{7.12},\eqref{7.13}}{\le}\frac{3\gamma}{2^{6}}<\frac{\gamma}{16},
\end{eqnarray*}
in contradiction with $\eqref{7.7}_{2}$ and \eqref{7.10} is verified. Next, we prove the validity of
\begin{eqnarray}\label{7.14}
\mf{F}(\sigma_{j+1})\le \frac{\mf{F}(\sigma_{j})}{4}+\frac{2\gamma^{(2-p)/p}\mf{S}(\sigma_{j})}{\varepsilon_{1}\tau^{n/2}}.
\end{eqnarray}
In the light of \eqref{7.10}, we have to consider only two possibilities: either \eqref{ns.2} holds and, given $\eqref{7.7}_{1}$ and the bound imposed on the size of $\gamma$, via \eqref{nsns} and \eqref{ns.20} we directly have \eqref{7.14}; or \eqref{ns.21} is satisfied and 
\begin{eqnarray*}
\mf{F}(\sigma_{j+1})&\stackrel{\eqref{tri.1}_{1}}{\le}&\frac{2}{\tau^{n/2}}\mf{F}(\sigma_{j})\stackrel{\eqref{ns.21}}{\le}\frac{2V(\sigma_{j})^{(2-p)/p}\mf{S}(\sigma_{j})}{\varepsilon_{1}\tau^{n/2}}\stackrel{\eqref{pq}_{1},\eqref{7.7}_{1}}{\le}\frac{2\gamma^{(2-p)/p}\mf{S}(\sigma_{j})}{\varepsilon_{1}\tau^{n/2}},
\end{eqnarray*}
and \eqref{7.14} follows in any case. Before proceeding further, notice that
\begin{eqnarray}\label{7.15}
\left(\sum_{j=i}^{k}\mf{S}(\sigma_{j})\right)^{\frac{p}{2(p-1)}}\stackrel{\eqref{6.2}}{\le}\frac{2^{\frac{4np}{p-1}}\left(\mathbf{I}^{f}_{1,m}(x_{0},\sigma)\right)^{\frac{p}{2(p-1)}}}{(\tau\theta)^{\frac{2np}{p-1}}}\le \frac{\mf{N}(x_{0};\sigma)}{H_{2}}\stackrel{\eqref{7.6}}{\le}\frac{2\gamma}{H_{2}}.
\end{eqnarray}
Summing \eqref{7.14} for $j\in \{i,\cdots,k\}$ we obtain
\begin{eqnarray*}
\sum_{j=i+1}^{k+1}\mf{F}(\sigma_{j})\le \frac{1}{4}\sum_{j=i}^{k}\mf{F}(\sigma_{j})+\frac{2\gamma^{(2-p)/p}}{\varepsilon_{1}\tau^{n/2}}\sum_{j=i}^{k}\mf{S}(\sigma_{j})
\end{eqnarray*}
Adding on both sides of the previous inequality $\mf{F}(\sigma_{i})$ and reabsorbing terms, we get \eqref{7.8.1}. We continue estimating in \eqref{7.8.1}:
\begin{eqnarray*}
\sum_{j=i}^{k+1}\mf{F}(\sigma_{j})\stackrel{\eqref{7.15}}{\le} \frac{4\mf{C}_{H}(\sigma_{i})}{3H}+\frac{2^{5}\gamma}{3\varepsilon_{1}\tau^{n/2}H_{2}^{\frac{2(p-1)}{p}}}\stackrel{\eqref{7.7}_{3}}{\le}\left(\frac{1}{3H}+\frac{2^{5}}{3\varepsilon_{1}\tau^{n/2}H_{2}^{\frac{2(p-1)}{p}}}\right)\gamma\stackrel{\eqref{hhh}}{\le}\frac{5\gamma}{12H},
\end{eqnarray*}
which implies \eqref{7.8}$_{2}$. Finally, we estimate
\begin{eqnarray*}
V(\sigma_{k+1})&\le& \snr{V(\sigma_{k+1})-V(\sigma_{i})}+V(\sigma_{i})\stackrel{\eqref{7.7}_{3}}{\le}\frac{\gamma}{4}+\sum_{j=i}^{k}\snr{V(\sigma_{j+1})-V(\sigma_{j})}\nonumber \\
&\le&\frac{\gamma}{4}+\frac{1}{\tau^{n/2}}\sum_{j=i}^{k}\mf{F}(\sigma_{j})\stackrel{\eqref{7.8}_{2}}{\le}\frac{\gamma}{4}+\frac{\gamma}{2H\tau^{n/2}}\stackrel{\eqref{6.9}_{2}}{\le}\frac{\gamma}{2}
\end{eqnarray*}
and, combining the content of the above display with $\eqref{7.8}_{2}$ we obtain $\eqref{7.8}_{1}$ and the proof is complete.
\end{proof}
\subsection{Oscillation estimates for large gradients}\label{7.2s} For some $\sigma\in (0,\rr ]$ we consider the case in which
\begin{eqnarray}\label{7.17}
\frac{\gamma}{8}:=V(\sigma_{0})>\frac{\mf{N}(x_{0};\sigma)}{16} \ \Longrightarrow \ \mf{N}(x_{0};\sigma)\le 2\gamma
\end{eqnarray}
where we used that by \eqref{6.0}, \eqref{7.1}, \eqref{7.2} and \eqref{7.4}, estimates \eqref{6.55.1}-\eqref{6.56.1} of Corollary \ref{cor.ex} are available, recall also Remark \ref{rem}. We proceed with two technical lemmas, eventually leading to quantitative oscillation estimates for nonzero gradients.  
\begin{lemma}
Assume \eqref{7.17}. Then
\begin{flalign}\label{7.18}
\sum_{j=0}^{\infty}\mf{F}(\sigma_{j})\le \frac{\mf{N}(x_{0};\sigma)}{H}\qquad \mbox{and}\qquad \frac{\gamma}{16}\le V(\sigma_{j})\le \gamma \ \ \mbox{for all} \ \ j\in \N\cup\{0\},
\end{flalign}
for any $\sigma\in (0,\rr]$ with $H\equiv H(\textnormal{\texttt{data}},M)$ defined in $\eqref{6.9}_{2}$.
\end{lemma}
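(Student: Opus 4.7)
The plan is to prove both conclusions simultaneously by induction on $j$, using Lemma \ref{i.lem} as the workhorse at each step. The governing observation is that $V(\sigma_0)=\gamma/8$ in \eqref{7.17} stays comfortably above the lower threshold $\gamma/16$, while $\mf{N}(x_0;\sigma)\le 2\gamma$ controls the forcing and all one-scale perturbations.

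First I would settle a base-scale bound $\mf{C}_H(\sigma_0)\le \gamma/4$. Since $H_1\mf{F}(\sigma_{-1})$ is embedded in $\mf{N}(x_0;\sigma)$, one has $\mf{F}(\sigma_{-1})\le 2\gamma/H_1$; pulling down one scale via $\eqref{tri.1}_1$ yields $H\mf{F}(\sigma_0)\le 4H\gamma/(\tau^{n/2}H_1)\le \gamma/8$, thanks to the size of $H_1$ prescribed in \eqref{hhh}. Combined with $V(\sigma_0)=\gamma/8$, this gives $\mf{C}_H(\sigma_0)\le \gamma/4$.

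The core of the argument is then the induction: $V(\sigma_j)\ge \gamma/16$ and $\mf{C}_H(\sigma_j)\le \gamma$ for every $j\ge 0$. Lemma \ref{i.lem} is almost tailored for this, the subtlety being that its hypothesis $\mf{C}_H(\sigma_{j+1})\ge \gamma/16$ formally reaches up to the index one is trying to bound, producing a superficial circularity. I would resolve this by recovering the lower bound at $\sigma_{k+1}$ first, using the sum estimate from Lemma \ref{i.lem} at the \emph{previous} stage. Assuming the inductive claim for $0,\ldots,k$ (with $k\ge 1$), all hypotheses of Lemma \ref{i.lem} with $i=0$ and terminal index $k-1$ are met, so it delivers $\sum_{j=0}^{k}\mf{F}(\sigma_j)\le \gamma/(2H)$; combined with $\eqref{tri.1}_3$ this gives
\begin{equation*}
V(\sigma_{k+1})\ge V(\sigma_0)-\tau^{-n/2}\sum_{j=0}^{k}\mf{F}(\sigma_j)\ge \frac{\gamma}{8}-\frac{\gamma}{2H\tau^{n/2}}\ge \frac{\gamma}{16},
\end{equation*}
the last inequality following from the size of $H$ in $\eqref{6.9}_2$. (The anomalous case $k=0$ is handled directly: $\snr{V(\sigma_1)-V(\sigma_0)}\le \tau^{-n/2}\mf{F}(\sigma_0)\le \gamma/(8H\tau^{n/2})$, again $\le \gamma/16$.) With $\mf{C}_H(\sigma_{k+1})\ge V(\sigma_{k+1})\ge \gamma/16$ now in hand, Lemma \ref{i.lem} applied with $i=0$ and terminal index $k$ delivers the companion upper bound $\mf{C}_H(\sigma_{k+1})\le \gamma$, closing the induction.

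The sharp sum estimate $\sum_{j=0}^\infty \mf{F}(\sigma_j)\le \mf{N}(x_0;\sigma)/H$ is then obtained by passing to the limit $k\to\infty$ in \eqref{7.8.1} with $i=0$, and rewriting both terms on the right in terms of $\mf{N}(x_0;\sigma)$: $\mf{F}(\sigma_0)$ against the $H_1$-component of $\mf{N}$ via $\eqref{tri.1}_1$ applied to $\sigma_{-1}\mapsto\sigma_0$; and $\gamma^{(2-p)/p}\sum_j\mf{S}(\sigma_j)$ against the $c_{12}H_2$-component, combining \eqref{6.2} with the comparability $V(\sigma_{-1})\gtrsim \gamma$, which itself follows from \eqref{7.17}, $\eqref{tri.1}_2$ and the smallness of $\mf{F}(\sigma_{-1})\le 2\gamma/H_1$. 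The main obstacle is precisely this last bookkeeping step: the numerical constants $H$, $H_1$, $H_2$, $c_{12}$ fixed in \eqref{6.9} and \eqref{hhh} have been calibrated exactly so that every dimensionless ratio produced by the arguments above collapses into the factor $1/H$ demanded by $\eqref{7.18}_1$.
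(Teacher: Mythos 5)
Your proposal is correct and follows essentially the same route as the paper: establish the base bound $\mf{C}_H(\sigma_0)\le\gamma/4$, propagate both the lower bound on $V(\sigma_j)$ and the upper bound on $\mf{C}_H(\sigma_j)$ through Lemma \ref{i.lem}, and then send $k\to\infty$ in \eqref{7.8.1} with the $H,H_1,H_2,c_{12}$ calibration producing the final factor $1/H$. The only differences are organizational: the paper proves $V(\sigma_j)\ge V(\sigma_0)/2$ by contradicting a first exit time rather than by your direct simultaneous induction, and it handles the $\gamma^{(2-p)/p}$ factor by subadditivity plus a Young inequality instead of your shortcut $V(\sigma_{-1})\gtrsim\gamma$; both work and rely on the same calibration.
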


\begin{proof}
Let us first prove that
\begin{eqnarray}\label{7.19}
V(\sigma_{j})\ge \frac{V(\sigma_{0})}{2}\qquad \mbox{for all} \ \ j\in \N\cup\{0\}.
\end{eqnarray}
Notice that
\begin{eqnarray*}
V(\sigma_{1})&\ge& V(\sigma_{0})-\snr{V(\sigma_{1})-V(\sigma_{0})}\stackrel{\eqref{tri.1}_{3}}{\ge} V(\sigma_{0})-\frac{\mf{F}(\sigma_{0})}{\tau^{n/2}}\nonumber \\
&\stackrel{\eqref{tri.1}_{1}}{\ge}& V(\sigma_{0})-\frac{2\mf{F}(\sigma_{-1})}{\tau^{n}}\ge V(\sigma_{0})-\frac{2\mf{N}(x_{0};\sigma)}{\tau^{n}H_{1}}\nonumber \\
&\stackrel{\eqref{hhh}}{\ge}& V(\sigma_{0})-\frac{\mf{N}(x_{0};\sigma)}{2^{7}}\stackrel{\eqref{7.17}}{\ge}\frac{V(\sigma_{0})}{2}.
\end{eqnarray*}
By contradiction, we assume that there is a finite exit time index $J\ge 2$ such that
\begin{eqnarray}\label{7.20}
V(\sigma_{J})< \frac{V(\sigma_{0})}{2}\qquad \mbox{and}\qquad V(\sigma_{j})\ge \frac{V(\sigma_{0})}{2}  \quad \mbox{for all} \ \ j\in \{0,\cdots,J-1\}.
\end{eqnarray}
Let us preliminary observe that
\begin{flalign}\label{7.21}
V(\sigma_{j})\ge \frac{V(\sigma_{0})}{2} \ \ \mbox{for all} \ \ j\in \{0,\cdots,J-1\} \ \Longrightarrow \ \mf{C}_{H}(\sigma_{j})\le \gamma \ \ \mbox{for all} \ \ j\in \{0,\cdots,J-1\}.
\end{flalign}
To show the validity of implication \eqref{7.21}, we proceed by induction. By direct calculation, we see that
\begin{flalign}
\mf{C}_{H}(\sigma_{0})\stackrel{\eqref{tri.1}_{1}}{\le} V(\sigma_{0})+\frac{2H\mf{F}(\sigma_{-1})}{\tau^{n/2}}\le V(\sigma_{0})+\frac{2H\mf{N}(x_{0};\sigma)}{\tau^{n/2}H_{1}}\stackrel{\eqref{7.17}}{\le}\gamma\left(\frac{1}{8}+\frac{2^{2}H}{\tau^{n/2}H_{1}}\right)\stackrel{\eqref{hhh}}{\le}\frac{\gamma}{4}.\label{7.21.1}
\end{flalign}
We then fix an arbitrary $k\in \{0,\cdots,J-2\}$, assume that $\mf{C}_{H}(\sigma_{j})\le \gamma$ holds for all $j\in \{0,\cdots,k\}$ and notice that $\eqref{7.20}_{2}$ and \eqref{7.17} yield that $\mf{C}_{H}(\sigma_{j+1})\ge \gamma/16$ for all $j\in \{0,\cdots,k\}$, therefore keeping \eqref{7.17} in mind, we deduce that the assumptions of Lemma \ref{i.lem} are verified with $i=0$ and $k$ being the number used here so $\mf{C}_{H}(\sigma_{k+1})\le \gamma$. Implication \eqref{7.21} then follows by the arbitrariety of $k\in \{0,\cdots,J-2\}$. By \eqref{7.20}-\eqref{7.21} now we know that $\mf{C}_{H}(\sigma_{j})\le \gamma$ for all $j\in \{0,\cdots,J-1\}$ and $\mf{C}_{H}(\sigma_{j+1})\ge \gamma/16$ for all $j\in \{0,\cdots,J-2\}$, thus via \eqref{7.18} we can apply again Lemma \ref{i.lem} with $i=0$ and $k=J-2$ to get
\begin{eqnarray}\label{7.22}
\sum_{j=0}^{J-1}\mf{F}(\sigma_{j})\le \frac{\gamma}{2H}\stackrel{\eqref{7.17}}{\le}\frac{4V(\sigma_{0})}{H},
\end{eqnarray}
so we can bound
\begin{eqnarray}\label{7.23}
\snr{V(\sigma_{J})-V(\sigma_{0})}\le \sum_{j=0}^{J-1}\snr{V(\sigma_{j+1})-V(\sigma_{j})}\stackrel{\eqref{tri.1}_{1}}{\le}\frac{1}{\tau^{n/2}}\sum_{j=0}^{J-1}\mf{F}(\sigma_{j})\stackrel{\eqref{7.22}}{\le}\frac{4V(\sigma_{0})}{\tau^{n/2}H}\stackrel{\eqref{6.9}_{2}}{\le}\frac{V(\sigma_{0})}{4}
\end{eqnarray}
for concluding:
\begin{eqnarray*}
V(\sigma_{J})\ge V(\sigma_{0})-\snr{V(\sigma_{0})-V(\sigma_{J})}\stackrel{\eqref{7.23}}{\ge}\frac{3V(\sigma_{0})}{4},
\end{eqnarray*}
in contradiction with \eqref{7.20}$_{1}$. This and the arbitrariety of $J\ge 2$ yield validity of \eqref{7.19}, which in turn implies the left-hand side of inequality \eqref{7.18}$_{2}$ and, applying \eqref{7.21} for all $j\in \N\cup\{0\}$ we derive the full chain of inequalities in $\eqref{7.18}_{2}$. We only need to verify $\eqref{7.18}_{1}$. Using \eqref{7.17}, \eqref{7.18}$_{2}$ and \eqref{7.21.1} we apply Lemma \ref{i.lem} with $i=0$ and for every integer $k$ to have
\begin{eqnarray*}
\sum_{j=0}^{\infty}\mf{F}(\sigma_{j})&\stackrel{\eqref{7.8.1}}{\le}&\frac{4\mf{F}(\sigma_{0})}{3}+\frac{2^{3}\gamma^{(2-p)/p}}{3\varepsilon_{1}\tau^{n/2}}\sum_{j=0}^{\infty}\mf{S}(\sigma_{j})\nonumber \\
&\stackrel{\eqref{tri.1}_{1},\eqref{6.2}}{\le}&\frac{8\mf{F}(\sigma_{-1})}{3\tau^{n/2}}+\frac{2^{3+4n}\gamma^{(2-p)/p}\mathbf{I}^{f}_{1,m}(x_{0},\sigma)}{3\varepsilon_{1}(\tau\theta)^{4n}}\nonumber \\
&\stackrel{\eqref{7.17}}{\le}&\frac{8\mf{N}(x_{0};\sigma)}{3\tau^{n/2}H_{1}}+\frac{2^{10n}V(\sigma_{0})^{(2-p)/p}\mathbf{I}^{f}_{1,m}(x_{0},\sigma)}{3\varepsilon_{1}(\tau\theta)^{4n}}\nonumber \\
&\stackrel{\eqref{tri.1}_{2}}{\le}&\frac{8\mf{N}(x_{0};\sigma)}{3\tau^{n/2}H_{1}}+\frac{2^{10n}V(\sigma_{-1})^{(2-p)/p}\mathbf{I}^{f}_{1,m}(x_{0},\sigma)}{3\varepsilon_{1}(\tau\theta)^{4n}}+\frac{2^{12n}\mf{F}(\sigma_{-1})^{(2-p)/p}\mathbf{I}^{f}_{1,m}(x_{0},\sigma)}{3\varepsilon_{1}(\tau\theta)^{6n}}\nonumber \\
&\le&\mf{N}(x_{0};\sigma)\left(\frac{8}{3\tau^{n/2}H_{1}}+\frac{2^{10n}}{3\varepsilon_{1}(\tau\theta)^{4n}c_{12}H_{2}}+\frac{2^{12n}}{3\varepsilon_{1}(\tau\theta)^{6n}H_{1}}+\frac{2^{12n}}{3\varepsilon_{1}c_{12}H_{2}(\tau\theta)^{6n}}\right)\nonumber \\
&\stackrel{\eqref{hhh}}{\le}&\frac{\mf{N}(x_{0};\sigma)}{H},
\end{eqnarray*}
where we also used Young inequality with conjugate exponents $\left(\frac{p}{2(p-1)},\frac{p}{2-p}\right)$ and the proof is complete.
\end{proof}

\begin{lemma}\label{7.3l}
Whenever $\sigma\in (0,\rr]$ is such that \eqref{7.17} holds, the limits in \eqref{limv} exist and inequalities
\begin{eqnarray}\label{7.25}
\left\{
\begin{array}{c}
\displaystyle
\ \snr{V_{p}(Du(x_{0}))-(V_{p}(Du))_{B_{\sigma}(x_{0})}}\le c\mf{N}(x_{0};\sigma)\\[8pt]\displaystyle 
\ \snr{Du(x_{0})-(Du)_{B_{\sigma}(x_{0})}}\le c\mf{N}(x_{0};\sigma)^{2/p}+c\snr{(Du)_{B_{\sigma}(x_{0})}}^{(2-p)/2}\mf{N}(x_{0};\sigma)
\end{array}
\right.
\end{eqnarray}
hold true for a constant $c\equiv c(\textnormal{\texttt{data}},M)$.
\end{lemma}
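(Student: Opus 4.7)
The plan is to exploit the summability estimate \eqref{7.18} to establish, in turn, that $\{(V_{p}(Du))_{B_{\sigma_{j}}(x_{0})}\}_{j}$ is Cauchy (which yields half of \eqref{limv} and the first bound in \eqref{7.25}), and that the same quantitative convergence then transfers to the $Du$-averages via the pointwise inequality \eqref{Vm.1}. The uniform lower bound $V(\sigma_{j})\ge \gamma/16$ in \eqref{7.18}$_{2}$ keeps us away from the singular point of $V_{p}^{-1}$ at the origin and is precisely what makes the transfer work.

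First, $|V(\sigma_{j+1})-V(\sigma_{j})|\le \tau^{-n/2}\mf{F}(\sigma_{j})$ by \eqref{tri.1}$_{3}$, so $\{V(\sigma_{j})\}$ is Cauchy by \eqref{7.18}$_{1}$; call its limit $\Xi_{0}$. For a generic $s\in (0,\sigma]$ I would select $j^{*}$ with $\sigma_{j^{*}+1}<s\le \sigma_{j^{*}}$ and apply \eqref{tri.1}$_{3}$ with $\nu=s/\sigma_{j^{*}}\in(\tau,1]$ to get
$$
|V(s)-\Xi_{0}|\le c\mf{F}(\sigma_{j^{*}})+c\sum_{k\ge j^{*}}\mf{F}(\sigma_{k}).
$$
Taking $s=\sigma$, using \eqref{7.18}$_{1}$ and $\mf{F}(\sigma)\le \mf{N}(x_{0};\sigma)/H_{1}$, this collapses to \eqref{7.25}$_{1}$. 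The identification $\Xi_{0}=V_{p}(Du(x_{0}))$ then follows because $\mf{F}(\sigma_{j})\to 0$ forces the $L^{2}$-average of $|V_{p}(Du)-\Xi_{0}|^{2}$ over $B_{\sigma_{j}}(x_{0})$ to vanish, so $x_{0}$ is a Lebesgue point of $V_{p}(Du)$ for its precise representative; note $|\Xi_{0}|\ge \gamma/16>0$ by \eqref{7.18}$_{2}$, so $Du(x_{0})=V_{p}^{-1}(\Xi_{0})$ is well-defined and nonzero.

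For the gradient estimate I would invoke \eqref{Vm.1} with $s=0$, $z_{1}=(Du)_{B_{\sigma}(x_{0})}$, $z_{2}=Du(x_{0})$, and take $p$-th roots (using $(a+b)^{1/p}\le a^{1/p}+b^{1/p}$, valid for $p\ge 1$) to obtain
$$
|Du(x_{0})-(Du)_{B_{\sigma}(x_{0})}|\le c\delta^{2/p}+c|(Du)_{B_{\sigma}(x_{0})}|^{(2-p)/2}\delta,
$$
where $\delta:=|V_{p}(Du(x_{0}))-V_{p}((Du)_{B_{\sigma}(x_{0})})|$. To bound $\delta\le c\mf{N}(x_{0};\sigma)$ I would split through $(V_{p}(Du))_{B_{\sigma}(x_{0})}$: the piece $|V_{p}(Du(x_{0}))-(V_{p}(Du))_{B_{\sigma}(x_{0})}|$ is controlled by \eqref{7.25}$_{1}$ just proved, while the nonlinear Jensen-type piece $|(V_{p}(Du))_{B_{\sigma}(x_{0})}-V_{p}((Du)_{B_{\sigma}(x_{0})})|$ is at most $\mint_{B_{\sigma}(x_{0})}|V_{p}(Du)-V_{p}((Du)_{B_{\sigma}(x_{0})})| \ \dx$, which by Cauchy--Schwarz and \eqref{equiv.11} is $\lesssim \mf{F}(u;B_{\sigma}(x_{0}))\le \mf{N}(x_{0};\sigma)/H_{1}$. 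Running the same argument at every scale $s\le \sigma$ and recalling that $\mf{N}(x_{0};s)\to 0$ by \eqref{7.5} delivers the remaining limit in \eqref{limv}.

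The main obstacle is the transition from $V_{p}$-level averages to $Du$-level averages, since $(Du)_{B_{\sigma}(x_{0})}$ is by definition a \emph{linear} mean while $V_{p}$ is a nonlinear homeomorphism; the decisive ingredient is the Jensen-type comparison $|(V_{p}(Du))_{B_{\sigma}(x_{0})}-V_{p}((Du)_{B_{\sigma}(x_{0})})|\le c\mf{F}(u;B_{\sigma}(x_{0}))$, supplied by \eqref{equiv.11}, combined with the subquadratic reverse bound \eqref{Vm.1}, which degenerates near the origin. Working inside the regime \eqref{7.17} guarantees $V_{p}(Du(x_{0}))\ne 0$, which avoids exactly that degeneration and closes the argument.
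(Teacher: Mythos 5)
Your proof is correct, and for $\eqref{7.25}_{2}$ it takes a genuinely different and more economical route than the paper. For $\eqref{7.25}_{1}$ and the first limit in $\eqref{limv}$ you follow the paper's telescoping scheme (one notational slip: $V(\sigma_{j})=\snr{(V_{p}(Du))_{B_{j}}}$ is a scalar, so the object you are showing to be Cauchy is the vector-valued sequence $(V_{p}(Du))_{B_{j}}$, and $\eqref{tri.1}_{3}$ bounds precisely that vector difference). For $\eqref{7.25}_{2}$ the paper proves that $\{(Du)_{B_{j}}\}_{j}$ is Cauchy by telescoping $(Du)_{B_{j+1}}-(Du)_{B_{j}}$, invoking $\eqref{Vm}_{2}$ and $\eqref{7.30.1}$ at each step, and only afterwards converts the resulting $\gamma$-dependent bound into one featuring $\snr{(Du)_{B_{\sigma}}}$ via $\eqref{7.29.0.0}$. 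You instead compare $Du(x_{0})$ with $(Du)_{B_{\sigma}}$ in one stroke: applying $\eqref{Vm.1}$ with $z_{1}=(Du)_{B_{\sigma}}$ produces the factor $\snr{(Du)_{B_{\sigma}}}^{(2-p)/2}$ exactly where it is needed, thanks to the asymmetry of the right-hand side of $\eqref{Vm.1}$ in $z_{1}$, and the auxiliary quantity $\delta=\snr{V_{p}(Du(x_{0}))-V_{p}((Du)_{B_{\sigma}})}$ is controlled by passing through $(V_{p}(Du))_{B_{\sigma}}$, combining $\eqref{7.25}_{1}$ with the Jensen-type comparison $\eqref{7.30.1}$. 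This bypasses the bookkeeping with $\gamma$ and $\eqref{7.29}$--$\eqref{7.29.0.0}$ entirely, which is a genuine simplification.

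One point of interpretation in your narrative deserves a correction, even though it does not affect the validity of the steps. You attribute the success of the $V_{p}\to Du$ transfer to the lower bound $V(\sigma_{j})\ge\gamma/16$, saying it ``keeps us away from the singular point of $V_{p}^{-1}$.'' In fact $V_{p}^{-1}$ is a global homeomorphism, so $Du(x_{0})=V_{p}^{-1}(\Xi_{0})$ would be well defined even if $\Xi_{0}=0$; and $\eqref{Vm.1}$ with $s=0$ holds for all $z_{1},z_{2}$, with the first term $\snr{V_{p}(z_{1})-V_{p}(z_{2})}^{2/p}$ exactly compensating the vanishing of the second as $\snr{z_{1}}\to 0$. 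Lemma \ref{7.4l} runs the analogous argument in the small-gradient regime, where no such lower bound is available. The lower bound $V(\sigma_{j})\ge\gamma/16$ is used in establishing $\eqref{7.18}$ itself (it prevents the iteration from exiting), not in the passage from $\eqref{7.18}$ to $\eqref{7.25}$, and indeed your derivation of $\eqref{7.25}_{2}$ never invokes it.
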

\begin{proof}
We start by showing that $\{(V_{p}(Du))_{B_{j}}\}_{j\in \N\cup\{0\}}$ is a Cauchy sequence. In fact, fixed integers $0\le i\le k-1$ we bound
\begin{eqnarray}\label{7.26}
\snr{(V_{p}(Du))_{B_{k}}-(V_{p}(Du))_{B_{i}}}&\le& \sum_{j=i}^{k-1}\snr{(V_{p}(Du))_{B_{j+1}}-(V_{p}(Du))_{B_{j}}}\nonumber \\
&\stackrel{\eqref{tri.1}_{3}}{\le}& \frac{1}{\tau^{n/2}}\sum_{j=i}^{k-1}\mf{F}(\sigma_{j})\le \frac{1}{\tau^{n/2}}\sum_{j=i}^{\infty}\mf{F}(\sigma_{j})\stackrel{\eqref{7.18}}{\le}c\mf{N}(x_{0};\sigma),
\end{eqnarray}
and
\begin{eqnarray}\label{7.27}
\snr{(V_{p}(Du))_{B_{0}}-(V_{p}(Du))_{B_{-1}}}\stackrel{\eqref{tri.1}_{3}}{\le} \frac{\mf{F}(\sigma)}{\tau^{n/2}}\le c\mf{N}(x_{0};\sigma)
\end{eqnarray}
with $c\equiv c(\textnormal{\texttt{data}},M)$, therefore there exists $\ell_{V}\in \mathbb{R}^{N\times n}$ such that 
\eqn{7.28}
$$\lim_{j\to \infty}(V_{p}(Du))_{B_{j}}=\ell_{V}.$$ Sending $k\to \infty$ in \eqref{7.26} we obtain
\begin{eqnarray*}
\snr{\ell_{V}-(V_{p}(Du))_{B_{i}}}\le c\mf{N}(x_{0};\sigma)\qquad \mbox{for all} \ \ i\in \N\cup\{0\}.
\end{eqnarray*}
Now, given any $s\in (0,\sigma]$ - and since we are interested in $s\to 0$ we can assume $s\le \sigma_{0}$ - there is $j_{s}\in \N\cup\{0\}$ such that $\sigma_{j_{s}+1}<s\le \sigma_{j_{s}}$ and
\begin{eqnarray}\label{7.28.1}
\lim_{s\to 0}\snr{\ell_{V}-(V_{p}(Du))_{B_{s}(x_{0})}}&\le& \lim_{j_{s}\to \infty}\snr{\ell_{V}-(V_{p}(Du))_{B_{j_{s}}}}+\lim_{j_{s}\to \infty}\snr{(V_{p}(Du))_{B_{s}(x_{0})}-(V_{p}(Du))_{B_{j_{s}}}}\nonumber \\
&\stackrel{\eqref{tri.1}_{3}}{\le}&\lim_{j_{s}\to \infty}\snr{\ell_{V}-(V_{p}(Du))_{B_{j_{s}}}}+\frac{1}{\tau^{n/2}}\lim_{j_{s}\to \infty}\mf{F}(\sigma_{j_{s}})\stackrel{\eqref{7.28},\eqref{6.64}}{=}0,
\end{eqnarray}
and the first limit in \eqref{limv} equals $\ell_{V}$, which defines the precise representative of $V_{p}(Du)$ at $x_{0}$, i.e.: $\ell_{V}=(V_{p}(Du))(x_{0})$. Next, notice that whenever $B\Subset \Omega$ is a ball, by \eqref{minav} and \cite[(2.6)]{gm} it is
\begin{eqnarray}\label{7.30}
\mf{F}(u;B)\approx \left(\mint_{B}\snr{V_{p}(Du)-V_{p}((Du)_{B})}^{2} \ \dx\right)^{1/2},
\end{eqnarray}
with constants implicit in "$\approx$" depending only on $p$,  so for any given $j\in \N\cup\{0\}$ it is
\begin{eqnarray}\label{xx.0}
\snr{(Du)_{B_{j}}}&\le& \mf{J}_{2}(V_{p}(Du);B_{j})^{2/p}\le c\mf{F}(\sigma_{j})^{2/p}+cV(\sigma_{j})^{2/p}
\end{eqnarray}
with $c\equiv c(p)$, while for $j=-1$ via H\"older and Young inequalities with conjugate exponents $\left(\frac{2-p}{2},\frac{2}{p}\right)$ we have
\begin{eqnarray}\label{xx.1}
\snr{(Du)_{B_{-1}}}&\le&\mf{J}_{2}(V_{p}(Du);B_{0})^{2/p}+\left(\mint_{B_{0}}\snr{Du-(Du)_{B_{-1}}}^{p} \ \dx\right)^{1/p}\nonumber \\
&\stackrel{\eqref{tri.1}_{1},\eqref{Vm.1}}{\le}&\frac{c\mf{F}(\sigma_{-1})^{2/p}}{\tau^{n/p}}+cV(\sigma_{0})^{2/p}+c\left(\mint_{B_{0}}\snr{V_{p}(Du)-V_{p}((Du)_{B_{-1}})}^{2} \ \dx\right)^{1/p}\nonumber \\
&&+c\snr{(Du)_{B_{-1}}}^{(2-p)/2}\left(\mint_{B_{0}}\snr{V_{p}(Du)-V_{p}((Du)_{B_{-1}})}^{p} \ \dx\right)^{1/p}\nonumber\\
&\stackrel{\eqref{7.30},\eqref{tri.1}_{1}}{\le}&\frac{1}{2}\snr{(Du)_{B_{-1}}}+c\mf{F}(\sigma_{-1})^{2/p}+cV(\sigma_{0})^{2/p},
\end{eqnarray}
for $c\equiv c(\textnormal{\texttt{data}},M)$ and using this time Young inequality with conjugate exponents $\left(\frac{2}{2-p},\frac{2}{p}\right)$ we get
\begin{eqnarray}\label{7.29.0.0}
V(\sigma_{0})&\le&\mf{J}_{p}(Du;B_{0})^{p/2}\le c\left(\mint_{B_{0}}\snr{Du-(Du)_{B_{-1}}}^{p} \ \dx\right)^{1/2}+c\snr{(Du)_{B_{-1}}}^{p/2}\nonumber \\
&\stackrel{\eqref{Vm.1}}{\le}&c\snr{(Du)_{B_{-1}}}^{p/2}+c\left(\mint_{B_{0}}\snr{V_{p}(Du)-V_{p}((Du)_{B_{-1}})}^{2} \ \dx\right)^{1/2}\nonumber \\
&&+c\snr{(Du)_{B_{-1}}}^{p(2-p)/4}\left(\mint_{B_{0}}\snr{V_{p}(Du)-V_{p}((Du)_{B_{-1}})}^{p} \ \dx\right)^{1/2}\nonumber \\
&\stackrel{\eqref{7.30},\eqref{tri.1}_{1}}{\le}&c\snr{(Du)_{B_{-1}}}^{p/2}+\frac{c}{\tau^{n/2}}\mf{F}(\sigma_{-1})+\frac{c}{\tau^{\frac{np}{4}}}\snr{(Du)_{B_{-1}}}^{p(2-p)/4}\mf{F}(\sigma_{-1})^{p/2}\nonumber \\
&\le& c\snr{(Du)_{B_{-1}}}^{p/2}+c\mf{F}(\sigma_{-1}),
\end{eqnarray}
for $c\equiv c(\textnormal{\texttt{data}},M)$, therefore in any case it is
\begin{eqnarray}\label{7.29}
\snr{(Du)_{B_{j}}}\stackrel{\eqref{7.18}}{\le} c\mf{N}(x_{0};\sigma)^{2/p}+c\gamma^{2/p}\qquad \mbox{for all} \ \ j\in \N\cup\{0,-1\},
\end{eqnarray}
with $c\equiv c(\textnormal{\texttt{data}},M)$. Moreover, given any ball $B\Subset \Omega$, by triangular and H\"older inequalities we bound
\begin{eqnarray}\label{7.30.1}
\snr{V_{p}((Du)_{B})-(V_{p}(Du))_{B}}\le \left(\mint_{B}\snr{V_{p}(Du)-V_{p}((Du)_{B})}^{2} \ \dx\right)^{1/2}\stackrel{\eqref{7.30}}{\le}c(p)\mf{F}(u;B)
\end{eqnarray}
and then estimate for integers $0\le i\le k-1$:
\begin{eqnarray*}
\snr{(Du)_{B_{i}}-(Du)_{B_{k}}}&\le& \sum_{j=i}^{k-1}\snr{(Du)_{B_{j+1}}-(Du)_{B_{i}}}\nonumber \\
&\stackrel{\eqref{Vm}_{2}}{\le}&c\sum_{j=i}^{k-1}\snr{V_{p}((Du)_{B_{j+1}})-V_{p}((Du)_{B_{j}})}(\snr{(Du)_{B_{j+1}}}+\snr{(Du)_{B_{j}}})^{(2-p)/2}\nonumber \\
&\stackrel{\eqref{pq}_{1},\eqref{7.29}}{\le}&c\left(\mf{N}(x_{0};\sigma)^{(2-p)/p}+\gamma^{(2-p)/p}\right)\sum_{j=i}^{k-1}\snr{V_{p}((Du)_{B_{j+1}})-V_{p}((Du)_{B_{j}})}\nonumber \\
&\le&c\left(\mf{N}(x_{0};\sigma)^{(2-p)/p}+\gamma^{(2-p)/p}\right)\sum_{j=i}^{k}\snr{V_{p}((Du)_{B_{j}})-(V_{p}(Du))_{B_{j}}}\nonumber \\
&&+c\left(\mf{N}(x_{0};\sigma)^{(2-p)/p}+\gamma^{(2-p)/p}\right)\sum_{j=i}^{k-1}\snr{(V_{p}(Du))_{B_{j+1}}-(V_{p}(Du))_{B_{j}}}\nonumber \\
&\stackrel{\eqref{tri.1}_{3},\eqref{7.30.1}}{\le}&\frac{c}{\tau^{n/2}}\left(\mf{N}(x_{0};\sigma)^{(2-p)/p}+\gamma^{(2-p)/p}\right)\sum_{j=i}^{k}\mf{F}(\sigma_{j})\nonumber \\
&\stackrel{\eqref{7.18}_{1}}{\le}&c\mf{N}(x_{0},\sigma)^{2/p}+c\gamma^{(2-p)/p}\mf{N}(x_{0};\sigma)\nonumber \\
&\stackrel{\eqref{7.17},\eqref{7.29.0.0}}{\le}&c\mf{N}(x_{0};\sigma)^{2/p}+c\snr{(Du)_{B_{-1}}}^{(2-p)/2}\mf{N}(x_{0};\sigma)
\end{eqnarray*}
for $c\equiv c(\textnormal{\texttt{data}},M)$. Recalling \eqref{6.55}$_{1}$, we get that $\{(Du)_{B_{j}}\}_{j\in \N\cup\{0\}}$ is a Cauchy sequence and there exists $\ell\in \mathbb{R}^{N\times n}$ such that $\lim_{j\to \infty}(Du)_{B_{j}}=\ell$. A standard interpolative argument analogous to that leading to \eqref{7.28.1} allows concluding that $\ell$ defines the precise representative of $Du$ at $x_{0}$, i.e. $Du(x_{0})=\ell$ and this assures the validity of the second limit in \eqref{limv}. Combining this last information with \eqref{7.30.1} and \eqref{7.28.1} we get that $\ell_{V}=(V_{p}(Du))(x_{0})=V_{p}(Du(x_{0}))$ so via \eqref{7.28} we eventually recover the first limit in \eqref{limv}. Finally, merging \eqref{7.27}-\eqref{7.28} and recalling that $V_{p}(Du(x_{0}))=(V_{p}(Du))(x_{0})$ we obtain \eqref{7.25}. The proof is complete.
\end{proof}

\subsection{Oscillation estimates for small gradients}\label{7.3s}
In this section we look at what happens when the complementary condition to \eqref{7.17} holds, i.e. when for $\sigma\in (0,\rr]$ it is
\begin{eqnarray}\label{7.33}
\frac{\gamma}{8}=:\frac{\mf{N}(x_{0};\sigma)}{16}\ge V(\sigma_{0}) \ \Longrightarrow \ \mf{N}(x_{0};\sigma)= 2\gamma.
\end{eqnarray}
Let us first observe that to avoid trivialities, we can suppose $\gamma>0$, and that there is no loss of generality in assuming that \eqref{7.33} actually holds for all $s\in (0,\sigma]$. In fact, if for some $s\in (0,\sigma]$ the opposite inequality to \eqref{7.33}, i.e. \eqref{7.17} holds, then we can directly conclude with \eqref{limv} and \eqref{7.25}. The validity of \eqref{7.33} for all $s\in (0,\sigma]$, \eqref{6.55.1}$_{1}$ and \eqref{6.64} yield that $\lim_{s\to 0}(V_{p}(Du))_{B_{s}(x_{0})}=0$, therefore keeping in mind that also $\lim_{s\to 0}V(s)=0$ and that
\eqn{j.2}
$$\snr{(Du)_{B_{s}(x_{0})}}\le c(p)\left( \mf{F}(u;B_{s}(x_{0}))^{2/p}+V(s)^{2/p}\right)$$ 
we can conclude that $\lim_{s\to 0}(Du)_{B_{s}(x_{0})}=0$ and the existence of the two limits in \eqref{limv} is proven. Next, we show the validity of \eqref{7.25} also in the case in which \eqref{7.33} is in force. Let us prove by induction that
\begin{eqnarray}\label{7.34}
\mf{C}_{H}(\sigma_{j})\le \gamma \qquad \mbox{for all} \ \ j\in \N\cup\{0\}.
\end{eqnarray}
A direct computation renders:
\begin{flalign}
\mf{C}_{H}(\sigma_{0})\stackrel{\eqref{7.33}}{\le}\frac{\mf{N}(x_{0};\sigma)}{16}+\frac{2H\mf{F}(\sigma_{-1})}{\tau^{n/2}}\le \mf{N}(x_{0};\sigma)\left(\frac{1}{16}+\frac{2H}{\tau^{n/2}H_{1}}\right)\stackrel{\eqref{hhh},\eqref{7.33}}{\le}\frac{\gamma}{4}.\label{7.35}
\end{flalign}
Then, we assume by contradiction that $\{j\in \N\cup\{0\}\colon \mf{C}_{H}(\sigma_{j+1})>\gamma\}\not =\{\emptyset\}$, define $\mf{l}:=\min\{j\in \N\cup\{0\}\colon \mf{C}_{H}(\sigma_{j+1})>\gamma\}$, i.e. the smallest integer minus one for which \eqref{7.34} fails, introduce the set $\mathcal{I}_{\mf{l}}:=\{j\in \N\cup\{0\}\colon \mf{C}_{H}(\sigma_{j})\le \gamma/4, \ j<\mf{l}+1\}$ and set $\chi:=\max \mathcal{I}_{\mf{l}}$. Notice that by \eqref{7.35} it is $\mathcal{I}_{\mf{l}}\not =\{\emptyset \}$, by definition $\mf{C}_{H}(\sigma_{\chi})\le \gamma/4$ and for $j\in \{\chi,\cdots,\mf{l}\}$ we have $\gamma\ge \mf{C}_{H}(\sigma_{j+1})\ge\gamma/4>\gamma/16$, therefore, recalling also \eqref{7.33} we can apply Lemma \ref{i.lem} with $i\equiv \chi$ and $k\equiv \mf{l}$ to conclude that $\mf{C}_{H}(\sigma_{\mf{l}+1})\le \gamma$ in contradiction with the definition of $\mf{l}$. This means that $\{j\in \N\cup\{0\}\colon \mf{C}_{H}(\sigma_{j+1})>\gamma\}=\{\emptyset\}$ and \eqref{7.34} holds true. Next, we take any $s\in (0,\sigma_{0}]$, determine $j_{s}\in \N\cup \{0\}$ such that $\sigma_{j_{s}+1}< s\le \sigma_{j_{s}}$ and estimate
\begin{eqnarray*}
V(s)\stackrel{\eqref{ls.42.1}_{2}}{\le}V(\sigma_{j_{s}})+\frac{\mf{F}(\sigma_{j_{s}})}{\tau^{n/2}}\stackrel{\eqref{6.9}_{2}}{\le}\mf{C}_{H}(\sigma_{j_{s}})\le \gamma\le\mf{N}(x_{0};\sigma).
\end{eqnarray*}
Moreover, if $s\in (\sigma_{0},\sigma_{-1}]$ we directly obtain
\begin{flalign*}
V(s)\le V(\sigma_{0})+\snr{V(\sigma_{0})-V(s)}\stackrel{\eqref{ls.42.1}_{1}}{\le} V(\sigma_{0})+\frac{2\mf{F}(\sigma_{-1})}{\tau^{n}}\stackrel{\eqref{7.34}}{\le}\gamma+\frac{2\mf{N}(x_{0};\sigma)}{\tau^{n}H_{1}}\stackrel{\eqref{hhh}}{\le}2\gamma=\mf{N}(x_{0};\sigma),
\end{flalign*}
so in any case it is $\sup_{s\le \sigma}V(s)\le \mf{N}(x_{0};\sigma)$, which in turn implies that $\snr{V(\sigma_{-1})-V(s)}\le 2\mf{N}(x_{0};\sigma)$ and \eqref{7.25}$_{1}$ can now be derived by sending $s\to 0$ in the previous inequality and recalling \eqref{limv}. Concerning $\eqref{7.25}_{2}$, we use \eqref{xx.0}-\eqref{xx.1} and \eqref{7.34} to deduce that $\snr{(Du)_{B_{j}}}\le c\gamma^{2/p}$ for all $j\in \N\cup\{-1,0\}$. This, the same interpolation argument exploited before and standard manipulations eventually render that $\snr{(Du)_{B_{-1}}-(Du)_{B_{s}(x_{0})}}\le c\gamma^{2/p}$, which, together with \eqref{7.33} and \eqref{limv} yield $\eqref{7.25}_{2}$ by sending $s\to 0$. In conclusion, we have just proven the following lemma.
\begin{lemma}\label{7.4l}
Assume that \eqref{7.33} holds for some $\sigma\in (0,\rr]$. Then the limits in \eqref{limv} exist and the bounds in \eqref{7.25} are verified.
\end{lemma}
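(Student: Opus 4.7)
The plan is to split the argument into a reduction step, an inductive propagation step via Lemma \ref{i.lem}, and a final interpolation/passage-to-the-limit step. The core observation is that under \eqref{7.33} we are effectively in a ``persistent small-gradient'' regime where everything is driven by the nonhomogeneous excess $\mf{N}(x_0;\sigma)$, not by a nonzero base average as in Section \ref{7.2s}.

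First I would reduce to the case in which \eqref{7.33} holds at every scale $s\in(0,\sigma]$: if at some radius the opposite inequality \eqref{7.17} is verified, one directly applies Lemma \ref{7.3l} and is done. Under the persistence assumption, $\eqref{6.55.1}_1$ combined with the $VMO$-type decay \eqref{6.64} forces $(V_p(Du))_{B_s(x_0)}\to 0$ and $V(s)\to 0$ as $s\to 0$; feeding this into the elementary bound \eqref{j.2} yields $(Du)_{B_s(x_0)}\to 0$ as well. Hence both limits in \eqref{limv} exist and equal zero.

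The heart of the proof is establishing the uniform bound $\mf{C}_H(\sigma_j)\le\gamma$ for every $j\in\N\cup\{0\}$. I would first verify the base case $\mf{C}_H(\sigma_0)\le\gamma/4$ by a direct computation exploiting \eqref{tri.1}, the definitions of $H,H_1$ from \eqref{6.9} and \eqref{hhh}, and \eqref{7.33}. Then I argue by contradiction: let $\mf{l}$ be the smallest index at which $\mf{C}_H(\sigma_{\mf{l}+1})>\gamma$, and let $\chi$ be the largest index in $\{0,\dots,\mf{l}\}$ for which $\mf{C}_H(\sigma_{\chi})\le\gamma/4$ (the set is nonempty by the base case). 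Maximality of $\chi$ means $\mf{C}_H(\sigma_{j+1})>\gamma/4>\gamma/16$ for $j\in\{\chi,\dots,\mf{l}\}$, while minimality of $\mf{l}$ yields $\mf{C}_H(\sigma_{j})\le\gamma$ on the same range. These are exactly the hypotheses of Lemma \ref{i.lem} applied with $i=\chi$, $k=\mf{l}$ (jointly with \eqref{7.33}, which supplies $\mf{N}(x_0;\sigma)\le 2\gamma$), so its conclusion $\mf{C}_H(\sigma_{\mf{l}+1})\le\gamma$ contradicts the definition of $\mf{l}$.

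With the uniform bound in hand, the oscillation estimates follow by interpolation. For \eqref{7.25}$_1$, given $s\in(0,\sigma_0]$ choose $j_s$ with $\sigma_{j_s+1}<s\le\sigma_{j_s}$ and combine $\eqref{ls.42.1}_2$ with $\mf{C}_H(\sigma_{j_s})\le\gamma$ to get $V(s)\le\gamma\le\mf{N}(x_0;\sigma)$; the case $s\in(\sigma_0,\sigma_{-1}]$ is handled directly via \eqref{ls.42.1}. Then $|V(\sigma_{-1})-V(s)|\le 2\mf{N}(x_0;\sigma)$, and \eqref{limv} lets us pass $s\to 0$. For \eqref{7.25}$_2$, insert the uniform bound into \eqref{xx.0}--\eqref{xx.1} to derive $|(Du)_{B_j}|\le c\gamma^{2/p}$ for all $j\ge-1$, then interpolate between adjacent scales as in the proof of Lemma \ref{7.3l} and pass to the limit using \eqref{7.33}.

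The delicate point is the ``sandwich'' choice of the pair $(\chi,\mf{l})$ in the inductive step: Lemma \ref{i.lem} requires both an upper bound on $\mf{C}_H$ throughout the chain and a lower bound $\mf{C}_H(\sigma_{j+1})\ge\gamma/16$ at every intermediate step, and the latter is precisely what fails if one tries a naive induction. The maximality-of-$\chi$ device is what supplies this lower bound automatically, and it is the only genuinely subtle ingredient---everything else is bookkeeping with the constants $H,H_1,H_2$ engineered in \eqref{6.9} and \eqref{hhh} so that all absorption constants land below $1$.
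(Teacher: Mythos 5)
Your proposal is correct and matches the paper's proof in structure and in all essential details: the reduction to the persistent small-gradient regime, the identification of the limits via $\eqref{6.55.1}_1$, \eqref{6.64} and \eqref{j.2}, the inductive propagation of $\mf{C}_H(\sigma_j)\le\gamma$ via contradiction with the max/min pair $(\chi,\mf{l})$ feeding into Lemma \ref{i.lem}, and the final interpolation to deliver \eqref{7.25}. Your observation that the maximality of $\chi$ is precisely what supplies the lower-bound hypothesis $\mf{C}_H(\sigma_{j+1})\ge\gamma/16$ required by Lemma \ref{i.lem} is exactly the key point the paper relies on.
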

\subsection{Sharp partial gradient continuity and proof of Theorems \ref{t.v.0}-\ref{t.v}}
Let us complete the proof of Theorem \ref{t.v}, started in Sections \ref{7.2s}-\ref{7.3s}.
\begin{proof}[Proof of Theorem \ref{t.v}]
Let $x_{0}\in \mathcal{R}_{u}$ be a point satisfying \eqref{1.3}, $M\equiv M(x_{0})>0$, $\bar{\varepsilon}\in (0,1)$, $\bar{\rr}\in (0,\min\{1,d_{x_{0}}\})$ be the corresponding parameters in \eqref{ru.0} with $\bar{\varepsilon},\bar{\rr}$ to be determined. We define $\ti{\varepsilon}:=2^{-10}\varepsilon'$ and suitably reduce the threshold radius to determine $\ti{\rr}\in (0,\rr']$ in such a way that inequality \eqref{7.2} holds with $\ti{\varepsilon}2^{-2}$ replacing $\varepsilon'$ for all $s\in (0,\ti{\rr}]$. Setting $\bar{\varepsilon}\equiv \ti{\varepsilon}/2$ and $\bar{\rr}\equiv \ti{\rr}$ in \eqref{ru.0} we see that both \eqref{1.4} and the assumptions in force in Sections \ref{7.2s}-\ref{7.3s} are satisfied, therefore the existence of the limits in \eqref{limv} follows from Lemmas \ref{7.3l}-\ref{7.4l}, while the (almost) pointwise oscillation estimates in \eqref{1.7} are exactly those appearing in \eqref{7.25}. We are only left with the proof of the assertion on the Lebesgue points of $V_{p}(Du)$ and of $Du$. Let us first assume that $x_{0}$ verifies both \eqref{1.3} and \eqref{1.4} with the just fixed parameters $\ti{\varepsilon}\equiv \ti{\varepsilon}(\textnormal{\texttt{data}},M)$ and $\ti{\rr}\equiv \ti{\rr}(\textnormal{\texttt{data}},M,f(\cdot))$. This choice assures that \eqref{limv}, \eqref{6.55.1} and \eqref{6.64} are available, and this in particular assures that $x_{0}$ is a Lebesgue point of $V_{p}(Du)$. Moreover, with $\sigma\in (0,\rr]$, recalling $\eqref{limv}_{2}$, we bound by means of \eqref{Vm.1}, \eqref{7.30}, \eqref{j.2}, \eqref{6.55.1} and \eqref{6.64},
\begin{eqnarray*}
\left(\mint_{B_{\sigma}(x_{0})}\snr{Du-(Du)_{B_{\sigma}(x_{0})}}^{p} \ \dx\right)^{1/p}&\le&c\left(\mint_{B_{\sigma}(x_{0})}\snr{V_{p}(Du)-V_{p}((Du)_{B_{\sigma}(x_{0})})}^{p}\snr{(Du)_{B_{\sigma}(x_{0})}}^{p(2-p)/2} \ \dx\right)^{1/p}\nonumber \\
&&+c\left(\mint_{B_{\sigma}(x_{0})}\snr{V_{p}(Du)-V_{p}((Du)_{B_{\sigma}(x_{0})})}^{2} \ \dx\right)^{1/p}\nonumber \\
&\le&c\mf{F}(u;B_{\sigma}(x_{0}))^{2/p}+c\snr{(Du)_{B_{\sigma}(x_{0})}}^{(2-p)/2}\mf{F}(u;B_{\sigma}(x_{0}))\nonumber \\
&\le&c\mf{F}(u;B_{\sigma}(x_{0}))^{2/p}+c(1+M)^{(2-p)/p}\mf{F}(u;B_{\sigma}(x_{0}))\to 0
\end{eqnarray*}
with $c\equiv c(n,N,p)$ and $x_{0}$ is a Lebesgue point of $Du$ as well. On the other hand, if $x_{0}$ is a Lebesgue point of $V_{p}(Du)$ we know that $\mf{F}(u;B_{\sigma}(x_{0}))\to 0$ and that $\eqref{limv}_{1}$ exists, therefore, recalling that \eqref{1.3} is in force, we can fix $\rr$ so small that $\eqref{1.4}_{2}$ holds and set $M:=2\limsup_{\sigma\to 0}\snr{(V_{p}(Du))_{B_{\sigma}(x_{0})}}+1$ to verify also $\eqref{1.4}_{1}$. Finally, if $x_{0}$ is a Lebesgue point of $Du$, then
\eqn{7.36}
$$
\left(\mint_{B_{\sigma}(x_{0})}\snr{Du-(Du)_{B_{\sigma}(x_{0})}}^{p} \ \dx\right)^{1/p}\to 0,\qquad \qquad \limsup_{\sigma\to 0}\snr{(Du)_{B_{\sigma}(x_{0})}}<\infty
$$
and $\eqref{limv}_{2}$ exists. Since
\begin{flalign}\label{7.38}
\snr{(V_{p}(Du))_{B_{\sigma}(x_{0})}}\le \mf{J}_{p}(Du;B_{\sigma}(x_{0}))^{p/2}\le c \left(\mint_{B_{\sigma}(x_{0})}\snr{Du-(Du)_{B_{\sigma}(x_{0})}}^{p} \ \dx\right)^{1/2}+c\snr{(Du)_{B_{\sigma}(x_{0})}}^{p/2},
\end{flalign}
with $c\equiv c(p)$ and, via triangular inequality,
\begin{eqnarray}\label{7.37}
\mf{F}(u;B_{\sigma}(x_{0}))&\stackrel{\eqref{7.30}}{\le}&c\left(\mint_{B_{\sigma}(x_{0})}\snr{V_{p}(Du)-V_{p}((Du)_{B_{\sigma}(x_{0})})}^{2} \ \dx\right)^{1/2}\nonumber \\
&\stackrel{\eqref{Vm}_{2}}{\le}&c\left(\mint_{B_{\sigma}(x_{0})}(\snr{Du}+\snr{(Du)_{B_{\sigma}(x_{0})}})^{(p-2)/2}\snr{Du-(Du)_{B_{\sigma}(x_{0})}}^{2} \ \dx\right)^{1/2}\nonumber \\
&\le&c\left(\mint_{B_{\sigma}(x_{0})}\snr{Du-(Du)_{B_{\sigma}(x_{0})}}^{p} \ \dx\right)^{1/2}\stackrel{\eqref{7.36}}{\to}0,
\end{eqnarray}
for $c\equiv c(n,N,p)$, keeping \eqref{1.3} and \eqref{7.37} in mind we can choose $\rr$ so small that $\eqref{1.4}_{2}$ holds true, and setting $M:=c+2c\limsup_{\sigma\to 0}\snr{(Du)_{B_{\sigma}(x_{0})}}^{p/2}$ where $c\equiv c(p)$ is the constant appearing in \eqref{7.38}, we obtain also $\eqref{1.4}_{1}$ and the proof is complete.
\end{proof}
Next, we prove Theorem \ref{t.v.0}.
\begin{proof}[Proof of Theorem \ref{t.v.0}]
Since our results are local in nature, we can assume that \eqref{1.2} holds globally in $\Omega$ - notice that being \eqref{1.2} in force, we can always assume the validity of \eqref{f.p.m}. Let $\mathcal{R}_{u}$ be the set defined in \eqref{ru.0} with $\bar{\varepsilon}\equiv \ti{\varepsilon}$, $\bar{\rr}\equiv \ti{\rr}$ and $\ti{\varepsilon}$, $\ti{\rr}$ defined in the proof of Theorem \ref{t.v}. The discussion at the beginning of Section \ref{morsec}, see also \cite[Section 5.1]{deqc}, yields that $\mathcal{R}_{u}$ is an open set of full $n$-dimensional Lebesgue measure and $\snr{\Omega\setminus \mathcal{R}_{u}}=0$ therefore given any $x_{0}\in \mathcal{R}_{u}$ with the specifics described before there is an open neighborhood $B(x_{0})$ of $x_{0}$ and a positive radius $\rr_{x_{0}}\in (0,\ti{\rr}]$ such that $\snr{(V_{p}(Du))_{B_{\rr_{x_{0}}}(x)}}<M$ and $\mf{F}(u;B_{\rr_{x_{0}}}(x))<\ti{\varepsilon}$. Given \eqref{1.2} and our choice of $\bar{\rr}$, $\bar{\varepsilon}$ we see that $\eqref{1.4}_{2}$ holds on $B(x_{0})$, Corollary \ref{cor.ex}, Theorem \ref{t.v} and Proposition \ref{vmo.t} apply, the limits in \eqref{limv} exist and define the precise representative of $V_{p}(Du)$ and of $Du$ at all $x\in B(x_{0})$. With these informations at hand, we aim to prove that the limits in \eqref{limv} are uniform in the sense that the continuous maps $B(x_{0})\ni x\mapsto (V_{p}(Du))_{B_{\sigma}(x)}$, $B(x_{0})\ni x\mapsto (Du)_{B_{\sigma}(x)}$ with $\sigma\in (0,\rr_{x_{0}}]$ uniformly converge to $V_{p}(Du(x))$ and to $Du(x)$ respectively as $\sigma\to 0$ thus yielding that $V_{p}(Du)$ and $Du$ are continuous on $B(x_{0})$. This is a consequence of the two inequalities in \eqref{1.7} as their right-hand side uniformly converges to zero by means of \eqref{1.2}, \eqref{6.63}, \eqref{6.55} and \eqref{j.2}. The proof is complete. 
\end{proof}

\subsection{Optimal function space criteria and proof of Theorem \ref{t.v.1}} This final section is devoted to the proof of Theorem \ref{t.v.1}. Once noticed that
$$
\left\{
\begin{array}{c}
\displaystyle
\ f\in L(n,1) \ \Longrightarrow \ \mathbf{I}^{f}_{1,m}(x_{0},s)\to_{s\to 0} \ \ \mbox{uniformly in} \ \ x_{0}\in \Omega \\[8pt]\displaystyle 
\ f\in L^{d}, \ \ d>n \ \Longrightarrow \ \mathbf{I}^{f}_{1,m}(x_{0},s)\le \frac{ds^{1-n/d}\nr{f}_{L^{d}}}{(d-n)\omega_{n}^{1/d}},
\end{array}
\right.
$$
cf. \cite[Section 2.3]{kumi1} and \cite[Section 6.5]{deqc} respectively, keeping in mind \eqref{6.55}, the proof goes exactly as in \cite[Proof of Theorem 2]{deqc}.

\end{document}